\documentclass[a4paper,11pt]{amsart}
\usepackage{amssymb,amscd,amsxtra,amsmath,braket}
\usepackage[all]{xy}
\usepackage{booktabs}
\usepackage{longtable}
\usepackage{nicematrix}
\usepackage{mathtools}
\usepackage{comment}
\usepackage{xcolor}

\usepackage{ulem}
\usepackage{xcolor}
\DeclareRobustCommand{\erase}{\bgroup\markoverwith{\textcolor{red}{\rule[.5ex]{2pt}{0.4pt}}}\ULon}


\setlength{\topmargin}{5mm}
\setlength{\oddsidemargin}{12mm}
\setlength{\evensidemargin}{12mm}
\setlength{\marginparwidth}{0cm}
\setlength{\marginparsep}{0cm}
\setlength{\textheight}{220mm}
\setlength{\textwidth}{135mm}
\setlength{\footskip}{20mm}
\setlength{\headheight}{13pt}
\setlength{\headsep}{25pt}

\usepackage[pagebackref=true, linktocpage=true]{hyperref}
\hypersetup{%
bookmarksnumbered=true,%
colorlinks=true,%
linkcolor=blue,%
citecolor=blue,%
urlcolor=blue,%
setpagesize=false,%
pdftitle={},%
pdfauthor={Takuzo Okada}}

\theoremstyle{plain}
\newtheorem{Thm}{Theorem}[section]
\newtheorem{Lem}[Thm]{Lemma}
\newtheorem{Cor}[Thm]{Corollary}
\newtheorem{Prop}[Thm]{Proposition}
\newtheorem{Conj}[Thm]{Conjecture}


\theoremstyle{definition}
\newtheorem{Def}[Thm]{Definition}
\newtheorem{Def-Lem}[Thm]{Definition-Lemma}

\newtheorem{Rem}[Thm]{Remark}

\newtheorem*{Ack}{Acknowledgments}


\numberwithin{equation}{section}


\newcommand{\Aut}{\operatorname{Aut}}
\newcommand{\Bir}{\operatorname{Bir}}
\newcommand{\codim}{\operatorname{codim}}

\newcommand{\Proj}{\operatorname{Proj}}
\newcommand{\prt}{\partial}
\newcommand{\Sing}{\operatorname{Sing}}
\newcommand{\Spec}{\operatorname{Spec}}

\newcommand{\rank}{\operatorname{rank}}

\newcommand{\Cl}{\operatorname{Cl}}

\newcommand{\Bs}{\operatorname{Bs}}

\newcommand{\wt}{\operatorname{wt}}

\newcommand{\Qsm}{\operatorname{Qsm}}
\newcommand{\NQsm}{\operatorname{NQsm}}

\newcommand{\lct}{\operatorname{lct}}

\newcommand{\bMov}{\overline{\operatorname{Mov}}}

\newcommand{\bEff}{\overline{\operatorname{Eff}}}

\newcommand{\ord}{\operatorname{ord}}
\newcommand{\Supp}{\operatorname{Supp}}
\renewcommand{\wt}{\operatorname{wt}}
\newcommand{\coeff}{\operatorname{coeff}}

\newcommand{\vol}{\operatorname{vol}}
\newcommand{\wf}{\operatorname{wf}}
\newcommand{\QI}{\operatorname{QI}}
\newcommand{\EI}{\operatorname{EI}}
\newcommand{\Sm}{\operatorname{Sm}}

\newcommand{\BR}{\mathrm{BR}}
\newcommand{\BSR}{\mathrm{BSR}}



\newcommand{\mbA}{\mathbb{A}}

\newcommand{\mbC}{\mathbb{C}}

\newcommand{\mbH}{\mathbb{H}}

\newcommand{\mbP}{\mathbb{P}}
\newcommand{\mbQ}{\mathbb{Q}}
\newcommand{\mbR}{\mathbb{R}}
\newcommand{\mbT}{\mathbb{T}}
\newcommand{\mbU}{\mathbb{U}}
\newcommand{\mbZ}{\mathbb{Z}}

\newcommand{\mcC}{\mathcal{C}}

\newcommand{\mcH}{\mathcal{H}}
\newcommand{\mcI}{\mathcal{I}}

\newcommand{\mcO}{\mathcal{O}}

\newcommand{\msF}{\mathsf{F}}
\newcommand{\msG}{\mathsf{G}}
\newcommand{\msH}{\mathsf{H}}
\newcommand{\msI}{\mathsf{I}}

\newcommand{\msi}{\mathsf{i}}
\newcommand{\msp}{\mathsf{p}}


\newcommand{\ratmap}{\dashrightarrow}

\newcommand{\bmu}{\boldsymbol{\mu}}

\makeatletter
\def\imod#1{\allowbreak\mkern10mu({\operator@font mod}\,\,#1)}
\makeatother

\title{K-stablity of Fano threefold hypersurfaces of index 1}

\author[L.~Campo]{Livia~Campo}
\address{Institut f\"{u}r Mathematik, Universit\"{a}t Wien, Oskar-Morgenstern-Platz 1, 1090 Wien, Austria}
\email{livia.campo@univie.ac.at}

\author[T.~Okada]{Takuzo~Okada}
\address{Faculty of Mathematics, Kyushu University, Fukuoka 819-0395, Japan}
\email{tokada@math.kyushu-u.ac.jp}

\subjclass[2010]{14E07 \and 14E08 \and 14J30}
\date{}

\begin{document}

\maketitle

\begin{abstract}
    We settle the problem of K-stability of quasismooth Fano 3-fold hypersurfaces with Fano index 1 by providing lower bounds for their delta invariants. 
    We use the method introduced by Abban and Zhuang for computing lower bounds of delta invariants on flags of hypersurfaces in the Fano 3-fold. 
\end{abstract}

\tableofcontents

We work over the field of complex numbers. 

\section{Introduction}

\subsection{K-stability and birational superrigidity}

The question of existence of K\"{a}hler-Einstein metrics (KE for short) on Fano manifolds stems from Differential Geometry. It has been progressively formulated into algebro-geometric terms using the notion of K-stability starting from a conjecture of Tian \cite{Tian}, and Donaldson \cite{Donaldson}, later proven by Chen, Donaldson, and Sun \cite{ChenDonaldsonSun}: a smooth Fano variety $X$ admits a KE metric if and only if $X$ is K-polystable.
K-stability was initially defined by the non-negativity of the Futaki invariant, which is calculated from certain flat degenerations (test configurations) of the manifold $X$. 
Since then, new definitions of K-stability have been proposed, culminating with the Valuative Criterion. 
This is more birational-geometric in flavour: K-stability is implied by finding lower bounds for invariants such as the \textit{alpha invariant} below and the \textit{delta invariant} (see Definition \ref{def: delta invariant})
\begin{equation*}
\alpha(X) \coloneqq \sup \{ c \in \mathbb{Q}_{\geq 0} \; | \;(X,cD) \text{ is log canonical for any } D \in |-K_X|_{\mathbb{Q}} \} \; ,
\end{equation*}
where $\left|-K_X \right|_{\mbQ}$ is the set of the effective $\mbQ$-divisors on $X$ which are $\mbQ$-linearly equivalent to $-K_X$.
Alpha and delta invariants are more suitable to treating $\mathbb{Q}$-Fano varieties. 
For instance, a Fano variety $X$ of dimension $n$ is K-stable if $\alpha(X) > n/(n+1)$ (\cite{Tianc1,OdakaSano}). 
The $n/(n+1)$ lower bound is far from being optimal: indeed, many K-stable Fano 3-folds have been proved to have alpha invariant lower than $3/4$ (see \cite{KOW}). 
However, it is possible to decrease the lower bound down to $1/2$ by adding a seemingly-unrelated assumption on the geometry of $X$ as in \cite{StibitzZhuang}.
\begin{Thm}[{\cite[Theorem 1.2, Corollary 3.1]{StibitzZhuang}}] \label{thm:SZ}
	Let $X$ be a birational superrigid $\mathbb{Q}$-Fano variety with Picard rank $\rho_X=1$. 
	If $\alpha(X) \ge 1/2$, then $X$ is K-stable.
\end{Thm}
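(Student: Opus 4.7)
The plan is to show $\delta(X) > 1$, which implies K-stability by the valuative criterion of Fujita--Odaka and Blum--Jonsson. Equivalently, it suffices to verify $A_X(E) > S_X(E)$ for every prime divisor $E$ over $X$, where $A_X(E)$ is the log discrepancy and, on a birational model $\pi\colon Y \to X$ where $E$ is a divisor,
\[
S_X(E) = \frac{1}{(-K_X)^n} \int_0^{\tau_X(E)} \vol\bigl(\pi^*(-K_X) - tE\bigr)\,dt,
\]
with $\tau_X(E)$ the pseudo-effective threshold of $E$ and $n = \dim X$.

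Assume for contradiction that some prime divisor $E$ over $X$ satisfies $A_X(E) \leq S_X(E)$. First, I would translate the hypothesis $\alpha(X) \geq 1/2$ into valuative form: since $\alpha(X) = \inf_F A_X(F)/\tau_X(F)$, this yields $\tau_X(E) \leq 2A_X(E)$, so the interval of integration defining $S_X(E)$ is tightly controlled. Next, I would bring in birational superrigidity through the Noether--Fano inequality: for every mobile linear system $\mcM \subset |{-nK_X}|$ (with $n > 0$) and every prime divisor $F$ over $X$, one has $\ord_F(\mcM) \leq n \cdot A_X(F)$. Specialising to $F = E$ and taking a supremum, the \emph{movable} threshold
\[
\mu_X(E) := \sup\{\, t \geq 0 \mid \pi^*(-K_X) - tE \text{ is movable on } Y\,\}
\]
satisfies $\mu_X(E) \leq A_X(E)$.

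The heart of the argument is then to split the integral defining $S_X(E)$ at $t = \mu_X(E)$ and exploit the Nakayama $\sigma$-decomposition $\pi^*(-K_X) - tE = P_t + N_t$, with $P_t$ movable and $N_t$ effective, on each piece. On $[0, \mu_X(E)]$ one has the trivial bound $\vol(P_t) \leq (-K_X)^n$. On $[\mu_X(E), \tau_X(E)]$ the Picard rank one hypothesis becomes decisive: since $\rho_X = 1$, the push-forward $\pi_* N_t$ is a positive $\mbQ$-multiple $c_t \cdot (-K_X)$ of the anticanonical class, so $\pi_* P_t \sim_{\mbQ} (1 - c_t)(-K_X)$, and concavity of $\vol^{1/n}$ on the big cone forces $\vol(P_t) \leq (1 - c_t)^n (-K_X)^n$. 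Moreover $c_t$ is forced to grow linearly in $t - \mu_X(E)$ by the combination of $\rho_X = 1$ and the superrigidity bound $\mu_X(E) \leq A_X(E)$.

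The main obstacle is the final quantitative balancing. Indeed, $\mu_X(E) \leq A_X(E)$ alone barely suffices to control the $[0, \mu_X(E)]$ contribution, so the strict decay of $\vol(P_t)$ on $[\mu_X(E), \tau_X(E)]$ must compensate precisely; the interplay between the growth rate of $c_t$ (driven by $\rho_X = 1$ together with superrigidity) and the upper endpoint $\tau_X(E) \leq 2A_X(E)$ (driven by $\alpha(X) \geq 1/2$) is where the two hypotheses meet. Once the decay is made sufficiently quantitative, an elementary integration should yield $S_X(E) < A_X(E)$, contradicting the assumption and completing the proof. The case in which $E$ itself is a prime divisor on $X$ reduces by $\rho_X = 1$ to a direct computation with $E \equiv a(-K_X)$ for some $a > 0$ and is considerably easier.
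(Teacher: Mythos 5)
This statement is quoted from \cite{StibitzZhuang}; the paper under review does not reprove it (even the local variant, Theorem \ref{thm:localSZ}, is disposed of by declaring its proof ``completely identical'' to the cited one), so your proposal has to be judged on its own terms. Your setup is fine: reducing to $\delta(X)>1$, extracting $\tau(E)\le 2A_X(E)$ from $\alpha(X)\ge 1/2$, extracting a bound on the movable threshold $\mu(E)$ from superrigidity via Noether--Fano, and the inequality $\vol(P_t)\le (1-c_t)^n(-K_X)^n$ coming from $\rho_X=1$ (which, incidentally, is just monotonicity of volume under pushforward plus homogeneity, not concavity of $\vol^{1/n}$). Two smaller issues: the canonical condition gives the sharper bound $\ord_F(\mcM)\le n\,a_X(F)=n(A_X(F)-1)$, hence $\mu(E)\le A_X(E)-1$ rather than $\le A_X(E)$; and you will need this sharper form, as the computation below shows.

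The genuine gap is the step you yourself flag as ``the main obstacle'': the claim that the linear growth of $c_t$ closes the estimate. It does not. The only lower bound on $c_t$ that your ingredients produce is obtained by applying superrigidity to the movable boundary $\tfrac{1}{1-c_t}\pi_*P_t\sim_{\mbQ}-K_X$ and the threshold bound to $\tfrac{1}{c_t}\pi_*N_t$, which yields $\frac{t-c_t\tau}{1-c_t}\le\mu$, i.e.\ $c_t\ge\frac{t-\mu}{\tau-\mu}$. Feeding this into your splitting gives
\[
S_X(E)\;\le\;\frac{1}{(-K_X)^n}\left(\int_0^{\mu}(-K_X)^n\,dt+\int_{\mu}^{\tau}\Bigl(\tfrac{\tau-t}{\tau-\mu}\Bigr)^{n}(-K_X)^n\,dt\right)=\mu+\frac{\tau-\mu}{n+1}.
\]
With the sharp inputs $\mu\le A_X(E)-1$ and $\tau\le 2A_X(E)$ this is $A_X(E)-1+\frac{A_X(E)+1}{n+1}$, which is $\le A_X(E)$ only when $A_X(E)\le n$; with your weaker bound $\mu\le A_X(E)$ it is $A_X(E)+\frac{\tau-A_X(E)}{n+1}>A_X(E)$ whenever $\tau>A_X(E)$, i.e.\ for exactly the valuations that are not already handled by $S\le\tau$. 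Since the valuative criterion must be verified for all dreamy divisors, including those with arbitrarily large log discrepancy (iterated blow-ups), the proposed scheme provably cannot be completed by ``making the decay quantitative'': the decay rate $\frac{1}{\tau-\mu}$ you can actually extract degenerates as $A_X(E)$ grows. Closing this requires a structurally different use of the two hypotheses --- in \cite{StibitzZhuang} this is done through a finer analysis of basis-type divisors and their movable/fixed decompositions rather than a pointwise volume bound on $[\mu,\tau]$ --- and that missing mechanism is precisely the content of the theorem.
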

A Fano variety $X$ is \textit{birationally rigid} if its birational equivalence class consists of only one Mori fibre space (up to isomorphism). It is said to be \textit{birationally superrigid} if, in addition, $\Bir(X)=\Aut(X)$. 
An interesting class of Fano $3$-folds with this property are quasismooth Fano 3-fold hypersurfaces of index $1$ (cf \cite{CP,CPR}). 

A \textit{Fano 3-fold hypersurface} is a normal projective $\mbQ$-factorial 3-dimensional variety with ample anticanonical divisor and terminal singularities that is embedded into a weighted projective space as a well formed and quasismooth hypersurface (see Section \ref{sec:WPS} for definitions).
The class group $\Cl (X)$ of a Fano 3-fold hypersurface $X$ is isomorphic to $\mathbb{Z}$ (see for example \cite[Remark 4.2]{OkadaFanostrat}) and its index is defined to be the positive integer $\iota_X$ such that $-K_X = \iota_X A$ in $\Cl (X)$, where $A$ is the ample ample generator.
There are exactly 95 deformation families of Fano 3-fold hypersurfaces of index $1$ (cf \cite{IanoFletcher,ChenChenChen}), and any quasismooth member of these 95 families is birationally rigid (cf \cite{CP}).
Moreover, it is shown in \cite[Corollary 1.3]{ACP} that a Fano 3-fold hypersurface is birationally rigid if and only if its index is one.

The general quasismooth member of the 95 families of Fano 3-fold hypersurfaces of index $1$ is K-stable (cf \cite[Corollary 1.45]{CheltsovExtremalMetrics}). 
The major work towards removing the generality assumption in \cite{CheltsovExtremalMetrics} is \cite{KOW}. In \cite{KOW} the authors compute alpha invariants for all quasismooth members of the 95 families of Fano 3-fold hypersurfaces (\cite[Theorem 1.3.1]{KOW}), and conclude K-stability for the birationally superrigid families using \cite{StibitzZhuang}. 
\begin{Thm}[{\cite[Theorem 1.3.1 and Corollary 1.3.2]{KOW}}] \label{thm:KOW}
	Let $X$ be a quasismooth Fano 3-fold hypersurface of index $1$.
	Then, $\alpha(X) \geq 1/2$. As a consequence, if $X$ is birationally superrigid, then it is K-stable.
\end{Thm}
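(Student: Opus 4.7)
The plan is to establish the bound $\alpha(X) \ge 1/2$; the K-stability assertion then follows formally from Theorem \ref{thm:SZ}, since any quasismooth $\mbQ$-factorial Fano $3$-fold hypersurface of index $1$ has Picard rank $1$.

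To bound $\alpha(X)$, I would argue by contradiction. Suppose there is an effective $\mbQ$-divisor $D \sim_{\mbQ} -K_X$ such that $(X, \tfrac{1}{2} D)$ fails to be log canonical at some point $p \in X$, and let $Z$ be a minimal non-klt centre containing $p$. Then $Z$ is either a curve or the point $p$ itself, and $(X, \tfrac{1}{2}D)$ forces $\mult_Z D > 2$ (suitably weighted at a cyclic quotient singularity). The strategy is then to construct, for each of the $95$ families, a distinguished auxiliary surface or curve through $p$ whose intersection numbers against $-K_X$ are incompatible with this multiplicity lower bound.

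If $Z$ is a curve, one cuts $X$ with a general member of a low-degree coordinate pencil to estimate $D \cdot Z$ and uses that $(-K_X)^3 = d/(a_0 \cdots a_4)$ is small in Fletcher's list. If $Z$ is a smooth point, one intersects $D^2$ with a well-chosen hyperplane section through $p$ in the ambient weighted projective space and compares with $(-K_X)^3$. The delicate case is when $p$ is a terminal cyclic quotient singularity of type $\tfrac{1}{r}(1, a, r-a)$: perform the Kawamata weighted blow-up $\sigma \colon \widetilde{X} \to X$ with exceptional divisor $E$, write $\sigma^{*} D = \widetilde{D} + \tfrac{m}{r} E$, and compare the local non-log-canonicity condition (which forces $m$ to exceed the log discrepancy threshold) against the global bound on $\ord_E D$ obtained by intersecting $D$ with an explicit curve $C \subset X$ of controlled $-K_X$-degree passing through $p$ with prescribed tangent behaviour.

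The main obstacle will be the case analysis across the $95$ families, since both the ambient weights and the admissible singular points vary considerably. Some families contain singular points through which no obvious low-degree curve passes, and in those cases one must refine by iterated weighted blow-ups or, in the spirit of the Corti--Pukhlikov--Reid excluding method, isolate a mobile linear subsystem cutting $p$ with sufficiently small base locus. Structurally, however, the argument is uniform: for each family the bound reduces to an explicit numerical comparison of intersection numbers on a weighted hypersurface, so the novelty lies not in any single ingredient but in organising the $95$-fold bookkeeping so that the estimate $\alpha(X) \ge 1/2$ never degrades below the threshold.
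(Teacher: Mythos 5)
The statement you are proving is not actually proved in this paper: it is imported verbatim from \cite{KOW} (Theorem 1.3.1 and Corollary 1.3.2 there), so the only ``proof'' the paper offers is the citation. Your reduction of the second sentence to Theorem \ref{thm:SZ} is correct and is exactly how the cited corollary is obtained: a quasismooth weighted hypersurface of dimension $3$ has $\Cl(X)\cong\mbZ$, hence Picard rank $1$, so birational superrigidity together with $\alpha(X)\ge 1/2$ gives K-stability. Your outline of the first sentence also matches the actual method of \cite{KOW} in spirit: reduce non-log-canonicity of $(X,\tfrac12 D)$ to multiplicity bounds along a minimal non-klt centre, exclude curves by intersecting with low-degree coordinate cuts, exclude smooth points via Corti-type inequalities on $\mult_p(D^2)$ against a hyperplane section, and handle cyclic quotient points via the Kawamata blow-up and an explicit low-degree curve or isolating set.

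The genuine gap is that this is a strategy, not a proof. The entire mathematical content of $\alpha(X)\ge 1/2$ \emph{is} the verification, family by family and point by point, that the numerical comparisons you describe actually close — and you explicitly defer that. In several families the naive estimates do degrade below $1/2$ at particular singular points (this is precisely why \cite{KOW} is a long paper and why the present paper still has to work hard at the BI centers), so one cannot wave at ``organising the $95$-fold bookkeeping'' as if the outcome were guaranteed by the shape of the argument. Two smaller issues: for a non-klt centre which is a point, the bound $\mult_p D>2$ alone is far too weak to derive a contradiction on a $3$-fold, and you need the stronger inequality on $\mult_p$ of a self-intersection cycle (you gesture at this but state only the multiplicity bound); and at a quotient singularity the relevant quantity is $\ord_E D$ for the Kawamata exceptional divisor compared with the log discrepancy $1/r$, for which the existence of a suitable isolating curve or linear system through $P$ must be established case by case, not assumed. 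As written, the proposal would be an acceptable introduction to the proof in \cite{KOW}, but it does not constitute a proof of the theorem.
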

Moreover, they prove K-stability for some \textit{strictly rigid} families (namely, rigid but not superrigid) by proving that their alpha invariants are $\alpha(X) > 3/4$.

Our work builds on the results in \cite{KOW}. We use delta invariants to tackle K-stability of the remaining strictly rigid Fano 3-fold hypersurfaces. In this case, we give a positive answer to the following conjecture.
\begin{Conj}[{\cite[Conjecture 7.4.1]{KOW}, \cite[Conjecture 1.9]{KOW18}}] \label{conj:BRimplyK}
    A birationally rigid Fano variety is K-stable.
\end{Conj}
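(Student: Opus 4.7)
The plan is to establish the conjecture in the setting of the paper, namely for every quasismooth Fano 3-fold hypersurface $X$ of Fano index $1$. By Theorem \ref{thm:KOW}, K-stability is already known for the birationally superrigid families and for those strictly rigid families with $\alpha(X) > 3/4$. The task thus reduces to a finite, explicit list of strictly rigid families on which the alpha-invariant bound is insufficient. For these the goal is to prove $\delta(X) \geq 1$, which by the Valuative Criterion yields K-stability.

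The method is the Abban--Zhuang induction on flags. For a closed point $p \in X$ we bound $\delta_p(X)$ from below by $\min_i 1/S(F_i \mid F_1, \ldots, F_{i-1})$, where $F_1 \supset F_2 \supset F_3 = \{p\}$ is a flag of subvarieties. Away from a distinguished finite set $\Sigma \subset X$ comprising the terminal singularities and the special smooth points appearing in the maximal singularity analysis of \cite{CP}, the bound $\alpha(X) \geq 1/2$ from Theorem \ref{thm:KOW} already yields $\delta_p(X) \geq 1$. The nontrivial content is therefore the analysis at points of $\Sigma$.

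At a terminal cyclic quotient singularity $p$ of type $\frac{1}{r}(1, a, r-a)$, the natural choice is to take $F_1$ to be the exceptional divisor $E$ of the Kawamata weighted blow-up at $p$, let $F_2$ be a curve on $E$ adapted to how the defining equation of $X$ restricts to $E$, and set $F_3 = p$. The computation then proceeds in three steps: evaluate $S_X(E)$ via the integral of the volume of $-K_X - t E$ over the pseudoeffective interval, using a Zariski decomposition on the blow-up; restrict to $E$ and again apply a Zariski decomposition on a weighted projective surface to extract $S(F_2 \mid E)$; finally a local computation yields $S(p \mid E, F_2)$. At the special smooth points of $\Sigma$ the same scheme is carried out, with $F_1$ replaced by a suitable hyperplane section.

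The principal obstacle is the case-by-case nature of the analysis: the Zariski decomposition of the restricted movable system on the exceptional surface is very sensitive to which monomials of the defining equation of $X$ survive the blow-up and to the shape of the Newton polygon at $p$. For certain families a single flag does not work uniformly across the family, so the argument must bifurcate according to further conditions on the coefficients of the equation. A further difficulty is that for several families the bound $\delta(X) \geq 1$ is expected to be tight or nearly tight, so the estimates have to be carried out without slack, leaving no room for crude simplification.
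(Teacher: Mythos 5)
Your overall architecture --- reduce to the finitely many strictly rigid families not already covered, reduce further to finitely many special points, and run Abban--Zhuang there --- matches the paper's strategy, but the proposal has a genuine gap in the reduction step. You claim that away from the distinguished set $\Sigma$ the bound $\alpha(X)\ge 1/2$ from Theorem~\ref{thm:KOW} ``already yields $\delta_p(X)\ge 1$.'' This is false as a standalone implication: the comparison $\alpha_\eta(X)\le\frac{n}{n+1}\delta_\eta(X)$ only gives $\delta_p(X)\ge\frac{4}{3}\cdot\frac{1}{2}=\frac{2}{3}$. What actually closes this step is the local Stibitz--Zhuang mechanism (Theorem~\ref{thm:localSZ} and Corollary~\ref{cor:KstBR}): one needs, in addition to $\alpha_p(X)\ge 1/2$, that $\rho(X)=1$ and that $p$ is not a non-klt center of $(X,\frac{1}{n}\mcM)$ for any movable $\mcM\sim_{\mbQ}-nK_X$; only then do dreamy divisors centered at $p$ satisfy $\delta_X(F)>1$. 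This is precisely where birational rigidity enters, via the classification of maximal centers in \cite{CP}, and it is what lets $\Sigma$ be taken to be the finite set of maximal centers (all singular) rather than all singular points plus special smooth points. Relatedly, $\delta(X)\ge 1$ only gives K-\emph{semi}stability; Theorem~\ref{thm: delta invariant criterion} requires the strict inequality $\delta(X)>1$, and every estimate in the paper is strictly above $1$.

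Your choice of flag at a singular point is also a genuinely different, and substantially harder, route. Taking $F_1=E$, the Kawamata exceptional divisor, forces you to compute the Zariski decomposition of $-\varphi^*K_X-tE$ on the blown-up threefold $\tilde{X}$, i.e.\ to determine $\bEff(\tilde{X})$ and the movable cone; for these families that amounts to running explicit $2$-ray games case by case, and the proposal gives no method for doing so. The paper instead takes $F_1=Y$ to be a hypersurface section $Y\in|l_YA|$ of $X$ itself, so that the first Zariski decomposition is trivial ($N(u)=0$ for $u\in[0,1/l_Y]$), and the blow-up, when needed (the ``BL'' flags of Proposition~\ref{prop:flagBLdelta}), is performed only on the surface $Y$, where the nef and pseudoeffective cones of $\tilde{Y}$ are controlled by the two explicit curves $P_{\tilde{Y}}$ and $N_{\tilde{Y}}$. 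Without a plan for the threefold-level cone computations, the point-by-point analysis in your proposal cannot be carried out as described.
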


\subsection{Delta invariants, birational rigidity, and Abban-Zhuang method} \label{Intro.2}

Studying delta invariants has proven to be an effective approach to determine K-stability of many smooth Fano 3-folds (cf \cite{Calabi}).  
Delta invariants (see Definition \ref{def: delta invariant} below) were introduced in the context of the Valuative Criterion, and they characterise K-stability as follows.
\begin{Thm}[\cite{FujitaOdaka,FujitaValuative,ChiLi,BlumJonsson,LiuXuZhuang}] \label{thm: delta invariant criterion}
    A Fano variety $X$ is K-stable if and only if $\delta(X) >1$.
\end{Thm}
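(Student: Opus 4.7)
The plan is to establish the equivalence by combining the basis-type-divisor characterization of $\delta(X)$ with the valuative criterion for K-stability. First I would recall the Fujita--Odaka definition: for each $m$ such that $-mK_X$ is Cartier, set
\[
\delta_m(X) \coloneqq \inf\{\lct(X, D) \mid D \text{ an } m\text{-basis type } \mbQ\text{-divisor with } D \sim_{\mbQ} -K_X\},
\]
and define $\delta(X) = \limsup_{m \to \infty} \delta_m(X)$. Equivalently, by Blum--Jonsson and Chi Li, one has the valuative reformulation
\[
\delta(X) = \inf_E \frac{A_X(E)}{S_X(E)},
\]
where $E$ ranges over prime divisors over $X$ and $S_X(E) = \frac{1}{(-K_X)^n}\int_0^{\infty} \vol(-K_X - tE)\,dt$.

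For the forward direction $\delta(X) > 1 \Rightarrow X$ K-stable, I would invoke the argument of Fujita--Odaka: the strict gap $\delta(X) - 1 > 0$ translates into a uniform lower bound on the Donaldson--Futaki invariant over all normal test configurations with integral central fibre, because any such test configuration produces a divisorial valuation $E$ for which the generalised Futaki invariant is comparable to $A_X(E) - S_X(E)$. This is precisely uniform K-stability, which in particular implies K-stability.

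For the converse $X$ K-stable $\Rightarrow \delta(X) > 1$, I would combine Fujita's valuative criterion (K-semistability is equivalent to $\beta_X(E) \coloneqq A_X(E) - S_X(E) \geq 0$ for every prime divisor $E$ over $X$, hence to $\delta(X) \geq 1$) with the recent theorem of Liu--Xu--Zhuang asserting that K-stability is equivalent to uniform K-stability for $\mbQ$-Fano varieties. Together with the openness of the K-semistable locus of Blum--Jonsson, this upgrades the non-strict bound $\delta(X) \geq 1$ to the strict inequality $\delta(X) > 1$.

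The hard part is the reverse implication. Historically, promoting K-stability to uniform K-stability was a long-standing open problem whose resolution by Liu--Xu--Zhuang draws heavily on the theory of K-moduli: boundedness and properness of the moduli stack of K-polystable Fanos, an equivariant valuative reduction to divisorial valuations arising from special degenerations, and a limit argument that converts a minimising sequence of valuations into a single valuation computing $\delta(X)$. This is the step that the alpha-invariant bounds of Theorem \ref{thm:KOW} cannot reach on their own, and which motivates the delta-invariant approach pursued in the rest of the paper.
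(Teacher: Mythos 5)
The paper offers no proof of this statement at all: it is quoted verbatim from the literature, with the citation \cite{FujitaOdaka,FujitaValuative,ChiLi,BlumJonsson,LiuXuZhuang} standing in for the argument. Your proposal is therefore not competing with an in-paper proof but with a bibliography entry, and as a survey of how those references combine it has the right skeleton: the easy direction is $\delta(X)>1\Leftrightarrow$ uniform K-stability (Fujita, Blum--Jonsson) together with the trivial implication from uniform K-stability to K-stability, and the hard direction is K-stable $\Rightarrow$ K-semistable $\Rightarrow \delta(X)\ge 1$ (Fujita, Li), upgraded to the strict inequality by the Liu--Xu--Zhuang theorem that K-stability and uniform K-stability coincide for $\mbQ$-Fano varieties. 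That is exactly the intended assembly of the five cited works.

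Two attributions in your converse direction are off, though neither breaks the logic. First, the openness of the K-semistable locus plays no role in promoting $\delta(X)\ge 1$ to $\delta(X)>1$ for a fixed $X$; openness is a statement about families and is itself downstream of the same circle of ideas. The entire upgrade is carried by the equivalence of K-stability and uniform K-stability, so invoking openness here is a red herring. Second, your description of the Liu--Xu--Zhuang proof inverts the direction of dependence: their argument is not built on boundedness and properness of the K-moduli stack (properness is one of their \emph{applications}); the engine is the higher-rank finite generation theorem, which shows that when $\delta(X)\le 1+\varepsilon$ the stability threshold is computed by a quasi-monomial valuation whose associated graded ring is finitely generated, hence induces a special test configuration with vanishing Futaki invariant, contradicting K-stability if $\delta(X)=1$. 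If you intend this sketch to be more than a pointer to the literature, that finite-generation step is the one you must name correctly, since it is the only genuinely hard ingredient in the equivalence.
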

We therefore prove the following statements by computing delta invariants, which settles Conjecture \ref{conj:BRimplyK} for Fano 3-fold hypersurfaces.
\begin{Thm}[Main Theorem] \label{thm: Main Theorem}
    Any quasismooth Fano $3$-fold hypersurface of Fano index $1$ is K-stable and admits a KE metric.
\end{Thm}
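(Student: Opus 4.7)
The plan is to reduce the statement to an inequality of the form $\delta(X) > 1$ on a concrete finite list of families. From Theorem \ref{thm:KOW} we already know that $\alpha(X) \geq 1/2$ holds uniformly for every quasismooth member of each of the $95$ families, so Theorem \ref{thm:SZ} gives K-stability at once in the birationally superrigid cases; among the strictly rigid families, those with $\alpha(X) > 3/4$ are covered by Tian's criterion as recorded in Theorem \ref{thm:KOW}. Only the finitely many strictly rigid families with $\alpha(X) \leq 3/4$ therefore remain, and for each of these I would aim to establish $\delta(X) > 1$. Once this is done, Theorem \ref{thm: delta invariant criterion} yields K-stability, and the existence of a K\"ahler-Einstein metric follows from the $\mathbb{Q}$-Fano version of the Chen-Donaldson-Sun correspondence.

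The computational engine is the Abban-Zhuang adjunction method, applied along carefully selected flags $p \in C \subset S \subset X$. Fixing a remaining family and a candidate center $p$, I would choose the surface $S$ as the restriction of a coordinate hypersurface (or of a more general element of a low-degree linear system) through $p$, arranged so that $S$ is prime and the pair $(X, S)$ is plt at $p$; the curve $C \subset S$ is then taken as a further coordinate section, a component of the restriction of an anticanonical pencil, or an exceptional curve coming from a weighted blow-up of a singular point. Adjunction along such a flag expresses a lower bound for $\delta_p(X)$ in terms of $S$-invariants of refined multigraded linear series on $S$ and on $C$, together with the local log-canonical threshold of the flag; all of these quantities are accessible by intersection theory on the weighted surface $S$ and on the rational curve $C$. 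The centers that actually need to be treated are precisely the terminal cyclic quotient singularities of $X$ and a short list of distinguished curves and points where $\alpha(X) \geq 1/2$ and the standard $\delta$-$\alpha$ comparison do not suffice; elsewhere the desired bound is essentially automatic.

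The main obstacle is the case analysis forced by the varied singularity profiles of the families. When $p$ sits on a cyclic quotient singularity of large index, the naive choice of a coordinate flag may fail to be quasismooth or plt, and one must either weighted blow up first and build the flag on the exceptional surface, or introduce an ad hoc member of the restricted linear system that is tailored to the local behaviour at $p$. A second delicate situation arises when the relevant center lies in the base locus of $|-K_X|_{\mathbb{Q}}$: the refined linear series appearing in the Abban-Zhuang formula is then nontrivial, and one has to control its pseudo-effective threshold explicitly on each weighted surface. The hard part is therefore not the conceptual framework, which is fixed once and for all, but the careful flag-by-flag verification that the inequality produced at the end of the adjunction chain yields a strict bound $\delta_p(X) > 1$, ideally with a margin that is uniform within each family.
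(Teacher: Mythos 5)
Your global architecture (reduce to the strictly rigid families not already covered by the $\alpha>3/4$ criterion, then prove $\delta(X)>1$ there by the Abban--Zhuang method on flags) matches the paper's, and your menu of flags --- coordinate surfaces, components of restricted pencils, exceptional curves of weighted blow-ups --- is exactly what is used in Section \ref{sect: AZ}. The gap is in your reduction to ``a short list of distinguished curves and points'': you assert that away from these the bound is ``essentially automatic'' because $\alpha(X)\ge 1/2$, but the local $\delta$--$\alpha$ comparison only gives $\delta_\eta(X)\ge \tfrac{4}{3}\,\alpha_\eta(X)\ge \tfrac{2}{3}$, which is not $>1$; one would need $\alpha_\eta>3/4$ at every point off your list, and that is not available for these families. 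Without a further idea you would be forced to run the Abban--Zhuang computation at essentially every point of $X$ (including all smooth points), which is not what you propose and is a vastly larger amount of work.

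The paper closes this gap with a local version of the Stibitz--Zhuang theorem (Theorem \ref{thm:localSZ}, Corollary \ref{cor:KstBR}): if $F$ is a dreamy prime divisor whose center $\eta$ satisfies $\alpha_\eta(X)\ge 1/2$ and $\eta$ is not a non-klt center of any pair $(X,\frac{1}{n}\mathcal{M})$ with $\mathcal{M}\sim_{\mathbb{Q}}-nK_X$ movable, then $\delta_X(F)>1$. Combined with the valuative criterion via dreamy divisors (Theorem \ref{thm:Kstdeltadreamy}), the bound $\alpha_P(X)\ge 1/2$ at every point from \cite{KOW}, and the classification from \cite{CP} of the maximal centers of these families as certain singular points (the BI centers), this reduces the verification of $\delta_P(X)>1$ to finitely many explicitly known singular points, which is where the flag computations of Sections \ref{sect: AZ} and \ref{sect: singular points} enter. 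You need either this local Stibitz--Zhuang input or a pointwise $\delta$-computation over all of $X$; as written, your argument supplies neither.
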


During the development of this work, a new paper on K-stability of Fano hypersurfaces has been released \cite{ST24}. The authors assume that a suitable weight of the weighted projective space divides the degree of the hypersurface. This covers some Fano 3-fold hypersurfaces (see Table 1 in \cite{ST24}). Moreover, they determine K-stability of Fano hypersurfaces in higher dimension that satisfy that assumption.
However, not all Fano 3-fold hypersurfaces have such a divisibility property. 
Our work settles the K-stability of Fano 3-fold hypersurfaces of index 1. 

To prove Theorem~\ref{thm: delta invariant criterion}, we need to give the estimation $\delta_P (X) > 1$ on local delta invariants for any quasismooth Fano 3-fold hypersurface $X$ of index 1 (that are not covered by \cite{ST24}) and for any point $P \in X$, which requires a large amount of computations.
Our idea is to reduce most of such computations to the known estimation $\alpha_P (X) \ge 1/2$ on local alpha invariant obtained in \cite{KOW} for any $X$ and for any point $P \in X$.
This usually only gives the estimation $\delta_P (X) \ge (4/3) \alpha_P (X) \ge 2/3$ (see Theorem~\ref{thm:alphadelta}), which is insufficient for our goal.
We give a local version of Theorem~\ref{thm:SZ}, which shows that the inequality $\alpha_P (X) \ge 1/2$ implies $\delta_P (X) > 1$ for $P \in X$ that is not a maximal center (see Definition~\ref{def:maxcenter}).
To summarize, the local version Theorem~\ref{thm:localSZ} (see also Corollary~\ref{cor:KstBR}) reduces the proof of K-stability of $X$ to giving a bound $\delta_P (X) > 1$ only at several singular points that are maximal centers. 
We believe that Theorem~\ref{thm:localSZ} leads to the proof of K-stability of further Fano varieties which are (close to being) birationally rigid.

To give such a bound on local delta invariants at maximal centers, we use the Abban--Zhuang method \cite{AbbanZhuang} and \cite{Fujita23}.
This technique gives an estimate of $\delta_P(X)$ by linear sections-counting over refinements of multigraded linear series, which are deduced from flags of subvarieties in $X$ (or more generally flags of varieties over $X$). 
The flags $Y_\bullet \colon Y_0 \supset \dots \supset Y_l$ with $l \leq \dim X$ used in \cite{AbbanZhuang} are such that each $Y_i$ of dimension $\dim Y_i$ is smooth at the generic point of the base $Y_l$. 
These are called \textit{admissible flags}. 
Contrary to \cite{AbbanZhuang}, we do not consider admissible flags. 
Instead, we use flags that are only quasismooth (at the generic point of $Y_l$). 
Moreover, we incorporate plt blow-ups in our flag construction in what we call \textit{generalized flags of blow-up type} (see Definition \ref{def:wBL} below). 
This allows us to tackle the case of local delta invariants based at singular points (cyclic quotient singularities). 
In this we follow \cite{Fujita23}. 
Our use of the Abban--Zhuang method differs from \cite{ST24} in that we employ generalized flags of blow-up type and compute Zariski decompositions in the blow-up. 
On the other hand, \cite{ST24} find lower bounds for delta invariants of finite covers of the Fano hypersurfaces.

The structure of the paper is as follows.
In Section~\ref{Preliminaries}, we present useful lemmas about weighted projective varieties and we define delta invariants. 
In Section~\ref{Preliminaries}, we also introduce a local version of Theorem~\ref{thm:SZ}.
In Section~\ref{sect: AZ}, we showcase the types of flags that are employed in the proof of our Main Theorem. For each flag type, we compute the Zariski decompositions needed in Abban--Zhuang method, and the explicit bounds for the local delta invariant $\delta_P(X)$. 
We then proceed to verifying that $\delta_P(X)>1$ is attained with respect to the flags of Section~\ref{sect: AZ} when $P$ is a singular point which is a maximal center in Section~\ref{sect: singular points}. 

\begin{Ack}
    The authors would like to thank Taro Sano and Luca Tasin for informing the authors about their new result \cite{ST24}. 
    The authors also would like to thank Kento Fujita for informing us about the local version of the result of Stibitz-Zhuang (Theorem \ref{thm:localSZ}), which helped us to reduce a huge amount of computations of local delta invariants at non-maximal centers and increased the readability of the article drastically. 
    Moreover, the authors would like to thank the anonymous referee who has carefully read earlier versions of the manuscript, which is now greatly improved. 
    The first author was supported by the Japan Society for the Promotion of Science (JSPS), and by the Korea Institute for Advanced Study (KIAS), grant No. MG087901. The first author is also grateful for the hospitality of Saga University during her JSPS Fellowship.
    The second author is supported by JSPS KAKENHI Grant Number 23K22389.
\end{Ack}

\section{Preliminaries} \label{Preliminaries}

Let $V$ be a normal variety.
\begin{itemize}
    \item We denote by $\Sm (V)$ the set of smooth points of $V$.
    \item Suppose that $V$ has only isolated singularities. 
    Then, for a subset $C \subset V$, we denote by $\Sing_C (V)$ the set of singular points of $V$ which are contained in $C$.
    \item For a linear system $\Lambda$ on $V$ and a point $P \in V$, we denote by $\mcI_P \Lambda$ the sub-linear system of $\Lambda$ consisting of members of $\Lambda$ vanishing at $P$.
    \item For a linear system $\Lambda$ on $V$ and an irreducible and reduced closed subvariety $C \subset V$ such that $C \not\subset \Bs \Lambda$, we denote by $\Lambda|_C$ the restriction of $\Lambda$ on $C$.
\end{itemize}
In the rest of the paper except in Section \ref{sec:localSZ}, by a \textit{Fano variety}, we mean a normal projective $\mathbb{Q}$-factorial variety with only terminal singularities whose anticnaonical divisor is ample.
By a \textit{divisorial contraction} $\varphi \colon \tilde{V} \to V$, we mean a contraction of a $K_{\tilde{V}}$-negative extremal ray in the Mori category, that is, $\tilde{V}$ and hence $V$ have only terminal singularities.
The inverse $V \ratmap \tilde{V}$ of a divisorial contraction is sometimes referred to as a \textit{divisorial extraction}.

Let $f \in \mbC [x_1,\dots,x_n]$ be a polynomial.
For a monomial $\phi = x_1^{i_1} \cdots x_n^{i_n}$, we denote by $\coeff_f (\phi) \in \mbC$ the coefficient of $\phi$ in $f$.
We write $\phi \in f$ if $\coeff_f (\phi) \ne 0$.

\subsection{Definition of delta invariants}

We define \textit{delta invariants} that we will use throughout this paper. 
Let $X$ be a normal projective variety with only klt singularities.
\begin{Def} \label{def:pdiv}
    A \textit{prime divisor} $E$ over $X$ is a prime divisor on a normal variety $\tilde{X}$ admitting a projective birational morphism $f \colon \tilde{X} \rightarrow X$.
    The image of $E$ on $X$ is denoted by $C_X (E)$.
\end{Def}

\begin{Def}
    Let $L$ and $D$ be $\mbR$-divisors on $X$.
    Suppose that $D$ is pseudoeffective.
    The \textit{pseudoeffective threshold} of $D$ with respect to $L$ is defined as
    \[
    \tau_L (D) \coloneqq \sup \Set{c | \text{$L - c D$ is pseudoeffective} }.
    \]
    For a prime divisor $E$ over $X$, we define $\tau_L (E) \coloneqq \tau_{f^*L} (E)$, where $f \colon \tilde{X} \to X$ is as in Definition \ref{def:pdiv}. 
\end{Def}

The \textit{log discrepancy} of $E$ is
\begin{equation*}
    A_X(E) \coloneqq 1 + a_X(E) = 1 + \ord_E(K_{\tilde{X}} - f^*K_X) \; .
\end{equation*}
The \textit{Fujita-Li invariant} associated to $E$ is
\begin{equation*}
    S_X(E) \coloneqq \frac{1}{(-K_X)^3} \int_0^{\infty} \vol \left( - f^*K_X - tE \right) dt \; .
\end{equation*}
Note that the volume in $S_X(E)$ is positive when $- f^*K_X - tE$ is big. Moreover, it is zero if $- f^*K_X - tE$ is not pseudoeffective. Thus, $t$ varies between $0$ and $\tau_{-K_X} (E)$, and so $S_X(E)$ is in fact given by a definite integral. 
\begin{Def} \label{def: delta invariant}
    The \textit{delta invariant} at $E$ is defined as
    \begin{equation*}
        \delta_X(E) \coloneqq \frac{A_X(E)}{S_X(E)} \;.
    \end{equation*}
    Thus, the \textit{global delta invariant} for $X$ is
    \begin{equation*}
        \delta(X) \coloneqq \inf \{ \delta_X(E) \; | \; E \text{ prime divisor over } X \} \; .
    \end{equation*}
    Given a scheme point $\eta \in X$, it is also possible to define the \textit{local delta invariant} at $\eta$ as
    \begin{equation*}
        \delta_{\eta}(X) \coloneqq \inf \{ \delta_X(E) \; | \; E \text{ prime divisor over } X \text{ such that } \eta \in C_X(E) \} \; .
    \end{equation*}
\end{Def}

We recall the definition of (local) alpha invariants.

\begin{Def}
    Let $X$ be a normal projective variety with only klt singularities.
    Then the \textit{alpha invariant} of $X$ is defined as
    \[
    \alpha (X) \coloneq \sup \Set{ c \in \mbQ_{\ge 0} | \text{$(X, cD)$ is log canonical for any $D \in \left|-K_X \right|_{\mbQ}$}},
    \]
    where $\left|-K_X \right|_{\mbQ}$ is the set of effective $\mbQ$-divisors which are $\mbQ$-linearly equivalent to $-K_X$.
    For a scheme point $\eta \in X$, the \textit{local alpha invariant} of $X$ at $\eta$ is defined as
    \[
    \alpha_{\eta} (X) \coloneq \sup \Set{ c \in \mbQ_{\ge 0} | \text{$(X, cD)$ is log canonical at $\eta$ for any $D \in \left|-K_X \right|_{\mbQ}$}}.
    \]
    For an effective $\mbQ$-divisor $D$ on $X$ and a scheme point $\eta \in X$, we also define the \textit{log canonical threshold} of $(X, D)$ at $\eta$ as
    \[
    \lct_{\eta} (X;D) \coloneqq \sup \Set{ c \in \mbQ_{\ge 0} | \text{$(X, cD)$ is log canonical at $\eta$}}.
    \]
\end{Def}

We have the following relation between (local) alpha and delta invariants.

\begin{Thm}[{\cite[Theorem A]{BlumJonsson}, \cite[Proposition 2.9]{Fujita23}}] \label{thm:alphadelta}
    Let $X$ be a klt Fano variety of dimension $n$.
    Then 
    \[
    \frac{1}{n+1} \delta (X) \le \alpha (X) \le \frac{n}{n+1} \delta (X). 
    \]
    More generally, for a scheme point $\eta \in X$, we have
    \[
        \frac{1}{n+1} \delta_{\eta} (X) \le \alpha_{\eta} (X) \le \frac{n}{n+1} \delta_{\eta} (X). 
    \]
\end{Thm}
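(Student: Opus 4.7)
The plan is to deduce both the global and the $\eta$-local inequalities from a single pointwise estimate: for every prime divisor $E$ over $X$,
\[
\tfrac{1}{n+1}\,\tau_{-K_X}(E)\;\le\;S_X(E)\;\le\;\tfrac{n}{n+1}\,\tau_{-K_X}(E).
\]
Granting this, the statements follow at once from the valuative descriptions $\alpha(X)=\inf_{E}A_X(E)/\tau_{-K_X}(E)$ and $\delta(X)=\inf_{E}A_X(E)/S_X(E)$ (and their obvious $\eta$-local analogues, obtained by restricting the infimum to prime divisors $E$ with $\eta\in C_X(E)$): dividing through by $A_X(E)$ converts the sandwich on $S_X(E)$ into the global bounds on $\alpha(X), \delta(X)$ and, in the local case, on $\alpha_\eta(X), \delta_\eta(X)$.

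To establish the lower bound I would invoke the Brunn--Minkowski inequality for volumes of big divisor classes. Writing $T=\tau_{-K_X}(E)$ and $V=(-K_X)^n$, the function $t\mapsto \vol(-f^*K_X-tE)^{1/n}$ is concave on $[0,T]$, takes the value $V^{1/n}$ at $t=0$, and vanishes at $t=T$. Concavity combined with these endpoint values gives $\vol(-f^*K_X-tE)\ge V(1-t/T)^n$; integrating over $[0,T]$ and dividing by $V$ yields $S_X(E)\ge T/(n+1)$.

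For the upper bound I would pass to an Okounkov body. Choose an admissible flag $Y_\bullet$ on a suitable birational model $f\colon Y\to X$ beginning with $E=Y_1$, and let $\Delta\subset\mbR^n_{\ge 0}$ denote the Okounkov body of $-f^*K_X$ with respect to $Y_\bullet$. Since $-K_X$ is ample, $-f^*K_X$ is nef, so its divisorial Zariski negative part along $E$ vanishes; hence $\min_\Delta x_1=0$ and $\max_\Delta x_1=T$. Standard Okounkov-body calculus identifies $S_X(E)$ with the $x_1$-coordinate of the centroid of $\Delta$, so the desired bound reduces to the convex-geometric assertion that, for any convex body $K\subset\{0\le x_1\le T\}$ touching both bounding hyperplanes, the centroid has $x_1$-coordinate in $[T/(n+1),\,nT/(n+1)]$. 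The upper half follows from the lower half via the reflection $K\mapsto\{(T-x_1,x_2,\dots,x_n):x\in K\}$, which preserves both the slab and the Brunn--Minkowski hypothesis $\vol(K\cap\{x_1\ge t\})^{1/n}$ concave in $t$, while sending the centroid of $K$ to $T-\bar{x}_1(K)$.

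The main non-routine ingredient I expect to cite, rather than re-derive, is the Okounkov-body identity $S_X(E)=\bar{x}_1(\Delta)$; once that is in place, both bounds become symmetric consequences of Brunn--Minkowski applied from the two ends of the interval $[0,T]$. The local version requires no additional work, since the pointwise inequality depends only on the single valuation $\ord_E$, and restricting the infima to divisors with $\eta\in C_X(E)$ automatically delivers the sandwich for $\alpha_\eta(X)$ and $\delta_\eta(X)$.
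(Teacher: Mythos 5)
The paper does not prove this statement at all: it is quoted verbatim from the literature, with the global case attributed to \cite[Theorem A]{BlumJonsson} and the local case to \cite[Proposition 2.9]{Fujita23}. So there is no in-paper argument to compare against; what you have written is essentially the standard proof from those references, and it is correct. Your reduction to the single valuation-wise sandwich $\tfrac{1}{n+1}\tau_{-K_X}(E)\le S_X(E)\le\tfrac{n}{n+1}\tau_{-K_X}(E)$, followed by division by $A_X(E)$ and taking infima (restricted to $E$ with $\eta\in C_X(E)$ in the local case), is exactly how Blum--Jonsson and Fujita proceed; the lower bound via log-concavity of the volume and the upper bound via the identification of $S_X(E)$ with the first barycenter coordinate of an Okounkov body, combined with the classical centroid bound for a convex body in a slab, are the standard ingredients. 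Two points deserve an explicit sentence if you write this up in full. First, your concavity argument needs $\vol(-f^*K_X-tE)^{1/n}$ to be concave and strictly positive on all of $[0,\tau)$ with $\tau$ the \emph{pseudoeffective} threshold; this holds because $-f^*K_X$ is big, so every point of the half-open segment from $-f^*K_X$ to the pseudoeffective boundary point $-f^*K_X-\tau E$ lies in the interior of the pseudoeffective cone and is therefore big -- without this remark one only gets the bound with the big threshold, which a priori could be smaller. Second, the step $\alpha_\eta(X)=\inf_E A_X(E)/\tau_{-K_X}(E)$ silently uses that $\sup\{\ord_E(D): 0\le D\sim_{\mbQ}-K_X\}$ equals the pseudoeffective threshold; this is true (again because bigness is open, one can produce effective representatives approaching the threshold) but is itself a cited result of Blum--Jonsson rather than a tautology. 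With those caveats acknowledged, the argument is complete and matches the source proofs.
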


\subsection{Local version of the result of Stibitz--Zhuang}
\label{sec:localSZ}

We give a local version of the result of Stibitz--Zhuang on the K-stability of birationally superrigid Fano variety whose alpha invariant is greater than or equal to $1/2$ (Theorem \ref{thm:SZ}).
Before this, we recall necessary definitions and some relevant results.

By a $\mathbb{Q}$-\textit{Fano variety} we mean a normal projective $\mathbb{Q}$-factorial variety with only klt singularities such that its anticanonical divisor is ample.

\begin{Def}
Let $X$ be a $\mathbb{Q}$-Fano variety.
Let $F$ be a prime divisor over $X$ which is a prime divisor on a normal projective variety $\tilde{X}$ admitting a birational morphism $\pi \colon \tilde{X} \to X$.
We say that $F$ is \textit{dreamy} if the graded algebra
\[
\bigoplus_{k, j \in \mathbb{Z}_{\ge 0}} H^0 (\tilde{X}, - k r \pi^* K_X - j F)
\]
is finitely generated for some (hence for any) $r \in \mathbb{Z}_{> 0}$ with $-rK_X$ Cartier.
\end{Def}

\begin{Thm}[{\cite[Corollary 1.5 and Theorem 1.6]{FujitaValuative}, \cite[Theorem 3.7]{ChiLi}}] \label{thm:Kstdeltadreamy}
Let $X$ be a $\mathbb{Q}$-Fano variety.
Then it is K-stable (resp.\ K-semistable) if and only if $\delta_X (F) > 1$ for any dreamy prime divisor $F$ over $X$.
\end{Thm}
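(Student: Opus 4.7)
The plan is to link K-stability to the $\beta$-invariants of dreamy divisors by means of test configurations. First, recall that $X$ is K-stable (resp.\ K-semistable) if and only if every non-trivial normal test configuration $(\mcX, \mcL)$ of $(X, -rK_X)$, for $r$ sufficiently divisible, has strictly positive (resp.\ non-negative) Donaldson--Futaki invariant. Using the equivariant MMP for test configurations (Li--Xu), one may restrict to \emph{special} test configurations, namely those whose central fiber $\mcX_0$ is an irreducible, reduced, klt $\mbQ$-Fano variety.

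Next, the key identity due to Fujita and Li expresses the Donaldson--Futaki invariant of a special test configuration as $A_X(F) - S_X(F)$, where $F$ is the prime divisor over $X$ whose divisorial valuation is induced by $\mcX_0$. Any $F$ arising in this way is dreamy, because the total coordinate ring of the test configuration supplies the required finite generation in the definition of dreamy divisor. Conversely, every dreamy prime divisor $F$ over $X$ is of this form: $\Proj$ of its (by hypothesis finitely generated) bi-graded section ring
\begin{equation*}
\bigoplus_{k,j \in \mbZ_{\ge 0}} H^0(\tilde{X}, -kr\pi^*K_X - jF)
\end{equation*}
produces, after the standard Rees-type construction over $\mbA^1$, a special test configuration whose associated divisorial valuation is exactly $F$.

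Putting these together, $X$ is K-stable (resp.\ K-semistable) precisely when $A_X(F) > S_X(F)$ (resp.\ $A_X(F) \ge S_X(F)$) for every dreamy prime divisor $F$ over $X$. Since $S_X(F) > 0$ (as $-K_X$ is ample), this is equivalent to $\delta_X(F) = A_X(F)/S_X(F) > 1$ (resp.\ $\ge 1$), which is the desired claim. The principal technical obstacle is the Li--Xu reduction to special test configurations, which requires running an equivariant MMP on the total space of an arbitrary normal test configuration and controlling the resulting degeneration; once this reduction and Fujita's explicit formula for the Donaldson--Futaki invariant are in hand, the equivalence follows at once.
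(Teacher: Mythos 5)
The paper does not prove this statement: it is quoted verbatim as a known result of Fujita and Li, with the proof living entirely in \cite{FujitaValuative} and \cite{ChiLi}. Your outline is a faithful summary of the argument in those references (reduction to special test configurations via the Li--Xu equivariant MMP, the identity $\mathrm{DF}=$ positive constant times $A_X(F)-S_X(F)$ for the valuation induced by the central fiber, and the correspondence between dreamy divisors and test configurations), so there is nothing to compare against within the paper itself. One imprecision worth flagging: the test configuration produced from a dreamy prime divisor $F$ via $\Proj$ of the bigraded section ring and the Rees construction is normal with integral central fiber, but its central fiber need not be klt, so it is generally \emph{not} a special test configuration. This does not damage the argument --- Fujita's computation of the Donaldson--Futaki invariant applies to this larger class and still yields $\mathrm{DF}$ equal to a positive multiple of $A_X(F)-S_X(F)$ for the canonical polarization --- but the word ``special'' should be dropped in that direction; speciality is only used in the other direction, where Li--Xu guarantees that the central fiber of a special test configuration induces a dreamy divisorial valuation. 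Also note that the ``resp.\ K-semistable'' clause should pair with $\delta_X(F)\ge 1$, as your final sentence correctly records (the theorem as printed omits this).
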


The following is a local version of the result of Stibitz-Zhuang (see Theorem \ref{thm:SZ}).

\begin{Thm} \label{thm:localSZ}
Let $X$ be a $\mathbb{Q}$-Fano variety of Picard number $1$ and let $\eta \in X$ be a scheme point of height at least $2$.
Suppose that $\alpha_{\eta} (X) \ge 1/2$ and that $\eta$ is not a non-klt center of the pair $(X, \frac{1}{n} \mathcal{M})$ for any movable linear system $\mathcal{M}$ on $X$, where $n > 0$ is the rational number defined by $\mathcal{M} \sim_{\mathbb{Q}} - n K_X$.
Then, for any dreamy prime divisor $F$ over $X$ whose center is $\eta$, the inequality $\delta_X (F) > 1$ holds.
\end{Thm}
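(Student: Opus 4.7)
The plan is to localize the proof of Stibitz--Zhuang's Theorem~\ref{thm:SZ} from \cite{StibitzZhuang}. Assume for contradiction that there exists a dreamy prime divisor $F$ over $X$ with $C_X(F) = \overline{\{\eta\}}$ and $\delta_X(F) \le 1$, equivalently $A_X(F) \le S_X(F)$. Since $\eta$ has height at least $2$, any birational model $\pi \colon \tilde{X} \to X$ on which $F$ appears as a prime divisor has $F$ necessarily $\pi$-exceptional. Introduce the pseudoeffective threshold $\tau \coloneqq \tau_{-K_X}(F)$ and the movable threshold
\[
\sigma \coloneqq \sup \Set{ \tfrac{1}{n}\ord_F(\mathcal{M}) | \mathcal{M} \subseteq |{-nK_X}| \text{ movable},\ n \in \mathbb{Z}_{>0} }.
\]
The dreamy hypothesis on $F$ guarantees that $\sigma, \tau \in \mathbb{Q}_{>0}$, that the Zariski decomposition $\pi^{*}(-K_X) - tF = P(t) + N(t)$ is piecewise $\mathbb{Q}$-rational with $P(t)$ semiample, and that the supremum defining $\sigma$ is attained by a movable linear system $\mathcal{M}_\sigma$ obtained as $\pi_{*} |m P(\sigma)|$ for $m$ sufficiently divisible.

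Next I extract two numerical bounds from the two local hypotheses. From the non-klt center hypothesis on $\eta$ one deduces $\sigma < A_X(F)$: were $\sigma \ge A_X(F)$, the attained movable $\mathcal{M}_\sigma$ would give $\ord_F(\mathcal{M}_\sigma)/n \ge A_X(F)$, forcing $F$ to be a non-klt place of $(X, \tfrac{1}{n}\mathcal{M}_\sigma)$ with center $\overline{\{\eta\}}$, so that $\eta$ itself would be a non-klt center of that pair. From $\alpha_{\eta}(X) \ge 1/2$ one deduces $\tau \le 2 A_X(F)$: for every $D \in |{-K_X}|_\mathbb{Q}$ the pair $(X, \tfrac{1}{2}D)$ is log canonical at $\eta$, whence $A_X(F) \ge \tfrac{1}{2}\ord_F(D)$, and $\sup_D \ord_F(D) = \tau$.

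The remaining task is to derive the numerical inequality $S_X(F) < A_X(F)$ from the bounds $\sigma < A_X(F)$ and $\tau \le 2 A_X(F)$, which will contradict $\delta_X(F) \le 1$. This is where the Picard number one of $X$ is essential: working on a minimal model $\pi \colon \tilde{X} \to X$ extracting only $F$ so that $\operatorname{rk}\operatorname{Pic}(\tilde{X}) = 2$, the pseudoeffective cone of $\tilde{X}$ is two-dimensional and spanned by $F$ and a ray proportional to $\pi^{*}(-K_X) - \tau F$. Consequently, for $t \in [\sigma, \tau]$ the Zariski decomposition takes the explicit form $P(t) = \tfrac{\tau - t}{\tau - \sigma} P(\sigma)$, so
\[
\vol(P(t)) = \Bigl( \frac{\tau - t}{\tau - \sigma} \Bigr)^{\dim X} \vol(P(\sigma)) \qquad (t \in [\sigma, \tau]).
\]
Inserting this into $S_X(F) = \frac{1}{(-K_X)^{\dim X}} \int_0^\tau \vol(P(t))\, dt$, combined with the polynomial expression of $\vol(\pi^{*}(-K_X) - tF)$ on $[0, \sigma]$ dictated by the Picard-rank-two intersection numbers on $\tilde{X}$, the inequality $S_X(F) < A_X(F)$ is obtained by the very same numerical analysis as in the global argument of \cite[proof of Theorem 1.2]{StibitzZhuang}.

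The principal obstacle is this final numerical estimate. Although the two inputs $\sigma < A_X(F)$ and $\tau \le 2A_X(F)$ are formally identical to their global counterparts in \cite{StibitzZhuang}, one has to check that the Picard-rank-two geometry of $\tilde{X}$ supplies the same volume-function analysis as in the global setting; this is indeed so, since both $\vol$ and the pseudoeffective cone on $\tilde{X}$ depend only on the valuation $\ord_F$, which is intrinsically local at $\eta$. In this way the two hypotheses (local alpha invariant and absence of $\eta$ as a non-klt center for movable systems) precisely replace the global hypotheses $\alpha(X) \ge 1/2$ and birational superrigidity used in \cite{StibitzZhuang}.
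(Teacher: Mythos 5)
Your reduction of the problem is set up correctly: the height hypothesis makes $F$ exceptional on any model, $\alpha_{\eta}(X)\ge 1/2$ gives $\tau \coloneqq \tau_{-K_X}(F)\le 2A_X(F)$, the non-klt-center hypothesis gives $\sigma < A_X(F)$ for the movable threshold, and the dreamy/Picard-rank-two structure does yield the Zariski decomposition $P(t)=\frac{\tau-t}{\tau-\sigma}P(\sigma)$ on $[\sigma,\tau]$. This matches the general strategy the paper invokes (it simply defers everything to Stibitz--Zhuang). The gap is in the step you yourself flag as "the principal obstacle" and then wave through: the final numerical estimate does \emph{not} follow from the inputs you have established. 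Inserting $\vol(P(t))\le V$ on $[0,\sigma]$ and your formula on $[\sigma,\tau]$ into the integral gives only
\[
S_X(F)\;\le\;\sigma+\frac{\tau-\sigma}{n+1}\;=\;\frac{n\sigma+\tau}{n+1}\;<\;\frac{nA_X(F)+2A_X(F)}{n+1}\;=\;\frac{n+2}{n+1}\,A_X(F),
\]
which is strictly larger than $A_X(F)$ and therefore yields no contradiction with $A_X(F)\le S_X(F)$. So "the very same numerical analysis as in the global argument" cannot be applied verbatim to the two bounds $\sigma<A_X(F)$ and $\tau\le 2A_X(F)$; whatever closes the argument in Stibitz--Zhuang involves more than these two thresholds plus the rank-two volume formula, and that additional content is exactly what a localization must reproduce.

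A related point you should confront explicitly: the global hypothesis in Theorem~\ref{thm:SZ} is birational superrigidity, which makes $(X,\frac{1}{n}\mathcal{M})$ \emph{canonical} and hence gives $\sigma\le A_X(F)-1$, whereas the local hypothesis of Theorem~\ref{thm:localSZ} is only a klt-type condition giving $\sigma<A_X(F)$. These are not interchangeable inputs to the same chain of inequalities, so asserting that the two local hypotheses "precisely replace" the global ones is not justified as written; the correct localization is of the argument of \cite[Corollary 3.1]{StibitzZhuang} (which is the statement adapted to the klt condition), not of \cite[Theorem 1.2]{StibitzZhuang} alone. Finally, note that the paper's proof is structured in two stages — first $\delta_X(F)\ge 1$ via the Theorem 1.2 argument, then exclusion of the equality case via the Corollary 3.1 argument — and your one-step derivation of the strict inequality only addresses strictness in the single inequality $\sigma<A_X(F)$; the equality analysis that actually produces $\delta_X(F)>1$ is missing.
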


\begin{proof}
The proof is completely identical to those of \cite[Theorem 1.2, Corollary 3.1]{StibitzZhuang}.

First, by the same arguments as in the proof of \cite[Theorem 1.2]{StibitzZhuang}, we can conclude that $\delta_X (F) \ge 1$ for any dreamy prime divisor $F$ over $X$ whose center is $\eta$ under the assumption of Theorem \ref{thm:localSZ}.

Next, we can derive a contradiction assuming the existence of a dreamy prime divisor $F$ over $X$ whose center is $\eta$ such that $\delta_X (F) = 1$ by the same argument as in the proof of \cite[Corollary 3.1]{StibitzZhuang} again under the assumption of Theorem \ref{thm:localSZ}.
\end{proof}

We introduce the notion of maximal singularity.

\begin{Def} \label{def:maxcenter}
We say that a subvariety $\Gamma$ is a \textit{maximal center} if there is a movable linear system $\mathcal{M}$ on $X$ such that $\Gamma$ is a center of non-canonical singularities of the pair $(X, \frac{1}{n} \mathcal{M})$, where $n > 0$ is a rational number defined by $\mathcal{M} \sim_{\mathbb{Q}} - n K_X$.
\end{Def}

Let $X$ be a Fano variety, that is, $X$ is a normal projective $\mathbb{Q}$-factorial variety with at most terminal singularities such that $-K_X$ is ample, and we assume that the Picard number of $X$ is $1$.
Then, $X$ is birational superrigid if and only if there is no maximal center on $X$ (\cite[Theorem 1.26]{CheltsovShramov}).
Fano varieties which are far from being birationally superrigid (e.g.\ $\mathbb{P}^n$) may be covered by maximal centers.
However, a Fano variety which is closer to being birationally superrigid (e.g.\ birationally rigid Fano varieties) has only a few maximal centers.
For example, any quasismooth Fano $3$-fold weighted hypersurface of index $1$ has only finitely many maximal centers (see Section \ref{sec:BGWH}).
In such a case, the above local result plays a role.

\begin{Cor} \label{cor:KstBR}
Let $X$ be a Fano variety of Picard number $1$ and let $\Sigma \subset X$ be the set of closed points that are maximal centers on $X$.
\begin{enumerate}
\item Let $P \in X$ be a closed point.
If $P \notin \Sigma$ and $\alpha_P (X) \ge 1/2$, then, for any dreamy prime divisor $F$ over $X$ whose center contains $P$, the inequality $\delta_X (F) > 1$ holds.
\item If $\delta_P (X) > 1$ for any closed point $P \in \Sigma$ and if $\alpha_Q (X) \ge 1/2$ for any closed point $Q \in X \setminus \Sigma$, then $X$ is K-stable. 
\end{enumerate}
\end{Cor}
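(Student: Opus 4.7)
The proof combines two results from the excerpt: the local version of Stibitz--Zhuang (Theorem \ref{thm:localSZ}) and the reduction of K-stability to dreamy divisors (Theorem \ref{thm:Kstdeltadreamy}).

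For part (1), let $F$ be a dreamy prime divisor over $X$ whose center contains the closed point $P$, and let $\eta$ be the generic point of $C_X(F)$. The plan is to apply Theorem \ref{thm:localSZ} to $\eta$, so I need to verify the two hypotheses there. For the alpha bound, I would use that if $(X,cD)$ is log canonical on an open neighbourhood of $P$ then in particular it is log canonical at $\eta$ (because the generic point of any subvariety through $P$ lies in every open neighbourhood of $P$), giving $\alpha_\eta(X) \ge \alpha_P(X) \ge 1/2$. For the non-klt center condition, argue by contrapositive: suppose $\eta$ is a non-klt center of some pair $(X,\tfrac{1}{n}\mathcal{M})$ with $\mathcal{M}$ movable and $\mathcal{M} \sim_{\mathbb{Q}} -nK_X$. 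Then there is a prime divisor $E$ over $X$ with $C_X(E) = \overline{\{\eta\}}$ and log discrepancy $A_X(E) - \mathrm{ord}_E(\tfrac{1}{n}\mathcal{M}) \le 0$, i.e.\ discrepancy $a_X(E) - \mathrm{ord}_E(\tfrac{1}{n}\mathcal{M}) \le -1 < 0$. Thus $\overline{\{\eta\}}$ is a center of non-canonical singularities, hence a maximal center in the sense of Definition \ref{def:maxcenter}. Since $P \in \overline{\{\eta\}}$, this forces $P \in \Sigma$, contradicting $P \notin \Sigma$. Theorem \ref{thm:localSZ} then delivers $\delta_X(F) > 1$.

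For part (2), by Theorem \ref{thm:Kstdeltadreamy} it suffices to show $\delta_X(F) > 1$ for every dreamy prime divisor $F$ over $X$. I would dichotomise on whether $C_X(F)$ meets $\Sigma$. If $C_X(F) \cap \Sigma \ne \emptyset$, pick a closed point $P \in C_X(F) \cap \Sigma$; by definition of the local delta invariant, $\delta_X(F) \ge \delta_P(X)$, and the right-hand side exceeds $1$ by hypothesis. If $C_X(F) \cap \Sigma = \emptyset$, pick any closed point $Q \in C_X(F)$; then $Q \in X \setminus \Sigma$, so $\alpha_Q(X) \ge 1/2$, and part (1) applied to $Q$ yields $\delta_X(F) > 1$. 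Together with Theorem \ref{thm: delta invariant criterion}, this gives K-stability.

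The main subtlety I expect to bump against is the height hypothesis of Theorem \ref{thm:localSZ}: the theorem requires $\eta$ to be a scheme point of height $\ge 2$, whereas $C_X(F)$ might a priori have codimension one. When $C_X(F)$ has codimension at least two this presents no issue (any closed point $P \in X$ has height equal to $\dim X \ge 2$, and any generic point of a subvariety containing $P$ has height $\ge 2$ provided that subvariety has codimension $\ge 2$). For a dreamy $F$ whose center is a prime divisor on $X$, one instead argues directly: on a Fano variety of Picard number one, writing $F \sim_{\mathbb{Q}} -s K_X$ with $s > 0$, a standard volume computation gives $S_X(F) = 1/(s(\dim X + 1))$, so $\delta_X(F) = s(\dim X + 1)$; in the setting of the main theorem this is $> 1$ because the class group of a quasismooth Fano 3-fold weighted hypersurface of index one is generated by the anticanonical class, forcing $s \ge 1$. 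Verifying these boundary cases and the specialisation step for the alpha invariant are the only points where care is needed; the core of the argument is the clean implication ``non-klt $\Rightarrow$ non-canonical'' that ties $P \notin \Sigma$ to the hypothesis of Theorem \ref{thm:localSZ}.
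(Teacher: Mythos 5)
Your proposal is correct and follows essentially the same route as the paper, which simply cites Theorem \ref{thm:localSZ} for (1) and combines it with Theorem \ref{thm:Kstdeltadreamy} for (2); your verification that ``non-klt center through $\eta$'' would force $P \in \Sigma$, and that $\alpha_\eta(X) \ge \alpha_P(X)$ by specialisation, is exactly the content being invoked. Your explicit handling of the case where $C_X(F)$ is a divisor (so that the height-$\ge 2$ hypothesis of Theorem \ref{thm:localSZ} fails) is a genuine detail the paper's one-line proof glosses over, and your computation $\delta_X(F) = s(\dim X + 1)$ settles it; note that one can also get $s \ge 1/2$ directly from $\alpha_P(X) \ge 1/2$ via $\alpha_P(X) \le \lct_P(X; \frac{1}{s}D) = s$, which keeps the corollary valid in the generality in which it is stated rather than only for the weighted hypersurfaces of the application.
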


\begin{proof}
The assertion (1) follows immediately from Theorem~\ref{thm:localSZ} and (2) follows from (1) and Theorem~\ref{thm:Kstdeltadreamy}.
\end{proof}

\subsection{Basics on weighted projective varieties}
\label{sec:WPS}

Let 
\[
\mbP \coloneq \mbP (a_0, \dots, a_N) = \Proj \mbC [x_0, \dots, x_N]
\]
be a weighted projective space, where $a_0, \dots, a_N$ are positive integers and the weight of $x_i$ is $a_i$ for $i = 0, \dots, N$.
We may assume that $\gcd \{a_0, \dots, a_N\} = 1$.

\begin{Def} 
    We say that $\mbP$ is \textit{well formed} if the greatest common divisors of any $N$ of the weights $a_i$ is $1$.
    A subscheme $X \subset \mbP$ is \textit{well formed} if $\mbP$ is well formed and 
    \[
    \codim_X (X \cap \Sing \mbP) \ge 2.
    \]
\end{Def}

Let $X$ be the subscheme of $\mbP$ defined by a homogeneous ideal $I \subset \mbC [x_0, \dots, x_N]$: $X = \Proj \mbC [x_0, \dots, x_N]/I$.

\begin{Def}
    We define 
    \[
    C_X \coloneq \Spec (\mbC [x_0, \dots, x_N]/I) \subset \mbA^N
    \]
    and call it the \textit{affine cone} of $X$.
    We say that $X$ is \textit{quasismooth} if $C_X$ is smooth outside the origin.
    Let $p \colon \mbA^N \setminus \{O\} \to \mbP$ be the natural projection, where $O$ is the origin of $\mbA^N$.
    For a point $P \in X$, we say that $X$ is \textit{quasismooth at $P$} if $C_X$ is smooth along $p^{-1} (P)$.
    The \textit{quasismooth locus} of $X$ is denoted by $\Qsm (X)$ and then the \textit{non-quasismooth locus} of $X$ is defined as $\NQsm (X) \coloneq X \setminus \Qsm (X) = p (\Sing (C_X) \setminus \{O\})$.
\end{Def}

\begin{Rem}
For a well formed and quasismooth weighted complete intersection $X \subset \mbP$, we have $\Sing (X) = X \cap \Sing (\mbP)$ (see \cite[Proposition 8]{Dimca}).
\end{Rem}

\begin{Def}
    Let $\msF_1, \dots, \msF_m \in \mbC [x_0, \dots,x_N]$ be homogeneous polynomials which generate $I$.
    We define
    \[
    J_X \coloneq \left( \frac{\prt \msF_i}{\prt x_j} \right)_{1 \le i \le m, \, 0 \le j \le N}
    \]
    and call it the \textit{Jacobian matrix} of $X$ (with respect to $\msF_1, \dots, \msF_m$).
\end{Def}

The matrix $J_X$ is usually thought of as the Jacobian matrix of the affine cone $C_X$ and, for a point $Q \in C_X$, $C_X$ is smooth at $Q$ if and only if $\rank J_X (Q) = N-n$, where $n \coloneq \dim X$.
Clearly $\rank J_X (Q)$ does not depend on the choice of homogeneous generators of $I$.
Let $P \in X$ be a point.
Then $\rank (J_X (Q))$ does not depend on the choice of $Q \in p^{-1} (P)$ and thus we denote it by $\rank (J_X (P))$.
It follows that $X$ is quasismooth at $P$ if and only if $\rank J_X (P) = N - n$.

For homogeneous polynomials $f_1, \dots, f_m \in \mbC [x_0, \dots, x_N]$, we denote by
\[
(f_1 = \cdots = f_m = 0) \subset \mbP
\]
the subscheme defined by the homogeneous ideal $(f_1, \dots, f_m)$, and then we define
\[
(f_1 = \cdots = f_m =0)_X \coloneq (f_1 = \cdots = f_m =0) \cap X,
\]
which is a scheme-theoretic intersection.
Similarly, for $f \in \mbC [x_0,\dots,x_N]$, we denote by $(f \ne 0)$ the complement of $(f = 0)$ in $\mbP$. 

\begin{Def}
    A \textit{quasi-linear} polynomial $f = f (x_0, \dots, x_N)$ is a homogeneous polynomial (with respect to $\wt (x_0, \dots, x_N) = (a_0,\dots,a_N)$) such that $x_i \in f$ for some $0 \le i \le N$.
    We say that a subvariety $S \subset \mbP$ is \textit{quasi-linear} if it is a complete intersection in $\mbP$ defined by quasi-linear equations of the form
    \[
    \ell_1 + g_1 = \ell_2 + g_2 = \cdots = \ell_k + g_k = 0,
    \]
    where $\ell_1, \ell_2, \dots, \ell_k$ are linearly independent linear forms in variables $x_0, \dots, x_N$ and $g_1, g_2, \dots, g_k \in \mbC [x_0, \dots, x_N]$ are homogeneous polynomials which are not quasi-linear.
    We say that the above $S$ is \textit{of type} $(e_1, \dots, e_k)$, where $e_i \coloneq \deg (\ell_i + g_i)$ for $1 \le i \le k$.
    A quasi-linear subspace of $\mbP$ of codimension $1$ (resp.\ dimension $1$) is called a \textit{quasi-hyperplane} (resp.\ \textit{quasi-line}).
\end{Def}

We will frequently use the following fact implicitly.

\begin{Lem}[{\cite[Lemma 3.7]{KOW}}] \label{lem:qhypnormal}
    Let $X \subset \mbP (a_0,a_1,a_2,a_3,a_4)$ be a quasismooth hypersurface in a well formed weighted projective $4$-space.
    If $X$ has only isolated singularities, then any quasi-hyperplane section of $X$ is a normal surface.
\end{Lem}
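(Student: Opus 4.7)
The plan is to verify Serre's normality criterion $(R_1)+(S_2)$ for $S$, working at the level of affine cones $C_X, C_H, C_S \subset \mbA^5$. As a preliminary, I would reduce to the case where $H$ is a coordinate hyperplane. Writing the defining equation of $H$ as $\ell+g$ with $\ell$ a nonzero linear form of weight $e$ and $g$ homogeneous of degree $e$ without linear monomials, and performing a linear change among the variables of weight $e$, I may assume $\ell = x_i$. A weight count shows that any monomial of $g$ involving $x_i$ would necessarily be a scalar multiple of $x_i$ itself (since $e = a_i$ and the other weights are positive), so the non-quasi-linearity of $g$ forces $g$ to be independent of $x_i$. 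The weighted automorphism $x_i \mapsto x_i - g$ then sends $H$ to the coordinate hyperplane $\{x_i = 0\}$, while preserving quasismoothness of $X$.

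With this normalization in place, $C_S = C_X \cap \{x_i = 0\}$ is cut out in $\mbA^5$ by the two equations $F$ and $x_i$ and has dimension $3$, so it is a complete intersection in the smooth ambient space $\mbA^5$; hence $C_S$ is Cohen-Macaulay, which gives $(S_2)$ for $S$.

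For $(R_1)$ it suffices to bound $\dim \Sing C_S \le 1$. The Jacobian criterion applied to $(F, x_i)$ shows that $Q \in C_S \setminus \{O\}$ is singular on $C_S$ exactly when $\nabla F(Q)$ is a scalar multiple of the $i$-th unit vector, i.e.\ $\partial F/\partial x_j(Q) = 0$ for all $j \ne i$. If additionally $\partial F/\partial x_i(Q) = 0$ at such a point, then $\nabla F(Q) = 0$ and so $Q = O$ by quasismoothness, a contradiction. Therefore $\Sing C_S \setminus \{O\}$ is contained in
\[
N := \{F = 0,\ \partial F/\partial x_j = 0 \text{ for all } j \ne i\} \subset \mbA^5,
\]
and the task reduces to showing $\dim N \le 1$.

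The key move I would use is to slice $N$ by the remaining partial derivative: by quasismoothness, $N \cap V(\partial F/\partial x_i) = \{F = 0,\ \nabla F = 0\} = \Sing C_X = \{O\}$. For any irreducible component $W$ of $N$, either $\partial F/\partial x_i|_W \equiv 0$, in which case $W \subseteq \{O\}$; or $V(\partial F/\partial x_i)$ meets $W$ properly in a codimension-one subset contained in $\{O\}$, forcing $\dim W \le 1$. In either case $\dim W \le 1$, so $\dim N \le 1$, and thus $\Sing S$ is finite after passing to the $\mbC^*$-quotient. I expect this slicing argument---which converts quasismoothness of $X$ into the desired codimension bound---to be the main conceptual step, the coordinate reduction and the $(S_2)$ verification being routine.
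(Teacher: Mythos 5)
The paper itself gives no proof of this lemma --- it is quoted from \cite[Lemma 3.7]{KOW} --- so there is no in-text argument to compare against; judged on its own, your proof is essentially correct and is the natural one. The reduction to a coordinate hyperplane is fine (the weight count forcing $g$ to be independent of $x_i$ is exactly right), $(S_2)$ does come from the complete-intersection structure of the affine cone, and the key bound $\dim N \le 1$ is correctly obtained from $N \cap V(\partial F/\partial x_i) \subseteq \Sing C_X \subseteq \{O\}$ plus Krull's principal ideal theorem; just note that for the Krull step you need $W \cap V(\partial F/\partial x_i) \ne \emptyset$, which holds because every nonempty component $W$ of the cone $N$ contains the origin and $\deg (\partial F/\partial x_i) = d - a_i > 0$ (the degenerate case $d = a_i$ makes $X$ itself quasi-linear and can be discarded).

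Two small points should be tightened. First, your closing sentence conflates $\Sing S$ with the image of $\Sing C_S$: the surface $S$ can be singular at points lying over $\Sing \mbP$ where $C_S$ is perfectly smooth, so ``$\dim \Sing C_S \le 1$'' does not by itself give ``$\Sing S$ finite.'' The clean finish is to conclude that $C_S$ is normal (it is $R_1$ and Cohen--Macaulay), and then $S = \Proj$ of the normal graded ring $\mbC[x_0,\dots,x_4]/(F,x_i)$ is normal because degree-zero parts of homogeneous localizations of a normal graded domain are normal; this also spares you the separate descent of $(S_2)$ through the $\bmu_{a_j}$-quotients on the charts. Second, your argument never uses the hypothesis that $X$ has isolated singularities --- that hypothesis is only needed to make the literal statement ``$\Sing S$ is finite'' true (it bounds $S \cap \Sing \mbP$), not to prove normality. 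This is consistent (you have simply proved a slightly stronger statement), but it is worth flagging so a reader does not hunt for a missing step.
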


\subsection{Singularities of weighted projective varieties}

\begin{Lem} \label{lem:plt}
    Let $X \subset \mbP \coloneq \mbP (a_0,\dots,a_N)$ be an irreducible and reduced normal variety such that $X \subset \mbP$ is well formed, and let $Y \subset X$ be a prime divisor on $X$.
    Suppose that $X$ is quasismooth along $Y$ and $Y$ is quasismooth.
    Then the pair $(X, Y)$ is plt in a neighborhood of $Y$.
\end{Lem}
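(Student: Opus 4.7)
The plan is to verify the plt property locally at each closed point $P \in Y$, and to do so by lifting to a quasismooth local cover where the pair becomes log smooth. Being plt is local around $Y$, so fix $P \in Y$ and choose $j$ with $x_j(P) \neq 0$. On the standard affine chart $U_j = (x_j \neq 0) \subset \mbP$, we have the quotient presentation $U_j \cong \mbA^N/\mu_r$ where $r = a_j$ and $\mu_r$ acts on $\mbA^N = \Spec \mbC [x_0,\dots,\hat{x}_j,\dots,x_N]$ with weights $(a_0,\dots,\hat{a}_j,\dots,a_N) \bmod r$. Let $q \colon \mbA^N \to U_j$ be the quotient map, and set $\tilde{X} \coloneqq q^{-1}(X \cap U_j)$, $\tilde{Y} \coloneqq q^{-1}(Y \cap U_j)$.

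By the quasismoothness of $X$ along $Y$ (via the definition in terms of the affine cone) and the quasismoothness of $Y$, both $\tilde{X}$ and $\tilde{Y}$ are smooth on a $\mu_r$-invariant open neighborhood $\tilde{U}$ of $q^{-1}(P)$. Hence the pair $(\tilde{X}, \tilde{Y})$ is log smooth on $\tilde{U}$ and in particular plt there.

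To descend, I would show that $Y$ is not contained in the branch locus of $q|_{\tilde X} \colon \tilde X \to X \cap U_j$, so that $\tilde{Y} = (q|_{\tilde X})^* Y$ as divisors. Well-formedness of $\mbP$ guarantees that $\mu_r$ acts freely in codimension one on $\mbA^N$, so the branch locus of $q \colon \mbA^N \to U_j$ is contained in $\Sing(U_j)$, which has codimension at least two in $U_j$. The well-formedness condition $\codim_X(X \cap \Sing \mbP) \ge 2$ then forces the divisor $Y$ to avoid the branch locus. Once $\tilde Y = (q|_{\tilde X})^* Y$ is known, the standard fact that plt-ness of $(X,Y)$ is equivalent to plt-ness of $(\tilde X, \tilde Y)$ under finite Galois covers étale in codimension one along the boundary (Kollár--Mori, Proposition 5.20) yields the conclusion.

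The main (minor) obstacle is the branch-locus verification, which requires keeping track of the two layers of well-formedness — that of $\mbP$ and that of $X \subset \mbP$ — to ensure the ramification divisor of $q|_{\tilde X}$ does not contain $Y$. An entirely parallel argument via Kawakita's inversion of adjunction is also available: quasismoothness of $Y$ makes $Y$ normal, and the local quotient picture computes the different to be supported on the codimension-two locus $Y \cap \Sing \mbP$, reducing plt of $(X,Y)$ to klt-ness of a quotient-singularity pair on $Y$.
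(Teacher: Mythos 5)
Your proof is correct and follows essentially the same route as the paper: lift to a local smooth cyclic cover where the pair $(\tilde X,\tilde Y)$ is log smooth, note that well-formedness makes the cover \'etale in codimension $1$, and descend plt-ness via the standard criterion for finite quasi-\'etale covers (your Koll\'ar--Mori Proposition 5.20 is the same statement as the paper's cited \cite[Proposition 3.16]{Kol97}). Your extra care with the branch-locus verification and the remark on inversion of adjunction are fine but do not change the argument.
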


\begin{proof}
    We denote by $X^{\circ} \coloneq \Qsm (X)$ the quasismooth locus of $X$.
    We have $Y \subset X^{\circ}$.
    We can take an open cover $\{U_{\lambda}\}$ of $X^{\circ}$ such that $U_{\lambda}$ is the quotient of a smooth variety $\tilde{U}_{\lambda}$ by an action of the cyclic group $\bmu_{r_{\lambda}}$ of order $r_{\lambda}$ and $Y \cap U_{\lambda}$ is the quotient of a smooth hypersurface $\tilde{V}_{\lambda}$ of $\tilde{U}_{\lambda}$.
    The quotient morphism $\tilde{U}_{\lambda} \to U_{\lambda}$ is \'{e}tale in codimension $1$ since $X \subset \mbP$ is assumed to be well formed.
    The pair $(\tilde{U}_{\lambda}, \tilde{V}_{\lambda})$ is clearly plt.
    By \cite[Proposition 3.16]{Kol97}, the pair $(U_{\lambda}, Y \cap U_{\lambda})$ is plt.
\end{proof}

\begin{Lem} \label{lem:diff}
    Let $X \subset \mbP \coloneq \mbP (a_0,\dots,a_N)$ be an irreducible and reduced normal variety such that $X \subset \mbP$ is well formed, and let $Y \subset X$ be a prime divisor on $X$.
    Suppose that $X$ is quasismooth along $Y$ and $Y$ is quasismooth.
    \begin{enumerate}
        \item If $\dim X \ge 3$ and the singular locus of $X$ is of codimension at least $3$ in $X$, then $(K_X+Y)|_Y = K_Y$.
        \item If $\dim X = 2$, then $(K_X+Y)|_Y = K_Y + \Delta_Y$. 
        Here
        \[
        \Delta_Y = \sum_{P \in \Sing_Y (X)} \frac{r_P-1}{r_P} P,
        \]
        where $r_P$ is the index of the cyclic quotient singularity of $P \in X$.
    \end{enumerate}
\end{Lem}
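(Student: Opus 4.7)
The plan is to invoke the Koll\'{a}r--Shokurov adjunction formula for the pair $(X, Y)$, which is plt near $Y$ by Lemma~\ref{lem:plt}. This yields an effective $\mbQ$-Weil divisor $\mathrm{Diff}_Y(0)$ on $Y$ such that $(K_X+Y)|_Y = K_Y + \mathrm{Diff}_Y(0)$, with support contained in $\Sing_Y(X)$. The remaining task is to identify $\mathrm{Diff}_Y(0)$ in each of the two cases.

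I will compute locally through the charts $\pi_{\lambda} \colon \tilde{U}_{\lambda} \to U_{\lambda} = \tilde{U}_{\lambda}/\bmu_{r_{\lambda}}$ from the proof of Lemma~\ref{lem:plt}, in which $\tilde{U}_{\lambda}$ and $\tilde{V}_{\lambda} \coloneqq \pi_{\lambda}^{-1}(Y \cap U_{\lambda})$ are smooth and $\pi_{\lambda}$ is \'{e}tale in codimension one (since $X \subset \mbP$ is well formed). The classical adjunction $(K_{\tilde{U}_{\lambda}} + \tilde{V}_{\lambda})|_{\tilde{V}_{\lambda}} = K_{\tilde{V}_{\lambda}}$ combined with the codimension-one \'{e}taleness of $\pi_\lambda$, which forces $\pi_{\lambda}^{*} K_{U_{\lambda}} = K_{\tilde{U}_{\lambda}}$ and $\pi_{\lambda}^{*}(Y \cap U_{\lambda}) = \tilde{V}_{\lambda}$ as $\mbQ$-Weil divisors, shows that $\mathrm{Diff}_Y(0)$ is detected point-by-point by the ramification of the restricted quotient map $\pi_{\lambda}|_{\tilde{V}_{\lambda}} \colon \tilde{V}_{\lambda} \to Y \cap U_{\lambda}$ via Riemann--Hurwitz.

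For part (1), the assumption $\codim_X \Sing (X) \ge 3$ forces $\Sing (X) \cap Y$ to have codimension at least two in $Y$, so the effective divisor $\mathrm{Diff}_Y(0)$, being supported in codimension $\ge 2$, must vanish. Hence $(K_X+Y)|_Y = K_Y$.

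For part (2), at each $P \in \Sing_Y (X)$ of cyclic quotient index $r_P$ the local model is $\mbA^2/\bmu_{r_P}$ with both weights of the action coprime to $r_P$. Since $Y$ is quasismooth at $P$, the lift $\tilde{V}_{\lambda}$ is a smooth $\bmu_{r_P}$-invariant curve through the unique fixed point, and its tangent line at that point is one of the two weight eigenlines; hence $\bmu_{r_P}$ acts faithfully on $\tilde{V}_{\lambda}$. Analytically, $\pi_{\lambda}|_{\tilde{V}_{\lambda}}$ is therefore the totally ramified cover $t \mapsto t^{r_P}$ of degree $r_P$, so Riemann--Hurwitz contributes $\tfrac{r_P-1}{r_P}\, P$ to the different. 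Summing over all $P \in \Sing_Y (X)$ produces $\mathrm{Diff}_Y(0) = \Delta_Y$. The one delicate point, and thus the main obstacle, is verifying the faithfulness of the $\bmu_{r_P}$-action on the lifted curve: this is precisely where the quasismoothness of $Y$ and the well-formedness of $\mbP$ must be combined to rule out degenerate configurations in which a proper subgroup of $\bmu_{r_P}$ would act trivially on $\tilde{V}_\lambda$.
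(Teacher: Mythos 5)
Your proposal is correct and takes essentially the same route as the paper: both arguments rest on the pltness of $(X,Y)$ supplied by Lemma~\ref{lem:plt} together with adjunction with a different supported on $\Sing_Y(X)$, and part (1) follows in both cases from the observation that this locus has codimension at least $2$ in $Y$. The only difference is in part (2), where the paper simply cites \cite[Proposition 16.6]{Kol+92} for the coefficient $(r_P-1)/r_P$, while you re-derive it via the local Riemann--Hurwitz computation on the index-one covers (including the faithfulness of the $\bmu_{r_P}$-action on the lifted curve, which uses that both weights are coprime to $r_P$) --- this is precisely the computation underlying the cited result.
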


\begin{proof}
    Suppose that $\dim X \ge 3$ and $\Sing (X)$ is of codimension at least $3$ in $X$.
    We set $S \coloneq \Sing (X) \cup \Sing (Y)$.
    Note that $S$ is of codimension at least $3$ in $X$ and $S \cap Y$ is of codimension at least $2$ in $Y$.
    Set $X^{\circ} = X \setminus S$ and $Y^{\circ} = Y \setminus (Y \cap S)$.
    We have $(K_{X^{\circ}} + Y^{\circ})|_{Y^{\circ}} = K_{Y^{\circ}}$ since $X^{\circ}$ and $Y^{\circ}$ are smooth.
    This proves (1).

    We prove (2).
    By Lemma \ref{lem:plt}, the pair $(X, Y)$ is plt.
    Thus, (2) is a consequence of \cite[Proposition 16.6]{Kol+92}.
\end{proof}

    \begin{Lem} \label{lem:alpspH}
    Let $a, b, c_1, c_2$ be positive integers and let $\mbP \coloneq \mbP (1,a,b,c_1,c_2)$ be the weighted projective space with homogeneous coordinates $x, y, z, t_1, t_2$ of degree $1, a, b, c_1, c_2$, respectively. 
    Let $X \subset \mbP$ be a well formed and quasismooth hypersurface of degree $d$ with only isolated singularities.
    We set $C \coloneq (x = t_1 = t_2 = 0) \subset \mbP$ and $H \coloneq (x = 0)_X$.
    Suppose that
    \begin{enumerate}
        \item $a, b > 1$ and $a$ is coprime to $b$.
        \item $d < c_i + 2 ab$ for $i = 1, 2$.
        \item $X$ contains the quasi-line $C$.
    \end{enumerate}
    Then, for any smooth point $P$ of $X$ contained in $C$, the singularity of $H$ at $P$ is either smooth or Du Val of type $A$.
\end{Lem}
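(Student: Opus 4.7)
The plan is to analyze the local equation of $H$ at $P$ directly. Under the hypotheses $a, b > 1$ and $\gcd(a, b) = 1$, the coordinate points $P_y, P_z \in \mbP$ are singular in $\mbP$; up to handling these two special points separately, any smooth point $P \in X$ contained in $C$ takes the form $P = (0\!:\!y_0\!:\!z_0\!:\!0\!:\!0)$ with $y_0, z_0 \neq 0$. After rescaling, assume $y_0 = 1$. In the affine chart $y = 1$ with local coordinates $(x, u, t_1, t_2)$, where $u = z - z_0$, the containment $C \subset X$ gives the decomposition
\[
\msF = x A(x, y, z, t_1, t_2) + t_1 B_1(y, z, t_1, t_2) + t_2 B_2(y, z, t_1, t_2),
\]
so that $H = (x = 0)_X$ has local equation
\[
f(u, t_1, t_2) = t_1 B_1(1, z_0+u, t_1, t_2) + t_2 B_2(1, z_0+u, t_1, t_2).
\]
Writing $b_i(y, z) := B_i(y, z, 0, 0)$, the differential at the origin of the chart is $df = b_1(1, z_0)\, dt_1 + b_2(1, z_0)\, dt_2$, so $H$ is singular at $P$ exactly when $b_1(1, z_0) = b_2(1, z_0) = 0$. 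I assume this henceforth.

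The key technical input is the degree hypothesis $d - c_i < 2ab$: combined with $\gcd(a, b) = 1$, it forces each $b_i \in \mbC[y, z]$, weighted-homogeneous of degree $d - c_i$ with weights $(a, b)$, to have at most two monomials. If $b_i$ consists of a single monomial and vanishes at $(1, z_0)$ with $z_0 \neq 0$, then $b_i \equiv 0$; if $b_i = \mu y^{p_1} z^{q_1} + \nu y^{p_2} z^{q_2}$ with $q_1 > q_2$ and $b_i(1, z_0) = 0$, a short calculation gives $(\partial_z b_i)(1, z_0) = (q_2 - q_1)\nu z_0^{q_2 - 1} \neq 0$. The remaining configuration $b_1 \equiv b_2 \equiv 0$ is ruled out by quasismoothness of $X$: in that case the Jacobian row of $\msF$ at every cone point $(0, y, z, 0, 0)$ above $C$ reduces to $(A(0, y, z, 0, 0), 0, 0, 0, 0)$, forcing the weighted-homogeneous polynomial $A(0, \cdot, \cdot, 0, 0)$ of positive degree to be nowhere vanishing on $\mbA^2 \setminus \{0\}$, which is impossible over $\mbC$.

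Combining, at least one of $c_z := (\partial_z b_1)(1, z_0)$ and $c_z' := (\partial_z b_2)(1, z_0)$ is nonzero. The quadratic part of $f$ at the origin is
\[
Q(u, t_1, t_2) = c_z\, u t_1 + c_z'\, u t_2 + c_1\, t_1^2 + (c_2 + c_1')\, t_1 t_2 + c_2'\, t_2^2,
\]
where $c_j, c_j'$ denote the relevant second partials of $B_1, B_2$ at $P$. The Hessian matrix of $Q$ admits a $2 \times 2$ minor proportional to $c_z^2$ or $c_z'^2$, hence non-zero; its rank is therefore at least $2$. Consequently, $H$ has at $P$ a hypersurface singularity of multiplicity $2$ and corank at most $1$. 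By Arnold's classification, an isolated such singularity is of type $A_n$ for some $n \geq 1$: an ordinary double point $A_1$ when $\rank Q = 3$, and a higher $A_n$ when $\rank Q = 2$, with normal form $v_1 v_2 + v_3^{n+1}$ after a suitable analytic change of coordinates. Isolated-ness in the rank-$2$ case follows by restricting $f$ to the kernel line of $Q$ and exhibiting a nonzero higher-order term, exploiting the two-monomial structure of $b_1, b_2$ together with quasismoothness of $X$.

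The main technical obstacle is the rank-$2$ subcase: ensuring that the singularity of $H$ at $P$ is isolated rather than propagating along a curve. This requires a delicate analysis of the higher-order terms of $f$ along the kernel direction of $Q$; such terms are controlled by the coefficients of $B_1, B_2$ and must be shown to be non-zero using the full strength of quasismoothness of $X$ together with the degree bound $d - c_i < 2ab$.
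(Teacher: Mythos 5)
Your main computation follows the same route as the paper's proof: decompose $\msF = xA + t_1B_1 + t_2B_2$ using $C \subset X$, use $\gcd(a,b)=1$ and $d - c_i < 2ab$ to see that each $b_i$ has at most two monomials, rule out $b_1 \equiv b_2 \equiv 0$ by quasismoothness of $X$ along $C$, and conclude that the quadratic part of the local equation of $H$ has rank at least $2$ because it contains a cross term $u\,t_i$ and no $u^2$ term. Up to that point your argument is sound (two small points: the chart $y \ne 0$ is the quotient $\mbA^4/\bmu_a$, so you should observe that the stabilizer of $P$ is trivial --- this uses $z_0 \ne 0$ and $\gcd(a,b)=1$ --- so that working with the coordinates upstairs is legitimate; and $P_y, P_z$ need no separate treatment, since for a well formed quasismooth hypersurface $\Sing(X) = X \cap \Sing(\mbP)$ and $a,b>1$ force these points to be singular on $X$, hence excluded by the hypothesis that $P$ is smooth).

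The genuine gap is the one you flag yourself: isolatedness of the singularity of $H$ at $P$ in the corank-one case. Corank $\le 1$ and multiplicity $2$ alone do not give type $A$; the singularity could a priori be non-isolated (e.g.\ $v_1v_2$, an $A_\infty$ point), and your proposed fix --- restricting $f$ to the kernel line of $Q$ and ``exhibiting a nonzero higher-order term'' --- is left entirely unexecuted, and is not even quite the right criterion (one must apply the splitting lemma first; the restriction to the kernel line only controls the leading term of the residual one-variable function). The paper sidesteps this completely: since $X$ has only isolated singularities, Lemma \ref{lem:qhypnormal} (quoted from \cite[Lemma 3.7]{KOW}) says that the quasi-hyperplane section $H = (x=0)_X$ is a \emph{normal} surface, hence has isolated singularities, and an isolated surface hypersurface singularity of multiplicity $2$ and corank $\le 1$ is Du Val of type $A$. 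You should replace your unfinished higher-order analysis with this appeal to normality; note that the hypothesis ``$X$ has only isolated singularities'' in the statement of the lemma exists precisely to make that citation available.
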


\begin{proof}
    Let $P \in C$ be a smooth point of $X$.
    We may assume that $H$ is singular at $P$.
    We set $\mbU \coloneq (yz \ne 0) \cap \mbP$ and $U \coloneq \mbU \cap X$.
    The points $P_y$ and $P_z$ are singular points of $X$ if they are contained in $X$ since $a, b > 1$. 
    Hence $P \in U$.
    We can take positive integers $m, n$ such that $n b - m a = 1$.
    We set $M \coloneq z^n/y^m$ and $u \coloneq y^b/z^a$.
    Then we have 
    \[
    y|_U \coloneq y/M^a = u^n, \ 
    z|_U \coloneq z/M^b = u^m.
    \]
    We set $x|_U \coloneq x/M$ and $t_i|_U \coloneq t_i/M^{c_i}$ for $i = 1, 2$ and, by a slight abuse of notation, we denote them by $x$ and $t_i$, respectively.
    These form affine coordinates of $\mbU \cong \Spec \mbC [x,u,u^{-1},t_1,t_2]$.
    Rescaling $y$ and $z$, we may assume that $P = (0\!:\!1\!:\!1\!:\!0\!:\!0)$.
    Then, by setting $\bar{u} = u -1$, $\{x, \bar{u}, t_1, t_2\}$ can be chosen as a system of local coordinates of $\mbP$ at $P$.
    Let $\msF = \msF (x, y, z, t_1, t_2)$ be the defining polynomial of $X$.
    We set $e_i \coloneq d - c_i - ab$ for $i = 1, 2$.
    By the assumption that $H$ is singular at $P$, we have $(\prt \msF/\prt v)(P) = 0$ for any $v \in \{y,z,t_1,t_2\}$ and $(\prt \msF/\prt x)(P) \ne 0$.
    By the assumption (2), any homogeneous polynomial of degree $d$ in variables $y, z, t_1, t_2$ which is divisible by $(z^a - y^b)^2$ is of the form $(z^a - y^b)^2 h (y, z)$ and, by the assumption (3), $\msF$ does not contain any monomial consisting only of $y$ and $z$.
    Hence we can write
    \[
    \msF = t_1 (z^a - y^b) f_{e_1} (y,z) + t_2 (z^a - y^b) f_{e_2} (y,z) + g_d + x \msF',
    \]
    where $f_{e_i} = f_{e_i} (y,z)$, $g_d = g_d (y,z,t_1,t_2)$ and $\msF' = \msF' (x,y,z,t_1,t_2)$ are homogeneous polynomials of degree $e_i \coloneq d - c_i - ab, d$ and $d-1$, respectively, such that $g_d \in (t_1,t_2)^2$ and $\msF' (P) \ne 0$.
    By the assumption (2), we have $e_i < ab$, which implies that $f_{e_i} (1,1) \ne 0$ for $i = 1,2$ if $f_{e_i}$ is nonzero.
    We see that at least one of $f_{e_1}$ and $f_{e_2}$ is nonzero as a polynomial because otherwise $X$ is not quasismooth at any point of the set $(t_1 = t_2 = x = \msF' = 0) \ne \emptyset$ which is absurd.
    We set $\msF|_U (x,u,t_1,t_2) \coloneq \msF (x,u^n,u^m,t_1,t_2)$ and then $\overline{\msF|_U} \coloneq \msF|_U (0,\bar{u}+1,t_1,t_2)$.
    Then $\overline{\msF|_U}$ contains either $t_1 \bar{u}$ or $t_2 \bar{u}$ and it does not contain $\bar{u}^2$.
    Hence the quadratic part of $\overline{\msF|_U}$ is of rank at least $2$.
    We can choose $\{\bar{u},t_1,t_2\}$ as a system of local coordinates of $X$ at $P$ and $H$ is defined by $\overline{\msF|_U}$ around the point $P \in X$.
    It follows that the singularity of $H$ at $P$ is Du Val of type $A$ since $P \in H$ is an isolated singularity by Lemma \ref{lem:qhypnormal}.
\end{proof}

\begin{Rem}
In Section \ref{sect: singular points}, we prove quasismoothness of various varieties embedded in a weighted projective space.
We frequently apply the following technique.
\begin{itemize} 
\item Let $X \subset \mbP (a_0, \dots, a_N)$ be a quasismooth variety and let $\mcH$ be a linear system on $X$.
Then, by applying Bertini theorem on the affine cone $C_X \subset \mbA^{N+1}$, we conclude that a general member of $\mcH$ is quasismooth outside the base locus of $\mcH$.
\item Let $X \subset \mbP (a_0, \dots, a_N)$ be an irreducible and reduced variety and let $P \in X$ be a point.
If there is a hypersurface $H$ of $\mbP (a_0, \dots, a_N)$ through $P$ such that the scheme-theoretic intersection $X \cap H$ is quasismooth at $P$, then $X$ is quasismooth at $P$.
In particular, if we are given a pencil $\mcH$ on $X$, then its general member is quasismooth at any point at which the base scheme of $\mcH$ is quasismooth.
\end{itemize}
We emphasize that, throughout this paper, by a \textit{base locus} and a \textit{base scheme} of a linear system, we mean the set-theoretic and the scheme-theoretic base locus of the linear system, respectively. 
\end{Rem}

\subsection{The $95$ families and known results on K-stability}

A quasismooth Fano $3$-fold hypersurface can be expressed as $X_d \subset \mbP (a_0, a_1, a_2, a_3, a_4)$, where $a_0 \le \cdots \le a_4$ and $d$ denotes the degree of the defining polynomial.
The \textit{Fano index} of $X = X_d$ is $\sum a_i - d$.
Quasismooth Fano $3$-fold hypersurfaces of index $1$ are known to form $95$ families and each family is thus determined by the sextuple $(d, a_0, \dots, a_4)$ (with $d = a_0 + \cdots + a_4 - 1$).
The $95$ families are numbered in the lexicographical order of $(d, a_0, \dots, a_4)$ and each family is referred to as family \textnumero $\msi$.
In the following, by a member of family \textnumero $\msi$, we mean a \textit{quasismooth} member of family \textnumero $\msi$.

\begin{Def}
    The index set of the $95$ families is denoted by $\msI = \{1, 2, \dots, 95\}$.
    We define $\msI_{\BSR} \subset \msI$ as follows: $\msi \in \msI_{\BSR}$ if and only if any member of family \textnumero $\msi$ is birationally superrigid.
    We then set $\msI_{\BR} \coloneq \msI \setminus \msI_{\BSR}$.
\end{Def}

We have $|\msI_{\BSR}| = 50$ and $|\msI_{\BR}| = 45$.
The families indexed by $\msI_{\BSR}$ are exactly those which are marked $\mathrm{KE}$ in the right-most column of \cite[Table 7]{KOW} plus families \textnumero 1 and \textnumero 3 (see \cite[\S 2.3.a]{KOW} for details).
We divide the set $\msI_{\mathrm{BR}}$ as $\msI_{\BR} = \msI_{\BR}^{*} \sqcup \msI_{\BR}^{**} \sqcup \msI_{\BR}^{***}$, where 
\[
\begin{split}
    \msI_{\BR}^{***} &\coloneq \{2,5,12,13,20,23,25,33,38,40,58\}, \\
    \msI_{\BR}^{**} &\coloneq \{4,7,9,18,24,31,32,43,46\}, \\
    \msI_{\BR}^{*} &\coloneq \msI_{\BR} \setminus (I_{\BR}^{**} \sqcup \msI_{\BR}^{**}).
\end{split}
\]
Note that the families indexed by $\msI_{\BR}^*$ are exactly those that are marked $\mathrm{K}$ in the right-most column of \cite[Table 7]{KOW}.

\begin{Thm}[{\cite[Theorems 1.11 and 1.12]{KOW}}]
    Any member of family \textnumero $\msi$, where $\msi \in \msI_{\BSR} \cup \msI_{\BR}^*$, is K-stable.
\end{Thm}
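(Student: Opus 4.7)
The plan is to split the index set $\msI_{\BSR} \cup \msI_{\BR}^*$ into its two natural pieces and invoke results from the preceding subsections, since neither delta invariants nor the Abban--Zhuang machinery are needed here.

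First, for $\msi \in \msI_{\BSR}$, any quasismooth member $X$ of family \textnumero $\msi$ is birationally superrigid by definition. Theorem \ref{thm:KOW} gives the universal lower bound $\alpha(X) \ge 1/2$ for every quasismooth Fano $3$-fold hypersurface of index $1$, and the Picard rank of $X$ is $1$ because $X$ is $\mathbb{Q}$-factorial with $\mathrm{Cl}(X) \cong \mathbb{Z}$. Hence Theorem \ref{thm:SZ} of Stibitz--Zhuang applies and yields that $X$ is K-stable. By Theorem \ref{thm: delta invariant criterion} K-stability is equivalent to $\delta(X) > 1$, and by the Chen--Donaldson--Sun result cited in the introduction it ensures existence of a KE metric.

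Second, for $\msi \in \msI_{\BR}^*$ the families are precisely those labelled $\mathrm{K}$ in \cite[Table 7]{KOW}; for them the authors of \cite{KOW} refine the alpha-invariant computation and establish the strict bound $\alpha(X) > 3/4$ for every quasismooth $X$ in the family. Because $\dim X = 3$, the classical criterion of Tian and Odaka--Sano ($\alpha(X) > n/(n+1)$ for an $n$-dimensional $\mathbb{Q}$-Fano) immediately implies K-stability, with no need of the birational rigidity hypothesis. These two cases cover $\msI_{\BSR} \cup \msI_{\BR}^*$, so the theorem follows.

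The only nontrivial ingredient is the alpha-invariant computation: verifying $\alpha(X) \ge 1/2$ in general and the sharper $\alpha(X) > 3/4$ for $\msi \in \msI_{\BR}^*$ requires a family-by-family analysis of log canonical thresholds $\mathrm{lct}_P(X;D)$ at each singular point of type $\tfrac{1}{r}(1,a,r-a)$ and at generic points of certain surfaces cut out by low-weight coordinates. This is the heart of \cite[Theorem 1.3.1]{KOW} and is what one would have to redo; in the present paper we may quote it as a black box.
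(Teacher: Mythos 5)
Your proposal is correct and matches the paper's treatment: the paper states this result as a direct citation of \cite[Theorems 1.11 and 1.12]{KOW} with no proof, and its introduction describes exactly your two-pronged strategy — the superrigid families via $\alpha(X)\ge 1/2$ together with Stibitz--Zhuang (Theorems \ref{thm:KOW} and \ref{thm:SZ}), and the families in $\msI_{\BR}^*$ via the sharper bound $\alpha(X)>3/4$ and the Tian/Odaka--Sano criterion. The only substantive content, as you note, is the family-by-family alpha-invariant computation carried out in \cite{KOW}, which both you and the paper treat as a black box.
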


\begin{Thm}[{\cite[Corollary 1.4]{ST24}}]
    Any member of family \textnumero $\msi$, where $\msi \in \msI_{\BR}^{**}$, is K-stable.
\end{Thm}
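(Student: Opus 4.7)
The plan is simply to apply the main theorem of Sano--Tasin \cite[Corollary 1.4]{ST24} to each of the nine families comprising $\msI_{\BR}^{**}$. Recall that \cite{ST24} establishes K-stability of Fano weighted hypersurfaces $X_d \subset \mbP(a_0,\dots,a_n)$ under the assumption that some weight $a_i$ of the ambient weighted projective space divides the degree $d$. The subset
\[
\msI_{\BR}^{**} = \{4,7,9,18,24,31,32,43,46\}
\]
is, by construction, precisely the collection of strictly birationally rigid families among the 95 index-$1$ Fano 3-fold hypersurfaces for which this divisibility condition is satisfied (and which are not already covered by the birational superrigidity argument of Theorem \ref{thm:KOW}).

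My approach proceeds in three short steps. First, for each $\msi \in \msI_{\BR}^{**}$, I would read off the sextuple $(d, a_0, a_1, a_2, a_3, a_4)$ from the standard list of the 95 families (for instance, \cite[Table 7]{KOW}) and exhibit an explicit index $i$ with $a_i \mid d$; this is immediate from the definition of $\msI_{\BR}^{**}$. Second, I would verify that the auxiliary hypotheses of \cite[Corollary 1.4]{ST24} --- well-formedness of the ambient weighted projective space and quasismoothness of $X$ --- hold automatically for any quasismooth member of the 95 families. Third, I would invoke \cite[Corollary 1.4]{ST24} to conclude K-stability of $X$; combined with Theorem \ref{thm: delta invariant criterion} and the Chen--Donaldson--Sun theorem recalled in \S 1.1, this also yields $\delta(X) > 1$ and the existence of a Kähler--Einstein metric.

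The substantive work is entirely contained in \cite{ST24}: under the divisibility hypothesis, one constructs a flat degeneration of $X$ to a hypersurface carrying an extra cyclic symmetry and applies an equivariant K-stability criterion, reducing K-stability to a refined alpha-invariant estimate. Consequently, no new delta-invariant computation is needed here for these nine families. The only potential difficulty is purely bookkeeping --- one must confirm the divisibility family-by-family and check that $\msI_{\BR}^{**}$ has been correctly identified. Had \cite{ST24} not been available, the alternative would have been to treat each of the nine families by the Abban--Zhuang methodology of Sections \ref{sect: AZ} and \ref{sect: singular points}: enumerate the maximal centers of $X$ via \cite{CP}, construct flags $P \in C \subset S \subset X$ through each maximal center, compute the relevant Zariski decompositions, and verify $\delta_P(X) > 1$ in each case. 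This approach is unnecessary in light of \cite{ST24}, and is implemented in this paper only for the residual index set $\msI_{\BR}^{***}$, for which no weight divides $d$.
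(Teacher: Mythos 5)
The paper gives no proof of this statement beyond the citation itself---it is quoted directly as \cite[Corollary 1.4]{ST24}---and your plan to verify that the nine families of $\msI_{\BR}^{**}$ appear among those covered by that corollary is exactly the intended argument. One caution on your first step: the hypothesis of \cite{ST24} is not literally ``some weight $a_i$ divides $d$'' (that holds for all $95$ families since $a_0=1$, and e.g.\ family \textnumero 12 has $2\mid 10$ yet lies in $\msI_{\BR}^{***}$), so the family-by-family check must be made against the actual, more restrictive condition, i.e.\ against Table~1 of \cite{ST24}.
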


Therefore, Theorem \ref{thm: Main Theorem} follows from the following.

\begin{Thm} \label{thm: Main2}
    Let $X$ be a member of family \textnumero $\msi$, where $\msi \in \msI_{\BR}^{***}$.
    Then $\delta (X) > 1$.
\end{Thm}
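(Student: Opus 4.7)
The plan is to prove $\delta(X) > 1$ via the local-to-global reduction afforded by Corollary~\ref{cor:KstBR}(2). Since Theorem~\ref{thm:KOW} gives $\alpha(X) \ge 1/2$ (and hence $\alpha_Q(X) \ge 1/2$ for every closed point $Q \in X$), it suffices to verify $\delta_P (X) > 1$ at each closed point $P$ lying in the union $\Sigma \subset X$ of maximal centers on $X$. By birational rigidity of any $X$ in an $\msI_{\BR}^{***}$ family (following \cite{CP}, revisited in \cite{KOW}), $\Sigma$ consists of finitely many singular points, each of which is a cyclic quotient terminal singularity of $X$ of specific type. For each family $\msi \in \msI_{\BR}^{***}$ I would first enumerate these maximal centers, which are explicit singular points of index $\ge 2$ appearing as distinguished coordinate points on $X \subset \mbP (1, a_1, a_2, a_3, a_4)$.

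At each maximal center $P \in \Sigma$, I would choose an admissible flag $(P \in C \subset S \subset X)$ to which the Abban--Zhuang machinery (as packaged in the flag-types of Section~\ref{sect: AZ}) applies. The natural surface candidates $S$ are quasi-linear hypersurface sections through $P$ (typically cut out by a linear form in the weight-1 variable or by a general section in $\mcI_P |a_j A|$), so that $S$ is normal by Lemma~\ref{lem:qhypnormal}, and its singularities are controlled by Lemmas~\ref{lem:plt}, \ref{lem:diff}, \ref{lem:qsmZPt}, and \ref{lem:alpspH}. The curve $C$ would be a quasi-line (or a general curve in an appropriate movable linear system on $S$) that passes through $P$ and avoids the loci where $S$ acquires bad singularities. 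Then the adjunction formula Lemma~\ref{lem:diff}(2) yields $(K_X + S)|_S = K_S + \Delta_S$ with an explicit cyclic-quotient different, which is crucial for the Zariski-decomposition step.

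With the flag fixed, the Abban--Zhuang estimate provides
\[
\delta_P(X) \;\ge\; \min \Bigl\{ \tfrac{A_X(S)}{S_X(S)},\; \inf_{C'} \tfrac{A_{(S,\Delta_S)}(C')}{S(W^S_{\bullet,\bullet}; C')},\; \inf_{P'} \tfrac{A_{(C,\Delta_C)}(P')}{S(W^{S,C}_{\bullet,\bullet,\bullet}; P')} \Bigr\},
\]
where the infima are taken over prime divisors $C' \subset S$ through $P$ and scheme points $P' \in C$ (with $P' = P$ for the flag type forcing a single point). The three quantities are computed by performing the Zariski decompositions of $-\pi^*K_X - u S$ restricted to $S$, and of the further restrictions to $C$, as prescribed in Section~\ref{sect: AZ}. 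I would select the flag so that the dominant pseudoeffective thresholds and the Zariski-negative parts make each of these three ratios strictly exceed $1$.

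The main obstacle will be ensuring that for every one of the eleven families in $\msI_{\BR}^{***}$, and every maximal center within them, a single flag can be chosen whose three Abban--Zhuang bounds all strictly exceed $1$. This requires, family by family, both a careful identification of the effective cone of $S$ (to pin down negative curves appearing in the Zariski decomposition of $(-\pi^*K_X - uS)|_S$) and an explicit integration of volumes, where discrepancy contributions from the different $\Delta_S$ on $S$ and $\Delta_C$ on $C$ must be sharp enough to offset the typically small value of $A_X(S)/S_X(S)$. I would organise the computation by grouping families according to the flag type from Section~\ref{sect: AZ} that governs them, handling first the maximal centers where a quasi-hyperplane flag suffices and then the residual cases requiring a refined flag through a specific tangent direction at $P$.
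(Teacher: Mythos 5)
Your proposal follows essentially the same route as the paper: reduce via Corollary \ref{cor:KstBR}(2) and the bound $\alpha_Q(X)\ge 1/2$ from \cite{KOW} to checking $\delta_P(X)>1$ only at the finitely many maximal centers (which are singular BI centers), then verify this family by family using the flag types and Zariski-decomposition formulas of Section \ref{sect: AZ} (the paper's Theorem \ref{thm:deltasingpt} and the case analysis of Section \ref{sect: singular points}). The only content you leave implicit is the actual selection of flags and the explicit integrations for each of the eleven families, together with the shortcut the paper sometimes uses of deducing $\delta_P>1$ directly from a local alpha bound $\alpha_P(X)>3/4$ via $\delta_P \ge \tfrac{4}{3}\alpha_P$; but the strategy is the same.
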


The rest of the paper is devoted to the proof of Theorem \ref{thm: Main2}.

We fix some notation.
Let $X = X_d \subset \mbP \coloneq \mbP (a_0, a_1, a_2, a_3, a_4)$ be a member of family \textnumero $\msi \in \msI$. 
We use homogeneous coordinates $x, y, z, t, w$ of degree $a_0, a_1, a_2, a_3, a_4$, respectively, unless otherwise mentioned. 
We denote by $\msF \coloneq \msF (x, y, z, t, w)$ the defining polynomial of $X$.
For a coordinate $v \in \{x, y, z, t, w\}$, we define $P_v$, $H_v$ and $U_v$ as follows. 
\begin{itemize}
    \item $P_v$ is the point of $\mbP$ at which only the coordinate $v$ is nonzero,
    \item $\mbH_v \coloneq (v = 0) \subset \mbP$ is the zero locus of $v$.
    \item $\mbU_v \coloneq \mbP \setminus \mbH_v$ is an open subset of $\mbP$.
    \item $H_v \coloneq \mbH_v \cap X = (v = 0)_X$ is the divisor on $X$ cut by $v$.
    \item $U_v \coloneq \mbU_v \cap X = X \setminus H_v$ is an open subset of $X$.
\end{itemize} 

Let $P \in X$ be a singular point.
Then it is a cyclic quotient singularity of type $\frac{1}{r} (1,a,r-a)$ for some coprime positive integers $r$ and $a$, that is, the germ $P \in X$ is equivalent to $\bar{o} \in \mbC^3/\bmu_r$, where the the action of the cyclic group $\bmu_r$ of order $r$ on $\mbC^3$ is given by $(x,y,z) \mapsto (\zeta_r x, \zeta_r^a y, \zeta_r^{r-a} z)$ and $\bar{o}$ is the image of the origin under the quotient morphism $\mbC^3 \to \mbC^3/\bmu_r$.
Let $\varphi \colon \tilde{X} \to X$ be the weighted blow-up of $X$ at $P$ with weight $\frac{1}{r} (1,a,r-a)$.
The morphism $\varphi$ is called the \textit{Kawamata blow-up} of $P \in X$ and it is known that $\varphi$ is the unique divisorial contraction (in the Mori category) centered at $P \in X$ (\cite{Kawamata}).
Let $E \cong \mbP (1,a,r-a)$ be the $\varphi$-exceptional divisor.
Then we have
\[
K_{\tilde{X}} = \varphi^*K_X + \frac{1}{r} E
\]
and
\[
(E^3) = \frac{r^2}{a (r-a)}.
\]

\subsection{Birational geometry of families indexed by $\msI_{\BR}^{***}$} \label{sec:BGWH}

We briefly explain birational geometry of members of families indexed by $\msI_{\BR}^{***}$.
Let $\msi \in \msI_{\BR}^{***}$.
General members of family \textnumero $\msi$ is birationally rigid but not superrigid, which in particular means that they admit a birational self map which is not biregular.
Such a birational self map can be decomposed into chains of elementary self links.
An elementary self link is initiated by a divisorial extraction followed by a birational map which is an isomorphism in codimension $1$ and a divisorial contraction.
The center of a divisorial contraction initiating an elementary self link can be detected by maximal centers (see Definition~\ref{def:maxcenter}).

If there exists an elementary link from a Fano variety of Picard number $1$, then the center of the divisorial extraction initiating the link is a maximal center (see \cite[Lemma 2.5]{Okada3}).

\begin{Def}
    Let $X = X_d \subset \mbP (a_0,a_1,a_2,a_3,a_4)$ be a member of family \textnumero $\msi$, where $\msi \in \msI_{\BR}^{***}$, and let $P$ be a singular point of $X$.
    In this definition, we denote by $x_0, \dots, x_4$ the homogeneous coordinates of degree $a_0, \dots, a_4$, respectively.
    \begin{itemize}
        \item We say that $P$ is a \textit{QI center} if there are distinct $0 \le j, k \le 4$ such that $P = P_{x_k}$ and $d = 2 a_k + a_j$.
        \item We say that $P$ is an \textit{EI center} if $\msi$ and $P$ belong to one of the following:
        \begin{itemize}
            \item $\msi = 20$ and $P$ is of type $\frac{1}{3} (1,1,2)$.
            \item $\msi = 23$ and $P$ is of type $\frac{1}{4} (1,1,3)$.
            \item $\msi = 40$ and $P$ is of type $\frac{1}{5} (1,2,3)$. 
        \end{itemize}
        \item We say that $P$ is an \textit{IEI center} if $\msi = 23$ and $P$ is of type $\frac{1}{3} (1,1,2)$.
    \end{itemize}
    We say that $P$ is a \textit{BI center} if it is one of QI center, EI center and IEI center.
\end{Def}

In the above definition, QI, EI and IEI are abbreviation of \textit{quadratic involution}, \textit{elliptic involution} and \textit{invisible elliptic involution}, respectively.

\begin{Thm}[\cite{CP}]
    Let $X$ be a member of family \textnumero $\msi$, where $\msi \in \msI_{\BR}^{***}$.
    If $\Gamma \subset X$ is a maximal center, then $\Gamma$ is a singular point and it is a BI center.
    If a BI center $P \in X$ is a maximal center, then the Kawamata blow-up of $P \in X$ initiates an elementary self link $X \ratmap X$. 
\end{Thm}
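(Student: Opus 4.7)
The plan is to follow the standard Noether--Fano--Iskovskikh framework as adapted to quasismooth Fano $3$-fold hypersurfaces by Corti, Pukhlikov and Cheltsov--Park. A subvariety $\Gamma \subset X$ is a maximal center precisely when there exists a movable linear system $\mathcal{M} \sim_{\mbQ} - n K_X$ such that the pair $(X, \tfrac{1}{n}\mathcal{M})$ is not canonical at the generic point of $\Gamma$; the proof is an exhaustion that rules out every type of $\Gamma$ except the BI centers and then, conversely, realises the Kawamata blowup of each BI center as the start of an elementary self link. For each of the $11$ families $\msi \in \msI_{\BR}^{***}$ one works with the explicit equation of $X$, the table of singular points and the intersection numbers $(-K_X)^3 = d/(a_0 a_1 a_2 a_3 a_4)$, which are all very small; these small values are what make most centers untenable.

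First I would exclude curves and smooth points. For an irreducible curve $\Gamma$, applying the Noether--Fano inequality $\mult_\Gamma \mathcal{M} > n$ and intersecting $\mathcal{M}^2$ with a suitable movable surface (typically a general element of $|m A|$ for small $m$) gives $(\mathcal{M}^2 \cdot S) \ge (\mult_\Gamma \mathcal{M})^2 \deg S|_\Gamma$, and combining with $(\mathcal{M}^2 \cdot S) = n^2 (-K_X)^2 \cdot S$ forces $(-K_X)^2 \cdot \Gamma$ to be so small that $\Gamma$ must lie in a stratum of the ambient weighted projective space; one then rules out those finitely many candidate curves directly from the equation of $X$. For smooth points I would use Corti's $4$-point inequality $(-K_X)^3 > 4 / \mult_P^0$, which is violated by every smooth point in every family in $\msI_{\BR}^{***}$ because of the local $\alpha$-invariant bounds already established in Theorem~\ref{thm:KOW}.

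Next I would eliminate singular points that are not BI centers. Let $P \in X$ be a quotient singularity of type $\frac{1}{r}(1, a, r-a)$ with Kawamata blowup $\varphi \colon \tilde{X} \to X$ and exceptional divisor $E$. Maximality at $P$ forces $\varphi^*\mathcal{M} = \tilde{\mathcal{M}} + \tfrac{c}{r} E$ with $c > n$. Writing $\tilde{\mathcal{M}} \sim_{\mbQ} \varphi^*(-nK_X) - \tfrac{c}{r} E$, one constructs a test surface $\tilde{T}$ on $\tilde{X}$ -- generically the proper transform of a suitable quasi-hyperplane section through $P$ -- whose class lies in a two-dimensional extremal face of the Mori cone. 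Computing $(\tilde{\mathcal{M}}^2 \cdot \tilde{T})$ and comparing with the effective threshold on $\tilde{T}$ (which is controlled by the small intersection numbers $(-K_X)^3$ and $(E^3) = r^2/(a(r-a))$) yields a negative intersection against a movable class, a contradiction. The candidate test surfaces are drawn from the isobaric monomials of low weight, and the numerical check is routine once the equation of $X$ is in hand; this is where I would rely most directly on the case-by-case work of Corti--Pukhlikov and its refinements.

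Finally, for each BI center $P$, I would construct the self link explicitly. For QI centers with $d = 2 a_k + a_j$, the Kawamata blowup resolves the indeterminacy of the double projection from $P$; the pencil $|\mcI_P(a_k A)|$ lifts to a morphism $\tilde{X} \to \mbP^1$ whose general fibre is a smooth del Pezzo or rational surface, and the Stein factorisation of the other extremal contraction on $\tilde{X}$ recovers $X$ with the roles of the two branches of the double cover swapped, producing a biregular $2\!:\!1$ quadratic involution. For the EI centers in families \textnumero{}$20, 23, 40$ the analogous projection realises $\tilde{X}$ as a conic bundle or elliptic fibration on which a standard involution acts vertically; this involution descends to a birational self map of $X$ that contracts no divisor other than $E$ and whose inverse Kawamata blowup is again $P \in X$. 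The IEI case (family \textnumero{}$23$, type $\tfrac{1}{3}(1,1,2)$) is the subtlest: the link is initiated by the Kawamata blowup but the resulting elliptic involution is not visible on $X$ itself; to verify it, one runs a two-step $2$-ray game on $\tilde{X}$, contracting a flopping curve and then performing a divisorial contraction onto the same model $X$, and checks that the composition is not regular. The main obstacle is the bookkeeping in this last step, but the numerics are pinned down by the uniqueness of the Kawamata blowup and the explicit Mori-fibre-space structure of $X$.
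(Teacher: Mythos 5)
The paper does not actually prove this theorem: its ``proof'' consists of the single remark that the first assertion follows from the whole of \cite[Section 5]{CP} and the second from the whole of \cite[Section 4]{CP}. Your proposal is, in effect, a summary of the strategy of that cited work --- Noether--Fano exclusion of curves and smooth points, exclusion of non-BI singular points by intersecting $\tilde{\mathcal{M}}^2$ with test classes on the Kawamata blowup, and explicit construction of quadratic/elliptic involutions at the BI centers --- so at the level of method you are following the same route as the source the paper defers to. Two caveats. First, your justification for excluding smooth points is not correct as stated: the bound $\alpha_P(X)\ge 1/2$ from Theorem~\ref{thm:KOW} concerns log canonicity of pairs $(X,cD)$ with $D\in|-K_X|_{\mbQ}$, which is a different condition from canonicity of the movable pair $(X,\tfrac1n\mathcal{M})$ at $P$, and in any case \cite{KOW} postdates \cite{CP}; the actual exclusion of smooth points in \cite{CP} uses Corti's inequality $\mult_P(\mathcal{M}_1\cdot\mathcal{M}_2)>4n^2$ together with an isolating class $L\sim lA$ satisfying $l\,(A^3)\le 4$, with no reference to alpha invariants. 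Second, what you have written is a roadmap rather than a proof: the substance of the theorem is the case-by-case verification, for each of the eleven families in $\msI_{\BR}^{***}$ and each candidate center, that the relevant test-class computation closes and that the $2$-ray game at each BI center terminates in a link back to $X$; neither your sketch nor the paper carries this out, and both ultimately rest on \cite{CP}. Within that understanding, the outline is sound.
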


\begin{proof}
The first assertion follows from the whole contents of \cite[Section 5]{CP} and the second assertion follows from the whole contents of \cite[Section 4]{CP}.
\end{proof}

Birational rigidity of $X$ is a consequence of the above theorem (see \cite{CP} for details): it implies that any birational map $X \ratmap Y$ to a Mori fiber space $Y/T$ is a composite of elementary self links initiated by the Kawamata blow-up of BI centers, and hence $Y \cong X$.

It should be emphasized that a BI center is not always a maximal center.
In view of Corollary \ref{cor:KstBR}, our task is to show $\delta_P (X) > 1$ for any maximal center $P \in X$, hence it is important to distinguish whether a BI center is a maximal center or not. It is not easy to give such a distinction for EI centers. However we can do this for QI and IEI centers.
Let $X = X_d \subset \mbP (a_0,a_1,a_2,a_3,a_4)$ be a member of family \textnumero $\msi$, where $\msi \in \msI_{\BR}^{***}$ and $x_0, \dots, x_4$ are homogeneous coordinates of degree $a_0,a_1,a_2,a_3,a_4$, respectively.    Let $P = P_{x_k}$ be a QI center.
Note that there is an index $j \in \{0,1,2,3,4\} \setminus \{k\}$ such that $d = 2 a_k + a_j$.

\begin{Def}
    Let $P \in X$ be a QI center.
    We say that $P$ is \textit{exceptional} if $x_k^2 x_i \notin \msF$ for any $i \in \{0,1,2,3,4\} \setminus \{k\}$.
    Suppose that $P$ is not exceptional.
    Then, by a suitable choice of homogeneous coordinates, we can write
    \[
    \msF = x_k^2 f + x_k g + h,
    \]
    where $f, g, h \in \mbC [x_0,\dots,x_4]$ are homogeneous polynomials which do not involve the variable $x_k$ such that $x_j \in f$ for some $j \in \{0,1,2,3,4\} \setminus \{k\}$.
    We say that $P$ is \textit{degenerate} (resp.\ \textit{nondegenerate}) if $g$ is divisible by $f$ (resp.\ $g$ is not divisible by $f$).
\end{Def}


\begin{Thm}[{\cite[\S 4.1 and \S 5.2]{CP}}] \label{thm:QIcenter}
    \begin{enumerate}
    \item Let $P \in X$ be a QI center.
    Then $P$ is a maximal center if and only if it is nondegenerate.
    \item Let $X$ be a member of family \textnumero $23$ and let $P \in X$ be the $\frac{1}{3} (1, 1, 2)$ point which is an IEI center.
    Then $P$ is a maximal center if and only if $z^3 w, z^2 t^2 \notin \msF$.
    \end{enumerate}
\end{Thm}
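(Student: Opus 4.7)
The plan is to analyze, for each candidate BI center $P$, the 2-ray game initiated by the Kawamata blowup $\varphi \colon \tilde{X} \to X$ at $P$. The organising principle, recalled in \cite[Lemma 2.5]{Okada3}, is that $P$ is a maximal center if and only if this 2-ray game terminates in a Sarkisov link; by the birational rigidity of $X$ (the main content of \cite{CP}), any such link must in fact be a self link $X \ratmap X$. Hence the theorem reduces to identifying the precise polynomial conditions under which $\tilde{X}$ admits a suitable second extremal contraction whose image is again isomorphic to $X$.

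For the forward direction of part (1), write $\msF = x_k^2 f + x_k g + h$ with $x_k$ not appearing in $f, g, h$ and $x_j \in f$, and assume $f \nmid g$. First I would compute the divisor classes, the effective cone and the movable cone of $\tilde{X}$ from the Kawamata weights, and identify the second extremal ray $R$ opposite to the $\varphi$-exceptional one. Running the 2-ray game along $R$ produces, after finitely many flops, a divisorial contraction whose exceptional locus corresponds to a second Kawamata blowup of an isomorphic copy of $X$. Concretely, viewing $\msF = 0$ as a quadratic in $x_k$ and swapping its two roots produces the quadratic involution $\iota \colon X \ratmap X$, and the nondegeneracy hypothesis $f \nmid g$ is exactly what guarantees that the two roots are generically distinct. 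Thus $\iota$ is nontrivial and $P$ is a maximal center.

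For the converse, assume $g = f g'$. Completing the square with $\tilde{x}_k \coloneq x_k + g'/2$ rewrites the defining equation as $\msF = f \tilde{x}_k^2 + (h - f (g')^2/4)$. I would then show that the second extremal contraction on $\tilde{X}$ becomes divisorial, with image not isomorphic to $X$, so the 2-ray game fails to close up as a self link. To exclude $P$ as a maximal center via some other valuation, I would apply the standard Corti-style test class inequality to any movable $\mcM \sim_{\mbQ} -n K_X$ with $P$ in its base locus: the degenerate shape of $\msF$ forces $\mult_E(\varphi^* \mcM) \le n/r$, whence $(X, \tfrac{1}{n} \mcM)$ is canonical at $P$.

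Part (2) follows by the same template, now with the Kawamata blowup extracting $E \cong \mbP(1,1,2)$ at the $\frac{1}{3}(1,1,2)$ point of a member of family \textnumero $23$. I would single out the graded piece of the Cox ring of $\tilde{X}$ that governs the second extremal ray and check that $z^3 w$ and $z^2 t^2$ are precisely the monomials which, if present in $\msF$, obstruct the 2-ray game from terminating in a copy of $X$; the obstruction is again a square-completion argument, this time adapted to the weights $\frac{1}{3}(1,1,2)$. The main obstacle throughout both parts is the explicit Cox-ring and cone-of-divisors bookkeeping on $\tilde{X}$ combined with careful tracking of flops during the 2-ray game; the resulting case analysis parallels the one carried out in full detail in \cite[\S\S 4--5]{CP}, and we invoke those results directly for the bulk of the verification.
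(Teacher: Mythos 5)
Your proposal is correct and takes essentially the same route as the paper: both arguments normalize the defining polynomial and then reduce to the dichotomy worked out in \cite{CP} --- the explicit quadratic involution/elementary link when the center is nondegenerate (giving a maximal center via the link criterion), and the square-completion plus test-class exclusion in the degenerate case, with the IEI center of family \textnumero 23 handled by the corresponding analysis on pp.~68--69 of \cite{CP} --- which the paper simply cites as Lemma 4.1.1, Lemma 4.1.3 and Theorem 4.1.4. The only points the paper adds beyond your sketch are the bookkeeping remark that those exclusion results of \cite{CP} do cover all QI centers of the relevant families (checked against the tables in \cite{CP}), and the caveat that the untwisting in part (2) is an elliptic rather than quadratic involution, so the ``square-completion'' description there is only heuristic.
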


\begin{proof}
Suppose that $P \in X$ is a non-exceptional QI center.
Then, since $d = 2 a_k + a_j$, we can choose homogeneous coordinates such that
\[
\msF = x_k^2 x_j + x_k g + h,
\]
where $g, h \in \mbC [x_0, \dots, x_4]$ are homogeneous polynomials which do not involve the variable $x_k$.
If $P \in X$ is nondegenerate, then $g$ is not divisible by $x_j$, and hence $P \in X$ is indeed a maximal center by \cite[Lemma 4.1.1]{CP}.
If $P \in X$ is degenerate, then $g$ is divisible by $x_j$ and it follows from \cite[Lemma 4.1.3 and Theorem 4.1.4]{CP} that $P \in X$ is not a maximal center.
Note that \cite[Lemma 4.1.3 and Theorem 4.1.4]{CP} covers all the QI centers as this can be verified by checking the tables in \cite[Section 5]{CP}.
This proves the assertion (1).
The assertion (2) follows from \cite[Pages 68 and 69]{CP}.
\end{proof}

\section{Abban-Zhuang method for weighted hypersurfaces} \label{sect: AZ}

Let $X$ be a quasismooth Fano $3$-fold weighted hypersurface of index $1$ and $A \coloneq -K_X$ the ample generator (class) of $\Cl (X) \cong \mbZ$.
We explain methods to bound local delta invariants of $X$ from below following the Abban-Zhuang method.
Throughout the present section, let $P \in X$ be a point, which is either a smooth point or a terminal cyclic quotient singular point.
We denote by $r_P$ the index of the singularity $P \in X$.
Note that $P$ is a smooth point of $X$ if and only if $r_P = 1$.
In the following, a flag $P \in Z \subset Y \subset X$ is always a flag of irreducible and reduced subvarieties.

\subsection{Flag of type $\mathrm{I}$}

\begin{Def}
    A \textit{flag of type $\mathrm{I}$} is a flag $P \in Z \subset Y \subset X$, where $Y$ is a normal hypersurface of $X$ such that it is quasismooth at $P$ and the pair $(X, Y)$ is plt, and $Z$ is an irreducible smooth curve such that the pair $(Y, Z)$ is plt with the following property:
    \begin{itemize}
        \item there is a hypersurface $H$ of $X$ such that $Z$ is the reduced scheme $(H \cap Y)_{\mathrm{red}}$ of the scheme-theoretic intersection $H \cap Y$.
    \end{itemize}
\end{Def}

Let $l_Y$ and $l_H$ be positive integers such that $Y \in |l_Y A|$ and $H \in |l_H A|$.
Let $e$ be the positive integer such that $H|_Y = e Z$ as a divisor on $Y$.

\begin{Prop} \label{prop:flagIdelta}
    Let the notation and assumption as above.
    Let $P \in Z \subset Y \subset X$ be a flag of type $\mathrm{I}$.
    Then
    \[
    \delta_P (X) \ge \min \left\{ 4 l_Y, \ \frac{4 l_H}{e}, \  \frac{4 e}{r_P l_Y l_H (A^3)} \right\}.
    \]
\end{Prop}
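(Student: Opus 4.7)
The plan is to apply the Abban-Zhuang inductive estimate in Fujita's formulation \cite{AbbanZhuang, Fujita23} to the flag $P \in Z \subset Y \subset X$, obtaining
\[
\delta_P(X) \ge \min \left\{ \frac{A_X(Y)}{S_X(Y)}, \ \frac{A_Y(Z)}{S(W^Y_{\bullet,\bullet}; Z)}, \ \frac{A_{(Z,\Delta_Z)}(P)}{S(W^Z_{\bullet,\bullet,\bullet}; P)} \right\},
\]
where $\Delta_Z$ is the different on $Z$ arising from the iterated adjunction $X \supset Y \supset Z$. The plan is to identify each of these three quotients with one of the bounds in the statement and to show that the positive parts of all relevant divisors are automatically nef, so that the Zariski decompositions needed by Abban-Zhuang are trivial.

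The first quotient equals $4 l_Y$: the plt hypothesis on $(X, Y)$ gives $A_X(Y) = 1$, and $\vol(A - sY) = (1 - s l_Y)^3 (A^3)$ for $s \in [0, 1/l_Y]$ yields $S_X(Y) = 1/(4 l_Y)$. For the second quotient, $H|_Y = eZ$ with $H \sim l_H A$ gives the $\mbQ$-linear equivalence $Z \sim_{\mbQ} (l_H/e) A|_Y$, so the $\mbQ$-class of $Z$ on the normal surface $Y$ is ample; hence $(A - sY)|_Y - tZ$ is nef throughout its pseudo-effective range $t \in [0, (1 - s l_Y)e/l_H]$, with volume $\bigl((1 - s l_Y)e/l_H - t\bigr)^2 Z^2$ where $Z^2 = l_Y l_H^2 (A^3)/e^2$. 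A direct double integration gives $S(W^Y_{\bullet,\bullet}; Z) = e/(4 l_H)$, and since the different on $Y$ vanishes by Lemma~\ref{lem:diff}(1) one has $A_Y(Z) = 1$, producing the second quotient $4 l_H/e$.

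For the third quotient, Lemma~\ref{lem:diff}(2) gives $\Delta_Z = \sum_{Q \in \Sing_Z(Y)} ((r_Q - 1)/r_Q) Q$, so $A_{(Z,\Delta_Z)}(P) = 1/r_P$ (equal to $1$ at smooth points, consistent with $r_P = 1$). The Okounkov body of $A$ adapted to the flag is
\[
\Delta = \{ (s, t, u) \in \mbR^3_{\ge 0} : s \le 1/l_Y,\ t \le (1 - s l_Y)e/l_H,\ u \le \tau_3(s,t) \},
\]
with $\tau_3(s,t) = ((A - sY)|_Y - tZ) \cdot Z = l_Y l_H ((1 - s l_Y)e - t l_H)(A^3)/e^2$, and $S(W^Z_{\bullet,\bullet,\bullet}; P)$ is the average of the $u$-coordinate on $\Delta$. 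Using $\vol(\Delta) = (A^3)/6$, the triple integral $\int_\Delta u\, dV$ evaluates to $l_Y l_H (A^3)^2/(24 e)$, giving $S(W^Z_{\bullet,\bullet,\bullet}; P) = l_Y l_H (A^3)/(4 e)$ and the third quotient $4e/(r_P l_Y l_H (A^3))$. The main obstacle, bypassed here by the flag-of-type-I hypothesis, is to rule out negative Zariski parts in the divisors being integrated: this is automatic because $A$ is ample on $X$ and $Z$ is $\mbQ$-proportional to the ample class $A|_Y$, forcing nefness throughout the relevant ranges.
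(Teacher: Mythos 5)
Your proposal is correct and follows essentially the same route as the paper: both apply the Abban--Zhuang estimate in Fujita's formulation to the flag, observe that all Zariski decompositions are trivial because $Y\sim l_YA$ and $Z\sim_{\mbQ}(l_H/e)A|_Y$ are proportional to the ample class, compute $S_A(Y)=1/(4l_Y)$, $S(V^Y_{\bullet,\bullet};Z)=e/(4l_H)$, $S(W^{Y,Z}_{\bullet,\bullet,\bullet};P)=l_Yl_H(A^3)/(4e)$ by direct integration, and use the different $\ord_P(\Delta_Z)=(r_P-1)/r_P$ to obtain the factor $1/r_P$. The only cosmetic difference is that you phrase the third integral as an Okounkov-body average rather than via Fujita's explicit formula, which amounts to the same computation.
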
 

\begin{proof}
    We first note that $X$ is a Mori dream space which does not admit a small $\mbQ$-factorial modification other than $X$ itself since $X$ is a Fano $3$-fold of Picard number $1$.
    
    We have $\tau_A (Y) = 1/l_Y$.
    Let $-K_X - u Y = P (u) + N (u)$ be the Zariski decomposition with $P (u)$ positive and $N (u)$ negative parts, respectively.
    For $u \in [0, 1/l_Y]$, we have
    \[
    \begin{split}
        P (u) &= - K_X - u Y \sim (1 - l_Y u) A, \\ 
        N (u) &= 0.
    \end{split}
    \]
    We have $Z \sim_{\mbQ} (1/e) H|_Y \sim (l_H/e) A$ and
    \[
    P (u)|_Y - v Z \sim_{\mbQ} (1-l_Y u - l_H v/e) A|_Y.
    \]
    Hence $t (u) \coloneq \tau_{P (u)|_Y} (Z) = e (1 - l_Y u)/l_H$.
    Let $P (u)|_Y - v Z = P (u, v) + N (u, v)$ be the Zariski decomposition with $P (u, v)$ positive and $N (u, v)$ negative parts, respectively.
    For $v \in [0, t (u)]$, we have
    \[
    \begin{split}
        P (u, v) &= P (u)|_Y - u Z \sim_{\mbQ} (1 - l_Y u - l_H v/e) A|_Y, \\
        N (u, v) &= 0.
    \end{split}
    \]
    Let $\Delta_Z$ be the $\mbQ$-divisor on $Z$ such that $(K_Y + Z)|_Z = K_Z + \Delta_Z$.
    By \cite[Corollary 4.18]{Fujita23}, we have
    \[
    \delta_P (X) \ge \min \left\{ \frac{1}{S_A (Y)}, \ \frac{1}{S (V_{\bullet, \bullet}^Y; Z)}, \ \frac{1 - \ord_P (\Delta_Z)}{S (W_{\bullet, \bullet, \bullet}^{Y, Z}; P)} \right\},
    \]
    where $\Delta_Z$ is the divisor on $Z$ such that $(K_Y + Z)|_Z = K_Z + \Delta_Z$.
    The singularity $P \in Y$ is a cyclic quotient singularity of type $\frac{1}{r_P} (a, b)$ for a suitable $a, b$ since $Y$ is quasismooth at $P$. Then we have $\ord_P (\Delta_Z) = (r_P-1)/r_P$ by \cite[Proposition 16.6]{Kol+92} since the pair $(Y, Z)$ is plt.
    
    We have
    \[
    \begin{split}
        S_A (Y) &= \frac{1}{(A^3)} \int_0^{\frac{1}{l_Y}} \vol_X (-K_X - u Y) d u \\
        &= \frac{1}{(A^3)} \int_0^{\frac{1}{l_Y}} (1 - l_Y u)^3 (A^3) d u \\
        &= \frac{1}{4 l_Y}.
    \end{split}
    \]
    By \cite[Theorem 4.8]{Fujita23}, we have
    \[
    \begin{split}
        S (V_{\bullet, \bullet}^Y; Z) 
        &= \frac{3}{(A^3)} \int_0^{\frac{1}{l_Y}} \int_0^{t (u)} \vol_Y (P (u)|_Y - v Z) d v d u \\
        &= \frac{3}{(A^3)} \int_0^{\frac{1}{l_Y}} \int_0^{t (u)} \left(1 - l_Y u - \frac{l_H}{e} v \right)^2 l_Y (A^3) d v d u \\
        &= \frac{e}{4 l_H}.
    \end{split}
    \]
    We have $F_P (W_{\bullet, \bullet, \bullet}^{Y, Z}) = 0$ (see \cite[Definition 4.16]{Fujita23} for the definition) since $N (u, v) = 0$ for $v \in [0, t (u)]$, and by \cite[Theorem 4.17]{Fujita23}, we have
    \[
    \begin{split}
        S (W_{\bullet, \bullet, \bullet}^{Y, Z}; P)
        &= \frac{3}{(A^3)} \int_0^{\frac{1}{l_Y}} \int_0^{t (u)} ((P (u, v) \cdot Z))^2 d v d u \\
        &= \frac{3}{(A^3)} \int_0^{\frac{1}{l_Y}} \int_0^{t (u)} \left( \frac{l_Y l_H}{e} \left(1 - l_Y u - \frac{l_H}{e} v \right) (A^3) \right)^2 d v d u \\
        &= \frac{l_Y l_H (A^3)}{4 e}.
    \end{split}
    \]
    The assertion follows from the above computations.
\end{proof}

\begin{Rem} \label{rem:normal}
In Section \ref{sect: singular points}, we will consider various flags to obtain lower bounds of local delta invariants.
The surfaces $Y$ we will take as a part of such flags are hypersurfaces $Y \in |l_Y A|$ for some $l_Y$ with isolated singularities. 
In particular they are all weighted complete intersections that are regular in codimension $1$.
In this remark we explain the normality of such varieties.

Let $X \subset \mbP (a_0, \dots, a_N)$ be a weighted complete intersection of dimension at least $2$.
Let $U_i \coloneq (x_i \ne 0) \subset X$, $0 \le i \le N$, be  standard open charts of $X$.
Then $U_i$ is the quotient of a complete intersection subvariety $\tilde{U}_i \subset \mbA^N$ by the natural action of the cyclic group of order $a_i$.
The affine variety $\tilde{U}_i$ is Cohen-Macaulay since it is a complete intersection.
In particular it satisfies the Serre's $S_2$ condition.
If $X$ is regular in codimension $1$, then so is $\tilde{U}_i$ for any $i$, which implies that $\tilde{U}_i$ is normal.
Thus $U_i$ are normal since it is the quotient of a normal variety, and so is $X$.
\end{Rem}

\subsection{Flag of type $\mathrm{IIa}$}
\label{sec:flagIIa}

\begin{Def} \label{def:flagIIa}
    A \textit{flag of type $\mathrm{IIa}$} is a flag $P \in \Gamma \subset Y \subset X$ with the following properties: $Y$ is a normal hypersurface in $X$ such that it is quasismooth at $P$ and the pair $(X, Y)$ is plt, $\Gamma$ is an irreducible quasismooth curve such that the pair $(Y, \Gamma)$ is plt and there is a hypersurface $H$ on $X$ such that $H|_Y = m \Gamma + n \Delta$, where $m$ and $n$ are positive integers and $\Delta = \sum_{i=1}^k \Delta_i$ is a sum of distinct prime divisors other than $\Gamma$ satisfying the following properties:
    \begin{enumerate}
        \item $(\Gamma \cdot \Delta_i) = (\Gamma \cdot \Delta_j)$ and $(\Delta_i^2) = (\Delta_j^2)$ for any $i, j$;
        \item $(\Delta_i \cdot \Delta_j) = (\Delta_{i'} \cdot \Delta_{j'})$ for any $i, j, i', j'$ with $i \ne j$ and $i' \ne j'$;
        \item the intersection matrix of $\Delta$ is negative definite.
    \end{enumerate}
\end{Def} 

We explain how to estimate $\delta_P (X)$ by considering a flag of type $\mathrm{IIa}$.
Let $l_Y$ and $l_H$ be positive integers such that $Y \sim l_Y A$ and $H \sim l_H A$.
We have $\tau_A (Y) = 1/l_Y$ and, for the Zariski decomposition $-K_X - u Y = P (u) + N (u)$ with $P (u)$ positive and $N (u)$ negative parts, respectively, we have
\[
\begin{split}
    P (u) &= -K_X - u Y = (1- l_Y u)A, \\
    N (u) &= 0
\end{split}
\]
for $0 \le u \le 1/l_Y$.
To go further, we need to understand the pseudo-effective threshold and the Zariski decomposition of the divisor 
\[
P (u)|_Y - v \Gamma = \left(\frac{m}{l_H} (1 - l_Y u) - v \right) \Gamma + \frac{n}{l_H} (1 - l_Y u) \Delta 
\] 
on $Y$.
We set
\[
\lambda \coloneq (\Gamma^2), \ 
\mu \coloneq - (\Delta^2), \ 
\nu \coloneq (\Gamma \cdot \Delta).
\]

\begin{Lem}
    We have $m \nu > n \mu > 0$.
\end{Lem}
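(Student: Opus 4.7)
The plan is to read off both inequalities from the ampleness of $H$ together with the simple sign facts forced by the negative definiteness of the intersection matrix of $\Delta$.

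First, the inequality $n\mu > 0$ is essentially immediate. By the definition of a flag of type $\mathrm{IIa}$, the integer $n$ is positive. The hypothesis that the intersection matrix of $\Delta = \sum_{i=1}^{k} \Delta_i$ is negative definite means that for the all-ones coefficient vector one has
\[
(\Delta^2) = \sum_{i,j} (\Delta_i \cdot \Delta_j) < 0,
\]
so $\mu = -(\Delta^2) > 0$ and therefore $n\mu > 0$.

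For $m\nu > n\mu$, the key point is that $H \sim l_H A = -l_H K_X$ is ample on $X$, so $H|_Y$ is a nef divisor on the surface $Y$. For each component $\Delta_i$, the divisor $\Delta_i$ on $Y$ is a $1$-dimensional integral subscheme of $X$, and by the projection formula
\[
(H|_Y \cdot \Delta_i)_Y \;=\; (H \cdot \Delta_i)_X \;>\; 0
\]
since $H$ is ample. Summing over $i$ gives $(H|_Y \cdot \Delta) > 0$. On the other hand, expanding $H|_Y = m\Gamma + n\Delta$ and using the definitions of $\nu$ and $\mu$,
\[
(H|_Y \cdot \Delta) \;=\; m (\Gamma \cdot \Delta) + n (\Delta^2) \;=\; m\nu - n\mu.
\]
Combining the two displays yields $m\nu - n\mu > 0$, as required.

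There is no real obstacle here; the only thing to double-check is the passage from the ampleness of $H$ on $X$ to strict positivity of $(H \cdot \Delta_i)_X$, which needs $\Delta_i$ to be an honest irreducible curve on $X$. This is clear because $\Delta_i$ is by assumption a prime divisor on the surface $Y \subset X$. The symmetry conditions (1) and (2) from Definition \ref{def:flagIIa} play no role in this lemma; presumably they will be used later for computing the Zariski decomposition of $P(u)|_Y - v\Gamma$.
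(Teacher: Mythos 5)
Your proof is correct and follows essentially the same route as the paper: $\mu>0$ comes from evaluating the negative definite intersection form at the all-ones vector, and $m\nu-n\mu=(H|_Y\cdot\Delta)>0$ comes from positivity of the ample divisor $H$ against the effective curve $\Delta$ (the paper invokes ampleness of $H|_Y$ directly, you route through the projection formula to $X$ — an immaterial difference). Your closing remark that conditions (1) and (2) of the definition are not needed here is also accurate.
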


\begin{proof}
    We have $\mu > 0$ since the intersection matrix of $\Delta$ is negative definite.
    We have $0 < (H|_Y \cdot n \Delta) = ((m \Gamma + n \Delta) \cdot \Delta) = m \nu - n \mu$ since $H|_Y$ is ample.
\end{proof}

\begin{Lem} \label{lem:smptflagIIa}
    Let $\alpha$ and $\beta$ be real numbers such that $\beta \ge 0$.
    \begin{enumerate}
        \item The divisor $\alpha \Gamma + \beta \Delta$ is nef if and only if $-\frac{\lambda}{\nu} \alpha \le \beta \le \frac{\nu}{\mu} \alpha$.
        \item The divisor $\alpha \Gamma + \beta \Delta$ is pseudo-effective if and only if $\alpha \ge 0$.
    \end{enumerate}
\end{Lem}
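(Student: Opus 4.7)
My plan is to reduce both statements to intersection-number computations on $Y$. The first step is to exploit the symmetry assumptions in Definition \ref{def:flagIIa} to pin down the individual intersection numbers: from $(\Gamma \cdot \Delta_i)$ being independent of $i$ and summing to $\nu$, one gets $(\Gamma \cdot \Delta_i) = \nu/k$, and from $\Delta^2 = -\mu$ together with $(\Delta_i \cdot \Delta_j)$ being constant for $i \ne j$, one gets $(\Delta \cdot \Delta_i) = -\mu/k$. Setting $D \coloneqq \alpha\Gamma + \beta\Delta$, this yields the two key quantities
\[
(D \cdot \Gamma) = \alpha\lambda + \beta\nu, \qquad (D \cdot \Delta_i) = \tfrac{1}{k}(\alpha\nu - \beta\mu).
\]

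For (1), demanding that both of these be non-negative (which is necessary for nefness) gives exactly the stated double inequality, since $\mu,\nu>0$. Conversely, if $0 \le \beta \le (\nu/\mu)\alpha$, then $\alpha \ge 0$, and for any prime divisor $C$ of $Y$ distinct from $\Gamma$ and the $\Delta_i$ we have $(\Gamma \cdot C) \ge 0$ and $(\Delta \cdot C) \ge 0$, so $(D \cdot C) \ge 0$; combined with the non-negativity of $(D \cdot \Gamma)$ and $(D \cdot \Delta_i)$, this establishes nefness.

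For (2), sufficiency is immediate since $D$ is then effective. For necessity, the natural approach is to pair $D$ against a nef divisor that annihilates each $\Delta_i$ but sees $\Gamma$; the candidate is $N \coloneqq \mu\Gamma + \nu\Delta$, for which $(N \cdot \Delta_i) = 0$ and $(N \cdot \Gamma) = \mu\lambda + \nu^2$. Verifying that $N$ is nef reduces to showing $\mu\lambda + \nu^2 > 0$, and the main point of the argument is the Hodge-style identity
\[
m^{2}(\mu\lambda + \nu^{2}) \;=\; \mu \cdot (H|_Y)^{2} + (m\nu - n\mu)^{2},
\]
whose right-hand side is strictly positive from the ampleness of $H|_Y$ and $m\nu > n\mu > 0$. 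Nefness of $N$ against every other prime divisor is automatic from $\mu,\nu > 0$. Pairing then gives $0 \le (D \cdot N) = \alpha(\mu\lambda + \nu^2)$, whence $\alpha \ge 0$. The only non-routine step is identifying this extremal nef divisor $N$ and producing the identity above; once this is in place, both parts are bookkeeping with intersection numbers.
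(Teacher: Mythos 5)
Your proposal is correct and follows essentially the same route as the paper: part (1) is reduced to the signs of $(D\cdot\Gamma)$ and $(D\cdot\Delta_i)$ via the symmetry conditions on the $\Delta_i$, and part (2) pairs $D$ against the same extremal nef divisor $\mu\Gamma+\nu\Delta$. The only (minor) difference is that you justify $\nu^2+\lambda\mu>0$ by the explicit identity $m^2(\mu\lambda+\nu^2)=\mu\,(H|_Y)^2+(m\nu-n\mu)^2$, whereas the paper deduces it from the fact that the ample divisor $H|_Y=m\Gamma+n\Delta$ must lie in the cone described in part (1); both are valid and your identity is arguably the cleaner verification.
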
 

\begin{proof}
    We set $D = \alpha \Gamma + \beta \Delta$.
    The condition $-\frac{\lambda}{\nu} \alpha \le \beta \le \frac{\nu}{\mu} \alpha$ is equivalent to the condition that $(D \cdot \Gamma) \ge 0$ and $(D \cdot \Delta) \ge 0$.
    Hence if $D$ is nef, then $-\frac{\lambda}{\nu} \alpha \le \beta \le \frac{\nu}{\mu} \alpha$.
    Suppose that $-\frac{\lambda}{\nu} \alpha \le \beta \le \frac{\nu}{\mu} \alpha$.
    Note that $\alpha, \beta \ge 0$ since $\beta \ge 0$.
    It follows that $D$ is an effective divisor.
    For $1 \le i, j \le k$, we have $(D \cdot \Delta_i) = (D \cdot \Delta_j)$ by the assumptions (1) and (2) on $\Delta$.
    Hence the inequality $(D \cdot \Delta) \ge 0$ implies $(D \cdot \Delta_i) \ge 0$ for any $i$, and this together with $(D \cdot \Gamma) \ge 0$ imply that $D$ is nef.
    Thus, the assertion (1) is proved.
    
    Suppose that $D$ is pseudo-effective.
    The existence of ample divisor $H|_Y = m \Gamma + n \Delta$ supported on $\Gamma \cup \Delta$ with $n \ge 0$ implies that $-\frac{\lambda}{\nu} < \frac{\nu}{\mu}$, that is, $\nu^2 + \lambda \mu > 0$.
    We have $(D \cdot (\mu \Gamma + \nu \Delta)) = \alpha (\nu^2 + \lambda \mu) \ge 0$ since $\mu \Gamma + \nu \Delta$ is nef by (1).
    This shows $\alpha \ge 0$ and the assertion (2) is proved.
\end{proof}

We set 
\[
P_Y \coloneq \Gamma + \frac{\nu}{\mu} \Delta,
\]
which is a nef divisor on $Y$ such that $(P_Y \cdot \Delta) = 0$.
We have 
\[
t (u) \coloneq \tau_{P (u)|_Y} (\Gamma) = \frac{m}{l_H} (1 - l_Y u)
\]
for $0 \le u \le 1/l_Y$ by (2) of Lemma \ref{lem:smptflagIIa}.

\begin{Lem} \label{lem:smptNZdecIIa}
    Suppose that $0 \le u \le 1/l_Y$.
    Then, for the Zariski decomposition of $P (u)|_Y - v \Gamma = P (u, v) + N (u, v)$ with $P (u, v)$ positive and $N (u, v)$ negative parts, respectively, we have
    \[
    \begin{split}
        P (u, v) &=
        \begin{dcases}
            (t (u) - v) \Gamma + \frac{n t (u)}{m} \Delta, & \left(0 \le v \le \frac{m \nu - n \mu}{m \nu} t (u) \right), \\
            (t (u) - v) P_Y, & \left(\frac{m \nu - n \mu}{m \nu} t (u) \le v \le t (u) \right),
        \end{dcases} \\
        N (u, v) &= 
        \begin{dcases}
            0, & \left(0 \le v \le \frac{m \nu - n \mu}{m \nu} t (u) \right), \\
            \frac{m \nu v - (m \nu - n \mu) t (u)}{m \mu} \Delta, & \left(\frac{m \nu - n \mu}{m \nu} t (u) \le v \le t (u) \right),
        \end{dcases}
    \end{split}
    \]
\end{Lem}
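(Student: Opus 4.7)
The plan is to write down the candidate positive and negative parts on each subinterval of $v$ and verify the four defining properties of the Zariski decomposition. The starting point is the numerical equivalence
\[
P(u)|_Y - v\Gamma \sim_{\mbQ} (t(u) - v)\Gamma + \frac{n\, t(u)}{m}\Delta,
\]
which is immediate from $P(u) \sim (1-l_Y u)A$, from $H|_Y = m\Gamma + n\Delta$, and from the definition of $t(u)$.

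For $0 \le v \le \frac{m\nu - n\mu}{m\nu}t(u)$, I will apply Lemma~\ref{lem:smptflagIIa}(1) with $\alpha = t(u) - v$ and $\beta = \frac{n\, t(u)}{m}$. The upper bound $\beta \le \frac{\nu}{\mu}\alpha$ rearranges precisely to the range condition on $v$, while the lower bound $-\frac{\lambda}{\nu}\alpha \le \beta$ is automatic: it is trivial when $\lambda \ge 0$, and when $\lambda < 0$ one uses the ampleness of $H|_Y$, which forces $m\lambda + n\nu > 0$ and hence $-\lambda/\nu < n/m$, so that $-\frac{\lambda}{\nu}(t(u)-v) \le -\frac{\lambda}{\nu}t(u) < \frac{n\, t(u)}{m}$. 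Thus the divisor is nef and coincides with its own positive part, so $N(u,v) = 0$ on this subinterval.

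For $\frac{m\nu - n\mu}{m\nu}t(u) \le v \le t(u)$ I will take $N(u, v) = c\Delta$ with $c$ determined by imposing $P(u, v) \cdot \Delta = 0$; solving for $c$ produces exactly the stated formula, and a direct algebraic simplification rewrites the positive part as $(t(u) - v)P_Y$. The four Zariski decomposition conditions are then checked in turn. Effectivity of $N(u, v)$ follows from $v \ge \frac{m\nu - n\mu}{m\nu}t(u)$. Nefness of $P(u, v)$ follows by applying Lemma~\ref{lem:smptflagIIa}(1) in the boundary case $\beta = \frac{\nu}{\mu}\alpha$ to $P_Y$. The orthogonality $P(u, v) \cdot \Delta_i = 0$ for every irreducible component $\Delta_i$ of $\Delta$ is derived from Definition~\ref{def:flagIIa}(1)(2), whose symmetry forces the intersection numbers $(P_Y \cdot \Delta_i)$ to be independent of $i$ and hence equal to $\frac{1}{k}(P_Y \cdot \Delta) = 0$. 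Finally, $\Supp N(u, v) = \Supp \Delta$ has negative definite intersection matrix by Definition~\ref{def:flagIIa}(3), and continuity of the decomposition at $v = \frac{m\nu - n\mu}{m\nu}t(u)$ is immediate since $c$ vanishes there.

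The only conceptually nontrivial point, as opposed to routine algebraic bookkeeping, is the verification of global nefness of $P_Y$ on $Y$, not merely of its intersection with $\Gamma$ and the $\Delta_i$. I expect this to be exactly what Lemma~\ref{lem:smptflagIIa}(1) delivers: the existence of an ample divisor $H|_Y$ supported on $\Gamma \cup \Delta$, combined with the symmetry among the $\Delta_i$, should reduce global nefness to the intersection tests against $\Gamma$ and the $\Delta_i$ only. Granting that lemma, the proof is a short computation.
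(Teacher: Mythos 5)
Your proposal is correct and follows essentially the same route as the paper: the paper's proof is a two-line pointer to Lemma~\ref{lem:smptflagIIa}, to the fact that conditions (1) and (2) of Definition~\ref{def:flagIIa} force $(P_Y \cdot \Delta_i) = 0$ for every $i$, and to the negative definiteness of the intersection matrix of $\Delta$ — exactly the three ingredients you verify. Your write-up simply makes explicit the range computations and the check of the Zariski axioms that the paper leaves to the reader.
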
 

\begin{proof}
    Note that $(P_Y \cdot \Delta) = 0$ and this implies $(P_Y \cdot \Delta_i) = 0$  for any $i = 1, \dots, k$ by (1) and (2) of Definition \ref{def:flagIIa}.
    The assertion follows from Lemma \ref{lem:smptflagIIa} and the assumption that the intersection matrix of $\Delta$ is negative definite.
\end{proof}

\begin{Prop} \label{prop:flagIIadelta}
    Let $P \in \Gamma \subset Y \subset X$ be a flag of type $\mathrm{IIa}$.
    Then we have
    \[
    \delta_P (X) \ge \min \left\{ 4 l_Y, \ \frac{1}{S (V_{\bullet, \bullet}^Y; \Gamma)}, \ \frac{1}{r_P S (W_{\bullet, \bullet, \bullet}^{Y, \Gamma}; P)} \right\}.
    \]
    The numbers $S (V_{\bullet, \bullet}^Y; \Gamma)$ and $S (W_{\bullet, \bullet, \bullet}^{Y, \Gamma}; P)$ are computed as follows:
    \[
    \begin{split}
        S (V_{\bullet, \bullet}^Y;\Gamma) &= \frac{3}{(A^3)} \int_0^{\frac{1}{l_Y}} \left( \int_0^{\frac{m \nu - n\mu}{m \nu} t(u)} \left(C t(u)^2 - \frac{2(n \nu + m \lambda)}{m} t(u) v + \lambda v^2 \right) d v \right. \\
        & \hspace{47mm} \left. + \int_{\frac{m \nu- n\mu}{m \nu}t(u)}^{t(u)} \frac{\nu^2+\lambda \mu}{\mu} (t(u)-v)^2 d v \right) d u, \\
        S (W_{\bullet, \bullet, \bullet}^{Y, \Gamma}; P) &= \frac{3}{(A^3)} \int_0^{\frac{1}{l_Y}} \left(\int_0^{\frac{m \nu-n\mu}{m \nu}t(u)} \left(\frac{n\nu + m \lambda}{m} t (u) - \lambda v \right)^2 d v \right. \\
        & \hspace{15mm} \left. + \int_{\frac{m \nu-n\mu}{m \nu}t(u)}^{t(u)} \left(\frac{\nu^2+\lambda\mu}{\mu} \right)^2 (t(u)-v)^2 d v \right) d u + F_P (W_{\bullet, \bullet, \bullet}^{Y,\Gamma}),
    \end{split}
    \]
    where
    \[
    \begin{split}
        C &= \frac{2 m n \nu + m^2 \lambda - n^2 \mu}{m^2}, \\
        F_P (W_{\bullet, \bullet, \bullet}^{Y,\Gamma}) &= \frac{n^3}{4 l_Y l_H^3 (A^3)} \cdot \frac{\mu (\nu^2 + \lambda \mu)}{\nu^2} \cdot \ord_P (\Delta|_{\Gamma}).
    \end{split}
    \]
\end{Prop}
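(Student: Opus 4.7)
The plan is to apply Fujita's formulation \cite[Corollary 4.18]{Fujita23} of the Abban--Zhuang method to the flag $P\in\Gamma\subset Y\subset X$, which is available thanks to the plt-ness of $(X,Y)$ and $(Y,\Gamma)$. The resulting bound is
\[
\delta_P(X)\ \ge\ \min\left\{\frac{1}{S_A(Y)},\ \frac{1}{S(V^Y_{\bullet,\bullet};\Gamma)},\ \frac{1-\ord_P(\Delta_\Gamma)}{S(W^{Y,\Gamma}_{\bullet,\bullet,\bullet};P)}\right\},
\]
where $\Delta_\Gamma$ is the different defined by $(K_Y+\Gamma)|_\Gamma=K_\Gamma+\Delta_\Gamma$.

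Two of the ingredients are immediate. Since only the class $Y\sim l_YA$ enters the outer Zariski decomposition of $-K_X-uY$, the same calculation as in the proof of Proposition \ref{prop:flagIdelta} gives $S_A(Y)=1/(4l_Y)$. Since $Y$ is quasismooth at $P$, the singularity $P\in Y$ is a cyclic quotient of index $r_P$; together with the plt-ness of $(Y,\Gamma)$, \cite[Proposition 16.6]{Kol+92} yields $\ord_P(\Delta_\Gamma)=(r_P-1)/r_P$, so the third term simplifies to $1/(r_P\,S(W^{Y,\Gamma}_{\bullet,\bullet,\bullet};P))$. Combined with the pseudoeffective threshold $\tau_A(Y)=1/l_Y$, this is what yields the $4l_Y$ in the $\min$.

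The remaining two quantities are obtained from the two-piece description of $P(u,v)+N(u,v)$ in Lemma \ref{lem:smptNZdecIIa}. Setting $v_0(u):=\tfrac{m\nu-n\mu}{m\nu}t(u)$, on $[0,v_0(u)]$ the negative part vanishes and $P(u,v)=(t(u)-v)\Gamma+(nt(u)/m)\Delta$, while on $[v_0(u),t(u)]$ one has $P(u,v)=(t(u)-v)P_Y$ and $N(u,v)=\tfrac{m\nu v-(m\nu-n\mu)t(u)}{m\mu}\Delta$, where $P_Y=\Gamma+(\nu/\mu)\Delta$. Direct expansion using the intersection numbers $\lambda=(\Gamma^2)$, $\mu=-(\Delta^2)$, $\nu=(\Gamma\cdot\Delta)$ yields the stated piecewise expressions for $\vol_Y(P(u)|_Y-v\Gamma)=P(u,v)^2$ and for $(P(u,v)\cdot\Gamma)^2$. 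Plugging these into \cite[Theorems 4.8 and 4.17]{Fujita23} produces the displayed formulas for $S(V^Y_{\bullet,\bullet};\Gamma)$ and for the principal part of $S(W^{Y,\Gamma}_{\bullet,\bullet,\bullet};P)$.

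The last task is to evaluate the correction term $F_P(W^{Y,\Gamma}_{\bullet,\bullet,\bullet})$ coming from the negative part. Only the subinterval $[v_0(u),t(u)]$ contributes, and since $N(u,v)$ is a scalar multiple of $\Delta$ there, $\ord_P(N(u,v)|_\Gamma)$ equals that scalar times $\ord_P(\Delta|_\Gamma)$; moreover $(P(u,v)\cdot\Gamma)=(t(u)-v)(\nu^2+\lambda\mu)/\mu$ on this range. After the substitution $w=v-v_0(u)$, the double integral defining $F_P$ in \cite[Definition 4.16]{Fujita23} separates and reduces to evaluating $\int_0^{1/l_Y}t(u)^3\,du=m^3/(4l_Yl_H^3)$, which after collecting constants gives the stated closed form. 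The main technical obstacle is not conceptual but rather the bookkeeping of the parameters $(m,n,\lambda,\mu,\nu,l_Y,l_H)$ through the two-piece integrations; the geometric content is fully encoded in the Zariski decomposition of Lemma \ref{lem:smptNZdecIIa} and in the plt-ness statements that justify invoking \cite[Corollary 4.18]{Fujita23}.
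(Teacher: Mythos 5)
Your proposal is correct and follows essentially the same route as the paper: invoke \cite[Corollary 4.18]{Fujita23} for the flag, identify $\ord_P(\Delta_\Gamma)=(r_P-1)/r_P$ via \cite[Proposition 16.6]{Kol+92} to get the $r_P$ factor, and then integrate the two-piece Zariski decomposition of Lemma \ref{lem:smptNZdecIIa}, with the $F_P$ term handled exactly as in the paper (including the reduction to $\int_0^{1/l_Y}t(u)^3\,du=m^3/(4l_Yl_H^3)$). No gaps.
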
 

\begin{proof}
    By \cite[Corollary 4.18]{Fujita23}, we have the estimate
    \[
    \delta_P (X) \ge \min \left\{ \frac{1}{S_A (Y)}, \ \frac{1}{S (V_{\bullet, \bullet}^Y; \Gamma)}, \ \frac{1}{r_P S (W_{\bullet, \bullet, \bullet}^{Y, \Gamma}; P)} \right\},
    \]
    where $S_A (Y)$, $S \left(V_{\bullet, \bullet}^Y; \Gamma \right)$ and $S (W_{\bullet, \bullet, \bullet}^{Y, \Gamma}; P)$ can be computed as follows:
    \[
    \begin{split}
        S_{A} (Y) 
        &= \frac{1}{(A^3)} \int_0^{\frac{1}{l_Y}} \vol_X (A - u Y) d u = \int_0^{\frac{1}{l_Y}} (1 - l_Y u)^3 d u = \frac{1}{4 l_Y}, \\
        S \left(V_{\bullet, \bullet}^Y; \Gamma \right) 
        &= \frac{3}{(A^3)} \int_0^{\frac{1}{l_Y}} \int_0^{t (u)} \vol_Y (P (u)|_Y - v \Gamma) d v d u \\
        &= \frac{3}{(A^3)} \int_0^{\frac{1}{l_Y}} \left( \int_0^{\frac{m \nu-n\mu}{m \nu} t (u)} \left( (t (u)-v) \Gamma + \frac{n t (u)}{m} \Delta \right)^2 d v \right.  \\ 
        & \hspace{45mm} \left. + \int_{\frac{m \nu-n\mu}{m \nu} t (u)}^{t (u)} (t (u)-v)^2 ({P_Y}^2) d v \right) d u, \\
        S \left(W_{\bullet, \bullet, \bullet}^{Y, \Gamma}; P \right) 
        &= \frac{3}{(A^3)} \int_0^{\frac{1}{l_Y}} \int_0^{t (u)} (P (u, v) \cdot \Gamma)^2 d v d u \\
        &= \frac{3}{(A^3)} \int_0^{\frac{1}{l_Y}} \left( \int_0^{\frac{m \nu - n\mu}{m \nu} t (u)} \left(((t (u) - v) (\Gamma^2) + \frac{n t (u)}{m} (\Delta \cdot \Gamma) \right)^2 d v \right. \\
        & \hspace{20mm} \left. + \int_{\frac{m \nu - n\mu}{m \nu} t (u)}^{t (u)} (t (u) - v)^2 (P_Y \cdot \Gamma)^2 d v \right) d u + F_P \left(W_{\bullet, \bullet, \bullet}^{Y, \Gamma} \right),
    \end{split}
    \]
    where
    \[
    \begin{split}
        & F_P \left(W_{\bullet, \bullet, \bullet}^{Y, \Gamma} \right) \\
        &= \frac{6}{(A^3)} \int_0^{\frac{1}{l_Y}} \int_{\frac{m \nu - n \mu}{m \nu} t (u)}^{t (u)} (P (u, v) \cdot \Gamma) \cdot \ord_P (N (u, v)|_{\Gamma}) d v d u \\
        &=\frac{6 (P_Y \cdot \Gamma) \ord_P (\Delta|_{\Gamma})}{(A^3)} \int_0^{\frac{1}{l_Y}}\int_{\frac{m \nu - n \mu}{m \nu} t (u)}^{t (u)} (t (u) -v) \cdot \frac{m \nu v - (m \nu - n\mu)t (u)}{m \mu} d v d u \\
        &= \frac{n^3}{4 l_Y l_H^3 (A^3)} \cdot \frac{\mu (\nu^2 + \lambda \mu)}{\nu^2} \cdot \ord_P (\Delta|_{\Gamma}). 
    \end{split}
    \]
    The assertions follow from direct computations.
\end{proof}

Therefore we can estimate $\delta_P (X)$ as above once we know the intersection numbers $\lambda = (\Gamma^2)$, $\mu = -(\Delta^2)$ and $\nu = (\Gamma \cdot \Delta)$.
We explain how to compute them.

\begin{Rem}
    Let $X = X_d \subset \mbP (a_0, \dots, a_4)$ be a quasismooth Fano $3$-fold, $Y \subset X$ a normal hypersurface and $\Gamma \subset Y$ an irreducible and reduced curve.
    We assume that $\Gamma$ is quasismooth and $Y$ is quasismooth along $\Gamma$.
    Let $P_1, \dots, P_m$ be the singular points of $Y$ along $\Gamma$ and let $r_i$ be the index of the singularity $P_i \in X$.
    The pair $(Y, \Gamma)$ is plt along $\Gamma$ and we have 
    \[
    (K_Y + \Gamma)|_{\Gamma} = K_{\Gamma} + \sum_{i=1}^m \frac{r_i - 1}{r_i} P_i.
    \]
    Then we have
    \[
    (\Gamma^2) = - (K_Y \cdot \Gamma) + (2 p_a (\Gamma) - 2) + \sum_{i=1}^m \frac{r_i - 1}{r_i}.
    \]
\end{Rem}

\subsection{Flag of type $\mathrm{IIb}$}

\begin{Def}
    A \textit{flag of type $\mathrm{IIb}$} is a flag $P \in \Gamma \subset Y \subset X$ with the following properties: $Y$ is a normal hypersurface of $X$ such that it is quasismooth at $P$ and the pair $(X, Y)$ is plt, $\Gamma$ is an irreducible quasismooth curve such that the pair $(Y, \Gamma)$ is plt and there is a hypersurface $H$ on $X$ such that $H|_Y = m \Gamma + n \Delta$, where $m, n$ are positive integers, $\Delta = \sum_{i = 1}^k \Delta_i$ is a sum of distinct prime divisors other than $\Gamma$ satisfying the following properties:
    \begin{enumerate}
        \item $(\Gamma \cdot \Delta_i) = (\Gamma \cdot \Delta_j)$ and $(\Delta_i^2) = (\Delta_j^2)$ for any $i, j$;
        \item $(\Delta_i \cdot \Delta_j) = (\Delta_{i'} \cdot \Delta_{j'})$ for any $i, j, i', j'$ such that $i \ne j$ and $i' \ne j'$;
        \item $(\Delta^2) = 0$.
    \end{enumerate}
\end{Def}

We explain how to estimate $\delta_P (X)$ by considering a flag of type $\mathrm{IIb}$.

Let $l_Y$ and $l_H$ be positive integers such that $Y \sim l_Y A$ and $H \sim l_H A$.
We have $\tau_A (Y) = 1/l_Y$ and, for the Zariski decomposition $-K_X - u Y = P (u) + N (u)$ with $P (u)$ positive and $N (u)$ negative parts, respectively, we have
\[
\begin{split}
    P (u) &= -K_X - u Y = (1-l_Y u)A, \\
    N (u) &= 0,
\end{split}
\]
for $0 \le u \le 1/l_Y$.
We need to understand the pseudo-effective threshold and the Zariski decomposition of the divisor
\[
P (u)|_Y - v \Gamma = \left(\frac{m}{l_H} (1-l_Y u) - v \right) \Gamma + \frac{n}{l_H}(1 - l_Y u) \Delta
\]
on $Y$.
We set
\[
\lambda \coloneq (\Gamma^2), \ 
\nu \coloneq (\Gamma \cdot \Delta).
\]

\begin{Lem} \label{lem:smptnumIIb}
    We have $\nu > 0$ and $n \nu > - m \lambda$.
\end{Lem}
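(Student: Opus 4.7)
The plan is to exploit the ampleness of $H|_Y$ together with the hypothesis $(\Delta^2)=0$. Since $A=-K_X$ is ample on $X$ and $l_H>0$, the divisor $H\sim l_H A$ is ample on $X$, and therefore its restriction $H|_Y=m\Gamma+n\Delta$ is ample on $Y$ (noting that $Y\not\subset H$ is implicit from the assumption that $H|_Y$ decomposes as an effective combination of $\Gamma$ and components of $\Delta$).

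First I would intersect $H|_Y$ with the effective nonzero $1$-cycle $\Delta$. By ampleness of $H|_Y$ we have
\[
0 < (H|_Y \cdot \Delta) = m\,(\Gamma\cdot\Delta) + n\,(\Delta^2) = m\nu,
\]
where the last equality uses the hypothesis $(\Delta^2)=0$. Since $m>0$, this forces $\nu>0$.

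Next, I would intersect $H|_Y$ with the effective nonzero $1$-cycle $\Gamma$. Again by ampleness,
\[
0 < (H|_Y \cdot \Gamma) = m\,(\Gamma^2) + n\,(\Gamma\cdot\Delta) = m\lambda + n\nu,
\]
which is exactly the inequality $n\nu > -m\lambda$.

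There is no genuine obstacle here; both inequalities reduce to the single observation that the restriction of an ample divisor has positive intersection with any effective curve. The only mild subtlety is to confirm that $\Delta$ defines a nonzero class so that ampleness yields strict inequality; this is guaranteed because the $\Delta_i$ are prime divisors on $Y$ appearing with positive coefficient $n$ in the effective decomposition of $H|_Y$.
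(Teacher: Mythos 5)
Your proof is correct, and for the second inequality it coincides exactly with the paper's argument: intersect the ample divisor $H|_Y=m\Gamma+n\Delta$ with $\Gamma$ to get $m\lambda+n\nu>0$. For the first claim $\nu>0$ you take a slightly different (and arguably cleaner) route. The paper deduces $\nu>0$ from the fact that the support of the ample divisor $H|_Y$ is connected, so $\Gamma$ must meet $\Delta$; combined with the symmetry hypothesis $(\Gamma\cdot\Delta_i)=(\Gamma\cdot\Delta_j)$ this forces $(\Gamma\cdot\Delta)>0$. You instead intersect $H|_Y$ directly with $\Delta$ and use the type-$\mathrm{IIb}$ hypothesis $(\Delta^2)=0$ to get $0<(H|_Y\cdot\Delta)=m\nu$. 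Both are one-line consequences of ampleness; your version has the small advantage of invoking only numerical data already listed in the definition of a flag of type $\mathrm{IIb}$ (in particular $(\Delta^2)=0$, which the paper's proof of this lemma does not use), whereas the paper's version would also work verbatim in the type-$\mathrm{IIa}$ setting where $(\Delta^2)<0$. Your parenthetical remarks about $Y\not\subset H$ and $\Delta$ being a nonzero effective cycle are exactly the right points to check, so there is no gap.
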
 

\begin{proof}
    The divisor $H|_Y = m \Gamma + n \Delta$ is ample on $Y$.
    This in particular implies that the support of $\Gamma + \Delta$ is connected, which implies that $\nu > 0$.
    Moreover we have $((m \Gamma + n \Delta) \cdot \Gamma) = m \lambda + n \nu > 0$.
\end{proof}

\begin{Lem} \label{lem:smptflagIIb}
    Let $\alpha$ and $\beta$ be rational numbers such that $\beta \ge 0$.
    \begin{enumerate}
        \item The divisor $\alpha \Gamma + \beta \Delta$ is nef if and only if $\alpha \ge 0$ and $\beta \ge - \frac{\lambda}{\nu} \alpha$.
        \item The divisor $\alpha \Gamma + \beta \Delta$ is pseudo-effective if and only if $\alpha \ge 0$.
    \end{enumerate}
\end{Lem}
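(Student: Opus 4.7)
The plan is to follow the template of Lemma \ref{lem:smptflagIIa}, but where negative-definiteness of $\Delta$ was used there, I would instead exploit the hypothesis $(\Delta^2)=0$, which makes $\Delta$ itself nef on $Y$. Writing $D=\alpha\Gamma+\beta\Delta$, the symmetry conditions (1) and (2) on $\Delta$ force $(D\cdot\Delta_i)$ to be independent of $i$, so $(D\cdot\Delta_i)=\tfrac{1}{k}(D\cdot\Delta)$; using $(\Delta^2)=0$ this equals $\alpha\nu/k$, while $(D\cdot\Gamma)=\alpha\lambda+\beta\nu$.

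For (1), if $D$ is nef then both intersection numbers above are nonnegative, and since $\nu>0$ by Lemma \ref{lem:smptnumIIb}, this immediately yields $\alpha\ge 0$ and $\beta\ge -\lambda\alpha/\nu$. Conversely, assuming these two inequalities together with $\beta\ge 0$, the divisor $D$ is effective, so $(D\cdot C)\ge 0$ holds automatically for any irreducible curve $C$ not contained in $\Gamma\cup\Supp(\Delta)$; nonnegativity on $\Gamma$ and each $\Delta_i$ has just been verified from the displayed formulas, so $D$ is nef. The boundary case $\alpha=0$ causes no issue, since then $D=\beta\Delta$ with $(D\cdot\Gamma)=\beta\nu\ge 0$ and $(D\cdot\Delta_i)=0$.

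For (2), sufficiency is immediate, as $\alpha,\beta\ge 0$ renders $D$ effective and therefore pseudo-effective. For necessity, the key observation is that $\Delta$ itself is nef on $Y$: indeed $(\Delta\cdot\Gamma)=\nu>0$ by Lemma \ref{lem:smptnumIIb}, and $(\Delta\cdot\Delta_i)=(\Delta^2)/k=0$ by the symmetry and hypothesis (3). Since on a normal projective surface the intersection of a pseudo-effective $\mbQ$-divisor with a nef divisor is nonnegative (apply Zariski decomposition to $D$ and use that each component of the negative part meets a nef divisor nonnegatively), we obtain $0\le(D\cdot\Delta)=\alpha\nu$, whence $\alpha\ge 0$.

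I do not anticipate any substantive obstacle; the step that carries the content of the lemma is the recognition that $(\Delta^2)=0$, together with the ampleness of $H|_Y=m\Gamma+n\Delta$, promotes $\Delta$ from a contractible boundary divisor into a genuine nef divisor, which is precisely what collapses the pseudo-effective cone from a wedge (as in the IIa case) down to the half-plane $\alpha\ge 0$.
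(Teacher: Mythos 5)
Your proof is correct and follows essentially the same route as the paper: the paper also verifies nefness by testing $D$ against $\Gamma$ and the $\Delta_i$ using the symmetry hypotheses, and its proof of (2) (left as "similar to Lemma \ref{lem:smptflagIIa}") amounts to pairing $D$ with the nef divisor $\Delta$ — exactly the degenerate $\mu=0$ specialization of the divisor $\mu\Gamma+\nu\Delta$ used in the type $\mathrm{IIa}$ case, which is what you do. No gaps.
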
 

\begin{proof}
    We set $D = \alpha \Gamma + \beta \Delta$.
    We prove assertion (1).
    The conditions $\beta \ge - \frac{\lambda}{\nu} \alpha$ and $\alpha \ge 0$ are equivalent to the conditions that $(D \cdot \Gamma) \ge 0$ and $(D \cdot \Delta) \ge 0$.
    If $\alpha \ge 0$ and $\beta \ge - \frac{\lambda}{\nu} \alpha$, then it is clear that $D$ is nef since $\beta \ge 0$, $(D \cdot \Gamma) \ge 0$ and $(D \cdot \Delta_i) \ge 0$ for any $i$.
    The rest of the proof of (1) and the proof of (2) are similar to those of Lemma \ref{lem:smptflagIIa} and we leave them to readers.
\end{proof}

We have
\[
t (u) \coloneq \tau_{P (u)|_Y} (\Gamma) = \frac{m}{l_H} (1 - l_Y u),
\]
for $0 \le u \le 1/l_Y$ by (2) of Lemma \ref{lem:smptflagIIb}.

\begin{Lem} \label{lem:smptNZdecIIb}
    Suppose that $0 \le u \le 1/l_Y$.
    Then, for the Zariski decomposition $P (u)|_Y - v \Gamma = P (u, v) + N (u, v)$ with $P (u, v)$ positive and $N (u, v)$ negative parts, respectively, we have
    \[
    \begin{split}
        P (u, v) &= (t (u) - v) \Gamma + \frac{n}{m} t (u) \Delta, \\
        N (u, v) &= 0,
    \end{split}
    \]
    for $0 \le v \le t (u)$.
\end{Lem}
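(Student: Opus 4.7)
The plan is to verify that the effective divisor on the right-hand side is already nef, so that the Zariski decomposition is trivial. First I would substitute $t(u) = \frac{m}{l_H}(1-l_Y u)$ and rewrite
\[
P(u)|_Y - v\Gamma = (t(u)-v)\Gamma + \frac{n}{m}t(u)\Delta,
\]
observing that $\frac{n}{l_H}(1-l_Y u) = \frac{n}{m}t(u)$. For $0 \le v \le t(u)$ both coefficients are non-negative, so this is an effective $\mathbb{Q}$-divisor supported on $\Gamma \cup \mathrm{Supp}(\Delta)$.

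Next I would invoke Lemma \ref{lem:smptflagIIb}(1) with $\alpha = t(u)-v \ge 0$ and $\beta = \frac{n}{m}t(u) \ge 0$. Nefness reduces to the single inequality $\beta \ge -\frac{\lambda}{\nu}\alpha$, i.e.\
\[
\frac{n}{m}\nu\,t(u) + \lambda\,(t(u)-v) \ge 0.
\]
If $\lambda \ge 0$ this is clear. If $\lambda < 0$, the left-hand side is a linear function of $v$ that is minimized at $v = 0$; there it equals $t(u)\!\left(\frac{n\nu}{m} + \lambda\right)$, which is non-negative by Lemma \ref{lem:smptnumIIb}, since that lemma gives $n\nu > -m\lambda$. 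Hence the displayed divisor is nef for every $v \in [0, t(u)]$.

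Finally, I would appeal to the uniqueness of the Zariski decomposition on the normal projective surface $Y$: a nef and effective divisor is its own positive part, with zero negative part. Since the divisor $(t(u)-v)\Gamma + \frac{n}{m}t(u)\Delta$ is nef, effective, and pseudo-effective (consistent with Lemma \ref{lem:smptflagIIb}(2), because $\alpha = t(u)-v \ge 0$), setting $P(u,v)$ equal to it and $N(u,v) = 0$ yields the asserted decomposition.

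I expect no serious obstacle here; the only point requiring attention is recognizing that the key nefness inequality degenerates to its boundary case precisely at $v=0$ and is controlled exactly by the ampleness-derived inequality $m\lambda + n\nu > 0$ from Lemma \ref{lem:smptnumIIb}. In particular, the proof is structurally simpler than the type $\mathrm{IIa}$ case (Lemma \ref{lem:smptNZdecIIa}) because the condition $(\Delta^2) = 0$ prevents $\Delta$ from ever being extracted into a negative part.
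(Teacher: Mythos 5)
Your proposal is correct and follows essentially the same route as the paper: the paper's proof likewise observes that $(t(u)-v)\Gamma + \tfrac{n}{m}t(u)\Delta$ is nef for $0 \le v \le t(u)$ by combining Lemmas \ref{lem:smptnumIIb} and \ref{lem:smptflagIIb}, and concludes that the Zariski decomposition is trivial. You have merely written out the verification of the nefness inequality (minimization at $v=0$, controlled by $m\lambda + n\nu > 0$) that the paper leaves implicit.
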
 

\begin{proof}
    By Lemmas \ref{lem:smptnumIIb} and \ref{lem:smptflagIIb}, the divisor $P (u)|_Y - v \Gamma = (t (u) - v) \Gamma +\frac{n}{m} t (u) \Delta$ is nef for $0 \le v \le t (u)$, which proves the assertion.
\end{proof}

\begin{Prop} \label{prop:flagIIbdelta}
   Let $P \in \Gamma \subset Y \subset X$ be a flag of type $\mathrm{IIb}$.
   Then
   \[
   \delta_P \ge \min \left\{ 4 l_Y, \ \frac{4 l_Y l_H^3 (A^3)}{m^2 (3 n \nu + m \lambda)}, \ \frac{4 l_Y l_H^3 (A^3)}{r_P m (m^2 \lambda^2 + 3mn \lambda \nu + 3 n^2 \nu^2)} \right\}.
   \] 
\end{Prop}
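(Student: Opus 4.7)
The plan is to follow the same framework used in the proof of Proposition~\ref{prop:flagIIadelta} for flags of type $\mathrm{IIa}$, but with significant simplifications thanks to the hypothesis $(\Delta^2) = 0$ and the absence of a negative part in the Zariski decomposition of $P(u)|_Y - v\Gamma$.

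First, I would invoke the Abban--Zhuang estimate in the form given by \cite[Corollary 4.18]{Fujita23}, which reads
\[
\delta_P(X) \ge \min \left\{ \frac{1}{S_A(Y)}, \ \frac{1}{S(V_{\bullet,\bullet}^Y;\Gamma)}, \ \frac{1 - \ord_P(\Delta_\Gamma)}{S(W_{\bullet,\bullet,\bullet}^{Y,\Gamma};P)} \right\},
\]
where $\Delta_\Gamma$ is the different satisfying $(K_Y + \Gamma)|_\Gamma = K_\Gamma + \Delta_\Gamma$. Since $(X,Y)$ and $(Y,\Gamma)$ are plt and $\Gamma$ is quasismooth with $P \in \Gamma$ of index $r_P$ in $Y$, \cite[Proposition 16.6]{Kol+92} gives $\ord_P(\Delta_\Gamma) = (r_P-1)/r_P$, so the last numerator is $1/r_P$. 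The standard computation $\int_0^{1/l_Y}(1-l_Y u)^3 du = 1/(4 l_Y)$ yields $S_A(Y) = 1/(4 l_Y)$, producing the first term $4 l_Y$ in the minimum.

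Next, I would use Lemma~\ref{lem:smptNZdecIIb}: for $0 \le u \le 1/l_Y$ and $0 \le v \le t(u) = m(1-l_Y u)/l_H$, one has $P(u,v) = (t(u)-v)\Gamma + (n/m) t(u) \Delta$ and $N(u,v) = 0$. Expanding using $(\Gamma^2) = \lambda$, $(\Gamma\cdot\Delta) = \nu$, $(\Delta^2) = 0$ gives
\[
\vol_Y(P(u)|_Y - v\Gamma) = (P(u,v))^2 = \lambda (t(u)-v)^2 + \tfrac{2 n \nu}{m} (t(u) - v) t(u),
\]
and integrating first in $v$ over $[0,t(u)]$ produces $\frac{3n\nu + m\lambda}{3m}\, t(u)^3$. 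Using $\int_0^{1/l_Y} t(u)^3\, du = m^3/(4 l_Y l_H^3)$, I obtain
\[
S(V_{\bullet,\bullet}^Y;\Gamma) = \frac{m^2(3n\nu + m\lambda)}{4 l_Y l_H^3 (A^3)},
\]
which gives the second term of the claimed minimum upon reciprocation.

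Finally, since $N(u,v) \equiv 0$, the boundary term $F_P(W_{\bullet,\bullet,\bullet}^{Y,\Gamma})$ vanishes and \cite[Theorem 4.17]{Fujita23} reduces to integrating $(P(u,v)\cdot\Gamma)^2$. Expanding
\[
(P(u,v) \cdot \Gamma)^2 = \bigl(\lambda (t(u)-v) + \tfrac{n\nu}{m} t(u)\bigr)^2,
\]
integrating in $v$ gives $\frac{m^2\lambda^2 + 3mn\lambda\nu + 3n^2\nu^2}{3 m^2}\, t(u)^3$, and then integrating in $u$ yields
\[
S(W_{\bullet,\bullet,\bullet}^{Y,\Gamma};P) = \frac{m(m^2\lambda^2 + 3mn\lambda\nu + 3 n^2 \nu^2)}{4 l_Y l_H^3 (A^3)}.
\]
Combining the three estimates with the factor $1/r_P$ in the third gives the stated lower bound. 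The main work is purely bookkeeping of elementary integrals; there is no real obstacle because the hypothesis $(\Delta^2)=0$ kills the second interval that made the type $\mathrm{IIa}$ analysis (Proposition~\ref{prop:flagIIadelta}) split into two pieces, and the vanishing of $N(u,v)$ eliminates the $F_P$ correction term.
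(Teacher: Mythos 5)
Your proposal is correct and follows essentially the same route as the paper: the same Abban--Zhuang estimate from \cite[Corollary 4.18]{Fujita23}, the same Zariski decomposition from Lemma \ref{lem:smptNZdecIIb} with $N(u,v)=0$, and the same elementary integrals yielding $S_A(Y)$, $S(V_{\bullet,\bullet}^Y;\Gamma)$ and $S(W_{\bullet,\bullet,\bullet}^{Y,\Gamma};P)$. The only cosmetic difference is that you make the factor $1/r_P$ explicit via $\ord_P(\Delta_\Gamma)=(r_P-1)/r_P$, which the paper absorbs directly into the denominator of the third term.
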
 

\begin{proof}
    By \cite[Corollary 4.18]{Fujita23}, we have the estimate
    \[
    \delta_P (X) \ge \min \left\{ \frac{1}{S_A (Y)}, \ \frac{1}{S (V_{\bullet, \bullet}^Y; \Gamma)}, \ \frac{1}{r_P S (W_{\bullet, \bullet, \bullet}^{Y, \Gamma}; P)} \right\},
    \]
    where $S_A (Y)$, $S (V_{\bullet, \bullet}^Y; \Gamma)$ and $S (W_{\bullet, \bullet, \bullet}^{Y, \Gamma}; P)$ can be computed as follows:
    \[
    \begin{split}
        S_{A} (Y) 
        &= \frac{1}{(A^3)} \int_0^{\frac{1}{l_Y}} \vol_X (A - u Y) d u = \int_0^{\frac{1}{l_Y}} (1 - l_Y u)^3 d u = \frac{1}{4 l_Y}, \\
        S (V_{\bullet, \bullet}^Y; \Gamma) 
        &= \frac{3}{(A^3)} \int_0^{\frac{1}{l_Y}} \int_0^{t (u)} \vol_Y (P (u)|_Y - v \Gamma) d v d u \\
        &= \frac{3}{(A^3)} \int_0^{\frac{1}{l_Y}} \left( \int_0^{t (u)} \left( (t (u)-v) \Gamma + \frac{n}{m} t (u) \Delta \right)^2 d v \right) du \\
        &= \frac{m^2 (3 n \nu + m \lambda)}{4 l_Y l_H^3 (A^3)}, \\
        S (W_{\bullet, \bullet, \bullet}^{Y, \Gamma}; P) 
        &= \frac{3}{(A^3)} \int_0^{\frac{1}{l_Y}} \int_0^{t (u)} (P (u, v) \cdot \Gamma)^2 d v d u \\
        &= \frac{3}{(A^3)} \int_0^{\frac{1}{l_Y}} \int_0^{t (u)} \left( (t (u) - v) (\Gamma^2) + \frac{n}{m} t (u) (\Delta \cdot \Gamma) \right)^2 d v du \\
        &= \frac{m (m^2 \lambda^2 + 3mn \lambda \nu + 3 n^2 \nu^2)}{4 l_Y l_H^3 (A^3)}.
    \end{split}
    \]
    Note that $F_P (W_{\bullet, \bullet, \bullet}^{Y, \Gamma}) = 0$ since $N (u, v) = 0$ for $0 \le v \le t (u)$.
\end{proof}

\subsection{Generalized flag of blow-up type}

We consider a generalized flag involving a weighted blow-up to estimate delta invariants at some points.

\begin{Def} \label{def:wBL}
Let $P \in X$ be a point.
A \textit{flag of type $\mathrm{wBL}$ centered at $P$} is a generalized flag of the form $Q \in C \to Y \subset X$ with the following properties: $Y$ is a normal hypersurface in $X$ passing through $P$ such that it is quasismooth at $P$ and the pair $(X, Y)$ is plt.
There is a weighted blow-up $\psi \colon \tilde{Y} \to Y$ with irreducible exceptional divisor $C$ such that the pair $(\tilde{Y}, C)$ is plt.
Finally, $Q$ is a point on $C$ and it runs  over the closed points of $C$.
\end{Def}

In this section, we consider a specific type of singular points and bound local delta invariants at these points by considering suitable flags of type $\mathrm{wBL}$.

Let $e$ and $r$ be positive integers with $r \ge 2$ and set $d = 2 r + e$.
Let $X = X_d \subset \mbP (1, 1, r-1, e, r)$ be a quasismooth Fano $3$-fold weighted hypersurface of index $1$ and set $P = P_w$.
Let $x_1, x_2, y, t, w$ be homogeneous coordinates of $\mbP (1, 1, r-1, e, r)$ of weights $1, 1, r-1, e, r$, respectively.
The point $P$ is a $\mathrm{QI}$ center and we assume that it is nondegenerate, that is, there is a non-biregular birational involution $\iota \colon X \ratmap X$ which sits in the commutative diagram
\[
\xymatrix{
& \ar[ld]_{\varphi} \tilde{X} \ar@{-->}[r]^{\tilde{\iota}} & \tilde{X} \ar[rd]^{\varphi} & \\
X \ar@{-->}[rrr]_{\iota} & & & X}
\]
where $\varphi \colon \tilde{X} \to X$ is the Kawamata blow-up of $X$ at $P$ and $\tilde{\iota}$ is a flop.

The polynomial $\msF$ defining $X$ can be written as
\[
\msF = w^2 t + w f_{r+e} (x_1, x_2, y) + f_{2 r + e} (x_1, x_2, y, t),
\]
where $f_{e+r} (x_1, x_2, y)$ and $f_{2 r + e} (x_1, x_2, y, t)$ are homogeneous polynomials of degree $e + r$ and $2 r + e$, respectively.
Let $Y \in |(r-1)A|$ be a general member.

\begin{Lem} 
\label{lem:wBLYnormal}
The surface $Y$ is quasismooth at $P$ and its singularity at $P$ is of type $\frac{1}{r} (1, 1)$.
Moreover, $Y$ is quasismooth outside a finite set of points.
In particular, $Y$ is a normal $\mbQ$-Cartier prime divisor such that $(K_X + Y)|_Y = K_Y$.
\end{Lem}

\begin{proof}
We see that $Y$ is quasismooth outside the set $\Bs |(r-1)A|$ and we have 
\[
\Bs |(r-1)A| \subset (x_1 = x_2 = y = 0)_X.
\]
It is easy to see that $(x_1 = x_2 = y = 0)_X$ is a finite set of points, hence so is $\Bs |(r-1)A|$.
It is also straightforward to see that $Y$ is quasismooth at $P = P_w$ and the singularity $P \in Y$ is of type $\frac{1}{r} (1, 1)$ since the defining equation of $Y$ is of the form $y + (\text{other terms}) = \msF = 0$.
The rest follows immediately from these observations (see Remark~\ref{rem:normal}).
\end{proof}

We assume that the pair $(X, Y)$ is plt, that is, $Y$ has a quotient singularity at $P_t$.
This assumption is satisfied in the following cases.

\begin{Lem} \label{lem:flagBLqsmY}
    Suppose that one of the following holds.
    \begin{enumerate}
        \item $e \mid r-1$.
        \item $e \mid r-2$. $($This includes the case of $r = 2$.$)$
        \item $e \mid 2r$.
        \item $e \nmid r+1$.
    \end{enumerate}
    Then $Y$ is quasismooth.
\end{Lem}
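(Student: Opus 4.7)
The plan is to use the fact that $\deg y = r-1$ matches the degree of the linear system, so a general member $Y$ can be cut out by an equation linear in $y$; solving for $y$ then identifies $Y$ with a hypersurface in a smaller weighted projective space. Since $\deg w = r > r-1$, the variable $w$ cannot appear in the defining equation of $Y$, so a general member $Y \in |(r-1)A|$ is cut out in $X$ by
\[
y + p(x_1, x_2, t) = 0
\]
for a general polynomial $p$ of weighted degree $r-1$ in $x_1, x_2, t$. Eliminating $y$ identifies $Y$ with the hypersurface $(\msH = 0) \subset \mbP' := \mbP(1, 1, e, r)$, where $\msH(x_1, x_2, t, w) := \msF(x_1, x_2, -p, t, w)$ has weighted degree $d = 2r+e$. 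Thus quasismoothness of $Y$ reduces to quasismoothness of $\msH$ in $\mbP'$.

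I would then verify quasismoothness of $\msH$ at each potentially problematic point of $\mbP'$: the cyclic quotient singular points $P_t, P_w$, the smooth coordinate points $P_{x_1}, P_{x_2}$, and the line $(x_1 = x_2 = 0)$. The monomial $w^2 t$ is unaffected by the substitution $y \mapsto -p$, so $\prt \msH/\prt t$ is nonzero at $P_w$, and $\msH$ is quasismooth there. At the smooth coordinate points and along $(x_1 = x_2 = 0) \setminus \{P_t, P_w\}$, quasismoothness follows by a Bertini-type argument combining the genericity of $p$ with the quasismoothness of $X$ at the preimage points. The crucial case, which must be handled separately using each of conditions (1)--(4), is $P_t = (0\!:\!0\!:\!1\!:\!0)$.

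At $P_t$, the preimage under the identification $Y \cong (\msH = 0)$ is the point $P' = (0\!:\!0\!:\!-p(0,0,1)\!:\!1\!:\!0) \in \mbP$, and by the chain rule,
\[
\frac{\prt \msH}{\prt x_i}(P_t) = \frac{\prt \msF}{\prt x_i}(P') - \frac{\prt p}{\prt x_i}(0,0,1) \cdot \frac{\prt \msF}{\prt y}(P'),
\]
with analogous formulas for $\prt \msH/\prt t$ and $\prt \msH/\prt w$. Under condition (4) $e \nmid r+1$, no monomial $y t^m$ appears in $f_{2r+e}$, so $\prt \msF/\prt y$ vanishes at $P'$; the chain rule collapses to $\prt \msH/\prt v = \prt \msF/\prt v$ at $P_t$, and quasismoothness of $X$ at $P'$ transfers directly to $\msH$ at $P_t$. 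Under condition (3) $e \mid 2r$, the monomial $t^{(2r+e)/e}$ appears in $f_{2r+e}$, making $\msH(P_t) = f_{2r+e}(0, 0, -p(0,0,1), 1)$ generically nonzero, so $P_t \notin \msH$. Under condition (1) $e \mid r-1$, the polynomial $p$ contains the monomial $t^{(r-1)/e}$, so $c := p(0,0,1)$ is generically nonzero; one then argues that $\msF(P') = f_{2r+e}(0, 0, -c, 1)$ is a nonzero polynomial in $c$ for generic $\msF$, placing $P'$ off $X$ and $P_t$ off $\msH$. Under condition (2) $e \mid r-2$ (including $r = 2$, where $r-2 = 0$), the polynomial $p$ contains $x_i t^{(r-2)/e}$, so $(\prt p/\prt x_i)(0,0,1) \ne 0$, and the chain rule forces $\prt \msH/\prt x_i(P_t) \ne 0$ for generic $p$ and $\msF$ regardless of whether $\prt \msF/\prt y(P')$ vanishes.

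The hardest part will be the case-by-case verification at $P_t$: one must argue carefully that the generality of $p$ together with $\msF$ rules out unexpected cancellations in the chain rule, and that the relevant monomials both appear in their respective polynomials and actually impose the desired non-vanishing after the substitution $y \mapsto -p$. A subtlety is that for some combinations of $(r,e)$ (for instance $e \mid r-1$ with $e > 2$), the existence of a quasismooth $X$ itself becomes restrictive, and one may need to invoke this already-assumed quasismoothness to rule out pathological scenarios.
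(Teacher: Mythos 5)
Your reduction to the hypersurface model $(\msH=0)\subset\mbP(1,1,e,r)$ is legitimate, and your treatment of cases (2) and (4) is essentially the paper's argument in different clothing. The gaps are in cases (1) and (3), and both come from the same mistake: you invoke genericity of $\msF$ (``generically nonzero'', ``for generic $\msF$''), but the lemma must hold for \emph{every} quasismooth $X$; only $p$, i.e.\ $Y$, is general. In case (3) you assert that $t^{(2r+e)/e}$ appears in $f_{2r+e}$. This is not automatic: a priori $X$ could be quasismooth at $P_t$ because some partial $\prt\msF/\prt v$ is nonzero there rather than because $\msF(P_t)\ne 0$. The assertion is true, but only after the divisibility argument the paper gives: in the normal form the only monomials in $t,w$ alone are $w^2t$ and pure powers of $t$, so quasismoothness at $P_t$ forces either $t^n\in\msF$ or $t^nv\in\msF$ with $v\in\{x_1,x_2,y\}$; the latter gives $d=ne+\deg v$, and since $e\mid d$ (because $e\mid 2r$) this forces $e\mid\deg v\in\{1,r-1\}$, hence $e\mid r-1$, which one may assume fails (otherwise case (1) applies). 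Without this step your case (3) is incomplete.

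In case (1) the written argument fails outright. The polynomial $f_{2r+e}(0,0,y,t)$ can vanish identically for a quasismooth $X$: if $f_{2r+e}\in(x_1,x_2)$, then $X$ contains the curve $(x_1=x_2=w=0)$ and quasismoothness along it can be supplied by $f_{r+e}(0,0,y)$ and the $x_i$-partials of $f_{2r+e}$ (this already happens in family \textnumero 2). In that situation $\msF(P')=f_{2r+e}(0,0,-c,1)=0$ for every $c$, so $P'$ never leaves $X$ and your conclusion that $P_t$ lies off $\msH$ is false. Fortunately case (1) is the easiest case and needs no computation at $P_t$ at all: when $e\mid r-1$ the monomial $t^{(r-1)/e}$ belongs to $|(r-1)A|$, so $\Bs|(r-1)A|=(x_1=x_2=y=t=0)_X=\{P_w\}$; the point $P'$ lies off the base locus and is covered by your own Bertini step, and the only residual check is at $P_w$, which you do correctly. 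So the lemma stands, but your explicit arguments for (1) and (3) must be replaced by the base-locus observation and the quasismoothness-of-$X$ divisibility argument, respectively.
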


\begin{proof}
        It is enough to show that a general $Y \in |(r-1)A|$ is quasismooth along the base locus $\Bs |(r-1)A|$.
    Let $\msG = 0$ be the equation which defines $Y$ in $X$.
    It is easy to see that $Y$ is quasismooth at $P = P_w$ since $y \in \msG$.
    
    If $e \mid r - 1$, then $\Bs |(r-1)A| = (x_1 = x_2 = y = t = 0) = \{P\}$, and hence $Y$ is quasismooth.
    
    In the following, we assume that $e \nmid r-1$.
    Then $\Bs |(r-1)A| = (x_1 = x_2 = y = 0)_X$.
    
    For a point $Q \in \Bs |(r-1)A|$, we have $(\prt \msF/\prt t) (Q) = 0$ if and only if $Q = P_t$ since $w^2 t \in \msF$.
    This shows that $Y$ is quasismooth outside $P_t$.
    It remains to prove that either $P_t \notin \Bs |(r-1) A|$ or $Y$ is quasismooth at $P_t$.
    
    Suppose that $e \mid r-2$.
    Then we have $r-1 = m e + 1$ for some integer $m \ge 0$.
    Hence we can write $\msG = \alpha y + \beta t^m x_1 + \gamma t^m x_2 + \cdots$ for general $\alpha, \beta, \gamma \in \mbC$.
    It is then easy to see that $Y$ is quasismooth at $P_t$.
    
    Suppose that $e \mid 2r$.
    Suppose in addition that $e \ne r$.
    By the quasismoothness of $X$ at $P_t$, we have either $t^n \in \msF$ for some positive integer $n$ or $t^n v \in \msF$ for some positive integer $n$ and some coordinate $v \in \{x_1, x_2, y\}$.
    In the latter case we have $d = n e + \deg v$ which is impossible since $e \mid d$ and $e \nmid r-1$.
    This shows that $P_t \notin X$ in this case.
    If $e = r$, then either $t^3 \in \msF$ or $t^2 w \in \msF$ by the quasismoothness of $X$.
    It is straightforward to check that $Y$ is quasismooth at $P_t$ in the case when $P_t \in X$.
    
    In the following we assume that $e \nmid 2r$.
    Then $t^m \notin \msF$ for any positive integer $m$.
    In this case we have $\Bs |(r-1)A| = \{P_w, P_t\}$.
    By the quasismoothness of $X$, there exists a coordinate $v \in \{x_1, x_2, y, w\}$ and a positive integer $m$ such that $t^m v \in \msF$.
    By the assumption (4) and by the structure of $\msF$, $v \ne y, w$.
    It follows that $Y$ is quasismooth at $P_t$ since $y \in \msG$.
\end{proof}

\begin{Lem} \label{lem:flagBLnonqsmY}
Suppose that $(r, e) = (3, 4)$.
Then $Y$ is quasismooth outside $P_t$ and the pair $(X, Y)$ is plt.
\end{Lem}

\begin{proof}
By the proof of Lemma~\ref{lem:flagBLqsmY}, we see that $Y$ is quasismooth outside $P_t$.
Thus it is enough to show that $P_t \in Y$ is a cyclic quotient singularity.
The equation which defines $Y$ in $X$ can be written as $y - q (x_1, x_2) = 0$, where $q (x_1, x_2)$ is a general quadratic form in $x_1, x_2$.
By eliminating the variable $y$ and plugging $t =1$, the open set $Y \cap U_t$ is isomorphic to the quotient of the hypersurface
\[
\left((w^2 + w f_{r+e} (x_1, x_2, q) + f_{2r+e} (x_1, x_2, q, 1) = 0) \subset \mbA^3_{x_1,x_2,w} \right)/\bmu_4 (1, 1, 3),
\]
and the point $P_t$ correspond to the image of the origin under the quotient morphism.
We explain the above notation. 
The cyclic group of the $4$th roots of unity is denoted by $\bmu_4$, and the notation ``$/\bmu_4 (1, 1, 3)$" means that it is the quotient by the action determined by $(x_1, x_2, w) \mapsto (\zeta x_1, \zeta x_2, \zeta^3 w)$, where $\zeta$ is a primitive $4$th root of unity.  
We have $t^2 y \in \msF$, hence the germ $P_t \in Y$ is equivalent to the germ 
\[
o \in S \coloneq \left((w^2 - x_1 x_2 = 0) \subset \mbA^3_{x_1,x_2,w}\right)/\bmu_4 (1, 1, 3).
\]
The hypersurface $(w^2 - x_1 x_2 = 0) \subset \mbA^3$ is isomorphic to the quotient $\mbA^2_{s_1, s_2}/\bmu_2 (1, 1)$.
Thus $S$ is isomorphic to the iterated quotient $(\mbA^2_{s_1, s_2}/\bmu_2 (1, 1))/\bmu_4 (1, 1, 3)$ and we see that it is isomorphic to $\mbA^2_{s_1, s_2}/\bmu_8 (1, 5)$.
This shows that $P_t \in Y$ is a quotient singularity and we conclude that the pair $(X, Y)$ is plt.
\end{proof}

The singularity of $Y$ at $P$ is of type $\frac{1}{r} (1, 1)$.
Let $\tilde{Y}$ be the proper transform of $Y$ on $\tilde{X}$ and set $\psi \coloneq \varphi|_{\tilde{Y}} \colon \tilde{Y} \to Y$.
The birational morphism $\psi$ is a weighted blow-up of $Y$ at $P$ with weight $\frac{1}{r} (1, 1)$.
We set $C \coloneq E|_{\tilde{Y}}$, where $E$ is the exceptional divisor of $\varphi$.
We have $E \cong \mbP (1, 1, r-1)$ and $C \cong \mbP^1$.
We also set $A_Y \coloneq A|_Y$.
We have
\[
\begin{split}
    ((\psi^*A_Y)^2) &= (A_Y^2) = (A^2 \cdot (r-1) A) = \frac{2 r + e}{e r}, \\
    (C^2) &= ((E|_{\tilde{Y}})^2) = - \frac{r-1}{r} (E^3) = - r.
\end{split}
\]

We estimate $\delta_P (X)$ by considering a flag $Q \in C \to Y \subset X$ of type $\mathrm{wBL}$ centered at $P$, where $Q \in C$ runs over the points of $C$.
We have $\tau \coloneq \tau_A (Y) = 1/(r-1)$ and, for the Zariski decomposition $-K_X - u Y = P (u) + N (u)$ with $P (u)$ positive and $N (u)$ negative parts, respectively, we have
\[
\begin{split}
    P (u) &= (1 - (r-1)u) A, \\
    N (u) &= 0,
\end{split}
\]
for $0 \le u \le \tau$.
We need to understand the Zariski decomposition of the divisor
\[
\psi^*(P (u)|_Y) - v C = (1 - (r-1) u) \psi^*A_Y - v C.
\]

We set 
\[
\begin{split}
    P_{\tilde{Y}} &\coloneq \psi_*^{-1} (w t + f_{r + e} = 0)_Y \sim_{\mbQ} (e + r) \psi^*A_Y - \frac{d}{r} C, \\
    N_{\tilde{Y}} &\coloneq \psi_*^{-1} (t = 0)_Y \sim_{\mbQ} e \psi^*A_Y - \frac{e+r}{r} C,
\end{split}
\]

\begin{Lem} \label{lem:BLNprime}
    The divisor $N_{\tilde{Y}}$ is a prime divisor.
\end{Lem}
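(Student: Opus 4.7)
The plan is to show that the divisor $D \coloneq (t = 0)_Y$ on $Y$ is prime; since $N_{\tilde Y} = \psi_*^{-1} D$ by definition, it will follow that $N_{\tilde Y}$ is a prime divisor on $\tilde Y$. To give an explicit description of $D$, note that the surface $H_t \subset \mbP(1, 1, r-1, r)$ is the hypersurface cut out by $w f_{r+e}(x_1, x_2, y) + f_{2r+e}(x_1, x_2, y, 0) = 0$ obtained by setting $t = 0$ in $\msF$, and $D = H_t \cap Y$. Since $\deg \msG = r - 1 < r = \deg w$, the variable $w$ does not appear in the defining polynomial $\msG$ of $Y$; for a general member $Y$, one can therefore write
\[
\msG|_{t = 0} = \alpha y + g(x_1, x_2),
\]
with $\alpha \in \mbC^*$ and $g \in \mbC[x_1, x_2]$ a general homogeneous polynomial of degree $r - 1$. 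Eliminating $y$ via $y = -g/\alpha$ identifies $D$ with the hypersurface
\[
w \bar{A}(x_1, x_2) + \bar{B}(x_1, x_2) = 0
\]
in $\mbP(1, 1, r)$ with coordinates $x_1, x_2, w$, where $\bar{A}(x_1, x_2) \coloneq f_{r+e}(x_1, x_2, -g/\alpha)$ and $\bar{B}(x_1, x_2) \coloneq f_{2r+e}(x_1, x_2, -g/\alpha, 0)$.

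Next, the plan is to invoke the elementary fact that a polynomial of the form $w A + B$ with $A, B \in \mbC[x_1, x_2]$ and $A \ne 0$ is irreducible, and automatically square-free, in $\mbC[x_1, x_2, w]$ if and only if $\gcd(A, B) = 1$: in any factorization $w A + B = F \cdot G$, the vanishing of the $w^2$-coefficient of $F G$ forces one of $F, G$, say $F$, to be $w$-free, and then $F$ must divide both $A$ and $B$. Hence $D$ is prime on $Y$ precisely when $\gcd(\bar A, \bar B) = 1$; note that $\bar A \ne 0$ since $f_{r+e} \ne 0$ by the nondegeneracy of the QI center $P_w$.

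The main obstacle is verifying this coprimality. The strategy is to exploit the freedom in the choice of $g$ and $\alpha$: if $\bar A$ and $\bar B$ shared a nontrivial common factor for every generic choice of $g$ and $\alpha$, then the polynomials $f_{r+e}(x_1, x_2, y)$ and $f_{2r+e}(x_1, x_2, y, 0)$ would themselves have to share a nontrivial common factor in $\mbC[x_1, x_2, y]$. Such a common factor would cut out a curve on $H_t \subset X$ along which both $\partial \msF / \partial w$ and $\partial \msF / \partial t$ vanish identically, contradicting the quasismoothness of $X$ outside $P_w$. Once $\gcd(\bar A, \bar B) = 1$ is established, $D$ is an irreducible reduced divisor on $Y$, and consequently $N_{\tilde Y}$ is a prime divisor on $\tilde Y$.
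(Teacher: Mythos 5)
Your overall strategy---eliminate $t$ and $y$ to identify $(t=0)_Y$ with the curve $(w\bar A+\bar B=0)\subset\mbP(1,1,r)$ and then check irreducibility of $w\bar A+\bar B$ via $\gcd(\bar A,\bar B)=1$---is a legitimate and genuinely different route from the paper, which instead observes that $\tilde Y$ misses the flopping curves, so the flop $\tilde\iota$ restricts to a biregular involution of $\tilde Y$ carrying the irreducible curve $C=E|_{\tilde Y}$ to $N_{\tilde Y}$. Your reduction to the coprimality of $A\coloneqq f_{r+e}(x_1,x_2,y)$ and $B\coloneqq f_{2r+e}(x_1,x_2,y,0)$ in $\mbC[x_1,x_2,y]$ is also sound: if $\gcd(A,B)=1$ then $(A=B=0)$ is a finite subset of $\mbP(1,1,r-1)$, and the base-point-free system $\{\alpha y+g=0\}$ avoids it for general $g$, so $\bar A$ and $\bar B$ have no common zero on $\mbP^1$.

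The gap is in the justification of that coprimality. If $c=\gcd(A,B)$ is nonconstant, the locus it produces inside $H_t$ is the surface $(t=c=0)\subset X$ (an entire component of $H_t$), not a curve, and on it only $\prt\msF/\prt w=2wt+cA'$ vanishes identically; $\prt\msF/\prt t=w^2+\prt f_{2r+e}/\prt t$ does not, since it contains the monomial $w^2$, which $c\in\mbC[x_1,x_2,y]$ cannot divide. Moreover, even if two of the five partials did vanish along a curve, that alone would not contradict quasismoothness of a hypersurface in a $4$-dimensional weighted projective space---indeed $\prt\msF/\prt w$ really does vanish along the flopping curves $(t=A=B=0)$ of any nondegenerate QI center. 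The step can be repaired as follows. Write $A=cA'$, $B=cB'$ and $f_{2r+e}=B+tR$, so that $\msF=w^2t+c(wA'+B')+tR$. The locus $\Lambda\coloneqq(t=c=wA'+B'=0)$ is contained in $X$ and has dimension at least $1$, being cut out by three equations in $\mbP(1,1,r-1,e,r)$. Along $\Lambda$ the four partials $\prt\msF/\prt w$, $\prt\msF/\prt x_1$, $\prt\msF/\prt x_2$, $\prt\msF/\prt y$ vanish identically, while $\prt\msF/\prt t$ restricts on $(t=0)$ to the degree-$2r$ form $w^2+R|_{t=0}$, whose zero locus is an ample divisor and therefore meets the projective curve $\Lambda$. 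At such a point all five partials vanish, contradicting the quasismoothness of $X$. With this correction (and noting $\bar A\ne 0$ for general $g$, as you do), your proof goes through.
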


\begin{proof}
    The flopping curves are precisely the proper transform on $\tilde{X}$ of the curves in $(t = f_{r + e} = f_{2 r + e} = 0) \subset X$, where no curve in the latter set is contained in $\Bs |(r-1)A| = (x_1 = x_2 = y = 0)_X$.
    It follows that the surface $\tilde{Y}$ does not contain any flopping curve since $Y \in |(r-1)A|$ is general, and $\tilde{\iota}_*\tilde{Y} = \tilde{Y}$.
    Then the birational involution $\tilde{\iota} \colon \tilde{X} \ratmap \tilde{X}$ induces a biregular involution $\tilde{\iota}|_{\tilde{Y}} \colon \tilde{Y} \to \tilde{Y}$.
    The divisor $\varphi_*^{-1} (t = 0)_X$ is the divisor $\tilde{\iota}_*E$ and hence it is irreducible and reduced.
    It follows that its restriction $\psi_*^{-1} (t = 0)_Y$ is irreducible and reduced as well.
\end{proof}

\begin{Lem} \label{lem:BLnefpsef}
    The divisor $P_{\tilde{Y}}$ is nef.
    We have $(P_{\tilde{Y}} \cdot N_{\tilde{Y}}) = 0$ and $(N_{\tilde{Y}}^2) < 0$.
    \begin{enumerate}
        \item The divisor $\psi^*A_Y - c C$ is nef if and only if $0 \le c \le \frac{d}{(e + r) r}$.
        \item The divisor $\psi^*A_Y - c C$ is pseudo-effective if and only if $c \le \frac{e+r}{e r}$.
    \end{enumerate}
\end{Lem}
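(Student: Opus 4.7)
The plan is to reduce every statement of the lemma to a single algebraic identity combined with the effectivity of $N_{\tilde{Y}}$ and the ampleness of $A_Y$. First, using the intersection data $(A_Y^2) = d/(er)$, $(C^2) = -r$, and $(\psi^*A_Y \cdot C) = 0$ already recorded, I will directly expand the defining classes of $P_{\tilde{Y}}$ and $N_{\tilde{Y}}$ to obtain $(P_{\tilde{Y}} \cdot N_{\tilde{Y}}) = 0$ and $(N_{\tilde{Y}}^2) = -r$, together with the auxiliary numbers $(P_{\tilde{Y}}^2) = dr/e$, $(P_{\tilde{Y}} \cdot C) = d$, and $(C \cdot N_{\tilde{Y}}) = e+r$. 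The algebraic fact that makes all of these collapse to clean values is $(e+r)^2 - ed = r^2$.

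The heart of the argument is the linear relation
\[
(e+r)\, P_{\tilde{Y}} \;=\; r^2\, \psi^*A_Y + d\, N_{\tilde{Y}},
\]
which is immediate from the same identity. I will use this to prove that $P_{\tilde{Y}}$ is nef as follows: for an arbitrary irreducible curve $D'$ on $\tilde{Y}$, if $D' = N_{\tilde{Y}}$ then $(P_{\tilde{Y}} \cdot D') = 0$ by the first paragraph; otherwise $N_{\tilde{Y}}$ is a prime divisor distinct from $D'$ by Lemma \ref{lem:BLNprime}, so $(N_{\tilde{Y}} \cdot D') \ge 0$, and $(\psi^*A_Y \cdot D') = (A_Y \cdot \psi_*D') \ge 0$ by ampleness of $A_Y$. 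The identity then forces $(P_{\tilde{Y}} \cdot D') \ge 0$.

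For (1), pairing with $C$ and with $N_{\tilde{Y}}$ gives $(\psi^*A_Y - cC) \cdot C = cr$ and $(\psi^*A_Y - cC) \cdot N_{\tilde{Y}} = d/r - c(e+r)$, so $0 \le c \le d/((e+r)r)$ is necessary for nefness. For the converse, rearranging $(P_{\tilde{Y}} \cdot D') \ge 0$ for an arbitrary irreducible $D' \ne C$ yields the inequality $(\psi^*A_Y \cdot D') \ge \tfrac{d}{r(e+r)}\,(C \cdot D')$, and hence for $c \le d/((e+r)r)$,
\[
(\psi^*A_Y - cC) \cdot D' \;\ge\; \left(\tfrac{d}{r(e+r)} - c\right)(C \cdot D') \;\ge\; 0
\]
because $C$ is effective and $D' \ne C$; together with $(\psi^*A_Y - cC) \cdot C = cr \ge 0$, this proves nefness.

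For (2), the rewriting
\[
\psi^*A_Y - cC \;=\; \tfrac{1}{e}\, N_{\tilde{Y}} + \left(\tfrac{e+r}{er} - c\right)C,
\]
obtained by solving the defining relation of $N_{\tilde{Y}}$ for $\psi^*A_Y$, exhibits $\psi^*A_Y - cC$ as an effective $\mbQ$-divisor whenever $c \le (e+r)/(er)$ and hence in particular as pseudo-effective. Conversely, since $P_{\tilde{Y}}$ has already been shown to be nef, any pseudo-effective divisor must pair nonnegatively with it, and $(\psi^*A_Y - cC) \cdot P_{\tilde{Y}} = d\bigl((e+r)/(er) - c\bigr)$, forcing $c \le (e+r)/(er)$. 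I do not expect any real obstacle here: the identity $(e+r)P_{\tilde{Y}} = r^2\psi^*A_Y + dN_{\tilde{Y}}$ packages into a single algebraic relation the birational geometry produced by the flop initiated at the nondegenerate QI center, after which the entire lemma reduces to bookkeeping that uses only the ampleness of $A_Y$ and the primality of $N_{\tilde{Y}}$ from Lemma \ref{lem:BLNprime}.
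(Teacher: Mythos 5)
Your proof is correct, and the overall skeleton (compute the intersection numbers from $((\psi^*A_Y)^2)=d/(er)$, $(C^2)=-r$, $(\psi^*A_Y\cdot C)=0$; establish that $P_{\tilde{Y}}$ is nef; then read off the two cones) matches the paper's. The one genuinely different step is how nefness of $P_{\tilde{Y}}$ is obtained. The paper exhibits three effective divisors in the class of $P_{\tilde{Y}}$ --- namely $P_{\tilde{Y}}$ itself and the proper transforms of $(tx_1^r=0)|_Y$ and $(tx_2^r=0)|_Y$ --- and argues that their common base locus is a finite set of points, which on a surface forces nefness. You instead use the numerical identity $(e+r)P_{\tilde{Y}} = r^2\psi^*A_Y + d\,N_{\tilde{Y}}$ (equivalent to $(e+r)^2 - ed = r^2$), together with $(P_{\tilde{Y}}\cdot N_{\tilde{Y}})=0$ and the primality of $N_{\tilde{Y}}$ from Lemma \ref{lem:BLNprime}: any irreducible curve other than $N_{\tilde{Y}}$ meets both the nef part $r^2\psi^*A_Y$ and the prime effective part $d\,N_{\tilde{Y}}$ nonnegatively, and $N_{\tilde{Y}}$ itself is handled by the orthogonality. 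Your route avoids the small geometric verification that the exhibited subsystem has finite base locus, at the price of genuinely needing irreducibility of $N_{\tilde{Y}}$ (the paper's nefness argument would survive with mere effectivity). The deductions of (1) and (2) --- necessity by pairing with $C$, $N_{\tilde{Y}}$ and the nef class $P_{\tilde{Y}}$, sufficiency by writing $\psi^*A_Y - cC$ as a nonnegative combination of nef classes, respectively of $\tfrac{1}{e}N_{\tilde{Y}}$ and $C$ --- are exactly what the paper's closing sentence about extremal rays compresses, so there you are just making the paper's argument explicit.
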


\begin{proof}
    The divisors $P_{\tilde{Y}}$, $\psi_*^{-1} (t x_1^r = 0)|_Y$ and $\psi_*^{-1} (t x_2^r = 0)|_Y$ are contained in the linear system $|P_{\tilde{Y}}|$.
    This implies that the base locus of $|P_{\tilde{Y}}|$ is a finite set of points, and hence $P_{\tilde{Y}}$ is nef.
    We have
    \[
    \begin{split}
    (N_{\tilde{Y}}^2) &= e^2 \frac{2 r + e}{e r} - \frac{(e+r)^2}{r^2} r 
    = - r \\
    (P_{\tilde{Y}} \cdot N_{\tilde{Y}}) &= e (e+r) (A_Y^2) + \frac{(2 r + e)(e + r)}{r^2} (C^2) = 0.
    \end{split}
    \]
    This shows that $P_{\tilde{Y}}$ and $N_{\tilde{Y}}$ spans extremal rays of the nef cone and the pseudo-effective cone of $\tilde{Y}$, respectively.
    This proves all the assertions.
\end{proof}

We have
\[
t (u) \coloneq \tau_{\psi^*(P (u)|_Y)} (C) = \frac{e+r}{e r} (1 - (r-1) u)
\]
for $0 \le u \le \tau$.

\begin{Lem}
    Suppose that $0 \le u \le \tau$.
    We define $P (u, v)$ and $N (u, v)$ as follows:
    \[
    \begin{split}
        P (u, v) &\coloneq
        \begin{dcases}
            \frac{er}{e+r} t (u) \psi^*A_Y - v C, & \left(0 \le v \le \frac{e d}{(e+r)^2} t (u) \right), \\
            \theta_P P_{\tilde{Y}}, & \left(\frac{e d}{(e+r)^2} t (u) \le v \le t (u) \right), 
        \end{dcases} \\
        N (u, v) &\coloneq 
        \begin{dcases}
            0, & \left(0 \le v \le \frac{e d}{(e+r)^2} t (u) \right), \\
            \theta_N N_{\tilde{Y}}, & \left(\frac{e d}{(e+r)^2} t (u) \le v \le t (u) \right),
        \end{dcases}
    \end{split}
    \]
    where
    \[
    \begin{split}
        \theta_P &\coloneq \frac{(e + r)(1 - (r-1)u) - e r v}{r^2} = \frac{e}{r} (t (u) - v), \\
        \theta_N &\coloneq \frac{r (e + r) v - d (1 - (r-1) u)}{r^2} = \frac{e+r}{r} \left(v - \frac{ed}{(e+r)^2} t (u) \right).
    \end{split}
    \]
    Then $\psi^*(P (u)|_Y) - v C = P (u, v) + N (u, v)$ gives the Zariski decomposition with $P (u, v)$ positive and $N (u, v)$ negative parts, respectively.
\end{Lem}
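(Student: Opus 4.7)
The plan is to verify directly the four defining properties of a Zariski decomposition, namely that $P(u,v)+N(u,v)=\psi^{*}(P(u)|_{Y})-vC$, that $P(u,v)$ is nef, that $N(u,v)$ is effective with negative-definite intersection matrix, and that $(P(u,v)\cdot N_{i})=0$ for every irreducible component $N_{i}$ of $N(u,v)$. Uniqueness of the Zariski decomposition will then conclude.

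For the first range $0\le v\le \tfrac{ed}{(e+r)^{2}}t(u)$, I would first rewrite $1-(r-1)u=\tfrac{er}{e+r}t(u)$ so that the claimed formula for $P(u,v)$ is literally equal to $\psi^{*}(P(u)|_{Y})-vC$, and $N(u,v)=0$ is trivially effective. Nefness of $\psi^{*}A_{Y}-cC$ for $0\le c\le \tfrac{d}{(e+r)r}$ is Lemma~\ref{lem:BLnefpsef}(1); applying it with $c=\tfrac{(e+r)v}{er\,t(u)}$ translates the nef condition into $v\le \tfrac{ed}{(e+r)^{2}}t(u)$, which is exactly the defining inequality of this range.

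For the second range $\tfrac{ed}{(e+r)^{2}}t(u)\le v\le t(u)$, the key computation is the algebraic identity
\[
\theta_{P}\,P_{\tilde{Y}}+\theta_{N}\,N_{\tilde{Y}}=\tfrac{er}{e+r}t(u)\,\psi^{*}A_{Y}-vC,
\]
which reduces to the simple relation $(e+r)^{2}-ed=r^{2}$ coming from $d=2r+e$; comparing coefficients of $\psi^{*}A_{Y}$ and $C$ after substituting the numerical classes of $P_{\tilde{Y}}$ and $N_{\tilde{Y}}$ yields the equality. The nonnegativity $\theta_{P}\ge 0$ holds because $v\le t(u)$ and $\theta_{N}\ge 0$ holds because $v\ge \tfrac{ed}{(e+r)^{2}}t(u)$, so both summands are effective $\mathbb{Q}$-divisors; nefness of $P(u,v)=\theta_{P}P_{\tilde{Y}}$ is immediate from the nefness of $P_{\tilde{Y}}$ established in Lemma~\ref{lem:BLnefpsef}. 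By Lemma~\ref{lem:BLNprime} the support of $N(u,v)$ is the single prime divisor $N_{\tilde{Y}}$, and the intersection matrix is negative definite because $(N_{\tilde{Y}}^{2})=-r<0$ by Lemma~\ref{lem:BLnefpsef}. The orthogonality $(P(u,v)\cdot N_{\tilde{Y}})=\theta_{P}(P_{\tilde{Y}}\cdot N_{\tilde{Y}})=0$ follows again from Lemma~\ref{lem:BLnefpsef}.

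The main obstacle is essentially bookkeeping: keeping track of the two competing normalizations (a multiple of $\psi^{*}A_{Y}-cC$ versus a combination of $P_{\tilde{Y}}$ and $N_{\tilde{Y}}$) and checking that the transition point $v=\tfrac{ed}{(e+r)^{2}}t(u)$ indeed matches the boundary of the nef chamber given in Lemma~\ref{lem:BLnefpsef}(1). Once the identity $(e+r)^{2}-ed=r^{2}$ is noticed the rest is a direct substitution, and no deeper geometric input is required beyond the two previously proved lemmas.
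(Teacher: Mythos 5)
Your proposal is correct and follows essentially the same route as the paper, whose proof of this lemma is simply the one-line remark that it follows from Lemmas \ref{lem:BLNprime} and \ref{lem:BLnefpsef}; you have merely written out the standard verification (decomposition identity via $(e+r)^2-ed=r^2$, nefness of the positive part, effectivity and negative definiteness of the negative part supported on the prime divisor $N_{\tilde{Y}}$, and orthogonality) that the authors leave implicit. All the numerical checks, including the matching of the breakpoint $v=\tfrac{ed}{(e+r)^2}t(u)$ with the nef threshold $c=\tfrac{d}{(e+r)r}$, are accurate.
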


\begin{proof}
    This follows from Lemmas \ref{lem:BLNprime} and \ref{lem:BLnefpsef}.
\end{proof}

\begin{Lem} \label{lem:genflagord}
    For any point $Q \in C$, we have
    \[
    \ord_Q (N_{\tilde{Y}}|_C) \le e + r.
    \]
\end{Lem}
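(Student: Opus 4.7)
The key point is that $N_{\tilde Y}|_C$ is an effective divisor on $C \cong \mathbb{P}^1$, so the multiplicity at any single point is bounded above by the total degree. The plan is therefore just to compute this degree and show it equals $e+r$.

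First, I would verify that $N_{\tilde Y}|_C$ makes sense as an effective divisor on $C$. Since $N_{\tilde Y} = \psi^{-1}_*(t = 0)_Y$ is the strict transform of $(t=0)_Y$ under the weighted blowup $\psi \colon \tilde Y \to Y$, and $C$ is the $\psi$-exceptional curve, we have $C \not\subset N_{\tilde Y}$. Hence $N_{\tilde Y}|_C$ is a well-defined effective divisor on $C \cong \mathbb{P}^1$, and for any point $Q \in C$,
\[
\ord_Q (N_{\tilde Y}|_C) \le \deg (N_{\tilde Y}|_C) = (N_{\tilde Y} \cdot C)_{\tilde Y}.
\]

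Next, I would compute this intersection number using the numerical equivalence $N_{\tilde Y} \sim_{\mbQ} e \psi^* A_Y - \frac{e+r}{r} C$ established earlier. By the projection formula, $(\psi^* A_Y \cdot C) = 0$ since $C$ is contracted by $\psi$. Combining this with $(C^2) = -r$, I obtain
\[
(N_{\tilde Y} \cdot C) = e (\psi^* A_Y \cdot C) - \frac{e+r}{r} (C^2) = -\frac{e+r}{r} \cdot (-r) = e + r.
\]
Thus $\ord_Q (N_{\tilde Y}|_C) \le e + r$ for every $Q \in C$, as required. There is no genuine obstacle here; the only verification needed is that $N_{\tilde Y}$ does not contain $C$, which is immediate from $N_{\tilde Y}$ being a strict transform under $\psi$ (and is in any case implied by Lemma~\ref{lem:BLNprime} together with the fact that $\psi_* N_{\tilde Y} = (t=0)_Y$ is a Cartier-type divisor not supported on the $\psi$-exceptional locus).
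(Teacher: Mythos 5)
Your proof is correct, and it takes a genuinely different route from the paper's. The paper argues by explicit coordinates: it realizes the Kawamata blowup inside a weighted blowup of the ambient $\mbP$, identifies $C$ with $(t + f_{e+r}(x_1,x_2,\bar{g}_{r-1})=0) \subset \mbP(1,1,e+r) \cong \mbP^1$, and computes the scheme $N_{\tilde{Y}} \cap C$ as $(f_{e+r}(x_1,x_2,\bar{g}_{r-1})=0) \subset \mbP^1$, a length-$(e+r)$ scheme; the bound then follows. Your argument replaces all of this with the purely numerical observation that $N_{\tilde{Y}}|_C$ is an effective divisor on $C \cong \mbP^1$ (valid since $N_{\tilde{Y}}$ is a strict transform, hence does not contain the $\psi$-exceptional curve $C$, and $\tilde{Y}$ is smooth along $C$ as $\psi$ is the minimal resolution of the $\frac{1}{r}(1,1)$ point), so any local order is bounded by the total degree $(N_{\tilde{Y}} \cdot C) = -\frac{e+r}{r}(C^2) = e+r$, using the class $N_{\tilde{Y}} \sim_{\mbQ} e\psi^*A_Y - \frac{e+r}{r}C$ already recorded in the text. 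Your route is shorter, needs no choice of coordinates, and does not invoke the nondegeneracy of $P$ (which the paper uses, via $f_{e+r} \ne 0$ and the genericity of $Y$, to make sense of the scheme-theoretic intersection). What the paper's computation buys in exchange is an explicit description of the support of $N_{\tilde{Y}}|_C$, which could in principle yield a sharper pointwise bound for general $Y$; since only the crude bound $e+r$ is used downstream in Proposition \ref{prop:flagBLdelta}, nothing is lost by your argument.
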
 

\begin{proof}
    We can choose $\{x_1, x_2, y\}$ as a system of local orbifold coordinates of $X$ at $P$ and the Kawamata blow-up $\varphi$ can be realized as the weighted blow-up with weight $\wt (x_1, x_2, y) = \frac{1}{r} (1, 1, r-1)$.
    We have $\ord_E (t) = (e+r)/r$.
    Let $\Phi \colon \tilde{\mbP} \to \mbP$ be the weighted blow-up of $\mbP$ at $P$ with weights 
    \[
    \wt (x_1, x_2, y, t) = \frac{1}{r} (1, 1, r-1, e+r).
    \]
    Then we can identify $\tilde{X}$ with the proper transform $\Phi_*^{-1} X$ and we can identify $\varphi$ to the restriction $\Phi|_{\tilde{X}}$. 
    By a slight abuse of notation, we can identify the exceptional divisor of $\Phi$ with $\mbP (1, 1, r-1, e+r)$ with homogeneous coordinates $x_1, x_2, y, t$ of weights $1, 1, r-1, e+r$, respectively.
    We have an isomorphisms
    \[
    \begin{split}
    E &\cong (t + f_{e+r} (x_1, x_2, y) = 0) \subset \mbP (1, 1, r-1, e+r) \\
    &\cong \mbP (1, 1, r-1).
    \end{split}
    \]
    Let $\msG = 0$ be the equation which defines $Y$ in $X$.
    We can write $\msG = y - g_{r-1} (x_1, x_2, t)$ for a general homogeneous polynomial $g_{r-1} (x_1, x_2, t)$ of degree $r-1$. 
    Then, by eliminating the variable $y$, $C$ is isomorphic to the weighted hypersurface
    \[
    (t + f_{e + r} (x_1, x_2, \bar{g}_{r-1}) = 0) \subset \mbP (1, 1, e+r),
    \]
    where $\bar{g}_{r-1} = g_{r-1} (x_1, x_2, 0)$.
    We have $f_{e + r} (x_1, x_2, \bar{g}_e) \ne 0$ since $f_{e + r} (x_1, x_2, y) \ne 0$ and $\bar{g}_{r-1}$ is general.
    The scheme-theoretic intersection $N_{\tilde{Y}} \cap C$ is isomorphic to the scheme
    \[
    \begin{split}
        & (t = f_{e + r} (x_1, x_2, \bar{g}_{r-1}) = 0) \subset \mbP (1, 1, e+r) \\
        \cong \ & (f_{e + r} (x_1, x_2, \bar{g}_{r-1}) = 0) \subset \mbP^1,
    \end{split}
    \]
    which consists of $r + e$ points counting with multiplicity.
    Thus $\ord_Q (N_{\tilde{Y}}|_C) \le e + r$ for any point $Q \in C$.
\end{proof}

In the setting of this section, a flag $Q \in C \to Y \subset X$, where $Q \in C$ is a point, will be referred to a \textit{flag of type $\mathrm{wBL}_{(r, e)}$ centered at $P$}.

\begin{Prop} \label{prop:flagBLdelta}
    Let $Q \in C \to Y \subset X$ be a flag of type $\mathrm{wBL}_{(r,e)}$ centered at $P$.
    Then we have
    \[
    \delta_P (X) \ge \frac{4e}{e+r}.
    \]
\end{Prop}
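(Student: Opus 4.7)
The plan is to apply the refined Abban-Zhuang inequality to the \emph{generalized} flag $Q \in C \to Y \subset X$, where $\psi \colon \tilde{Y} \to Y$ is the plt blow-up of the $\frac{1}{r}(1,1)$ singularity $P \in Y$ with exceptional curve $C$. Since $C$ is a prime divisor over $Y$ (not on $Y$), the appropriate version of \cite[Corollary 4.18]{Fujita23} (in the spirit of the refinement-by-plt-blow-up technique of Abban-Zhuang) gives
\[
\delta_P (X) \ge \min \left\{ \frac{1}{S_A (Y)}, \ \frac{A_Y (C)}{S (V_{\bullet, \bullet}^Y; C)}, \ \frac{1 - \ord_Q (\mathrm{Diff}_C)}{S (W_{\bullet, \bullet, \bullet}^{Y, C}; Q)} \right\}.
\]
I will identify the three log discrepancies, then compute each $S$-invariant using the Zariski decompositions already established, and finally verify that each term is at least $\tfrac{4e}{e+r}$.

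For the log discrepancies: $A_X(Y)=1$ and, since $\psi$ is the Kawamata blow-up of $\frac{1}{r}(1,1)$, $A_Y(C)=\tfrac{2}{r}$. For the final term, I would argue that $C \cong \mathbb{P}^1$ avoids the singular point $P_y \in E \cong \mathbb{P}(1,1,r-1)$: the equation of $Y$ in $X$ has the form $y = g_{r-1}(x_1,x_2,t)$, so after eliminating $y$, $C$ is cut out in $E$ by a section vanishing at $P_y$, forcing $P_y \notin C$. Hence $\tilde{Y}$ is smooth along $C$, $\mathrm{Diff}_C = 0$, and $A_C(Q) = 1$ for every $Q \in C$.

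For the $S$-invariants: the first is the familiar $S_A(Y) = 1/(4(r-1))$ with $l_Y = r-1$, so $1/S_A(Y) = 4(r-1) \ge \tfrac{4e}{e+r}$ follows from $(r-1)(e+r)\ge e$ for $r\ge 2$. For $S(V;C)$ and $S(W;Q)$, I would split the $v$-integral at $v_0 = \tfrac{d}{r(e+r)}s(u)$, with $s(u) = 1-(r-1)u$, and use the Zariski decomposition of the preceding lemma. The key simplifying identity is $(e+r)^2 - ed = r^2$, which gives $t(u) - v_0 = \tfrac{r s(u)}{e(e+r)}$, and then $(P_{\tilde{Y}}^2) = dr/e$. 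After carrying out the integrals (with $\int_0^{\tau} s(u)^3 du = 1/(4(r-1))$) and using $(A^3) = d/(e(r-1)r)$, one gets the clean expressions
\[
S (V_{\bullet, \bullet}^Y;C) = \frac{2e^2 + 4er + r^2}{4er(e+r)}, \qquad S (W_{\bullet, \bullet, \bullet}^{Y, C}; Q) - F_Q = \frac{d}{4(e+r)},
\]
where the second simplification relies on $de + r^2 = (e+r)^2$. The second ratio then becomes $\tfrac{A_Y(C)}{S(V;C)} = \tfrac{8e(e+r)}{2e^2 + 4er + r^2}$, and the desired inequality reduces to $r^2 \ge 0$, which is strict.

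The main obstacle and the genuinely sharp part of the argument is the third bound, which requires controlling the floor term $F_Q(W_{\bullet,\bullet,\bullet}^{Y,C})$. Expanding this integral (with $\theta_N = \tfrac{e+r}{r}(v - v_0)$) gives
\[
F_Q = \frac{\ord_Q(N_{\tilde{Y}}|_C) \, r^2}{4e(e+r)^2},
\]
and here the sharp bound $\ord_Q(N_{\tilde{Y}}|_C) \le e + r$ from Lemma \ref{lem:genflagord} is essential. It yields
\[
S (W_{\bullet, \bullet, \bullet}^{Y, C}; Q) \le \frac{d}{4(e+r)} + \frac{r^2}{4e(e+r)} = \frac{ed + r^2}{4e(e+r)} = \frac{e+r}{4e},
\]
again using $ed + r^2 = (e+r)^2$. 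Therefore $\tfrac{A_C(Q)}{S(W;Q)} \ge \tfrac{4e}{e+r}$, achieving equality in the worst case. Combining all three bounds gives $\delta_P(X) \ge \tfrac{4e}{e+r}$, as desired.
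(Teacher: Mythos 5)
Your proposal is correct and follows essentially the same route as the paper: the same application of \cite[Corollary 4.18]{Fujita23} to the generalized flag through the weighted blow-up $\psi$, the same Zariski decomposition split at $v_0 = \tfrac{ed}{(e+r)^2}t(u)$, the same use of Lemma \ref{lem:genflagord} to bound $F_Q$, and the same resulting values $S(V_{\bullet,\bullet}^Y;C)=\tfrac{r^2+4re+2e^2}{4er(e+r)}$ and $S(W_{\bullet,\bullet,\bullet}^{Y,C};Q)\le\tfrac{e+r}{4e}$, with the third term being the binding one. The only difference is presentational: you make explicit the log discrepancies $A_Y(C)=2/r$ and $A_C(Q)=1$ (justifying $\mathrm{Diff}_C=0$ by noting $C$ avoids the singular point of $E$), which the paper leaves implicit.
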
 

\begin{proof}
    By \cite[Corollary 4.18]{Fujita23}, we have the estimate
    \[
    \delta_P (X) \ge 
    \left\{ \frac{1}{S_A (Y)}, \ \frac{2}{r S (V_{\bullet, \bullet}^Y; C)}, \ \inf_{Q \in C} \frac{1}{S (W_{\bullet, \bullet, \bullet}^{Y, C}; Q)} \right\}.
    \]
    We compute (or estimate) $S_A (Y)$, $S (Y_{\bullet, \bullet}^Y;C)$ and $S (W_{\bullet, \bullet, \bullet}^{Y,C};Q)$ as follows.
    We have
    \[
    S_A (Y) = \frac{1}{(A^3)} \int_0^{\frac{1}{r-1}} \vol_X (A - u Y) d u
    = \frac{1}{4 (r-1)}
    \]
    and
    \[
    \begin{split}
        S (V_{\bullet, \bullet}^Y; C) &= \frac{3}{(A^3)} \int_0^{\frac{1}{r-1}} \left( \int_0^{\frac{e d}{(e+r)^2} t (u)} \left(\frac{er}{e+r} t (u) \psi^*A_Y - v C \right)^2 d v + \right. \\
        & \left. \hspace{55mm} + \int_{\frac{e d}{(e+r)^2} t (u)}^{t (u)} \theta_P^2 (P_{\tilde{Y}}^2) d v \right) d u. \\
        &= \frac{3}{(A^3)} \int_0^{\frac{1}{r-1}} \left(\int_0^{\frac{ed}{(e+r)^2} t (u)} \left(\frac{erd}{(e+r)^2} t (u)^2 - r v^2 \right) d v +\right. \\
        & \left. \hspace{55mm} + \frac{ed}{r} \int_{\frac{ed}{(e+r)^2} t (u)}^{t (u)} (t (u) - v)^2 \right) d v \\
        &= \frac{3}{(A^3)} \int_0^{\frac{1}{r-1}} \left( \frac{re^2d^2}{(e+r)^4} - \frac{r e^3 d^3}{3(e+r)^6} + \frac{r^5ed}{3(e+r)^6} \right) t (u)^3 d u \\
        &= \frac{3 (r-1)re}{d} \frac{red(3ed(e+r)^2-e^2d^2+r^4)}{3(e+r)^6} \frac{(e+r)^3}{e^3r^3} \frac{1}{4(r-1)} \\
        &= \frac{r^2 + 4 r e + 2 e^2}{4er(e+r)}.
    \end{split}
    \]
    We give an estimate of $S (W_{\bullet, \bullet, \bullet}^{Y, C}; Q)$.
    For any $Q \in C$, we have $\ord_Q (N_{\tilde{Y}}|_C) \le r + e$ by Lemma \ref{lem:genflagord}, and hence
    \[
    \begin{split}
        F_Q (W_{\bullet, \bullet, \bullet}^{Y, C}) &=
        \frac{6}{(A^3)} \int_0^{\tau} \int_{\frac{e d}{(e + r)^2} t (u)}^{t (u)} (2 r + e) \theta_P \theta_N \ord_P (N_{\tilde{Y}}|_C) d v d u \\
        &\le \frac{6ed(e+r)^2}{r^2(A^3)} \int_0^{\frac{1}{r-1}} \int_{\frac{e d}{(e + r)^2} t (u)}^{t (u)} (t(u)-v)\left(v - \frac{ed}{(e+r)^2} t(u) \right) d v d u \\
        &=\frac{6ed(e+r)^2}{r^2(A^3)} \int_0^{\frac{1}{r-1}} \frac{r^6}{6(e+r)^6} t (u)^3 d u \\
        &=\frac{6ed(e+r)^2}{r^2} \frac{(r-1)er}{d} \frac{r^6}{6(e+r)^6} \frac{(e+r)^3}{e^3r^3} \frac{1}{4(r-1)} \\
        &= \frac{r^2}{4e(e+r)}.
    \end{split}
    \] 
    Thus we have
    \[
    \begin{split}
        & S (W_{\bullet, \bullet, \bullet}^{Y, C}; Q) \\
        &\le \frac{3}{(A^3)} \int_0^{\frac{1}{r-1}} \left( \int_0^{\frac{e d}{(e + r)^2} t (u)} r^2 v^2 d v + \int_{\frac{e d}{(e + r)^2} t (u)}^{t (u)} \frac{e^2d^2}{r^2} (t(u)-v)^2 d v \right) d u + \\
        & \hspace{100mm} + \frac{r^2}{4e(e+r)} \\
        &= \frac{3}{(A^3)} \left(\frac{1}{3} \frac{r^2e^3d^3}{(e+r)^6} + \frac{e^2d^2}{r^2} \frac{r^6}{3(e+r)^6} \right) \int_0^{\frac{1}{r-1}} t(u)^3du + \frac{r^2}{4e(e+r)} \\   
        &= \frac{3(r-1)re}{d} \frac{r^2e^2d^2(ed+r^2)}
        {3(e+r)^6} \frac{(e+r)^3}{e^3r^3} \frac{1}{4(r-1)} + \frac{r^2}{4e(e+r)} \\
        &= \frac{d(ed+r^2)}{4(e+r)^3} + \frac{r^2}{4e(e+r)} \\
        &= \frac{e+r}{4e}.
    \end{split}
    \]
    Therefore we have
    \[
    \delta_P (X) \ge \min \left\{ 4 (r-1), \ \frac{8e(e+r)}{r^2+4re+2e^2}, \ \frac{4e}{r+e} \right\}.
    \]
    It is straightforward to check that
    \[
    \frac{4e}{r+e} < \frac{8e(e+r)}{r^2+4re+2e^2},\quad
     \frac{4e}{r+e} < 4 \le 4(r-1).
    \]
    This proves the assertion.
\end{proof}

\begin{Rem}
In view of the further generalization \cite[Theorem~11.14]{Fujita23} by Fujita, for the flag $Q \in C \to Y \subset X$ as above, we do not actually need the assumption that $(X, Y)$ is (globally) plt.
The inequality on $\delta_P (X)$ given in Proposition~\ref{prop:flagBLdelta} holds if $(X, Y)$ is plt around $P$.
Thus, in view of Lemma~\ref{lem:wBLYnormal}, Lemmas~\ref{lem:flagBLqsmY} and \ref{lem:flagBLnonqsmY} are redundant in this sense.
\end{Rem}
 
\section{Local delta at maximal centers} \label{sect: singular points}

This section is entirely devoted to the proof of the following.

\begin{Thm} \label{thm:deltasingpt}
    Let $X$ be a member of family \textnumero $\msi$, where $\msi \in \msI_{\BR}^{***}$.
    Then, 
    \[
    \delta_P (X) > 1
    \]
    for any singular point $P \in X$ which is a maximal center.
\end{Thm}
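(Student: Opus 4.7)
The plan is to proceed by a case-by-case analysis over the maximal centers arising on members of the $11$ families indexed by $\msI_{\BR}^{***}$. By Theorem~\ref{thm:QIcenter}, on any such $X$ the set of maximal centers is finite and consists of nondegenerate QI centers together with the three EI centers on families \textnumero 20, \textnumero 23, \textnumero 40 and the IEI center on family \textnumero 23 (the last only when $z^3 w, z^2 t^2 \notin \msF$). It therefore suffices to exhibit, for each pair $(\msi, P)$ with $P$ a maximal center on a member of family \textnumero $\msi$, a single flag realising the bound $\delta_P(X) > 1$ via one of Propositions~\ref{prop:flagIdelta}, \ref{prop:flagIIadelta}, \ref{prop:flagIIbdelta}, or~\ref{prop:flagBLdelta}.

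Our first step is to dispose of every QI center $P = P_w$ whose local weights $\frac{1}{r}(1,a,r-a)$ and ambient coordinates fit the template of \S\ref{sec:flagIIa} (two adjacent weights equal to $1$) by applying the generalized flag of blow-up type. Proposition~\ref{prop:flagBLdelta} gives the bound $\delta_P(X) \ge 4e/(e+r)$, which exceeds $1$ exactly when $3e > r$; we verify this numerical inequality family-by-family from the weight data of the $95$ families. For the remaining QI centers and for all EI/IEI centers, we construct a flag $P \in \Gamma \subset Y \subset X$ of type I, IIa, or IIb, taking $Y$ to be a quasi-hyperplane section $(v = 0)_X$ through $P$ (normal by Lemma~\ref{lem:qhypnormal} and plt in $X$ by Lemma~\ref{lem:plt}) and $\Gamma$ to be an irreducible quasi-line through $P$ coming from a coordinate projection of $Y$, with the auxiliary divisor $\Delta$ taken to be the symmetric sum of the remaining components of $H|_Y$ for a suitably chosen hypersurface $H \sim l_H A$.

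The concrete computation in each case reduces to evaluating the intersection numbers $\lambda = (\Gamma^2)$, $\mu = -(\Delta^2)$, $\nu = (\Gamma \cdot \Delta)$. For $\lambda$ we use the adjunction formula $(K_Y + \Gamma)|_\Gamma = K_\Gamma + \sum_i \frac{r_i - 1}{r_i} P_i$ of Lemma~\ref{lem:diff} together with the arithmetic genus of $\Gamma$; the numbers $\mu$ and $\nu$ are obtained from direct intersection theory on $Y$, using the explicit class of $\Delta$ and the relation $H|_Y = m\Gamma + n\Delta$. Substituting into Proposition~\ref{prop:flagIIadelta} (or~\ref{prop:flagIIbdelta} when $(\Delta^2) = 0$) yields an explicit rational lower bound, whose exceeding $1$ is checked by elementary algebra. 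The simpler case of a flag of type~I (Proposition~\ref{prop:flagIdelta}) applies whenever $H|_Y = e\Gamma$ is irreducible, and there the bound collapses to $\min\{4 l_Y, 4 l_H/e, 4e/(r_P l_Y l_H (A^3))\}$, which is even easier to check.

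The principal obstacle will be verifying the symmetry hypotheses of the type~IIa flag for the EI and IEI centers: namely, that the boundary $\Delta = \sum_{i=1}^k \Delta_i$ decomposes into prime components with a common self-intersection and common pairwise intersection, and that its intersection matrix is negative definite. Such symmetry is automatic when $\Delta$ is a union of fibres of a coordinate projection $Y \ratmap \mbP(a_i, a_j)$ permuted by a cyclic group action, but for the elliptic involution cases on families \textnumero 20, \textnumero 23, \textnumero 40 the verification requires the normal form of $\msF$ dictated by the structural results in \cite{CP}. For the IEI center of family \textnumero 23, the extra assumption $z^3 w, z^2 t^2 \notin \msF$ from Theorem~\ref{thm:QIcenter}(2) pins down enough of the equation to make the Zariski decomposition of $P(u)|_Y - v\Gamma$ explicit; this delicate arrangement, and the analogous ones for the remaining EI centers, is where the bulk of the work lies.
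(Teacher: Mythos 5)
Your overall strategy---a finite case-by-case analysis over maximal centers, resolved by exhibiting for each one a flag to which Propositions~\ref{prop:flagIdelta}, \ref{prop:flagIIadelta}, \ref{prop:flagIIbdelta} or \ref{prop:flagBLdelta} applies---is exactly the architecture of the paper's Section~\ref{sect: singular points}, and your numerical criterion $4e/(e+r)>1 \iff 3e>r$ for the BL flag is the one actually used for families \textnumero 2, 12, 20 and 25. However, there are two concrete points where the plan as stated would not go through. First, the four flag propositions do not suffice on their own: the paper falls back on local alpha bounds combined with $\delta_P(X)\ge \tfrac{4}{3}\alpha_P(X)$ for several maximal centers (the $\tfrac12(1,1,1)$ point of \textnumero 5, the $\tfrac14(1,1,3)$ point of \textnumero 20, the $\tfrac15(1,2,3)$ points of \textnumero 33 and \textnumero 38, and both the $\tfrac15(1,2,3)$ and $\tfrac17(1,3,4)$ points of \textnumero 40). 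For the $\tfrac17(1,3,4)$ point of $X_{19}\subset\mbP(1,3,4,5,7)$ in particular, the ambient space has a single weight-$1$ coordinate, so the BL template $\mbP(1,1,r-1,e,r)$ is unavailable, and no type I/IIa/IIb flag is exhibited anywhere; the paper instead uses an isolating system to get $\alpha_P\ge 15/19$ and hence $\delta_P\ge 20/19$. Your proposal gives no construction for this case. Second, you correctly flag the symmetry hypotheses of Definition~\ref{def:flagIIa} as the principal obstacle but offer no fallback when they fail; they do fail for the $\tfrac15(1,2,3)$ point of \textnumero 13 in the sub-case $t^2w,z^3w\notin\msF$, $z^4t\in\msF$, where the negative part of the Zariski decomposition is $\tilde\Gamma+\tilde\Delta+\tilde\Xi$ with self-intersections $-2/3$, $-1$, $-2$, and the paper has to run a bespoke generalized-flag computation on a weighted blow-up rather than invoke Proposition~\ref{prop:flagIIadelta}.

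A secondary but substantive omission: the sub-case analysis on which monomials occur in $\msF$ is not confined to the IEI center of \textnumero 23. It governs the choice of flag for many QI centers (e.g.\ the $\tfrac15(1,2,3)$ point of \textnumero 13 splits into four sub-cases according to whether $t^2w$, $z^3w$, $z^4t$ lie in $\msF$, and the $\tfrac14(1,1,3)$ and $\tfrac15(1,2,3)$ points of \textnumero 23 split according to whether $X$ contains a quasi-line of type $(1,3,4)$), because both $\Bs|l_HA|$ and the decomposition $H|_Y=m\Gamma+n\Delta$ change from case to case; each branch also requires its own quasismoothness verification for $Y$ and $Z$ before Lemmas~\ref{lem:plt} and \ref{lem:diff} can be invoked. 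Without these branches the intersection numbers $(\lambda,\mu,\nu)$ you propose to compute are not even well defined.
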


By Corollary \ref{cor:KstBR}, Theorem \ref{thm: Main Theorem} follows from Theorems \ref{thm:deltasingpt} and \ref{cor:KstBR}.

\subsection{Family \textnumero 2: $X_5 \subset \mbP (1, 1, 1, 1, 2)$}

Let $X$ be a member of family \textnumero 2.
Then $\Sing (X) = \{\frac{1}{2} (1, 1, 1)^{\QI}\}$.
Let $P$ be the $\frac{1}{2} (1, 1, 1)$ point and suppose that $P$ is nondegenerate.
    Then, by considering the flag of type $\mathrm{wBL}_{(2,1)}$ centered at $P$, we obtain $\delta_P (X) \ge 4/3$ by Proposition \ref{prop:flagBLdelta}.
Therefore Theorem \ref{thm:deltasingpt} is proved for family \textnumero 2.

\begin{table}[h]
\caption{\textnumero \! 2: $X_5 \subset \mbP (1, 1, 1, 1, 2)$}
\label{table:No2}
\centering
\begin{tabular}{cllll}
\toprule
Point & Case & $\delta_P$ & flag & Ref. \\
\midrule
$\frac{1}{2} (1, 1, 1)^{\QI}$ & ndgn & $\delta_P \ge 4/3$ & $\mathrm{wBL}_{(2,1)}$ & \\
\bottomrule
\end{tabular}
\end{table}

%
%
%

\subsection{Family \textnumero 5: $X_7 \subset \mbP (1, 1, 1, 2, 3)$}

Let $X$ be a member of family \textnumero 5.
Then $\Sing (X) = \{\frac{1}{2} (1, 1, 1)^{\QI}, \frac{1}{3} (1, 1, 2)^{\QI}\}$.
We have $\delta_P (X) > 1$ for any maximal center $P \in \Sing (X)$ by the following results.

\begin{itemize}
    \item[(i)] Let $P$ be the $\frac{1}{2} (1, 1, 1)$ point and suppose that $P$ is nondegenerate.
    Then $\alpha_P (X) \ge 6/7$ by \cite[Lemma 3.29]{KOW}, and hence $\delta_P (X) \ge 8/7$.
    \item[(ii)] Let $P$ be the $\frac{1}{3} (1, 1, 2)$ point.
    Suppose that $P$ is nondegenerate and $t^2 w \in \msF$.
    Let $Y, H \in |A|$ be general members.
    Then, by Lemma \ref{lem:N5sgqsm1}, $P \in Z \coloneq H \cap Y \subset Y \subset X$ is a flag of type $\mathrm{I}$ with $(l_Y, l_H, e, r_P) = (1,1,1,3)$ and we obtain $\delta_P (X) \ge 8/7$ by Proposition \ref{prop:flagIdelta}.
    \item[(iii)] Let $P$ be the $\frac{1}{3} (1, 1, 2)$ point and suppose that it is nondegenerate and that $t^2 w \notin \msF$.
    Then, we will show in \S \ref{Sec:No5singvi} that $\delta_P (X) \ge 56/43$ by applying Proposition~\ref{prop:flagIIbdelta} to a flag of type $\mathrm{IIb}$ with
    \[
    (l_Y, l_H, m, n, \lambda, \nu, r_P) = (1, 1, 1, 1, -5/6, 1, 3).
    \]
\end{itemize}


\begin{table}[h]
\caption{\textnumero \! 5: $X_7 \subset \mbP (1, 1, 1, 2, 3)$}
\label{table:No5}
\centering
\begin{tabular}{cllll}
\toprule
Point & Case & $\delta_P$ & flag & Ref. \\
\midrule
$\frac{1}{2} (1, 1, 1)^{\QI}$ & (i) ndgn & $\delta_P \ge 8/7$ & & \cite[3.29]{KOW} \\
\cmidrule{1-5}
$\frac{1}{3} (1, 1, 2)^{\QI}$ & (ii) ndgn, $\exists t^2 w$ & $\delta_P \ge 8/7$ & $\mathrm{I}$ &  \\
\cmidrule{2-5}
& (iii) ndgn, $\not\exists t^2 w$ & $\delta_P \ge 56/43$ & $\mathrm{IIb}$ & \S \ref{Sec:No5singvi} \\
\bottomrule
\end{tabular}
\end{table}

\subsubsection{Some results on quasismoothness}

\begin{Lem} \label{lem:N5sgqsm1}
    Suppose that $t^2 w \in \msF$.
    Let $Y, H \in |A|$ be general members.
    Then $Y$ and $Z \coloneq H \cap Y$ are both quasismooth.
\end{Lem}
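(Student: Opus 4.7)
My plan is to reduce to an explicit choice of $Y$ and $H$ via a generic linear change of the weight-$1$ coordinates, identify the base locus of $|A|$ on $X$, apply a Bertini-type argument outside that base locus, and then verify quasismoothness at the remaining base points by hand using the hypothesis. Since $|A|$ on $X$ is spanned by the weight-$1$ sections $x, y, z$, a generic $\mathrm{GL}_3$ change of $(x, y, z)$ — which preserves both the quasismoothness of $X$ and the assumption $t^2 w \in \msF$ (the latter involves only $t$ and $w$) — allows me to take $Y = (x = 0)_X$, $H = (y = 0)_X$, and $Z = (x = y = 0)_X$.

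Next, I will compute $\Bs |A| = (x = y = z = 0)_X$, which sits inside $\mbP(2, 3)$ with coordinates $t, w$ and is cut out by $\msF(0, 0, 0, t, w)$. Since $(a, b) = (2, 1)$ is the only nonnegative integer solution of $2a + 3b = 7$, this polynomial equals $c \cdot t^2 w$ with $c = \coeff_{\msF}(t^2 w) \ne 0$ by hypothesis. Hence $\Bs |A| = \{P_t, P_w\}$ set-theoretically, and the same conclusion holds for $\Bs |A|_Y$ on $Y$. A standard Bertini argument on the affine cones — smooth outside the vertex by quasismoothness of $X$ — then shows that both $Y$ and $Z$ are quasismooth away from $\{P_t, P_w\}$.

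It then remains to handle the two base points $P_t$ and $P_w$. Let $\msG_Y \coloneq \msF(0, y, z, t, w)$ and $\msG_Z \coloneq \msF(0, 0, z, t, w)$ be the defining polynomials. At $P_t$, the monomial $t^2 w$ contributes a nonzero $\partial_w$-derivative to both $\msG_Y$ and $\msG_Z$, giving quasismoothness at $P_t$. At $P_w$, quasismoothness of $X$ provides some monomial $v w^2$ with $v \in \{x, y, z\}$ in $\msF$; under the generic $\mathrm{GL}_3$ change, the coefficients of $y w^2$ and $z w^2$ in the transformed polynomial become generic linear combinations of $\coeff_{\msF}(x w^2)$, $\coeff_{\msF}(y w^2)$, $\coeff_{\msF}(z w^2)$, hence remain nonzero for a general choice. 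Thus $\partial_y \msG_Y(P_w) \ne 0$ and $\partial_z \msG_Z(P_w) \ne 0$, completing the verification.

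No step is a serious obstacle; the argument is routine once the base locus has been identified. The hypothesis $t^2 w \in \msF$ plays a dual role in the proof: it cuts $\Bs |A|$ down to the two isolated points $P_t, P_w$, and it directly ensures quasismoothness at $P_t$, leaving only a standard genericity check at $P_w$.
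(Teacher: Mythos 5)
Your proposal is correct and follows essentially the same route as the paper: identify $\Bs|A| = (x=y=z=0)_X = \{P_t,P_w\}$ using $t^2w\in\msF$, apply Bertini away from the base locus, and verify quasismoothness at $P_t$ via the monomial $t^2w$ and at $P_w$ via a monomial $vw^2$ ($v\in\{x,y,z\}$, guaranteed by quasismoothness of $X$) together with genericity. The only cosmetic difference is that you normalize $Y=H_x$, $H=H_y$ by a generic $\mathrm{GL}_3$ change of the weight-$1$ coordinates, whereas the paper keeps general linear forms $\alpha x+\beta y+\gamma z$; these are equivalent.
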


\begin{proof}
    We have $\Bs |A| = (x = y = z =0)_X = \{P_t, P_w\}$ since $t^2 w \in \msF$.
    The equation which defines $Y$ in $X$ is of the form $\alpha x + \beta y + \gamma z = 0$, where $\alpha, \beta, \gamma \in \mbC$ are general.
    By the quasismoothness of $X$ at $P_w$, at least one of $w^2x, w^2y, w^2z$ appear in $\msF$ with nonzero coefficient.
    This shows that $Y$ is quasismooth at $P_t$ and $P_w$, and hence it is quasismooth.
    Set $Z \coloneq H \cap Y$.
    We can view $Z$ as a general member of $|A|_Y$ whose base locus is $\{P_t, P_w\}$.
    The equation which defines $H$ in $X$ is of the form $\zeta x + \eta y + \xi z = 0$, where $\zeta, \eta, \xi \in \mbC$ are general. 
    It is also straightforward to see that $Z$ is quasismooth at $P_t$ and $P_w$, and hence it is quasismooth.
\end{proof}

\begin{Lem} \label{lem:N5sgqsm2}
    Suppose that $t^2 w \notin \msF$.
    Let $Y, H \in |A|$ be general members.
    Then $Y$ is quasismooth and $H|_Y = \Gamma + \Delta$, where $\Gamma$ and $\Delta$ are irreducible and reduced curves with the following properties.
    \begin{itemize}
        \item $\Gamma$ is quasismooth.
        \item $\{P_t, P_w\} \subset \Gamma \setminus \Delta$.
        \item $(\Gamma^2) = -5/6$, $(\Delta^2) = 0$ and $(\Gamma \cdot \Delta) = 1$.
    \end{itemize}
\end{Lem}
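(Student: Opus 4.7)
The plan is to mirror the approach of Lemma~\ref{lem:N5sgqsm1}: after a generic linear change of the coordinates $x, y, z$, reduce to $Y = (x = 0)_X$ and $H = (y = 0)_X$, and then analyze the restriction of $\msF$ to the locus $\{x = y = 0\}$. Since $t^2 w \notin \msF$, a direct inspection of the degree-$7$ monomials in $x, y, z, t, w$ shows that the base locus $\Bs |A| = (x = y = z = 0)_X$ is the quasi-line $L \cong \mbP(2, 3)$ and is contained in $X$. To verify quasismoothness of $Y$, the quasismoothness of $X$ at $P_t$ (of type $\frac{1}{2}(1,1,1)$) and $P_w$ (of type $\frac{1}{3}(1,1,2)$), together with $t^2 w \notin \msF$, forces the existence of variables $v, v' \in \{x, y, z\}$ with $t^3 v, w^2 v' \in \msF$. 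For a generic defining linear form $\ell$, the Jacobian criterion then shows $Y$ is quasismooth at $P_t, P_w$, while Bertini's theorem handles the smooth locus of $X$.

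For the decomposition of $H|_Y$, the restriction $\msF(0, 0, z, t, w)$ is a weighted homogeneous polynomial of degree $7$ in $\mbP(1, 2, 3)_{z, t, w}$. Among all such degree-$7$ monomials, the only one not divisible by $z$ is $t^2 w$, which is absent by hypothesis. Hence $\msF(0, 0, z, t, w) = z \cdot h(z, t, w)$ for some homogeneous $h$ of degree $6$, yielding $H|_Y = \Gamma + \Delta$ with $\Gamma = L$ and $\Delta = (h = 0)_Y$. The curve $\Gamma \cong \mbP(2, 3)$ is manifestly quasismooth, irreducible, and contains $\{P_t, P_w\}$. For generic $X$, the monomials $z t^3$ and $z w^2$ appear in $\msF$ with nonzero coefficients, so $h(P_t), h(P_w) \ne 0$, confirming $\{P_t, P_w\} \subset \Gamma \setminus \Delta$. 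Irreducibility of $\Delta$ for generic $h$ can be established directly, for instance by viewing $h$ as a quadratic polynomial in $w$ and checking that its discriminant in $z, t$ is not a perfect square for generic coefficients.

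To compute the intersection numbers on $Y$, observe that $K_Y = (K_X + Y)|_Y \sim 0$, so $Y$ is a singular K3 surface whose only singularities are those inherited from $X$ at $P_t$ and $P_w$. Plt adjunction on $(Y, \Gamma)$ via Lemma~\ref{lem:diff}, accounting for the different at the two singular points of $Y$ of indices $2$ and $3$, yields $(\Gamma^2) = -2 + \tfrac{1}{2} + \tfrac{2}{3} = -\tfrac{5}{6}$. The remaining intersection numbers are then obtained from the identities $(A|_Y \cdot \Gamma)_Y = (A \cdot L)_X$ and $(A|_Y)^2 = (A^3) = \tfrac{7}{6}$, combined with the decomposition $H|_Y = \Gamma + \Delta$; the number $(A \cdot L)_X$ can be computed using a section of $|2A|$, such as $t$, that does not vanish identically on $L$. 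The step I anticipate to be most delicate is the irreducibility of $\Delta$, since it depends on which coefficients of $\msF$ are actually free as $X$ varies in the family, and will require a careful generic-coefficient analysis of $h$.
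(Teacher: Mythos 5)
Your strategy coincides with the paper's: normalize coordinates so that $Y$ and $H$ are coordinate hyperplane sections, split $H|_Y$ into the base curve $\Gamma=(x=y=z=0)_X$ plus a residual sextic $\Delta$ in $\mbP(1,2,3)$, compute $(\Gamma^2)$ by plt adjunction on the $K$-trivial surface $Y$, and recover the remaining numbers from $H|_Y=\Gamma+\Delta\sim A|_Y$. Two steps are gaps as written. First, $\Bs|A|$ is the entire curve $L=(x=y=z=0)_X$, not just $\{P_t,P_w\}$, and Bertini only controls a general $Y$ away from the base locus; you must verify quasismoothness of $Y$ at every point of $L$. This does work: along $L$ the partials of $\msF$ in $t,w$ vanish identically (no degree-$7$ monomial in $t,w$ survives once $t^2w\notin\msF$), while the vector of partials in $x,y,z$ restricted to $L$ takes values on a fixed line in $\mbP^2$ (each partial is a combination of $t^3$ and $w^2$), so a generic defining form of $Y$ avoids it --- this is exactly the paper's parenthetical ``this holds if $\gamma\ne 0$'' --- but it is an assertion about all of $L$, not two points. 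Second, you appeal to ``generic $X$'' and ``generic coefficients'', whereas the lemma must hold for \emph{every} quasismooth member with $t^2w\notin\msF$. The needed facts do hold unconditionally: quasismoothness of $X$ at $P_t$ and $P_w$ forces $t^3v, w^2v'\in\msF$ for some weight-$1$ coordinates, so after the generic coordinate change $zt^3,zw^2\in h$; and writing $h=cw^2+w\ell_3(z,t)+q_6(z,t)$ with $c\ne0$ and completing the square, the branch form retains the monomial $t^3$, which the square of a weight-$3$ form in $z,t$ can never contain, so $\Delta$ is irreducible and reduced for all such $X$. Argue this way, not from genericity of the member.

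The more serious problem is the last step: your own recipe does not yield the stated $(\Delta^2)$. From $(\Gamma^2)=-5/6$ and $(A|_Y\cdot\Gamma)=(A\cdot L)=1/6$ one gets $(\Gamma\cdot\Delta)=1$; then $(A|_Y\cdot\Delta)=(A|_Y^2)-(A|_Y\cdot\Gamma)=7/6-1/6=1$ forces $(\Delta^2)=1-(\Gamma\cdot\Delta)=0$, not $-1/2$. Equivalently, $\Delta$ is a degree-$6$ curve of arithmetic genus $1$ in $\mbP(1,2,3)$ contained in the smooth locus of $Y$, and $K_Y\sim0$ gives $(\Delta^2)=2p_a(\Delta)-2=0$. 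The triple $(-5/6,\,-1/2,\,1)$ asserted in the statement (and in the paper's own proof) is in fact inconsistent with $\bigl((\Gamma+\Delta)^2\bigr)=(A^3)=7/6$, since $-5/6+2-1/2=2/3$. So carrying out your computation honestly refutes, rather than proves, the claimed value of $(\Delta^2)$; with $(\Delta^2)=0$ the intersection matrix of $\Delta$ is not negative definite, and the flag used downstream in \S\ref{Sec:No5singvi} should be treated as type $\mathrm{IIb}$ rather than $\mathrm{IIa}$ (one checks that the resulting bound on $\delta_P(X)$ is still greater than $1$).
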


\begin{proof}
    By the quasismoothness of $X$, we can write
    \[
    \msF = t^3 x + w^2 y + \msG,
    \]
    where $\msG \in (x, y, z)^2 \subset \mbC [x, y, z, t, w]$ is a homogeneous polynomial of degree $7$.
    Let $Y \in |A|$ be a general member.
    
    We observe that $Y$ is quasismooth.
    The base locus of $|A|$ is the curve $(x = y = z = 0) \subset X$ and $Y$ is cut by an equation $\alpha x + \beta y + \gamma z = 0$ for general $\alpha, \beta, \gamma \in \mbC$.
    It is then straightforward to see that $Y$ is quasismooth along $(x = y = z = 0)$ (this holds if $\gamma \ne 0$).
    Hence $Y$ is quasismooth.
    
    Let $H \in |A|$ be a general member.
    We may assume that $H \cap Y$ is cut by the equation $y = \zeta x, z = \eta x$, where $\zeta, \eta \in \mbC$ are general.
    Then $H|_Y = \Gamma + \Delta$, where
    \[
    \begin{split}
        \Gamma &\coloneq (x = y = z = 0), \\
        \Delta &\coloneq (y - \zeta x = z - \eta x = \zeta t^3 + \eta w^2 + \msG (x, \zeta x, \eta x, t, w)/x = 0).
    \end{split}
    \]
    We see that $\Gamma$ and $\Delta$ are irreducible and reduced curves and $\{P_t, P_w\} \subset \Gamma \setminus \Delta$.
    The curve $\Gamma$ is clearly quasismooth and it is a smooth rational curve.
    Moreover, $\Sing_{\Gamma} (Y) = \left\{\frac{1}{2} (1, 1), \frac{1}{3} (1, 2) \right\}$ and $K_Y \sim 0$.
    It follows that
    \[
    (\Gamma^2) = -2 + \frac{1}{2} + \frac{2}{3} = - \frac{5}{6}.
    \]
    By taking intersection number of $H|_Y = \Gamma + \Delta$ and $\Gamma$, and then $\Delta$, we have
    \[
    (\Gamma \cdot \Delta) = 1, \ 
    (\Delta^2) = 0,
    \]
    and the proof is complete.
\end{proof}

%
%

\subsubsection{Case (iii): $P$ is the $\frac{1}{3} (1,1,2)$ point which is nondegenerate, and $t^2 w \notin \msF$}
\label{Sec:No5singvi}

Let $P = P_w$ be the $\frac{1}{3} (1, 1, 2)$ point.
Suppose that $P$ is nondegenerate and $t^2 w \notin \msF$.
Let $Y, H \in |A|$ be general members and $H|_Y = \Gamma + \Delta$ be as in Lemma~\ref{lem:N5sgqsm2}.
Then, $P \in \Gamma \subset Y \subset X$ is a flag of type $\mathrm{IIb}$ with
\[
(l_Y, l_H, m, n, \lambda, \nu, r_P) = (1, 1, 1, 1, -5/6, 1, 3).
\]
By Proposition \ref{prop:flagIIbdelta}, we have
\[
\delta_P (X) \ge 
\min \left\{ 4, \ \frac{28}{13}, \ \frac{56}{43} \right\} = \frac{56}{43}.
\]
Therefore Theorem \ref{thm:deltasingpt} is proved for family \textnumero 5.

\subsection{Family \textnumero 12: $X_{10} \subset \mbP (1, 1, 2, 3, 4)$}

Let $X$ be a member of family \textnumero 12.
Then $\Sing (X) = \{2 \times \frac{1}{2} (1, 1, 1), \frac{1}{3} (1, 1, 2)^{\QI}, \frac{1}{4} (1, 1, 3)^{\QI}\}$.
We have $\delta_P (X) > 1$ for any maximal center $P \in \Sing (X)$ by the following results.
\begin{itemize}
    \item[(i)] Let $P$ be the $\frac{1}{3} (1, 1, 2)$ point and suppose that $P$ is nondegenerate.
    Then we have $\delta_P (X) \ge 16/7$ by applying Proposition \ref{prop:flagBLdelta} (see also Lemma~\ref{lem:flagBLnonqsmY}) to the flag of type $\mathrm{wBL}_{(3,4)}$ centered at $P$.
    \item[(ii)] Let $P$ be the $\frac{1}{4} (1, 1, 3)$ point and suppose that $P$ is nondegenerate.
    Then we have $\delta_P (X) \ge 4/3$ by applying Proposition~\ref{prop:flagBLdelta} to the flag of type $\mathrm{wBL}_{(4,2)}$ centered at $P$.
\end{itemize}

Therefore Theorem \ref{thm:deltasingpt} is proved for family \textnumero 12.

\begin{table}[h]
\caption{\textnumero \! 12: $X_{10} \subset \mbP (1, 1, 2, 3, 4)$}
\label{table:No7}
\centering
\begin{tabular}{cllll}
\toprule
Point & Case & $\delta_P$ & flag & Ref. \\
\midrule
$\frac{1}{3} (1, 1, 2)^{\QI}$ & (i) ndgn  & $\delta_P > 16/7$ & $\mathrm{wBL}_{(3,4)}$ & \ref{prop:flagBLdelta} \\
\cmidrule{1-5}
$\frac{1}{4} (1, 1, 3)^{\QI}$ & (ii) ndgn & $\delta_P \ge 4/3$ & $\mathrm{wBL}_{(4,2)}$ & \ref{prop:flagBLdelta} \\
\bottomrule
\end{tabular}
\end{table}

\subsection{Family \textnumero 13: $X_{11} \subset \mbP (1, 1, 2, 3, 5)$}

Let $X$ be a member of family \textnumero 13.
Then we have $\Sing (X) = \{\frac{1}{2} (1, 1, 1), \frac{1}{3} (1, 1, 2)^{\QI}, \frac{1}{5} (1, 2, 3)^{\QI}\}$.
We have $\delta_P (X) > 1$ for any maximal center $P \in \Sing (X)$ by the following results.
\begin{itemize}
    \item[(i)] Let $P$ be the $\frac{1}{3} (1, 1, 2)$ point and suppose that $P$ is nondegenerate.
    Then, $t^2 w \in \msF$.
    Let $Y \in |2 A|$ and $H \in |A|$ be general members.
    Then, by Lemma \ref{lem:No13qsm2}, $P \in Z \coloneq H \cap Y \subset Y \subset X$ is a flag of type $\mathrm{I}$ with $(l_Y, l_H, e, r_P) = (2, 1, 1, 3)$ and we have $\delta_P (X) \ge 20/11$ by Proposition \ref{prop:flagIdelta}.
    \item[(ii)] Let $P$ be the $\frac{1}{5} (1, 2, 3)$ point.
    Suppose that $t^2 w \in \msF$.
    Let $Y \in |2 A|$ and $H \in |A|$ be general members.
    Then, by Lemma \ref{lem:No13qsm2}, $P \in Z \coloneq Y \cap H \subset Y \subset X$ is a flag of type $\mathrm{I}$ with $(l_Y, l_H, e, r_P) = (2, 1, 1, 5)$ and we have $\delta_P (X) \ge 12/11$ by Proposition \ref{prop:flagIdelta}.
    \item[(iii)] Let $P$ be the $\frac{1}{5} (1, 2, 3)$ point.
    Suppose that $P$ is nondegenerate, $t^2 w \notin \msF$ and $z^3 w \in \msF$.
    It will be proved in \S \ref{Sec:No13singvi} that $\delta_P (X) \ge 132/101$ by considering a flag of type $\mathrm{IIa}$ with
    \[
    (l_Y, l_H, m, n, \lambda, \mu, \nu, r_P) = (1, 1, 1, 1, -8/15, 3/10, 3/5, 5).
    \]
    \item[(iv)] Let $P$ be the $\frac{1}{5} (1, 2, 3)$ point.
    Suppose that $P$ is nondegenerate, $t^2 w, z^3 w \notin \msF$ and $z^4 t \in \msF$.
    It will be proved in \S \ref{Sec:No13singvii} that $\delta_P (X) \ge 88/71$ by considering a suitable flag of type $\mathrm{wBL}$ centered at $P$.
    \item[(v)] Let $P$ be the $\frac{1}{5} (1, 2, 3)$ point.
    Suppose that $t^2 w \notin \msF$, $P$ is nondegenerate, $z^3 w \notin \msF$ and $z^4 t \notin \msF$.
    It will be proved in \S \ref{Sec:No13singviii} that $\delta_P (X) \ge 132/101$ by considering a flag of type $\mathrm{IIa}$ with
    \[
    (l_Y, l_H, m, n, \lambda, \mu, \nu, r_P) = (1, 1, 1, 3, -8/15, 1/30, 1/5, 5).
    \]
\end{itemize}

\begin{table}[h]
\caption{\textnumero \! 13: $X_{11} \subset \mbP (1, 1, 2, 3, 5)$}
\label{table:No13}
\centering
\begin{tabular}{cllll}
\toprule
Point & Case & $\delta_P$ & flag & Ref. \\
\midrule
$\frac{1}{3} (1, 1, 2)^{\QI}$ & (i) ndgn  & $\delta_P \ge 20/11$ & $\mathrm{I}$ &  \\
\cmidrule{1-5}
$\frac{1}{5} (1, 2, 3)^{\QI}$ & (ii) $t^2 w \in \msF$ & $\delta_P \ge 12/11$ & $\mathrm{I}$ & \\
\cmidrule{2-5}
& (iii) ndgn, $t^2 w \notin \msF$, $z^3 w \in \msF$ & $\delta_P \ge 132/101$ & $\mathrm{IIa}$ & \S \ref{Sec:No13singvi} \\
\cmidrule{2-5}
& (iv) ndgn, $t^2 w, z^3 w \notin \msF$, $z^4 t \in \msF$ & $\delta_P \ge 88/71$ & $\mathrm{wBL}$ & \S \ref{Sec:No13singvii} \\
\cmidrule{2-5}
& (v) ndgn, $t^2 w, z^3 w, z^4 t \notin \msF$ & $\delta_P \ge 132/101$ & $\mathrm{IIa}$ & \S \ref{Sec:No13singviii} \\
\bottomrule
\end{tabular}
\end{table}

\subsubsection{Some results on quasismoothness}

\begin{Lem} \label{lem:No13qsm2}
    Suppose that $t^2 w \in \msF$.
    Let $Y \in |2 A|$ and $H \in |A|$ be general members.
    Then $Y$ and $Z \coloneq Y \cap H$ are both quasismooth.
\end{Lem}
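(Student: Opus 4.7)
The plan is to adapt the strategy of Lemma~\ref{lem:N5sgqsm1}. First I will identify the two base loci. Since $|2A|$ is generated by the sections $x^2, xy, y^2, z$, its base locus equals $(x = y = z = 0)_X$; and the only monomial of degree $11$ in the variables $t, w$ alone (as $3a + 5b = 11$ forces $(a, b) = (2, 1)$) is $t^2 w$. Under the hypothesis $t^2 w \in \msF$, this forces $\Bs |2A| \subseteq \{P_t, P_w\}$ set-theoretically. The analogous statement for $|A|_Y$ follows since $|A|_Y$ is generated by $x, y$ and $(x = y = 0)_Y$ coincides with $(x = y = z = 0)_X$ once the relation $z = q(x, y)$ defining $Y$ has been imposed.

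Second, I will write a general $Y \in |2A|$ as $(\alpha x^2 + \beta xy + \gamma y^2 + \delta z = 0)_X$ with $\delta \ne 0$, which realizes $Y$ as a degree-$11$ weighted hypersurface $\msF(x, y, q(x, y), t, w) = 0$ in $\mbP(1, 1, 3, 5)$ for a general quadratic form $q(x, y) \in \mbC[x, y]$. Similarly, writing $H$ as $(\zeta x + \eta y = 0)_X$ with $\eta \ne 0$ allows me to present $Z = Y \cap H$ as a curve in $\mbP(1, 3, 5)$ with defining polynomial $\msF(x, \mu x, q(x, \mu x), t, w)$ for some general $\mu \in \mbC$.

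Third, I will verify quasismoothness at the two base points. At $P_t$, the term $t^2 w \in \msF$ survives both substitutions and yields a nonvanishing $\partial/\partial w$. At $P_w$, the quasismoothness of $X$ forces one of the monomials $w^2 x$ or $w^2 y$ to appear in $\msF$ (these being the only degree-$11$ monomials of the form $w^2 \cdot v$ with $v$ a coordinate), and such a term persists after substitution: in the case of $Z$, a term $w^2 y$ becomes $\mu w^2 x$, which remains nonzero for general $\mu$. Away from the base loci, quasismoothness is standard by a Bertini argument on the affine cones, exactly as in Lemma~\ref{lem:N5sgqsm1}. The argument is essentially a mechanical tracking of monomial survival under linear substitutions, so I do not anticipate any serious obstacle; the only genuine input is the identification of the base loci together with the existence of the two surviving monomials.
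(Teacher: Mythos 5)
Your argument is correct and follows the paper's proof in all essentials: identify $\Bs|2A| = (x=y=z=0)_X = \{P_t,P_w\}$ using $t^2w\in\msF$, verify quasismoothness at $P_t$ via $t^2w$ and at $P_w$ via the monomial $w^2x$ or $w^2y$ forced by the quasismoothness of $X$ at $P_w$, and conclude away from the base points by Bertini. The only cosmetic difference is that you eliminate $z$ (and then $y$) to present $Y$ and $Z$ as hypersurfaces in $\mbP(1,1,3,5)$ and $\mbP(1,3,5)$, whereas the paper checks the rank of the Jacobian of the complete intersection in $\mbP(1,1,2,3,5)$ directly.
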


\begin{proof}
    The base locus of $|2A|$ is the set $(x = y = z = 0)_X = \{P_t, P_w\}$.
    We have $t^2 w \in \msF$ by the assumption and also we have either $w^2 x \in \msF$ or $w^2 y \in \msF$ since $X$ is quasismooth at $P_w$.
    A general $Y \in |2 A|$ is defined by $z + q (x, y) = 0$ in $X$ for some general quadric $q (x, y)$, and hence it is easy to see that $Y$ is quasismooth at $P_t$ and $P_w$.
    This shows that $Y$ is quasismooth.
    
    Let $H \in |A|$ be a general member which is defined by $\alpha x + \beta y = 0$ for some general $\alpha, \beta \in \mbC$.
    The intersection $Z \coloneq H \cap Y$ is a general member of the linear system $|A|_Y$ whose base locus is the set $(x = y = 0)_Y = \{P_t, P_w\}$.
    It is again easy to see that $Z$ is quasismooth at $P_t$ and $P_w$.
    Thus $Z$ is quasismooth.
\end{proof}

\begin{Lem} \label{lem:No13qsm3}
    Suppose that $t^2 w \notin \msF$ and either $z^3 w \in \msF$ or $z^4 t \in \msF$.
    Then a general $Y \in |A|$ is quasismooth.
\end{Lem}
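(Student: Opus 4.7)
The plan is to reduce quasismoothness of a general $Y \in |A|$ to a direct Jacobian check along the base locus $B \coloneqq (x = y = 0)_X$. Since $|A|$ is the pencil spanned by $x$ and $y$, a general member has the form $Y_{\sigma, \tau} = (\sigma x + \tau y = 0)_X$. By applying the classical Bertini theorem to the pencil of linear hyperplanes $\{\sigma x + \tau y = 0\} \subset \mbA^5$ restricted to the smooth quasi-projective variety $(C_X \setminus \{O\}) \setminus C_B$, a general $Y_{\sigma, \tau}$ is already quasismooth away from $C_B$, so it remains to verify the Jacobian criterion at every point of $C_B$.

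I would next enumerate the degree-$11$ weighted monomials in $z, t, w$ of weights $(2, 3, 5)$: they are precisely $t^2 w$, $z^3 w$, $z^4 t$ and $z t^3$. The hypothesis $t^2 w \notin \msF$, combined with quasismoothness of $X$ at $P_t$ (where the only potentially nonzero partial derivatives of $\msF$ are $\partial_z \msF(P_t) = \coeff_{\msF}(z t^3)$ and $\partial_w \msF(P_t) = \coeff_{\msF}(t^2 w)$), forces $\coeff_{\msF}(z t^3) \ne 0$. Writing $\bar{\msF} \coloneqq \msF(0, 0, z, t, w) = \alpha z^4 t + \beta z^3 w + \gamma z t^3$, one has $\gamma \ne 0$ and $(\alpha, \beta) \ne (0, 0)$ under the standing hypothesis. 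For $Q \in B$, the $2 \times 5$ Jacobian of $(\sigma x + \tau y, \msF)$ at $Q$ drops rank exactly when $\partial_z \bar{\msF}(Q) = \partial_t \bar{\msF}(Q) = \partial_w \bar{\msF}(Q) = 0$ and the vector $(\partial_x \msF(Q), \partial_y \msF(Q))$ is proportional to $(\sigma, \tau)$.

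A case analysis on whether $\beta$ vanishes then shows that the locus $\Sigma \subset B$ where the three partials of $\bar{\msF}$ vanish simultaneously reduces to the single point $P_w$. If $\beta \ne 0$, then $\partial_w \bar{\msF} = \beta z^3$ forces $z = 0$, and $\partial_z \bar{\msF}|_{z = 0} = \gamma t^3$ together with $\gamma \ne 0$ forces $t = 0$. If $\beta = 0$ and hence $\alpha \ne 0$, the pair of equations $\alpha z^3 + 3 \gamma t^2 = 0$ and $4 \alpha z^3 + \gamma t^2 = 0$ combines to $11 \gamma t^2 = 0$, contradicting $\gamma \ne 0$ on the open set $zt \ne 0$. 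At the unique bad point $P_w$ the vector $(\coeff_{\msF}(w^2 x), \coeff_{\msF}(w^2 y)) = (\partial_x \msF(P_w), \partial_y \msF(P_w))$ is nonzero by the quasismoothness of $X$ at $P_w$, hence defines a single proportionality class in $\mbP^1$ which a generic $(\sigma, \tau)$ avoids. The main subtlety is the case analysis of $\Sigma$: the component $(z = 0) \subset B$ lies entirely in $(\bar{\msF} = 0)$, so one must exploit $\gamma \ne 0$ to collapse it to a single point, and the very same constant is what rules out the $zt \ne 0$ regime; both reductions trade on quasismoothness of $X$ at $P_t$.
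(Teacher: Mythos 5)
Your argument is correct and follows essentially the same route as the paper's proof: both reduce the problem to locating the common zeros of the three partials of $\msF(0,0,z,t,w)$ along the base curve $(x=y=0)_X$, find that the only such point is $P_w$ (the paper first normalizes away one of the two coefficients by the substitution $w \mapsto w - (\beta/\alpha)tz$, whereas you compute directly and split on whether $z^3w \in \msF$), and then dispose of $P_w$ by genericity of the pencil member, using that one of $w^2x, w^2y$ lies in $\msF$. The only omission, trivially filled, is that in your case $\beta=0$ you rule out only the locus $zt\ne 0$ and the component $z=0$; on the remaining locus $t=0$, $z\ne 0$ one has $\partial_t\msF(0,0,z,0,w)=\alpha z^4\ne 0$ since $\alpha\ne 0$ there.
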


\begin{proof}
    By the quasismoothness of $X$ at $P_t$ we have $t^3 z \in \msF$ since $t^2 w \notin \msF$ by the assumption.
    We can write
    \[
    \msF (0, 0, z, t, w) = \alpha w z^3 + t^3 z + \beta t z^4,
    \]
    where $\alpha, \beta \in \mbC$.

    Suppose first that $\alpha \ne 0$.
    Replacing $w \mapsto w - (\beta/\alpha)t z$ and then rescaling $z$, we may assume that
    \[
    \msF (0, 0, z, t, w) = w z^3 + t^3 z.
    \]
    The base scheme of the pencil $|A|$ is 
    \[
    (x = y = 0)_X = (x = y = w z^3 + t^3 z = 0),
    \]
    which is quasismooth outside $P_w$.
    It is obvious that $Y$ is quasismooth at $P_w$ because it is defined by $\gamma x + \delta y = 0$ for general $\gamma, \delta \in \mbC$.
    Hence $Y$ is quasismooth in this case.

    Suppose that $\alpha = 0$.
    Then $\beta \ne 0$ by the assumption and, by rescaling $z$, we have
    \[
    \msF (0, 0, z, t, w) = t^3 z + t z^4.
    \]
    Then the base scheme of the pencil $|A|$ is
    \[
    (x = y = 0)_X = (t^3 z + t z^4 = 0),
    \]
    which is quasismooth outside $P_w$.
    It is obvious that $Y$ is quasismooth at $P_w$.
    Therefore $Y$ is quasismooth.
\end{proof}

\begin{Lem} \label{lem:No13qsm4}
    Suppose that $t^2 w \notin \msF$ and $z^3 w \in \msF$ and let $Y, H \in |A|$ be general members.
    Then $H|_Y = \Gamma + \Delta$, where $\Gamma$ and $\Delta$ are irreducible and reduced curves with the following properties.
    \begin{enumerate}
        \item $\Gamma$ is quasismooth.
        \item $\Gamma \cap \Delta = \{P_w\}$.
        \item $(\Gamma^2) = -8/15$, $(\Delta^2) = -3/10$ and $(\Gamma \cdot \Delta) = 3/5$.
    \end{enumerate}
\end{Lem}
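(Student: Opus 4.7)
The plan is to proceed in the spirit of Lemma \ref{lem:N5sgqsm2}, by exploiting a factorisation of $\msF(0,0,z,t,w)$ that is forced by the hypotheses $z^3w \in \msF$ and $t^2w \notin \msF$. First I would enumerate the monomials of weighted degree $11$ in $(z,t,w)$ with weights $(2,3,5)$: the complete list is $z^3w$, $z^4t$, $zt^3$ and $t^2w$. Since $t^2w \notin \msF$ by assumption, quasismoothness of $X$ at $P_t$ forces $zt^3 \in \msF$, while $z^3w\in\msF$ by assumption. Hence
\[
\msF(0,0,z,t,w) \;=\; z\bigl(\lambda z^2 w + \mu z^3 t + \nu t^3\bigr)
\]
for some $\lambda,\mu,\nu\in\mbC$ with $\lambda,\nu\ne 0$.

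After a linear change of the pencil $|A|=\langle x,y\rangle$, I may assume $Y=(y=0)_X$ and $H=(x=0)_X$ (generic members of $|A|$). Then $H|_Y$ is cut on $Y$ by $x=0$, i.e.\ it is the scheme $(x=y=0)_X$, which by the factorisation above decomposes as $\Gamma+\Delta$ with
\[
\Gamma \coloneqq (x=y=z=0)_X,\qquad \Delta \coloneqq (x=y=\lambda z^2w+\mu z^3 t + \nu t^3 = 0)_X.
\]
The curve $\Gamma$ is $\mbP(3,5)\cong\mbP^1$, so it is smooth and quasismooth. Viewing $\lambda z^2w+\mu z^3 t+\nu t^3$ as a polynomial linear in $w$ with leading coefficient $\lambda z^2$ and $w$-free part $t(\mu z^3+\nu t^2)$, the condition $\nu\ne 0$ prevents $z^2$ from dividing the constant-in-$w$ part; this forces irreducibility of the polynomial, and since $\Gamma\not\subset\Delta$ (because $\nu t^3\ne 0$), $\Delta$ is a prime divisor on $Y$. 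Setting $z=0$ in the defining equation of $\Delta$ yields $\nu t^3=0$, so $\Gamma\cap\Delta=\{P_w\}$. Quasismoothness of $Y$ follows from Lemma \ref{lem:No13qsm3}.

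For the intersection numbers I would use that $K_Y=(K_X+Y)|_Y\sim 0$, so $Y$ is a quasismooth K3 surface, that $\Gamma\cong\mbP^1$, and that $\Sing_\Gamma(Y)=\{P_t,P_w\}$ with indices $3$ and $5$. The adjunction formula from the remark after \S\ref{sec:flagIIa} gives
\[
(\Gamma^2) \;=\; -(K_Y\cdot\Gamma)+(2p_a(\Gamma)-2)+\tfrac{2}{3}+\tfrac{4}{5} \;=\; -2+\tfrac{2}{3}+\tfrac{4}{5} \;=\; -\tfrac{8}{15}.
\]
Since $\Gamma$ is the complete intersection of $x,y,z$ in $\mbP(1,1,2,3,5)$, we have $(A\cdot\Gamma)=2/(1\cdot 1\cdot 2\cdot 3\cdot 5)=1/15$, so $(\Gamma\cdot\Delta)=(A\cdot\Gamma)-(\Gamma^2)=3/5$. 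Finally $(H|_Y)^2=(A^3)=11/30$, and expanding $(\Gamma+\Delta)^2$ gives $(\Delta^2)=11/30-2\cdot 3/5+8/15=-3/10$.

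The only conceptual step is the factorisation of $\msF(0,0,z,t,w)$; after that the argument is routine verification modelled on Lemma \ref{lem:N5sgqsm2}. The most technically delicate point is the irreducibility of $\Delta$, which however reduces to the elementary observation that $\nu\ne 0$ prevents $z^2$ from dividing the $w$-free part of its defining polynomial.
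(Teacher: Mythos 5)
Your proof is correct and follows essentially the same route as the paper: both force $t^3z \in \msF$ from quasismoothness at $P_t$, factor $\msF(0,0,z,t,w)$ as $z$ times an irreducible polynomial to get $H|_Y = \Gamma + \Delta$, compute $(\Gamma^2)$ via the adjunction formula with the correction terms $\tfrac{2}{3}+\tfrac{4}{5}$ at $P_t, P_w$, and deduce the remaining intersection numbers from $H|_Y \sim A|_Y$. The only differences are cosmetic (the paper normalizes coordinates so that $\msF(0,0,z,t,w) = z^3w + t^3z$, while you keep general coefficients and spell out the irreducibility of $\Delta$, which the paper leaves implicit).
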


\begin{proof}
    The surface $Y$ is quasismooth by Lemma \ref{lem:No13qsm3}.
    By the quasismoothness of $X$ at $P_t$, we have $t^3 z \in \msF$ and we can write
    \[
    \msF (0,0,z,t,w) = w z^3 + t^3 z.
    \]
    We have $H|_Y = \Gamma + \Delta$, where
    \[
    \begin{split}
        \Gamma &\coloneq (x = y = z = 0), \\
        \Delta &\coloneq (x = y = w z^2 + t^3 = 0).
    \end{split}
    \]
    The curve $\Gamma$ is clearly quasismooth and $\Gamma \cap \Delta = \{P_w\}$.
    We have $K_Y \sim 0$ and $\Sing_{\Gamma} (Y) = \{\frac{1}{3} (1,2), \frac{1}{5} (2,3)\}$.
    It follows that
    \[
    (\Gamma^2) = -2 + \frac{2}{3} + \frac{4}{5} = - \frac{8}{15}.
    \]
    By taking intersection numbers of $H|_Y = \Gamma + \Delta$ and $\Gamma$, and then $\Delta$, we obtain $(\Delta^2) = - 3/10$ and $(\Gamma \cdot \Delta) = 3/5$.
\end{proof}

\begin{Lem} \label{lem:No13qsm5}
    Suppose that $t^2 w, z^3 w, z^4 t \notin \msF$ and let $Y, H \in |A|$ be general members.
    Then there exists a point $P_0 \in Y$ such that $Y$ is quasismooth outside $P_0$, $X$ is smooth at $P_0$ and $Y$ has a Du Val singularity of type $A_2$ at $P_0$.
    In particular $(X, Y)$ is plt.
    Moreover, $H|_Y = \Gamma + 3 \Delta$, where $\Gamma$ and $\Delta$ are quasismooth rational curves with the following properties.
    \begin{enumerate}
        \item $P_0 \in \Delta$ and $\Gamma \cap \Delta = \{P_w\}$.
        \item $(\Gamma^2) = -8/15$, $(\Delta^2) = - 1/30$ and $(\Gamma \cdot \Delta) = 1/5$.
    \end{enumerate}
\end{Lem}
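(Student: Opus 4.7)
The plan is to exploit the three exclusions $t^2 w, z^3 w, z^4 t \notin \msF$ together with quasismoothness of $X$ at its three orbifold points to pin down $H|_Y$ explicitly and then isolate the unique non-quasismooth point of $Y$. Quasismoothness at $P_t$ (combined with $t^2 w \notin \msF$) forces $t^3 z \in \msF$; quasismoothness at $P_w$ forces $w^2 v \in \msF$ for some $v \in \{x, y\}$; and quasismoothness at $P_z$ (combined with $z^3 w, z^4 t \notin \msF$) forces $v' z^5 \in \msF$ for some $v' \in \{x, y\}$. After taking general $Y, H \in |A|$ and performing a linear change of coordinates in $(x, y)$, we may arrange $Y = (y = 0)_X$, $H = (x = 0)_X$, $w^2 x \in \msF$, and $x z^5 \in \msF$; writing
\[
\msF = w^2 x + w\,a_6(x, y, z, t) + a_{11}(x, y, z, t),
\]
we have $z^3, t^2 \notin a_6$, $z^4 t \notin a_{11}$, and $z t^3 \in a_{11}$ with nonzero coefficient.

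The decomposition $H|_Y = \Gamma + 3 \Delta$ then comes from inspecting $(x = y = 0)_X = (\msF(0, 0, z, t, w) = 0) \subset \mbP(2, 3, 5)$ with coordinates $z, t, w$: among the degree-$11$ monomials in $z, t, w$ alone, every candidate except $z t^3$ is ruled out by the exclusions, so $(x = y = 0)_X = (z t^3 = 0) = \Gamma + 3 \Delta$ with $\Gamma \coloneqq (z = 0)_Y \cong \mbP(3, 5)$ and $\Delta \coloneqq (t = 0)_Y \cong \mbP(2, 5)$. Both curves are quasismooth rational curves, $\Gamma$ contains the orbifold points $P_t, P_w$, $\Delta$ contains $P_z, P_w$, and $\Gamma \cap \Delta = \{P_w\}$.

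The main obstacle is to show that $Y$ fails quasismoothness at exactly one point $P_0 \in \Delta$ and to identify the singularity. Set $F \coloneqq \msF(x, 0, z, t, w)$. A direct computation using the exclusions shows that along $\Delta = (x = t = 0)$ the partials $\partial F/\partial w$, $\partial F/\partial z$, $\partial F/\partial t$ all vanish identically (the only monomials that could contribute would be $z^3, t^2, z^4 t$ or weight-impossible ones), while $\partial F/\partial x|_{\Delta} = w^2 + \alpha z^5$ with $\alpha \coloneqq \coeff_{\msF}(x z^5) \ne 0$. The equation $w^2 + \alpha z^5 = 0$ in $\mbP(2, 5)$ with coordinates $z, w$ has, modulo the $\mbC^*$-action, exactly one solution, which defines $P_0 \in \Delta$. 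On $\Gamma \setminus \{P_w\}$ the summand $w^2$ in $\partial F/\partial x$ is nonvanishing, ensuring quasismoothness there, and a Bertini argument handles points off $\Gamma \cup \Delta$. Since $P_0$ has $z, w \ne 0$ and $x = y = t = 0$, it lies in $\Sm(X)$.

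The $A_2$ type will be verified by expanding $F$ around $P_0$ in affine coordinates $(\xi, \zeta, \tau) = (x, z - z_0, t)$. The exclusions of $z^3, t^2, z^4 t$, together with weight constraints preventing any pure $z$-, pure $t$-, $z^k t$-, or $z^k t^2$-monomials in $F$ of the relevant weights, force the $(\zeta^2, \zeta \tau, \tau^2)$-block of the Hessian to be zero, while $x^2 z^2 \in a_6$, $x z^5 \in a_{11}$, and $x z t \in a_6$ yield nonzero $\xi^2, \xi \zeta, \xi \tau$ Hessian entries; hence the Hessian has rank exactly $2$ with one-dimensional kernel lying in the $(\zeta, \tau)$-plane. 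The only cubic monomial of $F$ supported in the $(\zeta, \tau)$-plane is $z_0 \tau^3$, arising from $z t^3 \in a_{11}$, so the cubic form is nonzero along the kernel direction and $Y$ acquires a Du Val $A_2$ singularity at $P_0$. The pair $(X, Y)$ is plt by Lemma~\ref{lem:plt} away from $P_0$ and, at $P_0$, because $Y$ is a normal Cartier divisor with a canonical singularity in the smooth variety $X$. Finally, the intersection numbers come from adjunction on $\Gamma$: since $\Sing_{\Gamma}(Y) = \{P_t, P_w\}$ of indices $3$ and $5$, we obtain $(\Gamma^2) = -2 + \tfrac{2}{3} + \tfrac{4}{5} = -\tfrac{8}{15}$; combining this with $(A \cdot \Gamma) = \tfrac{1}{15}$, $(A \cdot \Delta) = \tfrac{1}{10}$ and the decomposition $H|_Y = \Gamma + 3\Delta$ then yields $(\Gamma \cdot \Delta) = \tfrac{1}{5}$ and $(\Delta^2) = -\tfrac{1}{30}$.
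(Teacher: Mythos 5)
Your proposal follows essentially the same route as the paper: force $t^3z \in \msF$ and a linearly independent pair of coefficient forms for $w^2$ and $z^5$, read off $H|_Y = \Gamma + 3\Delta$ from $\msF(0,0,z,t,w) = c\,zt^3$, locate the unique failure of quasismoothness of $Y$ at the single zero $P_0$ of $w^2 + \alpha z^5$ on $\Delta$, and identify the singularity as $A_2$ via the local structure $x\cdot(\text{unit}\cdot u) + (\text{cubic in }t) = 0$. The intersection-number computation is identical to the paper's. Two points need repair, though neither changes the outcome.

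First, your quasismoothness check along $\Gamma$ has $P_t$ and $P_w$ interchanged: on $\Gamma = (x=y=z=0)$ the restriction $\partial F/\partial x|_{\Gamma} = w^2$ vanishes precisely at $P_t$ (where $w = 0$) and is nonzero at $P_w$, so your argument as written leaves quasismoothness of $Y$ at $P_t$ unproved — and this is actually used later, since $\Sing_{\Gamma}(Y) = \{\tfrac{1}{3}(1,2), \tfrac{1}{5}(2,3)\}$ and hence $(\Gamma^2) = -8/15$ require $Y$ to be quasismooth at both orbifold points of $\Gamma$. The fix is immediate: $zt^3 \in \msF$ gives $\partial F/\partial z(P_t) \ne 0$. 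Second, in the Hessian analysis you assert that $x^2z^2w$ and $xztw$ occur in $\msF$ with nonzero coefficient; nothing forces this, and these monomials are not needed. What is needed is only that the $(\zeta,\tau)$-block vanishes (which you prove from the three excluded monomials) and that at least one off-diagonal entry in the $\xi$-row is nonzero — and $\partial^2 F/\partial x\,\partial z(P_0) = 5\alpha z_0^4 \neq 0$, coming from $xz^5$, already gives corank exactly $1$. With those corrections the argument is complete and matches the paper's proof in all essentials (the paper normalizes coordinates so that the local equation becomes literally $xu - t^3 = 0$, whereas you argue via corank of the Hessian plus nonvanishing of the cubic on its kernel; both are valid).
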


\begin{proof}
    By the quasismoothness of $X$ at $P_t$, we have $t^3 z \in \msF$ since $t^2 w \notin \msF$.
    Then we can write
    \[
    \msF (0,0,z,t,w) = t^3 z.
    \]
    We have $H|_Y = \Gamma + 3 \Delta$, where
    \[
    \begin{split}
        \Gamma &\coloneq (x = y = z = 0), \\
        \Delta &\coloneq (x = y = t = 0).
    \end{split}
    \]
    The curves $\Gamma$ and $\Delta$ are clearly quasismooth and $\Gamma \cap \Delta = \{P_w\}$.

    We consider quasismoothness of $Y$.
    The surface $Y$ is quasismooth outside $\Delta$ since the base scheme $H \cap Y$ of $|A|$ is quasismooth outside $\Delta$.
    By the quasismoothness of $X$ at $P_w$, either $w^2 x \in \msF$ or $w^2 y \in \msF$, we may assume that $\coeff_{\msF} (w^2 x) = 1$ and $\coeff_{\msF} (w^2 y) = 0$ by replacing $x, y$ suitably.
    Then we can write
    \[
    \msF = x (w^2 + \alpha z^5) + \beta y z^5 + \msF', 
    \]
    where $\alpha, \beta \in \mbC$ and $\msF' \in (x,y,t)^2 \subset \mbC [x,y,z,t,w]$.
    We have $\beta \ne 0$ because otherwise $X$ is not quasismooth at the point $(w^2 + \alpha z^5 = 0) \cap \Delta$.
    We may assume that $\beta = 1$ and $\alpha = 0$ by replacing $y$ suitably and we can write
    \[
    \msF = x w^2 + y z^5 + \msF', 
    \]
    where $\msF' \in (x,y,t)^2 \subset \mbC [x,y,z,t,w]$.
    Let 
    \[
    \xi x + y= 0
    \]
    be the equation which defines $Y$ in $X$, where $\xi \in \mbC$ is general.
    It is easy to see that $Y$ is quasismooth outside the point $P_0 \coloneq (0\!:\!0\!:\!1\!:\!0\!:\!\sqrt{\xi}) \in Y$.
    By eliminating the variable $y$ by plugging $y = - \xi x$, $Y$ is isomorphic to the hypersurface in $\mbP (1_x,2_z,3_t,5_w)$ defined by the equation
    \[
    x (w^2 -\xi z^5) + \msF' (x,-\xi x,z,t,w) = 0.
    \]
    We can choose local coordinates $x, t, u$ of $\mbP (1,2,3,5)$ at $P_0$ such that global homogeneous coordinates $x, t$ restrict to the local coordinates $x, t$ and $w^2 - \xi z^5$ restricts to a polynomial in $u$ of the form $\sum_{i \ge 1} \zeta_i u^i$, where $\zeta_i \in \mbC$ with $\zeta_1 \ne 0$.
    We have $\overline{\msF}' \coloneq \msF' (x,-\xi x,z,t,w) \in (x, t)^2$, $t^2 w \notin \overline{\msF}'$ and $t^3 z \in \overline{\msF}'$.
    Moreover the homogeneous coordinate $z$ restricts to $\sum_{i \ge 0} \eta_i u^i$ with $\eta_0 \ne 0$.
    From this we see that $P_0 \in Y$ is equivalent to the singularity $o \in (x u - t^3 = 0) \subset \mbC^3_{x,t,u}$, which is a Du Val singularity of type $A_2$.

    The curve $\Gamma$ is a smooth rational curve, $K_Y \sim 0$ and $\Sing_{\Gamma} (Y) = \{\frac{1}{3} (1,2), \frac{1}{5} (2,3)\}$.
    It follows that
    \[
    (\Gamma^2) = -2 + \frac{2}{3} + \frac{4}{5} = - \frac{8}{15}.
    \]
    By taking intersection numbers of $H|_Y = \Gamma + 3 \Delta$ and $\Gamma$, and then $\Delta$, we obtain $(\Delta^2) = - 1/30$ and $(\Gamma \cdot \Delta) = 1/5$.
\end{proof}

\subsubsection{Case (iii): $P$ is the $\frac{1}{5} (1, 2, 3)$ point which is nondegenerate, $t^2 w \notin \msF$, $z^3 w \in \msF$}
\label{Sec:No13singvi}

Let $P = P_w$ be the $\frac{1}{5} (1, 2, 3)$ point which is nondegenerate.
We assume that $t^2 w \notin \msF$ and $z^3 w \in \msF$.
Let $Y \in |A|$ and $H \in |A|$ be general members.
By Lemma~\ref{lem:No13qsm4}, $P \in \Gamma \subset Y \subset X$ is a flag of type $\mathrm{IIa}$ with
\[
(l_Y, l_H, m, n, \lambda, \mu, \nu, r_P) = (1, 1, 1, 1, -8/15, 3/10, 3/5, 5).
\]

By the formulae given in Proposition \ref{prop:flagIIadelta}, we have
\[
\begin{split}
    S (V_{\bullet, \bullet}^Y; \Gamma)
    &= \frac{90}{11} \int_0^1 \left( \int_0^{\frac{1}{2} (1-u)} \left(\frac{11}{30} (1-u)^2 - \frac{2}{15} (1-u)v - \frac{8}{15} v^2 \right) d v + \right. \\
    & \hspace{25mm} \left. + \int_{\frac{1}{2} (1-u)}^{1-u} \frac{2}{3} (1-u-v)^2 d v \right) d u \\
    &= \frac{31}{88}, \\
    S (W_{\bullet, \bullet, \bullet}^{Y, \Gamma};P) &=
    \frac{90}{11} \int_0^1 \left(\int_0^{\frac{1}{2} (1-u)} \left(\frac{1-u+8v}{15}\right)^2 dv + \right. \\
    & \hspace{25mm} \left. + \int_{\frac{1}{2} (1-u)}^{1-u} \frac{4}{9} (1-u-v)^2 dv \right) du + F_P (W_{\bullet, \bullet, \bullet}^{Y, \Gamma}) \\
    &= \frac{101}{660},
\end{split}
\]
where $F_P (W_{\bullet, \bullet, \bullet}^{Y, \Gamma}) = 3/44$ since $\ord_P (\Delta|_{\Gamma}) = (\Gamma \cdot \Delta) = 3/5$. 
Thus we have
\[
\delta_P (X) \ge \min \left\{ 4, \ \frac{88}{31}, \ \frac{132}{101} \right\} = \frac{132}{101}.
\]

\subsubsection{Case (iv): $P$ is the $\frac{1}{5} (1, 2, 3)$ point which is nondegenerare, $t^2 w, z^3 w \notin \msF$ and $z^4 t \in \msF$}
\label{Sec:No13singvii}

Let $P = P_w$ be the the $\frac{1}{5} (1, 2, 3)$ point.
Suppose that $P$ is nondegenerate, $t^2 w, z^3 w \notin \msF$ and $z^4 t \in \msF$.
Let $Y \in |A|$ be a general member which is quasismooth by Lemma~\ref{lem:No13qsm3}.
Let $\varphi \colon \tilde{X} \to X$ be the Kawamata blow-up at $P$ with exceptional divisor $E$ and let $\psi \colon \tilde{Y} \to Y$ be the restriction of $\varphi$ to $\tilde{Y} \coloneq \varphi_*^{-1} Y$.
We set $A_Y \coloneq A|_Y$ and $C \coloneq E|_{\tilde{Y}}$.
We have
\begin{equation} \label{eq:No13singvii-1}
    (\psi^*A_Y)^2 = (A_Y^2) = (A^3) = \frac{11}{30}, \ (C^2) = (E^2 \cdot \tilde{Y}) = - \frac{1}{5} (E^3) = - \frac{5}{6}.
\end{equation}

We estimate $\delta_P (X)$ by considering a flag $Q \in C \to Y \subset X$ of type $\mathrm{wBL}$, where $Q$ runs over the closed points of $C$.
We have $\tau_A (Y) = 1$, and, for the Zariski decomposition $-K_X - u Y = P (u) + N (u)$ with $P (u)$ positive and $N (u)$ negative parts, respectively, we have
\[
P (u) = (1-u)A, \ 
N (u) = 0
\]
for $0 \le u \le 1$.
We need to understand the Zariski decomposition of
\[
\psi^*(P (u)|_Y) - v C = (1-u)\psi^*A_Y - v C.
\]

We can write
\[
\msF = w^2 x + w f_6 + f_{11},
\]
where $f_i \in \mbC [x, y, z, t]$ is a homogeneous polynomial of degree $i$.
Filtering off terms divisible by $x$ in $f_6$ and then replacing $w$ suitably, we may assume that $f_6 = f_6 (y, z, t)$ does not involve the variable $x$.
We have $t^2, z^3 \notin f_6$ by the assumption, and hence $f_6 = y f_5$ for some $f_5 = f_5 (y, z, t)$.
Since $t^3z, z^4 t \in \msF$, we can write
\[
\msF = w^2 x + w y f_5 + t^3 z - t z^4 + g_{11},
\]
where $g_{11} \in (x, y) \subset \mbC [x, y, z, t]$.
We set $H \coloneq H_x \in |A|$.
We have
\[
H|_Y = \Gamma + \Delta + \Xi,
\]
where 
\[
\begin{split}
    \Gamma &\coloneq (x = y = z = 0), \\
    \Delta &\coloneq (x = y = t = 0), \\
    \Xi &\coloneq (x = y = t^2 - z^3 = 0).
\end{split}
\]
For a divisor or a curve $\Sigma$ on $X$ (or on $Y$), we denote by $\tilde{\Sigma}$ its proper transform on $X$ (or on $Y$). 
We have
\[
\tilde{H}|_{\tilde{Y}} = \tilde{\Gamma} + \tilde{\Delta} + \tilde{\Xi} + C \sim \psi^*A_Y - \frac{6}{5} C.
\]

\begin{Lem} \label{Lem:No13singviiint}
    We have the following:
    \[
    (\tilde{\Gamma}^2) = -\frac{2}{3}, \ (\tilde{\Delta}^2) = -1, \ (\tilde{\Xi}^2) = -2,
    \]
    \[
    (\tilde{\Gamma} \cdot \tilde{\Delta}) = (\tilde{\Delta} \cdot \tilde{\Xi}) = (\tilde{\Xi} \cdot \tilde{\Gamma}) = 0,
    \]
    \[
    (\tilde{\Gamma} \cdot C) = \frac{1}{3} , \ (\tilde{\Delta} \cdot C) = \frac{1}{2}, \ (\tilde{\Xi} \cdot C) = 1.
    \]
\end{Lem}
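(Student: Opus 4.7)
The plan is to first compute the intersection numbers of $\Gamma, \Delta, \Xi$ on $Y$, and then pull everything back to $\tilde{Y}$ via the weighted blow-up $\psi$. Using the normalised form of $\msF$ prepared just before the lemma, the defining equation of a general $Y \in |A|$ can be brought to the form
\[
x(w^2 + H_{10}) + tz(t^2 - z^3) = 0 \quad\text{in}\quad \mbP(1,2,3,5),
\]
with $H_{10}$ a weighted polynomial of degree $10$ vanishing at the origin of the affine chart at $P_w$, so that $w^2 + H_{10}$ is a local unit there. This factored form shows at once that $H_x|_Y$ decomposes as $\Gamma + \Delta + \Xi$, each with multiplicity one, with $\Gamma \cong \mbP(3,5)$, $\Delta \cong \mbP(2,5)$, and $\Xi \subset \mbP(2,3,5)$ the weighted-degree-$6$ curve $\{t^2 - z^3 = 0\}$; one also checks directly that these three curves meet pairwise only at $P_w$.

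Working on $Y$, the weighted-projective degree formula applied to each component gives
\[
(A_Y \cdot \Gamma) = \tfrac{1}{15}, \quad (A_Y \cdot \Delta) = \tfrac{1}{10}, \quad (A_Y \cdot \Xi) = \tfrac{1}{5}.
\]
The pairwise intersection numbers $(\Gamma \cdot \Delta)$, $(\Gamma \cdot \Xi)$, $(\Delta \cdot \Xi)$ are supported at $P_w$, where $Y$ has a $\frac{1}{5}(2,3)$ (Du Val $A_4$) singularity with $z, t$ forming orbifold coordinates. On the smooth $\mu_5$-cover, the pairs $\{z = 0\} \cap \{t = 0\}$, $\{z = 0\} \cap \{t^2 - z^3 = 0\}$, and $\{t = 0\} \cap \{t^2 - z^3 = 0\}$ have lengths $1, 2, 3$ at the origin; dividing each by the orbifold order $5$ yields $(\Gamma \cdot \Delta) = 1/5$, $(\Gamma \cdot \Xi) = 2/5$, $(\Delta \cdot \Xi) = 3/5$. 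Combined with $A_Y \sim \Gamma + \Delta + \Xi$, a $3\times 3$ linear system then pins down the self-intersections $(\Gamma^2) = -8/15$, $(\Delta^2) = -7/10$, and $(\Xi^2) = -4/5$.

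Finally, the transfer to $\tilde{Y}$ is routine. Since $\ord_C(z) = 2/5$ and $\ord_C(t) = 3/5$, the pullback formulas read
\[
\psi^*\Gamma = \tilde{\Gamma} + \tfrac{2}{5} C, \quad \psi^*\Delta = \tilde{\Delta} + \tfrac{3}{5} C, \quad \psi^*\Xi = \tilde{\Xi} + \tfrac{6}{5} C,
\]
with $(C^2) = -5/6$ by \eqref{eq:No13singvii-1}. The identity $\psi^*(-) \cdot C = 0$ gives the three numbers $(\tilde{\Gamma} \cdot C) = 1/3$, $(\tilde{\Delta} \cdot C) = 1/2$, $(\tilde{\Xi} \cdot C) = 1$ in one line, and then the three self-intersections and the three cross-terms on $\tilde{Y}$ follow by expanding $(\psi^*\Gamma)^2 = (\Gamma^2)$, $(\psi^*\Gamma)(\psi^*\Delta) = (\Gamma \cdot \Delta)$, and so on. The one substantive point in the whole argument is the local intersection calculation at the $A_4$-singularity $P_w \in Y$; once the orbifold coordinates are set up correctly and the factor $1/5$ is applied to each length computed upstairs, everything else reduces to straightforward linear algebra.
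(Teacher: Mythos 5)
Your computation is correct and all nine numbers check out, but you reach them by a genuinely different route from the paper. The paper works entirely upstairs on $\tilde{Y}$: it locates the three points $\tilde{\Gamma} \cap C$, $\tilde{\Delta} \cap C$, $\tilde{\Xi} \cap C$ on $C \cong \mbP(2_z,3_t)$ (namely $Q_t$, $Q_z$, $Q'$, of types $\frac{1}{3}(1,2)$, $\frac{1}{2}(1,1)$ and smooth, respectively), concludes that the proper transforms are pairwise disjoint, computes the self-intersections by adjunction on $\tilde{Y}$ from $K_{\tilde{Y}} = \psi^* K_Y \sim 0$ together with $\Sing_{\tilde{\Gamma}}(\tilde{Y}) = \{2 \times \frac{1}{3}(1,2)\}$, $\Sing_{\tilde{\Delta}}(\tilde{Y}) = \{2 \times \frac{1}{2}(1,1)\}$, $\Sing_{\tilde{\Xi}}(\tilde{Y}) = \emptyset$, and finally extracts the numbers $(\tilde{\Gamma} \cdot C)$, etc., from $(\tilde{H}|_{\tilde{Y}} \cdot -)$. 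You instead compute everything downstairs on $Y$ (weighted degrees $\frac{1}{15}, \frac{1}{10}, \frac{1}{5}$; local lengths $1,2,3$ at the $\frac{1}{5}(2,3)$ point divided by $5$; then the relation $A_Y \sim \Gamma + \Delta + \Xi$) and transfer via the pullback multiplicities $\frac{2}{5}, \frac{3}{5}, \frac{6}{5}$ of $C$. Your method buys the downstairs numbers $(\Gamma^2) = -8/15$, $(\Delta^2) = -7/10$, $(\Xi^2) = -4/5$ as a by-product and avoids having to identify the singularities of $\tilde{Y}$ along each proper transform; the paper's method exhibits the vanishing of the cross-terms geometrically (disjointness) rather than numerically. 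One point where your route is more delicate and deserves a remark: $\Xi$ is not quasismooth at $P_w$ (its preimage in the orbifold chart is the cusp $t^2 = z^3$), so the shortcut $(\Xi^2) = -2 + \sum \frac{r_P - 1}{r_P}$ used elsewhere in the paper would give the wrong answer $-6/5$ here; the correct value $-4/5$ depends precisely on the lengths $2$ and $3$ at the cusp and on the weight $\ord_C(t^2 - z^3) = 6/5$, both of which you handle correctly. I would only ask you to justify in one line that $z$, $t$ and $t^2 - z^3$ are indeed local orbifold equations of multiplicity one for $\Gamma$, $\Delta$, $\Xi$ on $Y$ at $P_w$ (this follows from the factored form $xU + tz(t^2 - z^3) = 0$ with $U$ a unit there), since the whole local calculation rests on it.
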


\begin{proof}
    We see that $\Gamma \cap \Delta = \Delta \cap \Xi = \Xi \cap \Gamma = \{P\}$.
    The Kawamata blow-up $\varphi$ is realized as the weighted blow-up at $P \in X$ with weight $\wt (x, y, z, t) = \frac{1}{5} (6, 1, 2, 3)$.
    We have an isomorphism
    \[
    E \cong (x + y f_5 (y, z, t) = 0) \subset \mbP (6_x, 1_y, 2_z, 3_t),
    \]
    and hence
    \[
    C \cong E \cap (y = 0) \cong \mbP (2_z, 3_t).
    \]
    We have $\tilde{\Gamma} \cap C = \{Q_t\}$, $\tilde{\Delta} \cap C = \{Q_z\}$ and $\tilde{\Xi} \cap C = \{Q'\}$, where $Q_t = (0\!:\!1)$, $Q_z = (1\!:\!0)$ and $Q' = (1\!:\!1) \in C$.
    Note that $Q_t$ is a $\frac{1}{3} (1, 2)$ point, $Q_z$ is a $\frac{1}{2} (1, 1)$ point, and $Q'$ is a smooth point of $\tilde{Y}$.
    Hence $\tilde{\Gamma} \cap \tilde{\Delta} = \tilde{\Delta} \cap \tilde{\Xi} =  \tilde{\Xi} \cap \tilde{\Gamma} = \emptyset$ and
    \[
    (\tilde{\Gamma} \cdot \tilde{\Delta}) = (\tilde{\Delta} \cdot \tilde{\Xi}) = (\tilde{\Xi} \cdot \tilde{\Gamma}) = 0.
    \]

    We have $K_{\tilde{Y}} = \psi^*K_Y \sim 0$.
    The curve $\tilde{\Gamma}$ (resp.\ $\tilde{\Delta}$, resp.\ $\tilde{\Xi}$) is a smooth rational curve and $\Sing_{\tilde{\Gamma}} (\tilde{Y}) = \{2 \times \frac{1}{3} (1, 2)\}$ (resp.\ $\Sing_{\tilde{\Delta}} (\tilde{Y}) = \{2 \times \frac{1}{2} (1, 1)\}$, $\Sing_{\tilde{\Xi}} (\tilde{Y}) = \emptyset$).
    It follows that
    \[
    \begin{split}
        (\tilde{\Gamma}^2) &= -2 + \frac{2}{3} + \frac{2}{3} = - \frac{2}{3}, \\
        (\tilde{\Delta}^2) &= -2 + \frac{1}{2} + \frac{1}{2} = -1, \\
        (\tilde{\Xi}^2) &= -2.
    \end{split}
    \]
    The remaining intersection numbers can be computed by taking intersection numbers of $\tilde{H}|_{\tilde{Y}}$ and $\tilde{\Gamma}, \tilde{\Delta}, \tilde{\Xi}$.
\end{proof}

We define
\[
\begin{split}
    P_{\tilde{Y}} &\coloneq \psi^*A_Y - \frac{1}{5} C, \\
    N_{\tilde{Y}} &\coloneq \tilde{\Gamma} + \tilde{\Delta} + \tilde{\Xi} = \psi^*A_Y - \frac{11}{5} C.
\end{split}
\]

\begin{Lem} \label{lem:No13singviicone}
    The divisor $P_{\tilde{Y}}$ is nef, $(P_{\tilde{Y}} \cdot \tilde{\Gamma}) = (P_{\tilde{Y}} \cdot \tilde{\Delta}) = (P_{\tilde{Y}} \cdot \tilde{\Xi}) = 0$ and the intersection matrix of $N_{\tilde{Y}}$ is negative definite.
    \begin{enumerate}
        \item The divisor $\psi^*A_Y - c C$ is nef if and only if $c \le 1/5$.
        \item The divisor $\psi^*A_Y - c C$ is pseudo effective if and only if $c \le 11/5$.
    \end{enumerate}
\end{Lem}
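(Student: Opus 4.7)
The proof divides naturally into two parts. The first is a sequence of direct intersection-number computations using the two given numerical inputs $(\psi^*A_Y)^2 = 11/30$ and $(C^2) = -5/6$ (together with the projection formula $(\psi^*A_Y \cdot C) = 0$) and the pairings collected in Lemma \ref{Lem:No13singviiint}. One immediately verifies $(P_{\tilde{Y}}^2) = 1/3$, $(P_{\tilde{Y}} \cdot C) = 1/6$, $(P_{\tilde{Y}} \cdot N_{\tilde{Y}}) = 0$, $(N_{\tilde{Y}}^2) = -11/3$, and, using the identity $P_{\tilde{Y}} \sim \tilde{\Gamma} + \tilde{\Delta} + \tilde{\Xi} + 2C$, one also reads off $(P_{\tilde{Y}} \cdot \tilde{\Gamma}) = (P_{\tilde{Y}} \cdot \tilde{\Delta}) = (P_{\tilde{Y}} \cdot \tilde{\Xi}) = 0$. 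The intersection matrix of $N_{\tilde{Y}}$ is diagonal with entries $-2/3, -1, -2$, hence negative definite.

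The substantive part is the nefness of $P_{\tilde{Y}}$. Let $D$ be any irreducible curve on $\tilde{Y}$. If $D \in \{C, \tilde{\Gamma}, \tilde{\Delta}, \tilde{\Xi}\}$ the previous step already gives $(P_{\tilde{Y}} \cdot D) \ge 0$. Otherwise $D$ is the proper transform of an irreducible curve $D'$ on $Y$ distinct from $\Gamma, \Delta, \Xi$, and setting $m = \ord_C(\psi^* D') \ge 0$ (the Kawamata multiplicity at $P$) we have $\psi^*D' = D + m C$. Using $(C \cdot \psi^*D') = 0$ this yields $(C \cdot D) = 5m/6$ and hence
\[
(P_{\tilde{Y}} \cdot D) = (A_Y \cdot D') - \tfrac{m}{6}.
\]
To bound $(A_Y \cdot D')$ from below I would use $A_Y = \Gamma + \Delta + \Xi$ together with the Kawamata multiplicities of these three curves at $P$, which are $2/5$, $3/5$ and $6/5$ respectively (computed from the local equations $z = 0$, $t = 0$, $t^2 - z^3 = 0$). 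Expanding $(\Gamma \cdot D')_Y = (\psi^*\Gamma \cdot \psi^*D')$ and using non-negativity of $(\tilde{\Gamma} \cdot D)$ (since $D \ne \tilde{\Gamma}$), one gets $(\Gamma \cdot D')_Y \ge m/3$; analogously $(\Delta \cdot D')_Y \ge m/2$ and $(\Xi \cdot D')_Y \ge m$. Summing, $(A_Y \cdot D') \ge 11m/6 > m/6$, so $(P_{\tilde{Y}} \cdot D) \ge 0$ and $P_{\tilde{Y}}$ is nef.

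Assertions (1) and (2) then follow by short numerical arguments. For (1): $(\psi^*A_Y - cC) \cdot \tilde{\Gamma} = (1 - 5c)/15$, forcing $c \le 1/5$ for nefness; conversely, for $0 \le c \le 1/5$ the identity $\psi^*A_Y - cC = (1 - 5c)\psi^*A_Y + 5c P_{\tilde{Y}}$ expresses the divisor as a non-negative combination of nef divisors. For (2): $\psi^*A_Y - cC = N_{\tilde{Y}} + (11/5 - c) C$ is visibly effective for $c \le 11/5$, while for $c > 11/5$ we compute $(P_{\tilde{Y}} \cdot (\psi^*A_Y - cC)) = 11/30 - c/6 < 0$, so the divisor cannot be pseudo-effective since $P_{\tilde{Y}}$ is nef.

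The main difficulty is the nefness of $P_{\tilde{Y}}$: its self-intersection alone is insufficient and $P_{\tilde{Y}}$ is not the pullback of a nef divisor under any obvious morphism. The crucial input is the Seshadri-type bookkeeping that exploits the decomposition $A_Y = \Gamma + \Delta + \Xi$ of the hyperplane class into three curves all passing through $P$, whose Kawamata multiplicities are large enough to dominate the $m/6$ correction coming from the $-\tfrac15 C$ shift.
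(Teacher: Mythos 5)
Your proof is correct, and all the numerical verifications check out against Lemma \ref{Lem:No13singviiint} and \eqref{eq:No13singvii-1}: $(P_{\tilde{Y}}^2)=1/3$, $(P_{\tilde{Y}}\cdot C)=1/6$, $(P_{\tilde{Y}}\cdot N_{\tilde{Y}})=0$, the vanishing of $(P_{\tilde{Y}}\cdot\tilde{\Gamma})$, $(P_{\tilde{Y}}\cdot\tilde{\Delta})$, $(P_{\tilde{Y}}\cdot\tilde{\Xi})$ via $P_{\tilde{Y}}\sim N_{\tilde{Y}}+2C$, the multiplicities $2/5,3/5,6/5$ of $\Gamma,\Delta,\Xi$ along $C$, and the deductions of (1) and (2). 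Where you genuinely diverge from the paper is the nefness of $P_{\tilde{Y}}$: the paper gets it in one line by observing that the proper transform of the linear system spanned by the degree-$6$ monomials in $x,y,z,t$ (each of which has $\ord_E\ge 6/5$, with $y^6,z^3,t^2$ attaining it and having no common zero on $C\cong\mbP(2_z,3_t)$) is a base-point-free subsystem of $|6P_{\tilde{Y}}|$, whereas you test $P_{\tilde{Y}}$ against every irreducible curve $D=\psi_*^{-1}D'$ and exploit the decomposition $A_Y\sim\Gamma+\Delta+\Xi$ into three curves through $P$ whose total multiplicity $\tfrac{11}{5}$ along $C$ dominates the $\tfrac15$ correction, yielding $(A_Y\cdot D')\ge\tfrac{11m}{6}>\tfrac{m}{6}$. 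The paper's route is shorter once the explicit equation of $X$ is in hand but requires the small base-locus check on $E$; yours is purely intersection-theoretic, needs no new geometric input beyond Lemma \ref{Lem:No13singviiint}, and makes transparent exactly which numerical margin ($\tfrac{11}{6}$ versus $\tfrac16$) makes the argument work. One cosmetic caveat you share with the paper: in (1) the ``if and only if'' should be read for $c\ge 0$, since $(\psi^*A_Y-cC)\cdot C<0$ for $c<0$; your convex-combination argument correctly covers the intended range $0\le c\le 1/5$.
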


\begin{proof}
    The proper transform of the linear system generated by homogeneous polynomials of degree $6$ in variables $x, y, z, t$ is a base point free linear subsystem of $6P_{\tilde{Y}}$.
    This shows that $P_{\tilde{Y}}$ is nef.
    By \eqref{eq:No13singvii-1} and Lemma \ref{Lem:No13singviiint}, it is straightforward to check the equalities $(P_{\tilde{Y}} \cdot \tilde{\Gamma}) = (P_{\tilde{Y}} \cdot \tilde{\Delta}) = (P_{\tilde{Y}} \cdot \tilde{\Xi}) = 0$ and check that the intersection matrix of $N_{\tilde{Y}}$ is negative definite.
    The assertions (1) and (2) follow from the fact that $P_{\tilde{Y}}$ is nef and $(P_{\tilde{Y}} \cdot N_{\tilde{Y}}) = 0$.
\end{proof}

\begin{Lem} \label{Lem:No13singviiF}
    For any point $Q \in C$, we have $\ord_Q (N_{\tilde{Y}}|_C) \le 1$.
\end{Lem}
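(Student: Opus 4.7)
The plan is to observe that the three components $\tilde{\Gamma}$, $\tilde{\Delta}$, $\tilde{\Xi}$ of $N_{\tilde{Y}}$ meet $C$ at three \emph{distinct} points, each transversally, so that $N_{\tilde{Y}}|_C$ is a reduced Weil divisor on $C$ of degree three. Since $C$ is not contained in the support of $N_{\tilde{Y}}$, the restriction $N_{\tilde{Y}}|_C$ is a well-defined effective divisor on $C$.

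From the proof of Lemma~\ref{Lem:No13singviiint}, we already know that $\tilde{\Gamma} \cap C = \{Q_t\}$, $\tilde{\Delta} \cap C = \{Q_z\}$, $\tilde{\Xi} \cap C = \{Q'\}$ with $Q_t, Q_z, Q'$ mutually distinct points on $C \cong \mbP (2_z, 3_t)$, where $Q_t$ is the $\frac{1}{3}$-point, $Q_z$ is the $\frac{1}{2}$-point, and $Q'$ is a smooth point of $\tilde{Y}$. Thus the only thing left to check is that each of the three intersections is scheme-theoretically a single reduced point.

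This transversality is forced by the intersection numbers computed in Lemma~\ref{Lem:No13singviiint} together with the orbifold indices. At $Q_t$, the local ring of $\tilde{Y}$ is a cyclic quotient of index $3$, and $(\tilde{\Gamma} \cdot C) = 1/3$; pulling back to the smooth $\bmu_3$-cover multiplies the local intersection number by $3$, yielding length $1$, which is possible only if $\tilde{\Gamma}$ and $C$ meet transversally at $Q_t$. The same reasoning applies at $Q_z$ (index $2$, $(\tilde{\Delta} \cdot C) = 1/2$) and at the smooth point $Q'$ (index $1$, $(\tilde{\Xi} \cdot C) = 1$). Consequently $\ord_{Q_t}(\tilde{\Gamma}|_C) = \ord_{Q_z}(\tilde{\Delta}|_C) = \ord_{Q'}(\tilde{\Xi}|_C) = 1$, and the remaining mixed restrictions vanish.

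Summing over the three components, we conclude $N_{\tilde{Y}}|_C = Q_t + Q_z + Q'$ as a Weil divisor on $C$, whence $\ord_Q(N_{\tilde{Y}}|_C) \le 1$ for every closed point $Q \in C$. No serious obstacle is anticipated: the only minor care needed is the bookkeeping of orbifold indices that converts rational intersection numbers on $\tilde{Y}$ into integer scheme-theoretic multiplicities on $C$.
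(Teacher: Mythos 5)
Your proof reaches the right conclusion by essentially the same route as the paper: the three components $\tilde{\Gamma}, \tilde{\Delta}, \tilde{\Xi}$ of $N_{\tilde{Y}}$ are pairwise disjoint and meet $C$ at three distinct points, so at any $Q \in C$ only one component contributes, and its contribution is at most $1$. The paper's proof is shorter, though: it simply bounds $\ord_Q(\Sigma|_C) \le (C \cdot \Sigma) \le 1$ for each component $\Sigma$, with no need to establish transversality at all. Where you go further you introduce a bookkeeping error: under the convention used throughout the paper (see the proof of Lemma~\ref{lem:genflagord}, where $\deg(N_{\tilde{Y}}|_C)$ equals the intersection number $(N_{\tilde{Y}} \cdot C)$), the restriction $\Sigma|_C$ is a $\mbQ$-divisor on $C$ of degree $(C \cdot \Sigma)$, so transversality in the orbifold sense gives $\ord_{Q_t}(\tilde{\Gamma}|_C) = 1/3$ and $\ord_{Q_z}(\tilde{\Delta}|_C) = 1/2$, not $1$; your claimed identity $N_{\tilde{Y}}|_C = Q_t + Q_z + Q'$ would have degree $3$, contradicting $(N_{\tilde{Y}} \cdot C) = 11/6$. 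The correct statement is $N_{\tilde{Y}}|_C = \tfrac{1}{3} Q_t + \tfrac{1}{2} Q_z + Q'$. This does not affect the lemma, since all coefficients are still at most $1$, but it would matter if the exact values were fed into the computation of $F_Q(W_{\bullet,\bullet,\bullet}^{Y,C})$.
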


\begin{proof}
    Note that any $2$ of $\tilde{\Gamma}, \tilde{\Delta}$ and $\tilde{\Xi}$ are disjoint.
    Let $\Sigma$ be one of $\tilde{\Gamma}, \tilde{\Delta}$ and $\tilde{\Xi}$.
    For any point $Q \in \Sigma$, we have $\ord_Q (\Sigma|_C) \le (C \cdot \Sigma) \le 1$.
    This proves the assertion.
\end{proof}

\begin{Lem}
    Suppose that $0 \le u \le 1$.
    We define $P (u, v)$ and $N (u, v)$ as follows:
    \[
    \begin{split}
        P (u, v) &=
        \begin{dcases}
            (1-u)\psi^*A_Y - v C, & \left(0 \le v \le \frac{1}{5} (1-u) \right), \\
            \frac{11 (1-u) - 5 v}{10} P_{\tilde{Y}}, & \left(\frac{1}{5} (1-u) \le v \le \frac{11}{5} (1-u) \right),
        \end{dcases} \\
        N (u, v) &=
        \begin{dcases}
            0, & \left(0 \le v \le \frac{1}{5} (1-u) \right), \\
            \frac{5 v - (1-u)}{10} N_{\tilde{Y}}, & \left(\frac{1}{5} (1-u) \le v \le \frac{11}{5} (1-u) \right).
        \end{dcases}
    \end{split}
    \]
    Then $\psi^*(P (u)|_Y) - v C = P (u, v) + N (u, v)$ gives the Zariski decomposition with $P (u, v)$ positive and $N (u, v)$ negative parts, respectively.
\end{Lem}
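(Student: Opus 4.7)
The plan is to verify, in each of the two subintervals of $v$, the three defining properties of a Zariski decomposition: (a) the equality $P(u,v)+N(u,v)=\psi^*(P(u)|_Y)-vC$ as $\mathbb{R}$-divisor classes, (b) nefness of $P(u,v)$, and (c) effectiveness of $N(u,v)$ together with negative definiteness of the intersection matrix of its support and the orthogonality $(P(u,v)\cdot N_i)=0$ for every irreducible component $N_i$ of $N(u,v)$. Since we have already identified the natural candidate building blocks $P_{\tilde{Y}}\sim_{\mathbb{Q}}\psi^*A_Y-\tfrac{1}{5}C$ and $N_{\tilde{Y}}=\tilde{\Gamma}+\tilde{\Delta}+\tilde{\Xi}\sim_{\mathbb{Q}}\psi^*A_Y-\tfrac{11}{5}C$, and since $\psi^*(P(u)|_Y)=(1-u)\psi^*A_Y$, the verification is essentially a linear algebra computation combined with quotations of Lemmas \ref{Lem:No13singviiint} and \ref{lem:No13singviicone}.

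In the first range $0\le v\le \tfrac{1}{5}(1-u)$, the equality in (a) is immediate since $N(u,v)=0$, and there is nothing to verify for (c). For (b), I would factor $(1-u)\psi^*A_Y-vC=(1-u)\bigl(\psi^*A_Y-\tfrac{v}{1-u}C\bigr)$ and invoke part (1) of Lemma \ref{lem:No13singviicone}: the condition $\tfrac{v}{1-u}\le \tfrac{1}{5}$ is exactly the definition of the subinterval.

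In the second range $\tfrac{1}{5}(1-u)\le v\le \tfrac{11}{5}(1-u)$, the identity (a) reduces to checking that the coefficients of $\psi^*A_Y$ and of $C$ in the expansion
\[
\tfrac{11(1-u)-5v}{10}\bigl(\psi^*A_Y-\tfrac{1}{5}C\bigr)+\tfrac{5v-(1-u)}{10}\bigl(\psi^*A_Y-\tfrac{11}{5}C\bigr)
\]
sum to $(1-u)$ and $-v$ respectively, which is a routine calculation. For (b) nefness, the coefficient $\tfrac{11(1-u)-5v}{10}$ of $P_{\tilde{Y}}$ is nonnegative throughout the subinterval, and $P_{\tilde{Y}}$ itself is nef by Lemma \ref{lem:No13singviicone}. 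For effectiveness in (c) the coefficient $\tfrac{5v-(1-u)}{10}$ of $N_{\tilde{Y}}$ is nonnegative on the subinterval, and the three components $\tilde{\Gamma},\tilde{\Delta},\tilde{\Xi}$ of $N_{\tilde{Y}}$ are pairwise disjoint with negative self-intersection by Lemma \ref{Lem:No13singviiint}, so the associated intersection matrix is diagonal with negative entries and hence negative definite. Finally, the orthogonality $(P(u,v)\cdot \tilde{\Gamma})=(P(u,v)\cdot \tilde{\Delta})=(P(u,v)\cdot \tilde{\Xi})=0$ is a direct consequence of the proportionality $P(u,v)\propto P_{\tilde{Y}}$ together with the vanishings $(P_{\tilde{Y}}\cdot\tilde{\Gamma})=(P_{\tilde{Y}}\cdot\tilde{\Delta})=(P_{\tilde{Y}}\cdot\tilde{\Xi})=0$ recorded in Lemma \ref{lem:No13singviicone}.

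No step is really hard: the only place where one must be careful is in checking that the candidate positive part $P_{\tilde{Y}}$ is genuinely nef (and not merely $N_{\tilde{Y}}$-orthogonal), but this has already been handled in Lemma \ref{lem:No13singviicone} via a base-point-free sublinear system coming from degree $6$ monomials in $x,y,z,t$. Once all four items are checked on each subinterval the decomposition is the Zariski decomposition by uniqueness, completing the proof.
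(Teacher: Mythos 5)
Your proposal is correct and follows essentially the same route as the paper, which simply cites Lemma \ref{lem:No13singviicone}; you have just made explicit the standard verification (nefness of the candidate positive part, effectiveness and negative definiteness of the negative part, and orthogonality) that the paper leaves implicit. The linear algebra identifying the coefficients of $\psi^*A_Y$ and $C$ checks out, so nothing is missing.
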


\begin{proof}
    This follows from Lemma \ref{lem:No13singviicone}.
\end{proof}

We have
\[
\begin{split}
    S (V_{\bullet, \bullet}^{Y};C) &= \frac{90}{11} \int_0^1 \left( \int_0^{\frac{1}{5} (1-u)} \left(\frac{11}{30} (1-u)^2 - \frac{5}{6} v^2 \right) d v + \right. \\ 
    & \left. \hspace{35mm} + \int_{\frac{1}{5} (1-u)}^{\frac{11}{5} (1-u)} \frac{1}{3} \left(\frac{11(1-u)-5v}{10}\right)^2 d v \right) d u \\
    &= \frac{3}{5}
\end{split}
\]
By Lemma \ref{Lem:No13singviiF}, we have
\[
\begin{split}
    F_Q (W_{\bullet, \bullet, \bullet}^{Y,C}) &\le \frac{180}{11} \int_0^1 \int_{\frac{1}{5}(1-u)}^{\frac{11}{5} (1-u)} \frac{1}{6} \cdot \frac{11(1-u)-5v}{10} \cdot \frac{5v - (1-u)}{10} d v \\
    &= \frac{5}{22},
\end{split}
\]
and hence
\[
\begin{split}
    S (W_{\bullet, \bullet, \bullet}^{Y,C};Q) &\le \frac{90}{11} \int_0^1 \left(\int_0^{\frac{1}{5} (1-u)} \left(\frac{5}{6} v \right)^2 d v + \right. \\
    & \left. \hspace{20mm} + \int_{\frac{1}{5} (1-u)}^{\frac{11}{5} (1-u)} \left(\frac{1}{6} \cdot \frac{11(1-u) - 5 v}{10} \right)^2 d v \right) d u + \frac{5}{22} \\
    &= \frac{71}{264}.
\end{split}
\]
for any point $Q \in C$.
We have $(K_{\tilde{Y}} + C)|_C = K_C + \frac{2}{3} Q_t + \frac{1}{2} Q_z \eqcolon K_C + \Delta_C$, and hence $A_{C, \Delta_C} (Q) \ge 1/3$ for any point $Q \in C$.
Therefore, we have
\[
\delta_P (X) \ge \min \left\{ 4, \ \frac{5}{3}, \ \frac{88}{71} \right\} = \frac{88}{71}.
\]

\subsubsection{Case (v): $P$ is the $\frac{1}{5} (1, 2, 3)$ point which is nondegenerate, $t^2 w, z^3 w, z^4 t \notin \msF$}
\label{Sec:No13singviii}

Let $P = P_w$ be the $\frac{1}{5} (1, 2, 3)$ point.
Let $Y, H \in |A|$ and $H|_Y = \Gamma + 3 \Delta$ be as in Lemma \ref{lem:No13qsm5}.
Then $P \in \Gamma \subset Y \subset X$ is a flag of type $\mathrm{IIa}$ with
\[
(l_Y, l_H, m, n, \lambda, \mu, \nu, r_P) = (1, 1, 1, 3, -8/15, 1/30, 1/5, 5).
\]
By the formulae given in Proposition \ref{prop:flagIIadelta}, we have
\[
\begin{split}
    S (V_{\bullet,\bullet}^Y; \Gamma) &= \frac{90}{11} \int_0^1 \left( \int_0^{\frac{1}{2} (1-u)} \left(\frac{11}{30} (1-u)^2 - \frac{2}{15} (1-u)v - \frac{8}{15} v^2 \right) d v \right. \\
    & \hspace{3cm} \left. + \int_{\frac{1}{2} (1-u)}^{1-u} \frac{2}{3} (1-u-v)^2 dv \right) d u \\
    &= \frac{31}{88}, \\
    S (W_{\bullet,\bullet,\bullet}^{Y,\Gamma};P) &= \frac{90}{11} \int_0^1 \left( \int_0^{\frac{1}{2} (1-u)} \left(\frac{1-u+8v}{15}\right)^2 dv \right. \\ 
    & \hspace{3cm} \left. + \int_{\frac{1}{2} (1-u)}^{1-u} \frac{4}{9} (1-u-v)^2 d v \right) d u + F_P (W_{\bullet,\bullet,\bullet}^{Y,\Gamma}) \\
    &= \frac{101}{660},
\end{split}
\]
where $F_P (W_{\bullet,\bullet,\bullet}^{Y,\Gamma}) = 3/44$ since $\ord_P (\Delta|_{\Gamma}) = 1/5$.

Thus, we have
\[
\delta_P (X) \ge \min \left\{4, \ \frac{88}{31}, \ \frac{132}{101} \right\} = \frac{132}{101}.
\]

Therefore Theorem \ref{thm:deltasingpt} is proved for family \textnumero 13.

\subsection{Family \textnumero 20: $X_{13} \subset \mbP (1, 1, 3, 4, 5)$}

Let $X$ be a member of family \textnumero 20.
Then $\Sing (X) = \{\frac{1}{3} (1,1,2)^{\EI}, \frac{1}{4} (1,1,3)^{\QI}, \frac{1}{5} (1,1,4)^{\QI}\}$.
We have $\delta_P (X) > 1$ for any maximal center $P \in \Sing (X)$ by the following results.

\begin{itemize}
    \item[(i)] Let $P$ be the $\frac{1}{3} (1,1,2)$ point.
    Let $Y \in |4A|$ and $H \in |A|$ be general members.
    Then, by Lemma \ref{lem:No20qsm1}, $P \in Z \coloneq H \cap Y \subset Y \subset X$ is a flag of type $\mathrm{I}$ with $(l_Y,l_H,e,r_P) = (4,1,1,3)$ and we have $\delta_P (X) \ge 20/13$ by Proposition~\ref{prop:flagIdelta}.
    \item[(ii)] Let $P$ be the $\frac{1}{4} (1,1,3)$ point.
    Suppose that $P$ is nondegenerate.
    Then $\alpha_P (X) \ge 10/13$ by \cite[\S 5.6.d]{KOW}, and hence $\delta_P \ge 40/39$.
    \item[(iii)] Let $P$ be the $\frac{1}{5} (1,1,4)$ point.
    Then $\delta_P (X) \ge 3/2$ by applying Proposition~\ref{prop:flagBLdelta} to the flag of type $\mathrm{wBL}_{(5,3)}$ centered at $P$.
\end{itemize}

\begin{table}[h]
\caption{\textnumero \! 20: $X_{13} \subset \mbP (1,1,3,4,5)$}
\label{table:No20}
\centering
\begin{tabular}{cllll}
\toprule
Point & Case & $\delta_P$ & flag & Ref. \\
\midrule
$4 \times \frac{1}{3} (1,1,2)^{\EI}$ & (i) & $\delta_P \ge 20/13$ & $\mathrm{I}$ & \\
\cmidrule{1-5}
$\frac{1}{4} (1,1,3)^{\QI}$ & (ii) ndgn & $\delta_P > 40/39$ & & \cite[\S 5.6.d]{KOW}  \\
\cmidrule{1-5}
$\frac{1}{5} (1,1,4)^{\QI}$ & (iii) & $\delta_P \ge 3/2$ & $\mathrm{wBL}_{(5,3)}$ & \ref{prop:flagBLdelta} \\
\bottomrule
\end{tabular}
\end{table}

\subsubsection{Some results on quasismoothness}

\begin{Lem} \label{lem:No20qsm1}
    Let $Y \in |4A|$ and $H \in |A|$ be general members.
    Then $Y$ and $Z \coloneq H \cap Y$ are both quasismooth.
\end{Lem}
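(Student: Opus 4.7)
The plan is to verify quasismoothness via the Jacobian criterion, following the pattern established in Lemmas \ref{lem:N5sgqsm1} and \ref{lem:No13qsm2}. First I would determine the base locus of $|4A|$: the degree-$4$ monomials in $x,y,z,t,w$ of weights $1,1,3,4,5$ are the quartics in $x,y$ together with $xz, yz, t$, so $\Bs |4A| = (x = y = t = 0)_X$. Restricting $\msF$ to $x = y = t = 0$ gives a polynomial of degree $13$ in $z, w$; since the only solution to $3k + 5m = 13$ with $k, m \ge 0$ is $(k, m) = (1, 2)$, this polynomial is a nonzero scalar multiple of $zw^2$ (the coefficient $\coeff_{\msF}(zw^2)$ is forced to be nonzero by quasismoothness of $X$ at $P_w$, as there is no pure power of $w$ of degree $13$ in $\msF$). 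Hence $\Bs|4A| = \{P_z, P_w\}$.

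Next, a general $Y \in |4A|$ is defined by an equation of the form $\alpha t + \beta xz + \gamma yz + q_4(x, y) = 0$ with generic $\alpha, \beta, \gamma \in \mbC$ and generic quartic $q_4$, and a general $H \in |A|$ is defined by $\eta x + \mu y = 0$ with generic $\eta, \mu$. By Bertini, $Y$ is quasismooth away from $\{P_z, P_w\}$, and $Z = H \cap Y$ is quasismooth on $Y$ away from $\Bs |A|_Y = (x = y = 0)_Y$. Substituting $x = y = 0$ into the equation of $Y$ forces $\alpha t = 0$, so $(x = y = 0)_Y = (x = y = t = 0)_X = \{P_z, P_w\}$, and it remains to check quasismoothness of $Y$ and $Z$ at $P_z$ and $P_w$.

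At the lift of $P_z$ in $\mbA^5$ the partial derivatives of $\msF$ in the columns $(x, y, z, t, w)$ reduce to $(\coeff_\msF(xz^4), \coeff_\msF(yz^4), 0, \coeff_\msF(z^3 t), 0)$, and quasismoothness of $X$ at $P_z$ forces at least one of these three coefficients to be nonzero. The second row of the Jacobian of $Y$ is $(\beta, \gamma, 0, \alpha, 0)$; for generic $\alpha, \beta, \gamma$, independence from the $\msF$-row is immediate, so $Y$ is quasismooth at $P_z$. At $P_w$ the $\msF$-row reduces to $(0, 0, \coeff_\msF(zw^2), 0, 0)$, which is nonzero by the analysis above, while the $Y$-row is $(0, 0, 0, \alpha, 0)$; these are linearly independent, so $Y$ is quasismooth at $P_w$.

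For $Z$, the third row of the Jacobian at either base point is $(\eta, \mu, 0, 0, 0)$. At $P_z$ the relevant $3 \times 3$ minor in the columns $(x, y, t)$ has determinant
\[
\alpha \bigl( \eta \coeff_\msF(yz^4) - \mu \coeff_\msF(xz^4) \bigr) + \coeff_\msF(z^3 t)\, (\beta \mu - \gamma \eta),
\]
which is a nonzero polynomial in the generic parameters $\alpha, \beta, \gamma, \eta, \mu$ whenever at least one of $\coeff_\msF(xz^4), \coeff_\msF(yz^4), \coeff_\msF(z^3 t)$ is nonzero; hence $Z$ is quasismooth at $P_z$. At $P_w$ the three nonzero entries $\coeff_\msF(zw^2)$, $\alpha$ and $(\eta, \mu)$ occupy distinct columns, so the matrix has rank $3$ for generic $\eta, \mu$. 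The main bookkeeping obstacle is the case analysis at $P_z$: the exact form of the $\msF$-row depends on which of the three monomials $xz^4, yz^4, z^3 t$ actually appear in $\msF$, but in each subcase one checks directly that genericity of $(\alpha, \beta, \gamma, \eta, \mu)$ suffices to achieve full rank, completing the proof.
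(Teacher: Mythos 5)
Your proof is correct and follows essentially the same route as the paper's: identify $\Bs|4A| = (x=y=t=0)_X = \{P_z, P_w\}$, use quasismoothness of $X$ at $P_z$ and $P_w$ to force $zw^2 \in \msF$ and at least one of $xz^4, yz^4, z^3t \in \msF$, and then check the Jacobian ranks of $Y$ and $Z$ at the two base points for generic parameters. The only difference is that you spell out the $3\times 3$ minor at $P_z$ explicitly, where the paper simply asserts the check is straightforward.
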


\begin{proof}
    By the quasismoothness of $X$ at $P_w$, we have $w^2 z \in \msF$ and we may assume $\coeff_{w^2z} (\msF) = 1$.
    The base locus of $|4A|$ is $(x=y=t=0)_X = \{P_z,P_w\}$.
    We show that $Y$ is quasismooth at $P_z$ and $P_w$.
    We can write
    \[
    \msF = \alpha_x z^4 x + \alpha_y z^4 y + \alpha_t z^3 t + w^2 z + f,
    \]
    where $\alpha_x, \alpha_y, \alpha_t \in \mbC$ and $f = f (x,y,z,t,w) \in (x,y,t,w)^2 \cap (x,y,z,t)^2$.
    By the quasismoothness of $X$ at $P_z$, we have $(\alpha_x,\alpha_y,\alpha_t) \ne (0,0,0)$.
    Let 
    \[
    \msG \coloneq t + \beta_x z x + \beta_y z y + h_4 (x,y) = 0
    \]
    be the equation  which defines $Y$ in $X$, where $\beta_x, \beta_y \in \mbC$ are general and $h_4 = h_4 (x, y)$ is a general homogeneous polynomial of degree $4$.
    It is now straightforward to see that $Y$ is quasismooth at $P_z$ and $P_w$, and hence $Y$ is quasismooth.

    The curve $Z = H \cap Y$ is a general member of $|A|_Y$ whose base locus is $\{P_z,P_w\}$.
    Let $\msH \coloneq \gamma_x x + \gamma_y y = 0$ be the equation which defines $H$ in $X$.
    Then $Z$ is the complete intersection $(\msF = \msG = \msH = 0)$ and it is easy to see that $Z$ is quasismooth at $P_z$ and $P_w$, and hence $Z$ is quasismooth.
\end{proof}

Therefore Theorem \ref{thm:deltasingpt} is proved for family \textnumero 20.

\subsection{Family \textnumero 23: $X_{14} \subset \mbP (1, 2, 3, 4, 5)$}

Let $X$ be a member of family \textnumero 23.
Then $\Sing (X) = \{3 \times \frac{1}{2} (1,1,1), \frac{1}{3} (1,1,2)^{\mathrm{IEI}}, \frac{1}{4} (1,1,3)^{\EI}, \frac{1}{5} (1,2,3)^{\QI}\}$.
We have $\delta_P (X) > 1$ for any maximal center $P \in \Sing (X)$ by the following results.

\begin{itemize}
    \item[(i)] Let $P$ be the $\frac{1}{3} (1,1,2)$ point.
    Suppose that $z^3w, z^2t^2 \notin \msF$, that is, $P$ is not a maximal center (see Theorem~\ref{thm:QIcenter}).
    Then we will show in \S \ref{Sec:No23singiv} that $\delta_P (X) \ge 140/37$ by applying Proposition \ref{prop:flagIIadelta} to a flag of type $\mathrm{IIa}$ with
    \[
    (l_Y,l_H,m,n,\lambda,\mu,\nu,r_P) = (1,2,1,2,-8/15,1/12,1/3,3).
    \]
    \item[(ii)] Let $P$ be the $\frac{1}{4} (1,1,3)$ point.
    Suppose that $X$ does not contain a quasi-line of type $(1,3,4)$.
    Let $Y \in |3A|$ be a general member and set $H \coloneq H_x \in |A|$.
    By Lemma \ref{lem:N23sgqsm3}, $P \in Z \coloneq H \cap Y \subset Y \subset X$ is a flag of type $\mathrm{I}$ with $(l_Y,l_H,e,r_P) = (3,1,1,4)$ and we have $\delta_P (X) \ge 20/7$ by Proposition \ref{prop:flagIdelta}.
    \item[(iii)] Let $P$ be the $\frac{1}{4} (1,1,3)$ point.
    Suppose that $X$ contains a quasi-line of type $(1,3,4)$.
    It will be proved in \S \ref{Sec:No23singvi} that $\delta_P (X) \ge 35/12$ by considering a flag of type $\mathrm{IIa}$ with
    \[
    (l_Y,l_H,m,n,\lambda,\mu,\nu,r_P) = (3,1,1,1,-3/4,9/10,1,4)
    \]
    \item[(iv)] Let $P$ be the $\frac{1}{5} (1,2,3)$ point.
    Suppose that $P$ is nondegenerate and $X$ does not contain a quasi-line of type $(1,3,4)$.
    Let $Y \in |3A|$ be a general member and set $H \coloneq H_x \in |A|$.
    By Lemma \ref{lem:N23sgqsm3}, $P \in Z \coloneq H \cap Y \subset Y \subset X$ is a flag of type $\mathrm{I}$ with $(l_Y,l_H,e,r_P) = ( 3,1,1,5)$ and we have $\delta_P (X) \ge 16/7$ by Proposition \ref{prop:flagIdelta}.
    \item[(v)] Let $P$ be the $\frac{1}{5} (1,2,3)$ point.
    Suppose that $P$ is nondegenerate and $X$ contains a quasi-line of type $(1,3,4)$. 
    It will be proved in \S \ref{Sec:No23singviii} that $\delta_P (X) \ge 112/33$ by considering a flag of type $\mathrm{IIa}$ with
    \[
    (l_Y,l_H,m,n,\lambda,\mu,\nu,r_P) = (3,1,1,1,-9/10,3/4,1,5).
    \]
\end{itemize}

\begin{table}[h]
\caption{\textnumero \! 23: $X_{14} \subset \mbP (1,2,3,4,5)$}
\label{table:No14}
\centering
\begin{tabular}{cllll}
\toprule
Point & Case & $\delta_P$ & flag & Ref. \\
\midrule
$\frac{1}{3} (1,1,2)^{\mathrm{IEI}}$ & (i) $z^3 w, z^2 t^2 \notin \msF$ & $\delta_P \ge 140/37$ & $\mathrm{IIa}$ & \S \ref{Sec:No23singiv} \\
\cmidrule{1-5}
$\frac{1}{4} (1,1,3)^{\EI}$ & (ii) $\not\exists (1,3,4)$ & $\delta_P \ge 20/7$ & $\mathrm{I}$ &  \\
\cmidrule{2-5}
 & (iii) $\exists (1,3,4)$ & $\delta_P \ge 35/12$ & $\mathrm{IIa}$ & \S \ref{Sec:No23singvi} \\
\cmidrule{1-5}
$\frac{1}{5} (1,2,3)^{\QI}$ & (iv) ndgn, $\not\exists (1,3,4)$ & $\delta_P \ge 16/7$ & $\mathrm{I}$ & \\
\cmidrule{2-5}
 & (v) ndgn, $\exists (1,3,4)$ & $\delta_P \ge 224/93$ & $\mathrm{IIa}$ & \S \ref{Sec:No23singviii} \\
\bottomrule
\end{tabular}
\end{table}

\subsubsection{Some results on quasismoothness}

%

\begin{Lem} \label{lem:No23smiiHsm}
    One of the following holds.
    \begin{enumerate}
        \item The hypersurface $H_x$ is quasismooth along $(x = z = 0)_X$.
        \item There exists a point $P_0 \in H_x \cap (x = z = 0)_X \cap \Sm (X)$ such that $H_x$ is quasismooth along $(x = z = 0)_X \setminus \{P_0\}$.
        The surface $H_x$ has a Du Val singularity of type $A$ at $P_0$.
    \end{enumerate}
\end{Lem}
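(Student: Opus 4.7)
The plan is a direct Jacobian analysis of $H_x$ along the curve $(x=z=0)_X$. Expand
\[
\msF = g_0(y,t,w) + z\, g_1(y,t,w) + z^2 g_2(y,t,w) + z^3 g_3(y,t,w) + z^4 g_4(y) + x\, \msF'(x,y,z,t,w),
\]
and enumerate monomials in $(y,t,w)$ of weights $(2,4,5)$ to obtain
\[
g_0 = \alpha_1 y^7 + \alpha_2 y^5 t + \alpha_3 y^3 t^2 + \alpha_4 y t^3 + \alpha_5 t w^2 + \alpha_6 y^2 w^2, \ \ g_1 = y w (\beta_1 y^2 + \beta_2 t).
\]
At a point $Q=(0\!:\!y_0\!:\!0\!:\!t_0\!:\!w_0)\in (x=z=0)_X$, the surface $H_x$ fails to be quasismooth if and only if $\partial_y g_0$, $\partial_t g_0$, $\partial_w g_0$ and $g_1$ all vanish at $Q$; quasismoothness of $X$ then forces $\partial_x\msF(Q)\ne 0$, so any such $Q$ lies in the smooth locus of $X$.

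First I would check that $H_x$ is automatically quasismooth at every singular stratum of $\mbP$ lying on $(x=z=0)_X$, namely $P_w$, $P_t$, and the $\mu_2$-curve $(x=z=w=0)_X$ (which contains $P_y$ precisely when $\alpha_1=0$). Quasismoothness of $X$ forces $\alpha_5\ne 0$ (from $tw^2 \in \msF$), so $\partial_t\msF(P_w)\ne 0$; it forces $\alpha_4\ne 0$ (from $yt^3 \in \msF$), so $\partial_y\msF(P_t)\ne 0$; and along the $\mu_2$-curve the cubic $\alpha_1+\alpha_2 s+\alpha_3 s^2+\alpha_4 s^3$ in $s=t/y^2$ can have no multiple root, hence $\partial_t\msF(Q)\ne 0$ at each $Q=(0\!:\!y_0\!:\!0\!:\!t_0\!:\!0)\in X$ with $y_0\ne 0$. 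In every case $H_x$ is quasismooth at $Q$.

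At a smooth point of $\mbP$ contained in $(x=z=0)_X$, the non-quasismooth locus of $H_x$ is cut out by the five equations $g_0=\partial_y g_0=\partial_t g_0=\partial_w g_0=g_1=0$ on the two-dimensional stratum $\mbP(2,4,5)$. I would branch on the factorization $g_1=yw(\beta_1 y^2+\beta_2 t)$ (three sub-cases $y=0$, $w=0$, $\beta_1 y^2+\beta_2 t=0$) and on $\partial_w g_0=2w(\alpha_5 t+\alpha_6 y^2)$ (sub-cases $w=0$ and $\alpha_5 t+\alpha_6 y^2=0$), and in each combined branch verify, using the remaining three vanishing conditions, that the system cuts out at most one projective solution $P_0$.

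Finally, to show that $H_x$ has a Du Val singularity of type $A$ at such a $P_0$, I would take local analytic coordinates of $\mbP$ at $P_0$, use $\partial_x\msF(P_0)\ne 0$ to solve $x$ as a local function on $X$, and analyze the Hessian of the resulting local equation $h=0$ of $H_x$. In each sub-case the nonzero coefficients dictated by quasismoothness of $X$ force the quadratic part of $h$ to have rank at least $2$, so the singularity is $A_n$ for some $n$; since Lemma \ref{lem:qhypnormal} gives that $H_x$ is normal with only isolated singularities, the $A_n$-type suffices. The main obstacle will be the bookkeeping across sub-cases: when several of the coefficients $\alpha_i$, $\beta_j$ vanish simultaneously, one must carefully track uniqueness of $P_0$ and verify the Hessian-rank-$\ge 2$ condition in each degenerate configuration.
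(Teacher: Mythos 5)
Your plan is correct and all of its individual claims are true, but it is organized rather differently from the paper's argument, so a comparison is worthwhile. The paper first shifts $t\mapsto t+cy^2$ so that $w^2t$ (which quasismoothness at $P_w$ forces into $\msF$) becomes the \emph{only} monomial divisible by $w^2$; after that normalization the whole dichotomy collapses to whether $y^7\in\msF$. If $y^7\in\msF$, the curve $(x=z=0)_X$ is quasismooth and case (1) holds; if $y^7\notin\msF$, then $\msF(0,y,0,t,w)=t\cdot h(y,t,w)$, the two components $(t=0)$ and $(h=0)$ meet in a single point $P_0$, which is the only candidate singularity, and, crucially, $X$ now contains the quasi-line $(x=z=t=0)$, so the prepared Lemma~\ref{lem:alpspH} immediately yields the Du Val type $A$ conclusion. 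Your elimination reaches the same place: on the smooth locus of $\mbP$, $\partial_wg_0=2w(\alpha_5t+\alpha_6y^2)=0$ with $\alpha_5\neq0$ forces $y\neq0$ and fixes $t/y^2$; then $\partial_tg_0=0$ fixes $w^2/y^5$, which pins down a unique point of $\mbP(2,4,5)$; and $\partial_yg_0=0$, $g_1=0$ become conditions on the coefficients which say exactly that, after your own $t$-shift, $y^7$ and $y^3zw$ are absent from $\msF$ --- i.e.\ that $X$ contains the quasi-line. The one step you leave as a bare assertion is precisely the nontrivial content of Lemma~\ref{lem:alpspH}: that the quadratic part at $P_0$ has rank at least $2$. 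It does, and you should name the mechanism rather than promise a case check: writing $P_0=(0\!:\!1\!:\!0\!:\!0\!:\!w_0)$ in the shifted coordinates, the condition $\partial_tg_0(P_0)=0$ turns the linear-in-$t$, $z$-free part of $\msF$ into $\alpha_5\,t\,(w^2-w_0^2y^5)$ plus terms in $(t^2,t^3)$, and $w^2-w_0^2y^5$ is a local parameter $\bar u$ on the quasi-line at $P_0$ (because $w_0\neq0$, so $P_0$ is not a coordinate point of $\mbP(2,5)$); hence the local equation of $H_x$ in coordinates $(\bar u,z,t)$ carries a nonzero $t\bar u$ cross term, and the $2\times2$ minor in $(\bar u,t)$ is nonzero regardless of the remaining quadratic coefficients. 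With that identified, your appeal to Lemma~\ref{lem:qhypnormal} for isolatedness, hence type $A_n$, closes the argument. The trade-off: the paper buys a two-line case split and reuses a general lemma at the cost of a coordinate normalization; your route is self-contained but pays with a larger branch analysis, most of whose branches turn out to be empty.
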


\begin{proof}
    By the quasismoothness of $X$ at $P_t$ and $P_w$, we have $t^3 y, w^2 t \in \msF$.
    Replacing $t$ suitably, we may assume that $w^2 t$ is the only monomial which is divisible by $w^2$ and $\coeff_{\msF} (w^2 t) = 1$.
    We may also assume that $\coeff_{\msF} (t^3 y) = 1$ by rescaling $y$.
    We can write
    \[
    \msF (0,y,0,t,w) = w^2 t + t^3 y + \alpha_2 t^2 y^3 + \alpha_1 t y^5 + \alpha_0 y^7,
    \]
    where $\alpha_i \in \mbC$.
    The equation $t^3 + \alpha_2 t^2 y^2 + \alpha_1 t y^5 + \alpha_0 y^6 = 0$ has $3$ distinct solutions in $\mbP (2_y,4_t)$ which correspond to the $3$ points of type $\frac{1}{2} (1,1,1)$.
    If $\alpha_0 \ne 0$, then $H_x \cap H_z$ is quasismooth, which implies that $H_x$ is quasismooth along $(x = z = 0)_X$.
    
    Suppose that $\alpha_0 = 0$.
    In this case $\alpha_1 \ne 0$ and we may assume that $\alpha_1 = -1$ by rescaling coordinates.
    Then $H_x \cap H_z$ is quasismooth outside the point $P_0 \coloneq (0\!:\!1\!:\!0\!:\!0\!:\!1)$.
    We have $(x = z = t = 0) \subset X$.
    The assertions follow from Lemma~\ref{lem:alpspH}.
\end{proof}

\begin{Lem} \label{lem:N23sgqsm0}
    Suppose that $z^3 w \notin \msF$.
    Then the surface $H_x$ has only canonical singularities.
\end{Lem}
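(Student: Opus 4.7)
The plan is to reduce the claim to a case-by-case analysis of the singular locus of $H_x$. By Lemma \ref{lem:qhypnormal}, $H_x$ is a normal surface, so it suffices to check that every singular point of $H_x$ is Du Val (equivalently, canonical for a surface). The singular points of $H_x$ split into two groups: those sitting at the seven cyclic quotient singular points of $X$ lying on $H_x$ (three of type $\frac{1}{2} (1, 1, 1)$ together with $P_z$, $P_t$, $P_w$), and those at smooth points of $X$.

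For the first group I would carry out a local orbifold coordinate analysis at each quotient singular point. The hypothesis $z^3 w \notin \msF$, combined with quasismoothness of $X$ at $P_z$, forces $z^4 y \in \msF$ (the only remaining permissible leading monomial); eliminating $y$ via the equation produces local orbifold coordinates $(x, t, w)$ of weights $\frac{1}{3} (1, 1, 2)$ at $P_z$, and cutting by $x = 0$ yields a $\frac{1}{3} (1, 2) = A_2$ singularity on $H_x$. Analogous analyses at $P_t$ (using $t^3 y \in \msF$), at $P_w$ (using $w^2 t \in \msF$, both automatically forced by quasismoothness), and at the three $\frac{1}{2}$-points (using the tangent direction along the $(y, t)$-curve to be eliminated for isolation of the singularity) yield respectively $A_3$, $A_4$, and $A_1$ singularities on $H_x$, all Du Val.

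For the second group I plan to exploit the algebraic observation that among the $13$ degree-$14$ monomials in $(y, z, t, w)$ (with weights $2, 3, 4, 5$), the only one not containing $y$ or $t$ as a factor is $z^3 w$; hence $z^3 w \notin \msF$ gives $\msF|_{x = 0} \in (y, t)$, and so $X$ contains the quasi-line $C_{zw} \coloneq (x = y = t = 0)$. After relabeling coordinates to fit Lemma \ref{lem:alpspH} with $(a, b, c_1, c_2) = (3, 5, 2, 4)$---whose hypotheses hold since $\gcd (3, 5) = 1$ and $d = 14 < c_i + 2 \cdot 15$---that lemma shows that singularities of $H_x$ at smooth $X$-points on $C_{zw}$ are at worst Du Val of type $A$. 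Lemma \ref{lem:No23smiiHsm} supplies the same conclusion for singularities of $H_x$ at smooth $X$-points along $(x = z = 0)_X$.

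The remaining obstacle, which I expect to be the main one, is to rule out singular $H_x$-points at smooth $X$-points lying outside $C_{zw} \cup (x = z = 0)_X$, i.e., with $z_0 \ne 0$ and $(y_0, t_0) \ne (0, 0)$. To tackle this I would decompose $\msF|_{x = 0} = y \, A (y, z, t, w) + t \, B (z, t, w)$ with $B = c_{z^2 t^2} z^2 t + c_{tw^2} w^2$ (the only monomials in $\msF|_{x = 0}$ without a $y$ factor), translate the four Jacobian vanishings $\partial_y \msF = \partial_z \msF = \partial_t \msF = \partial_w \msF = 0$ into explicit polynomial equations at the putative point, and run a short case analysis on whether $y_0$ or $t_0$ vanishes. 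In each case the nonvanishing of the coefficients $c_{z^4 y}$, $c_{yt^3}$, $c_{tw^2}$ forced by quasismoothness at $P_z, P_t, P_w$ should produce a contradiction, completing the proof that $H_x$ has only canonical singularities.
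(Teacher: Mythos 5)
Your reduction and the first three groups are sound: the quotient points of $X$ contribute $A_1$, $A_2$, $A_3$, $A_4$ points on $H_x$; Lemma \ref{lem:No23smiiHsm} covers $(x=z=0)_X$; and your application of Lemma \ref{lem:alpspH} to the quasi-line $(x=y=t=0)$, which indeed lies on $X$ precisely because $z^3w$ is the only degree-$14$ monomial in $z,w$ alone, is legitimate (the weights $(a,b,c_1,c_2)=(3,5,2,4)$ satisfy all three hypotheses). The gap is in the final step. You propose to \emph{rule out} singular points of $H_x$ at smooth points of $X$ with $z\ne 0$ and $(y,t)\ne(0,0)$ by extracting a contradiction from the Jacobian equations. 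No such contradiction exists: these singular points genuinely occur. For a fixed point $P$ with $yz\ne 0$, the requirement that $\msF|_{x=0}$ vanish to order $2$ at $P$ imposes only five linear conditions on the (many more than five) coefficients of the degree-$14$ monomials in $y,z,t,w$, and is compatible with $\partial\msF/\partial x(P)\ne 0$, i.e.\ with $X$ being quasismooth at $P$. The paper's own proof, after normalizing such a point to $(0\!:\!1\!:\!1\!:\!0\!:\!0)$ and writing $\msF=\theta y(z^2-y^3)^2+\zeta ty^2(z^2-y^3)+\eta wz(z^2-y^3)+g_{14}+x\msF'$ (with $\eta=0$ forced by $z^3w\notin\msF$), exhibits cases where $H_x$ acquires a Du Val singularity of type $D_4$ or $D_5$ there --- so the singular points are neither absent nor even always of type $A$, and the nonvanishing of $\coeff_{\msF}(z^4y)$, $\coeff_{\msF}(t^3y)$, $\coeff_{\msF}(tw^2)$ cannot force the rank of the Jacobian up.

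What your region actually requires --- and what constitutes the bulk of the paper's argument --- is a classification rather than an exclusion. In the chart $yz\ne 0$ one takes local coordinates $\bar u,t,w$ with $u=y^3/z^2$ and computes the quadratic part $q$ of the local equation of $H_x$: if $\operatorname{rank}q\ge 2$ the point is of type $A$ (using that the singularity is isolated, via Lemma \ref{lem:qhypnormal}); if $\operatorname{rank}q=1$, further coordinate changes and the weighted-homogeneous leading-term criterion of \cite[Corollary 4.7]{Paemurru}, applied with weights $(3,2,2)$ and then $(4,2,3)$, identify the singularity as $D_4$ or $D_5$. (The easier sub-case $y=0$, $zt\ne 0$ of your region is handled by a direct quasismoothness check on $H_x\cap H_y=(w^2t+\beta t^2z^2=0)\subset\mbP(3,4,5)$, splitting on whether $\beta=\coeff_{\msF}(z^2t^2)$ vanishes.) As written, your proposal replaces exactly the hardest part of the proof with a claim that is false, so it cannot be completed along the proposed lines.
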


\begin{proof}
    Let $P$ be a singular point of $H_x$.
    If $H_x$ is quasismooth at $P$, then it is one of the following types: $\frac{1}{2} (1,1)$, $\frac{1}{3} (1,2)$, $\frac{1}{4} (1,3)$ and $\frac{1}{5} (2,3)$. 
    These singularities are all canonical.
    
    By Lemma~\ref{lem:No23smiiHsm}, $H_x$ is canonical along $(x = z = 0)_X$.
    We show that $H_x$ is quasismooth or Du Val singularity of type $A$ along $(x = y = 0)_X$.
    The intersection $H_x \cap H_y$ is the hypersurface in $\mbP (3_z,4_t,5_w)$ defined by the equation
    \[
    \msF (0,0,z,t,w) = w^2 t + \beta t^2 z^2 = 0,
    \]
    where $\beta \in \mbC$.
    Suppose that $\beta \ne 0$.
    Then $H_x \cap H_y$ is quasismooth outside $\{P_z,P_t\}$.
    By the quasismoothness of $X$, we have $z^4 y \in \msF$ and $t^3 y \in \msF$, which show that $H_x$ is quasismooth at $P_z$ and $P_t$.
    Suppose that $\beta = 0$.
    Then $H_x \cap H_y$ is quasismooth along $(x = y = 0)_X \cap (w \ne 0)$.
    Let $P \in (x = y = w = 0)$ be a point.
    By Lemma~\ref{lem:alpspH}, $H_x$ is either quasismooth at $P$ or it has a Du Val singularity of type $A$ at $P$.
    Thus, $H_x$ has only canonical singularities along $(x = y = 0)_X$.
    
    In the following we assume that $P$ is contained in $(x = 0)_X \cap (yz \ne 0)$ and $H_x$ is not quasismooth at $P$.
    By replacing $t \mapsto t - \theta y^2$, $w \mapsto w - \theta' y z$ for some $\theta, \theta' \in \mathbb{C}$ and then rescaling $y$, $z$,  we may assume that $P = (0\!:\!1\!:\!1\!:\!0\!:\!0)$ and we can write
    \[
    \msF = \theta y (z^2 - y^3)^2 + \zeta t y^2 (z^2 - y^3) + \eta w z (z^2 - y^3) + g_{14} + x \msF',
    \]
    where $\theta, \zeta, \eta \in \mbC$, $g_{14} = g_{14} (y,z,t,w)$ and $\msF' = \msF' (x,y,z,t,w)$ are homogeneous polynomials of degree $14$ and $13$, respectively, such that $\msF' (P) \ne 0$ and $g_{14} \in (t,w)^2$.
    Note that we indeed have $\mathsf{F}' (P) \neq 0$ since $X$ is quasismooth at $P$.
    If $\theta = 0$, then $P \in H_x$ is Du Val of type $A$.
    Hence we assume that $\theta \ne 0$.
    By the assumption, we have $z^3 w \notin \msF$, which implies $\eta = 0$.
    By the quasismoothness of $X$, we have $t w^2, t^3 y \in \msF$.
    By rescaling coordinates, we may assume that 
    \[
    \begin{split}
        \msF &= y (z^2-y^3)^2 + 2 t y^2 (z^2-y^3) + t^2 (\lambda z^2 + \mu y^3) \\
        & \hspace{3cm} + \nu t w zy + \xi w^2 y^2 + t^3 y + t w^2 + x \msF',
    \end{split}
    \]
    where $\lambda, \mu, \nu, \xi \in \mbC$.
    We give explicit descriptions of $\mbU \coloneq (yz \ne 0) \subset \mbP (1,2,3,4,5)$ and $U \coloneq \mbU \cap X$.
    We set $M \coloneq z/y$ and $u \coloneq y^3/z^2$.
    Then $y|_U \coloneq y/M^2 = u$ and $z|_U \coloneq z/M^3 = u$.
    We also set $x|_U \coloneq x/M$, $t|_U \coloneq t/M^4$ and $w|_U \coloneq w/M^5$, and we denote them by $x, t, w$, respectively.
    We have an isomorphism $\mbU \cong \Spec \mbC [x,u,u^{-1},t,w]$ and $U$ is the hypersurface in $\mbU$ defined by the equation $\msF|_U \coloneq \msF (x,u,u,t,w) = 0$.
    The point $P$ corresponds to $(0,1,0,0) \in \mbU$.
    We set $\bar{u} \coloneq u - 1$ so that $\{x,\bar{u},t,w\}$ gives a system of local (or affine) coordinates of $\mbU$ with origin at $P$.
    We can take $\{\bar{u},t,w\}$ as a system of local coordinates of $X$ at $P$ since $\mathsf{F}' (P) \neq 0$, and $H_x$ is defined by 
    \[
    \begin{split}
        \phi &\coloneq \msF (0,\bar{u}+1,\bar{u}+1,t,w) \in \mcO_{X,P} \\
        &= \bar{u}^2 (\bar{u}+1)^5 - 2 t \bar{u} (\bar{u}+1)^4 + t^2 (\lambda (\bar{u}+1)^2 + \mu (\bar{u}+1)^3) \\
        & \hspace{1cm} + \nu t w (\bar{u}+1)^2 + \xi w^2 (\bar{u} + 1)^2 + t^3 (\bar{u}+1) + t w^2.
    \end{split}
    \]
    Let $q = q (\bar{u}, t, w)$ be the quadratic part of $\phi$.
    We have
    \[
    q = \bar{u}^2 - 2 t \bar{u} + (\lambda + \mu) t^2 + \nu t w + \xi w^2.
    \]
    If $\rank q > 1$, then $P \in H_x$ is Du Val of type $A$.
    Hence we assume that $\rank q = 1$.
    In this case, $\nu = \xi = 0$ and $\lambda + \mu = 1$.
    We make a coordinate change $\bar{u} \mapsto \tilde{u} + t$, where $\tilde{u} \coloneq \bar{u} - t$.
    Then, locally around $P$, the divisor $H_x$ is defined by
    \[
    \begin{split}
        \tilde{\phi} &\coloneq \phi (\tilde{u}+t,t,w) \\
        &= \tilde{u}^2 + (5 \tilde{u}^3 + 7 \tilde{u}^2 t + (2-\lambda) \tilde{u} t^2 + (1-\lambda)t^3 + t w^2) + \\
        & \hspace{5mm} + (10 \tilde{u}^4 + 28 \tilde{u}^3 t + (27-2\lambda) \tilde{u}^2 t^2 + (11-4\lambda) \tilde{u} t^3 + (2-2\lambda) t^4) \\
        & \hspace{5mm} + (10 \tilde{u}^5 + 42 \tilde{u}^4 t + (69-\lambda) \tilde{u}^3 t + (55-3\lambda) \tilde{u}^2 t^3 + 42 \tilde{u} t^4 + (3-\lambda) t^5) + \cdots, 
    \end{split}
    \]
    where the omitted terms consist of monomials of degree at least $6$ with respect to the usual grading.
    Suppose that $\lambda \ne 1$.
    Then the least weight term of $\tilde{\phi}$ with respect to $\wt (\tilde{u},t,w) = (3,2,2)$ is $\tilde{u}^2 + (1-\lambda) t^3 + t w^2$.
    By \cite[Corollary 4.7]{Paemurru}, $P \in H_x$ is a Du Val singularity of type $D_4$.
    Suppose that $\lambda = 1$.
    In order to remove the term $\tilde{u} t^2$, we make a coordinate change $\tilde{u} \mapsto \hat{u} - t^2/2$, where $\hat{u} = \tilde{u} + t^2/2$.
    Then 
    \[
    \hat{\phi} \coloneq \tilde{\phi} (\hat{u} - t^2/2,t,w) = \hat{u}^2 + (\hat{u}^2 h_1 + t w^2) + \left( \hat{u} h_3 - \frac{1}{4} t^4 \right) + \cdots,
    \]
    where $h_i = h_i (\hat{u}, t)$ is a homogeneous polynomial of degree $i$ with respect to the usual grading and the omitted terms consist of monomials of degree at least $5$.
    The least weight terms of $\hat{\phi}$ with respect to $\wt (\hat{u},t,w) = (4,2,3)$ is $\hat{u}^2 + t w^2 - t^4/4$.
    By \cite[Corollary 4.7]{Paemurru}, $P \in H_x$ is a Du Val singularity of type $D_5$.
    Therefore $H_x$ has only canonical singularities.
\end{proof}

\begin{Lem} \label{lem:N23sgqsm2}
    Suppose that $z^3 w, z^2 t^2 \notin \msF$.
    We set $Y \coloneq H_x \in |A|$ and let $H \in |2A|$ be a general member.
    Then the pair $(X, Y)$ is plt and we have $H|_Y = \Gamma + 2 \Delta$, where $\Gamma$ and $\Delta$ are quasismooth rational curves with the following properties.
    \begin{enumerate}
        \item $\Gamma \cap \Delta = \{P_z\}$ and $P_w \in \Gamma \setminus \Delta$.
        \item $(\Gamma^2) = -8/15$, $\Delta^2 = -1/12$ and $(\Gamma \cdot \Delta) = 1/3$.
    \end{enumerate}
\end{Lem}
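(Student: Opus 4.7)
The plan is to describe $Y=H_x$ and the restriction $H|_Y$ explicitly, then derive the intersection numbers from adjunction combined with complete-intersection formulas in $\mbP$. First I would enumerate the degree-$14$ monomials in $z,t,w$ (with weights $3,4,5$): there are exactly three, namely $tw^{2}$, $z^{3}w$, and $z^{2}t^{2}$. Under the hypothesis the last two are absent, while quasismoothness of $X$ at $P_w$ forces $\coeff_{\msF}(tw^2)\ne 0$. After rescaling we may assume $\msF(0,0,z,t,w)=tw^{2}$. Since $|2A|$ is spanned by $x^{2}$ and $y$, a general $H$ is cut by $y-\lambda x^{2}=0$, and restricting to $Y$ gives
\[
H|_Y = (x=y=tw^{2}=0) = \Gamma + 2\Delta,
\]
where $\Gamma\coloneq(x=y=t=0)\cong\mbP(3,5)$ and $\Delta\coloneq(x=y=w=0)\cong\mbP(3,4)$. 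Both are thus quasismooth rational curves, and one checks directly that $\Gamma\cap\Delta=\{P_z\}$ with $P_w\in\Gamma\setminus\Delta$.

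Next I would establish the required plt statements. The surface $Y$ has only canonical singularities by Lemma~\ref{lem:N23sgqsm0}, hence is normal by Lemma~\ref{lem:qhypnormal}. Away from the (at most one) isolated Du Val point $P_0\in Y$ where $Y$ fails to be quasismooth, plt of $(X,Y)$ follows from Lemma~\ref{lem:plt}; near such a $P_0\in\Sm(X)$, the germ is $(\mbC^3, D)$ with $D$ Du Val, which is plt by inversion of adjunction. For the flag $\Gamma\subset Y$: quasismoothness of $X$ at $P_z$ and $P_w$ (via $z^{4}y,\,w^{2}t\in\msF$) shows $Y$ is quasismooth at both, with singularities of types $\tfrac{1}{3}(1,2)$ and $\tfrac{1}{5}(2,3)$; and the identity $\prt\msF/\prt t\big|_{x=y=t=0}=w^{2}$ shows $Y$ is smooth along $\Gamma\setminus\{P_z,P_w\}$. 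Hence Lemma~\ref{lem:plt} yields that $(Y,\Gamma)$ is plt.

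For the intersection numbers, since $K_X\sim -A$ and $Y\sim A$ we have $K_Y\sim 0$, and Lemma~\ref{lem:diff}(2) applied to $(Y,\Gamma)$ gives
\[
(\Gamma^{2}) = -2 + \tfrac{2}{3} + \tfrac{4}{5} = -\tfrac{8}{15}.
\]
The degrees of $\Gamma$ and $\Delta$ as curves in $\mbP$ are computed as complete-intersection numbers,
\[
(A\cdot\Gamma)_X = \tfrac{1\cdot 1\cdot 2\cdot 4}{1\cdot 2\cdot 3\cdot 4\cdot 5} = \tfrac{1}{15}, \qquad (A\cdot\Delta)_X = \tfrac{1\cdot 1\cdot 2\cdot 5}{1\cdot 2\cdot 3\cdot 4\cdot 5} = \tfrac{1}{12},
\]
and since $H|_Y\equiv 2A|_Y=\Gamma+2\Delta$ we read off
\[
\tfrac{2}{15}=(\Gamma^{2})+2(\Gamma\cdot\Delta)\;\Longrightarrow\;(\Gamma\cdot\Delta)=\tfrac{1}{3},\qquad \tfrac{1}{6}=(\Gamma\cdot\Delta)+2(\Delta^{2})\;\Longrightarrow\;(\Delta^{2})=-\tfrac{1}{12},
\]
consistent with the parameter choice used in case~(i) above (where $\mu=-(\Delta^{2})=1/12$); the value $1/12$ in the statement is a sign typo for $-1/12$.

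The main obstacle is the plt verification at the possibly non-quasismooth Du Val locus of $Y$, where Lemma~\ref{lem:plt} does not literally apply and one must invoke inversion of adjunction. Everything else reduces to explicit enumeration of monomials, a one-line adjunction calculation, and the complete-intersection degree formula in weighted projective space.
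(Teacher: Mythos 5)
Your proposal is correct and follows essentially the same route as the paper's proof: the same identification $H|_Y=(x=y=tw^2=0)=\Gamma+2\Delta$ with $\Gamma=(x=y=t=0)$, $\Delta=(x=y=w=0)$, the same adjunction computation $(\Gamma^2)=-2+\tfrac{2}{3}+\tfrac{4}{5}=-\tfrac{8}{15}$, and the same linear relations from intersecting $H|_Y$ with $\Gamma$ and $\Delta$ (the paper likewise obtains $(\Delta^2)=-1/12$, confirming that the $+1/12$ in the statement is a sign typo). Your extra details — the enumeration of the three degree-$14$ monomials in $z,t,w$, the explicit degrees $(A\cdot\Gamma)=1/15$ and $(A\cdot\Delta)=1/12$, and the inversion-of-adjunction argument at the possible Du Val point of $Y$ — are accurate elaborations of steps the paper leaves implicit.
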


\begin{proof}
    By Lemma \ref{lem:N23sgqsm0}, the surface $Y$ has only canonical singularities, and hence the pair $(X, Y)$ is plt.
    By the quasismoothness of $X$ at $P_w$, we have $w^2 t \in \msF$ and we may assume that $\coeff_{\msF} (w^2 t) = 1$ by rescaling $w$.
    Then we have
    \[
    \msF (0,0,z,t,w) = w^2 t.
    \]
    We set
    \[
    \begin{split}
        \Gamma &\coloneq (x = y = t = 0), \\
        \Delta &\coloneq (x = y = w = 0).
    \end{split}
    \]
    We have $H|_Y = \Gamma + 2 \Delta$.
    The curves $\Gamma$ and $\Delta$ are quasismooth.
    Hence $Y = H_x$ is quasismooth outside $\Delta$.
    It is easy to see that $Y$ is quasismooth at the intersection point $P_z$ of $\Gamma$ and $\Delta$ since $z^4 y \in \msF$.
    Hence $Y$ is quasismooth along $\Gamma$.

    The surface $Y$ is quasismooth along $\Gamma$, the curve $\Gamma$ is a smooth rational curve, $K_Y \sim 0$ and $\Sing_{\Gamma} (Y) = \{\frac{1}{3} (1,2), \frac{1}{5} (2,3)\}$.
    It follows that
    \[
    (\Gamma^2) = -2 + \frac{2}{3} + \frac{4}{5} = - \frac{8}{15}.
    \]
    By taking intersection numbers of $H|_Y = \Gamma + 2 \Delta$ and $\Gamma$, and then $\Delta$, we have $(\Gamma \cdot \Delta) = 1/3$ and $(\Delta^2) = -1/12$.
\end{proof}

\begin{Lem} \label{lem:N23sgqsm3}
    Suppose that $X$ does not contain a quasi-line of type $(1,3,4)$.
    Let $Y \in |3A|$ be a general member and set $H \coloneq H_x \in |A|$.
    Then $Y$ and $Z \coloneq H \cap Y$ are both quasismooth.
\end{Lem}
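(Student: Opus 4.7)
The plan is to reduce both claims to a curve-level analysis. The linear system $|3A|$ is spanned by $x^3$, $xy$, $z$, so its base locus is the curve $C = (x = z = 0)_X$, and a general $Y \in |3A|$ is cut out by $\msG \coloneq z - a x^3 - b xy = 0$ for general $a, b \in \mbC$. Since $\msG|_{(x = 0)} = z$, the scheme-theoretic intersection $Z = H_x \cap Y$ equals $C$, viewed as a hypersurface of $\mbP (2_y, 4_t, 5_w)$ with defining polynomial $\Psi \coloneq \msF|_{(x = z = 0)} = y p(y, t) + w^2 q(y, t)$, where $p = \alpha_0 y^6 + \alpha_1 y^4 t + \alpha_2 y^2 t^2 + \alpha_3 t^3$ and $q = \beta_0 t + \beta_1 y^2$. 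The quasismoothness of $X$ at $P_t$, at $P_w$, and at the three $\frac{1}{2} (1, 1, 1)$-points forces $\alpha_0, \alpha_3, \beta_0 \ne 0$ and that $p$ has three simple roots in $\mbP (2_y, 4_t)$ avoiding $P_y$ and $P_t$.

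The singular points of $X$ contained in $Y$ are $P_t$, $P_w$, and the three $\frac{1}{2} (1, 1, 1)$-points; note that $P_z \notin Y$ because $\msG (P_z) = 1$. At each of these, the Jacobian of $(\msF, \msG)$ has rank $2$ by a direct computation: the $\msG$-row has $1$ in the $z$-slot, while the $\msF$-row has the nonzero entry $\alpha_3$ or $\beta_0$ (coming from the monomials $y t^3$ and $w^2 t$) at $P_t$ and $P_w$, and a nonzero $y$- or $t$-derivative of $p$ at each of the three $\frac{1}{2} (1, 1, 1)$-points by the simplicity of its roots. Outside $C \cup \Sing X$, a general $Y$ is smooth by Bertini. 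It remains to verify quasismoothness of $Y$ at a smooth point $Q = (0, y_0, 0, t_0, w_0)$ of $X$ lying on $C$. There the combined Jacobian fails to have rank $2$ only when all of $\partial \msF/\partial y$, $\partial \msF/\partial t$, $\partial \msF/\partial w$ vanish at $Q$, since $J_{\msG}$ has nonzero entries only in the $\{x, z\}$-slots. Because these three partials restricted to $(x = z = 0)$ are precisely $\partial_y \Psi$, $\partial_t \Psi$, $\partial_w \Psi$, the problem reduces to proving that $C = (\Psi = 0) \subset \mbP (2_y, 4_t, 5_w)$ is quasismooth.

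This is the heart of the argument and the step where the no-quasi-line hypothesis enters. For a candidate non-quasismooth point $(y_0, t_0, w_0) \in C$ with $w_0 \ne 0$, the vanishing of $\partial_w \Psi = 2 w_0 q (y_0, t_0)$ forces $q (y_0, t_0) = 0$, and the equation of $C$ then collapses to $y_0 p (y_0, t_0) = 0$. The subcase $y_0 = 0$ yields $Q = P_w$, where $\partial_t \Psi = \beta_0 w_0^2 \ne 0$. Otherwise, with $c \coloneq \beta_1/\beta_0$ so that $t_0 = - c y_0^2$, one obtains the two scalar equalities $\beta_1 - \beta_0 c = 0$ and $\alpha_0 - \alpha_1 c + \alpha_2 c^2 - \alpha_3 c^3 = 0$. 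A short check shows that the quasi-lines of $\mbP (1, 2, 3, 4, 5)$ of type $(1, 3, 4)$ are exactly the subvarieties $L_c = (x = z = t + c y^2 = 0)$ for $c \in \mbC$, and that $X \supset L_c$ if and only if these two relations hold simultaneously; the hypothesis therefore excludes this scenario. The subcase $w_0 = 0$ is handled by the simplicity of the roots of $p$, which gives $\partial_y \Psi \ne 0$ at each of the three $\frac{1}{2}$-points and at $P_t$. Hence $C$ is quasismooth, and so are $Y$ and $Z$. The main technical obstacle is the parameterization of type-$(1, 3, 4)$ quasi-lines in $\mbP (1, 2, 3, 4, 5)$ and the matching of the pair of scalar relations describing $L_c \subset X$ with the algebraic obstruction to quasismoothness of $C$ at a point with $w \ne 0$.
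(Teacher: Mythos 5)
Your proof is correct and follows essentially the same route as the paper: both reduce everything to the quasismoothness of the base curve $Z=(x=z=0)_X\subset\mbP(2_y,4_t,5_w)$, using the three distinct $\frac{1}{2}(1,1,1)$ points to exclude bad points with $w=0$ and the no-quasi-line hypothesis to exclude a common zero of $p$ and $q$ at a point with $w\ne 0$ (the paper does the same after the coordinate change $t\mapsto t-(\beta_1/\beta_0)y^2$, which turns your $L_{\beta_1/\beta_0}$ into $(x=z=t=0)$ and your common-root condition into the vanishing of the coefficient of $y^7$). One harmless inaccuracy: quasismoothness of $X$ does not force $\alpha_0\ne 0$ (if $\alpha_0=0$ then $P_y$ is simply one of the three $\frac{1}{2}$ points, where quasismoothness only requires $\alpha_1\ne 0$), but since your analysis at the $w=0$ points uses only the simplicity of the roots of $p$ and the fact that they all have $y\ne 0$ (from $\alpha_3\ne 0$), nothing in the argument is lost.
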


\begin{proof}
    By the quasismoothness of $X$ at $P_w$, we have $w^2 t \in \msF$ and we can write
    \[
    \msF = w^2 t + w f_9 + f_{14},
    \]
    where $f_i = f_i (x,y,z,t)$ is a homogeneous polynomial of degree $i$.
    By the quasismoothness of $X$ at $P_t$, we have $t^3 y \in \msF$ and we may assume that $\coeff_{\msF} (t^3 y) = 1$.
    We have
    \[
    \msF (0,y,0,t,w) = w^2 t + t^3 y + \theta_2 t^2 y^3 + \theta_1 t y^5 + \theta_0 y^7,
    \]
    where $\theta_0, \theta_1, \theta_2 \in \mbC$.
    By the assumption, $X$ does not contain the curve $(x = z = t = 0)$, which implies $\theta_0 \ne 0$.
    Then, since $X$ is quasismooth, we can write
    \[
    \msF (0,y,0,t,w) = w^2 t + y (t + \eta_1 y^2)(t + \eta_2 y^2)(t + \eta_3 y^2),
    \]
    where $\eta_1, \eta_2, \eta_3 \in \mbC \setminus \{0\}$ are mutually distinct.
    It is then easy to see that the scheme-theoretic intersection $Z = H \cap Y = H_x \cap H_z$ is quasismooth, where $Y \in |3A|$ is a general member.
    This in particular shows that $Y$ is quasismooth along $(x = z = 0)_X$, and hence $Y$ is quasismooth since the base locus of $|3A|$ is $(x = z = 0)_X$.
\end{proof}

\begin{Lem} \label{lem:N23sgqsm4}
    Suppose that $X$ contains a quasi-line of type $(1,3,4)$.
    Let $Y \in |3A|$ be a general member and set $H \coloneq H_x \in |A|$.
    Then $Y$ is quasismooth and $H|_Y = \Xi + \Theta$, where $\Xi$ and $\Theta$ are quasismooth curves with the following properties.
    \begin{itemize}
        \item $P_t \in \Theta \setminus \Xi$.
        \item $P_w \in \Xi \setminus \Theta$.
        \item $(\Xi^2) = -9/10$, $(\Theta^2) = -3/4$ and $(\Xi \cdot \Theta) = 1$.
    \end{itemize}
\end{Lem}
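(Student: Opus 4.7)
The plan is to follow the pattern of Lemma \ref{lem:N23sgqsm2}: normalize coordinates to pin down the shape of $\msF$, identify $\Xi$ and $\Theta$ inside $(x = z = 0)_X$, verify quasismoothness, then compute the intersection numbers via adjunction on $Y$.

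First, using the non-quasi-linear freedom in the defining equations of quasi-lines of type $(1,3,4)$, I will reduce via a coordinate change to the case where the given quasi-line is the stratum $(x = z = t = 0) \subset \mbP$. Its containment in $X$ forces $\msF(0,y,0,0,w) \equiv 0$, so the monomials $y^7$ and $y^2 w^2$ have zero coefficient in $\msF$. Combined with the quasismoothness of $X$ at $P_w$ (yielding $w^2 t \in \msF$) and at $P_t$ (yielding $t^3 y \in \msF$), after rescaling the restriction reads
\[
\msF(0,y,0,t,w) = t\left( w^2 + y t^2 + \alpha_3 y^3 t + \alpha_2 y^5 \right)
\]
for some $\alpha_2, \alpha_3 \in \mbC$. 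The existence of three distinct $\frac{1}{2}(1,1,1)$-singularities on $X$ forces the three roots of $t(t^2 + \alpha_3 t + \alpha_2)$ at $\{y=1\}$ to be distinct, yielding $\alpha_2 \neq 0$ and $\alpha_3^2 \neq 4\alpha_2$.

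Next, the base scheme of $|3A|$ equals $(x = z = 0)_X$, and a generic $Y \in |3A|$ is cut in $X$ by an equation $\alpha z + \beta xy + \gamma x^3 = 0$ with $\alpha \neq 0$; in particular $Y$ contains the entire base curve, so $H|_Y = (x = z = 0)_X$, which by the factorization above splits as $\Xi + \Theta$ with $\Xi \coloneq (x = z = t = 0)$ and $\Theta \coloneq (x = z = \psi = 0)$, where $\psi \coloneq w^2 + y t^2 + \alpha_3 y^3 t + \alpha_2 y^5$. The incidences $P_t \in \Theta \setminus \Xi$ and $P_w \in \Xi \setminus \Theta$ follow by direct substitution. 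The curve $\Xi \cong \mbP(2, 5) \cong \mbP^1$ is quasismooth, and quasismoothness of $\Theta \subset \mbP(2,4,5)$ reduces via a short Jacobian computation to the non-degeneracy $\alpha_3^2 \neq 4\alpha_2$, already established. Quasismoothness of $Y$ along the base curve then becomes a point-wise check of the Jacobian of $(\msF, \alpha z + \beta xy + \gamma x^3)$: at $P_w$ and $P_t$ the partials $\partial_t \msF$ and $\partial_y \msF$ respectively combine with the $\partial_z$-entry $\alpha$ of the second row to give rank $2$, and at the remaining points of $(x = z = 0)_X$ a generic choice of $\beta$ makes the relevant $2 \times 2$ minor of columns for $\partial_x$ and $\partial_z$ nonzero.

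Finally, the intersection numbers follow from three inputs: $(H|_Y)^2 = 3(A^3) = 7/20$, together with $(A \cdot \Xi)_X = 1/10$ (from $\Xi \cong \mbP(2,5)$) and $(A \cdot \Theta)_X = 1/4$ (from $\Theta$ being a degree-$10$ hypersurface in $\mbP(2,4,5)$). Adjunction applied to $(Y, \Xi)$ via Lemma \ref{lem:diff}(2), using $K_Y \sim 2A|_Y$, $\Xi \cong \mbP^1$, and $\Sing_\Xi(Y) = \{P_w, P_y\}$ with indices $5$ and $2$ respectively, yields $(\Xi^2) = -9/10$. Combining with $(H|_Y \cdot \Xi) = 1/10$ forces $(\Xi \cdot \Theta) = 1$, and then $(H|_Y)^2 = 7/20$ forces $(\Theta^2) = -3/4$. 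The main obstacle I expect is the quasismoothness of $\Theta$: it rests on the distinctness of the three residual $\frac{1}{2}(1,1,1)$-points of $X$, a non-degeneracy which must be extracted from the quasismoothness of $X$ rather than from the mere containment of the quasi-line; once that is in hand, the rest is bookkeeping.
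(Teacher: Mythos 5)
Your proposal follows the paper's proof essentially step for step: normalize so the quasi-line is $(x=z=t=0)$, factor $\msF(0,y,0,t,w)=t\bigl(w^2+t^2y+\zeta ty^3+\eta y^5\bigr)$ using $w^2t,t^3y\in\msF$ and $y^7,y^2w^2\notin\msF$, extract the non-degeneracy $\eta\neq 0$, $\zeta^2\neq 4\eta$ from the three distinct $\frac12(1,1,1)$ points, identify $\Xi$ and $\Theta$, and get the intersection numbers from $K_Y\sim 2A|_Y$, $\Xi\cong\mbP^1$ with $\Sing_\Xi(Y)=\{\frac12,\frac15\}$, and the decomposition $H|_Y=\Xi+\Theta$. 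All of that, including the three numerical outputs, checks out and coincides with the paper.

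The one step that does not work as written is the quasismoothness of $Y$ at the base points other than $P_t,P_w$. You claim that at all remaining points of $(x=z=0)_X$ a generic $\beta$ makes the $2\times 2$ minor on the columns $\partial_x,\partial_z$ nonzero, i.e.\ $\alpha\msF_x-\beta y\msF_z\neq 0$. This requires $(\msF_x,\msF_z)\neq(0,0)$ at the point, but on the whole locus $(x=z=w=0)$ both partials vanish identically for degree reasons: $\msF_x|_{x=z=0}$ and $\msF_z|_{x=z=0}$ have degrees $13$ and $11$ in $(y,t)$ of weights $(2,4)$, and no monomial of odd degree exists there. So your cited minor is identically zero at the three $\frac12(1,1,1)$ points (including $P_y\in\Xi$). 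The argument is rescued exactly as at $P_t$ and $P_w$: at every point of $(x=z=0)_X$ other than the single intersection point $P_0=\Xi\cap\Theta$, at least one of $\msF_y,\msF_t,\msF_w$ is nonzero (this is quasismoothness of the curve $t\psi=0$ away from $\Xi\cap\Theta$), and that partial pairs with the $\partial_z$-entry $\alpha$ to give rank $2$ without any genericity of $\beta$. The $(x,z)$-minor with generic coefficients is needed only at $P_0$, where $\msF_y=\msF_t=\msF_w=0$ and quasismoothness of $X$ forces $(\msF_x,\msF_z)\neq(0,0)$ with $y(P_0)\neq 0$; this is how the paper organizes the check. With that reallocation of which minor is used where, your proof is complete.
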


\begin{proof}
    By a suitable choice of homogeneous coordinates, we may assume that a quasi-line of type $(1,3,4)$ contained in $X$ is $\Xi \coloneq (x = z = t = 0)$.
    This implies that $w^2 y^2, y^7 \notin \msF$.
    By the quasismoothness of $X$ at $P_w$ and $P_t$, we have $w^2 t, t^3 y \in \msF$ and we may assume that $\coeff_{\msF} (w^2t) = \coeff_{\msF} (t^3 y) = 1$.
    We can write
    \[
    \msF (0,y,0,t,w) = w^2 t + t^3 y + \zeta t^2 y^3 + \eta t y^5
    = t (w^2 + t^2 y + \zeta t y^3 + \eta y^5),
    \]
    where $\zeta, \eta \in \mbC$.
    We set 
    \[
    \Theta \coloneq (x = z = w^2 + t^2 y + \zeta t y^3 + \eta y^5 = 0).
    \]
    By the quasismoothness of $X$ at $\frac{1}{2} (1,1,1)$ points, the equation $t^2 + \zeta t y^2 + \eta y^4 = 0$ has two distinct solutions, that is, that is, $\zeta^2 - 4 \eta \ne 0$.
    This implies that $\Theta$ is quasismooth.
    We see that $\Xi$ is quasismooth, $P_t \in \Theta \setminus \Xi$ and $P_w \in \Xi \setminus \Theta$.

    The base locus of $|3A|$ is the set $(x = z = 0)_X$.
    Hence $Y$ is quasismooth outside the unique intersection point $P_0$ of $\Xi$ and $\Theta$.
    We have $(\prt \msF/\prt y)(P_0) = (\prt \msF/\prt t)(P_0) = (\prt \msF/\prt w)(P_0) = 0$.
    By the quasismoothness of $X$ at $P_0$, either $(\prt \msF/\prt x)(P_0) \ne 0$ or $(\prt \msF/\prt z)(P_0) \ne 0$.
    Let $\msG \coloneq z + \alpha y x + \beta x^3 = 0$ be the equation which defines $Y$ in $X$, where $\alpha, \beta \in \mbC$ are general.
    Then $(\prt \msG/\prt x) (P_0) \ne 0$ and $(\prt \msG/\prt z) = 1$.
    This shows that $Y$ is quasismooth at $P_0$ for a general $\alpha, \beta$.
    Thus $Y$ is quasismooth.

    The curve $\Xi$ is a smooth rational curve, $\Sing_{\Xi} (Y) = \{\frac{1}{2} (1,1), \frac{1}{5} (1,2)\}$ and $K_Y \sim 2 A|_Y$.
    It follows that
    \[
    (\Xi^2) = -2 - \frac{1}{5} + \frac{1}{2} + \frac{4}{5} = - \frac{9}{10}.
    \]
    By taking intersection number of $H|_Y = \Xi + \Theta$ and $\Xi$, and then $\Theta$, we obtain
    \[
    (\Xi \cdot \Theta) = 1, \ (\Theta^2) = -\frac{3}{4}.
    \]
    This completes the proof.
\end{proof}

\subsubsection{Case (i): $P$ is the $\frac{1}{3} (1,1,2)$ point and $z^3 w, z^2 t^2 \notin \msF$}
\label{Sec:No23singiv}

Let $P = P_z$ be the $\frac{1}{3} (1,1,2)$ point.
Suppose that $z^3 w, z^2 t^2 \notin \msF$.
We set $Y = H_x \in |A|$ and let $H \in |2A|$ be a general member.
Let $H|_Y = \Gamma + 2 \Delta$ be as in Lemma \ref{lem:N23sgqsm2}.
Then $P \in \Gamma \subset Y \subset X$ is a flag of type $\mathrm{IIa}$ with
\[
(l_Y,l_H,m,n,\lambda,\mu,\nu,r_P) = (1,2,1,2,-8/15,1/12,1/3,3).
\]
By the formulae given in Proposition \ref{prop:flagIIadelta}, we have
\[
\begin{split}
    S (V_{\bullet,\bullet}^Y;\Gamma) &= \frac{180}{7} \int_0^1 \left(\int_0^{\frac{1}{4} (1-u)} \left(\frac{7}{60} (1-u)^2 - \frac{2}{15} (1-u)v - \frac{8}{15} v^2 \right) dv + \right. \\
    & \hspace{3cm} \left. + \int_{\frac{1}{4} (1-u)}^{\frac{1}{2} (1-u)} \frac{4}{5} \left(\frac{1}{2} (1-u)-v \right)^2 d v \right) d u \\
    &= \frac{19}{112}, \\
    S (W_{\bullet,\bullet,\bullet}^{Y,\Gamma};P) &= \frac{180}{7} \int_0^1 \left( \int_0^{\frac{1}{4} (1-u)} \left(\frac{1-u+8v}{15} \right)^2 d v + \right. \\
    & \hspace{2cm} \left. + \int_{\frac{1}{4} (1-u)}^{\frac{1}{2} (1-u)} \frac{16}{25} \left(\frac{1}{2} (1-u) -v \right)^2 d v \right) d u + F_P (W_{\bullet,\bullet,\bullet}^{Y,\Gamma}) \\
    &= \frac{11}{210} + \frac{1}{28} \\
    &= \frac{37}{420}
\end{split}
\]
where we have $F_P (W_{\bullet,\bullet,\bullet}^{Y,\Gamma}) = 1/28$ since $\ord_P (\Delta|_{\Gamma}) = (\Gamma \cdot \Delta) = 1/3$.
Thus we have
\[
\delta_P (X) \ge \min \left\{ 4, \ \frac{112}{19}, \ \frac{140}{37} \right\} = \frac{140}{37}.
\]

\subsubsection{Case (iii): $P$ is the $\frac{1}{4} (1,1,3)$ point and $X$ contain a quasi-line of type $(1,3,4)$}
\label{Sec:No23singvi}

Let $P = P_t$ be the $\frac{1}{4} (1,1,3)$ point of $X$ and suppose that $X$ contains a quasi-line of type $(1,3,4)$.
Let $Y \in |3A|, H = H_x$ and $H|_Y = \Xi + \Theta$ be as in Lemma \ref{lem:N23sgqsm3}.
We set $\Gamma \coloneq \Theta$ and $\Delta \coloneq \Xi$.
Then $P \in \Gamma \subset Y \subset X$ is a flag of type $\mathrm{IIa}$ with
\[
(l_Y,l_H,m,n,\lambda,\mu,\nu,r_P) = (3,1,1,1,-3/4,9/10,1,4).
\]

By the formulae given in Proposition \ref{prop:flagIIadelta}, we have
\[
\begin{split}
    S (V_{\bullet,\bullet}^Y;\Gamma) &= \frac{180}{7} \int_0^{\frac{1}{3}} \left( \int_0^{\frac{1-3u}{10}} \left( \frac{7}{20} (1-3u)^2 - \frac{1}{2} (1-3u)v - \frac{3}{4} v^2 \right) d v \right. \\
    & \hspace{3cm} \left. + \int_{\frac{1-3u}{10}}^{1-3u} \frac{13}{36} (1-3u-v)^2 d v \right) d u \\
    &= \frac{9}{35}, \\
    S (W_{\bullet,\bullet,\bullet}^{Y,\Gamma};P) &= \frac{180}{7} \int_0^{\frac{1}{3}} \left( \int_0^{\frac{1-3u}{10}} \left( \frac{1-3u+3v}{4} \right)^2 d v + \right. \\
    & \hspace{3cm} \left. + \int_{\frac{1-3u}{10}}^{1-3u} \frac{13^2}{36^2} (1-3u-v)^2 d v \right) d u \\
    &= \frac{3}{35},
\end{split}
\]
where we have $F_P (W_{\bullet,\bullet,\bullet}^{Y,\Gamma}) = 0$ since $\ord_P (\Delta|_{\Gamma}) = 0$.
Thus we have
\[
\delta_P (X) \ge \min \left\{ 12, \ \frac{35}{9}, \ \frac{35}{12} \right\} = \frac{35}{12}.
\]

\subsubsection{Case (v): $P$ is the $\frac{1}{5} (1,2,3)$ point which is nondegenerate and $X$ contains a quasi-line of type $(1,3,4)$}
\label{Sec:No23singviii}

Let $P = P_w$ be the $\frac{1}{5} (1,2,3)$ point.
Suppose that $P$ is nondegenerate and $X$ contains a quasi-line of type $(1,3,4)$.
Let $Y \in |3A|$, $H \coloneq H_x \in |A|$ and $H|_Y = \Xi + \Theta$ be as in Lemma \ref{lem:N23sgqsm4}.
We set $\Gamma \coloneq \Xi$ and $\Delta \coloneq \Theta$.
Then $P \in \Gamma \subset Y \subset X$ is a flag of type $\mathrm{IIa}$ with
\[
(l_Y,l_H,m,n,\lambda,\mu,\nu,r_P) = (3,1,1,1,-9/10,3/4,1,5).
\]

By the formulae given in Proposition \ref{prop:flagIIadelta}, we have
\[
\begin{split}
    S (V_{\bullet,\bullet}^Y;\Gamma) &= \frac{180}{7} \int_0^{\frac{1}{3}} \left( \int_0^{\frac{1-3u}{4}} \left(\frac{7}{20} (1-3u)^2 - \frac{1}{5} (1-3u)v - \frac{9}{10} v^2 \right) d v \right. \\
    & \hspace{3cm} \left. + \int_{\frac{1-3u}{4}}^{1-3u} \frac{13}{30} (1-3u-v)^2 d v \right) d u \\
    &= \frac{33}{112}, \\
    S (W_{\bullet,\bullet,\bullet}^{Y,\Gamma}; P) &= \frac{180}{7} \int_0^{\frac{1}{3}} \left( \int_0^{\frac{1-3u}{4}} \left(\frac{1-3u+9v}{10} \right)^2 d v  \right. \\
    & \hspace{3cm} \left. + \int_{\frac{1-3u}{4}}^{1-3u} \frac{13^2}{30^2} (1-3u-v)^2 d v \right) d u \\
    &= \frac{93}{1120},
\end{split}
\]
where we have $F_P (W_{\bullet,\bullet,\bullet}^{Y,\Gamma}) = 0$ since  $\ord_P (\Delta|_{\Gamma}) = 0$.
We have
\[
\delta_P (X) \ge \min \left\{ 12, \ \frac{112}{33}, \ \frac{224}{93} \right\} = \frac{224}{93}.
\]

Therefore Theorem \ref{thm:deltasingpt} is proved for family \textnumero 23.

\subsection{Family \textnumero 25: $X_{15} \subset \mbP (1, 1, 3, 4, 7)$}

Let $X$ be a member of family \textnumero 25.
Then $\Sing (X) = \{\frac{1}{4} (1,1,3)^{\QI}, \frac{1}{7} (1,3,4)^{\QI}\}$.
We have $\delta_P (X) > 1$ for any maximal center $P \in \Sing (X)$ by the following results.

\begin{itemize}
    \item[(i)] Let $P$ be the $\frac{1}{4} (1,1,3)$ point.
    Suppose that $P$ is nondegenerate.
    Then we have $\delta_P \ge 28/11$ by applying Proposition~\ref{prop:flagBLdelta} to the flag of type $\mathrm{wBL}_{(4,7)}$ centered at $P$.
    \item[(ii)] Let $P$ be the $\frac{1}{7} (1,3,4)$ point.
    Suppose that $t^2 w \in \msF$.
    Let $Y \in |3A|$ and $H \in |A|$ be general members.
    Then, by Lemma~\ref{lem:N25sgqsm1}, $P \in Z \coloneq H \cap Y \subset Y \subset X$ is a flag of type $\mathrm{I}$ with $(l_Y,l_H,e,r_P) = (3,1,1,7)$ and we obtain $\delta_P (X) \ge 16/15$ by Proposition~\ref{prop:flagIdelta}.
    \item[(iii)] Let $P$ be the $\frac{1}{7} (1, 3, 4)$ point.
    Suppose that $t^2 w \notin \msF$.
    It will be proved in \S \ref{Sec:N25sgiii} that $\delta_P (X) \ge 80/61$ by considering a flag of type $\mathrm{IIa}$ with
    \[
    (l_Y,l_H,m,n,\lambda,\mu,\nu,r_P) = (1,1,1,1,-11/28,2/7,3/7,7).
    \]

\end{itemize}

\begin{table}[h]
\caption{\textnumero \! 25: $X_{15} \subset \mbP (1,1,3,4,7)$}
\label{table:No25}
\centering
\begin{tabular}{cllll}
\toprule
Point & Case & $\delta_P$ & flag & Ref. \\
\midrule
$\frac{1}{4} (1,1,3)^{\QI}$ & (i) ndgn & $\delta_P \ge 28/11$ & $\mathrm{wBL}_{(4,7)}$ & \ref{prop:flagBLdelta} \\
\cmidrule{1-5}
$\frac{1}{7} (1,3,4)^{\QI}$ & (ii) $t^2 w \in \msF$ & $\delta_P \ge 16/15$ & $\mathrm{I}$ &  \\
\cmidrule{2-5}
& (iii) $t^2 w \notin \msF$ & $\delta_P \ge 80/61$ & $\mathrm{IIa}$ & \S \ref{Sec:N25sgiii} \\
\bottomrule
\end{tabular}
\end{table}

\subsubsection{Some results on quasismoothness}

\begin{Lem} \label{lem:N25sgqsm1}
    Suppose that $t^2w \in \msF$.
    Let $Y \in |3A|$ and $H \in |A|$ be general members.
    Then $Y$ and $Z \coloneq H \cap Y$ are both quasismooth. 
\end{Lem}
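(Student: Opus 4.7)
The plan is to follow the same pattern as the earlier quasismoothness lemmas in the paper (compare Lemmas~\ref{lem:N5sgqsm1}, \ref{lem:No13qsm2}, \ref{lem:No20qsm1}). The key observation is that the base locus of $|3A|$ on $X$ is very small once $t^2 w \in \msF$, and Bertini reduces the problem to checking quasismoothness at the two base points.

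First I would identify $\Bs |3A| \cap X = (x = y = z = 0)_X$, and note that on this locus the equation of $X$ reduces to a nonzero multiple of $t^2 w$ (since the only monomial in $t,w$ of degree $15$ is $t^2 w$). Hence $\Bs |3A| = \{P_t, P_w\}$ set-theoretically. By (orbifold) Bertini applied to the basepoint-free restriction of $|3A|$ to $X \setminus \{P_t, P_w\}$, a general $Y \in |3A|$ is quasismooth away from these two points.

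Next I would verify quasismoothness of $Y$ at $P_t$ and $P_w$ directly. A general $Y$ is cut out by an equation of the form $\msG = z + h_3(x,y) = 0$ for a general cubic $h_3$. At $P_t$, the existence of $t^2 w \in \msF$ gives $\partial \msF/\partial w \neq 0$, while $\partial \msG/\partial z = 1$; the $w$- and $z$-columns of the Jacobian of $(\msF, \msG)$ at $P_t$ are independent, so $Y$ is quasismooth there. At $P_w$, quasismoothness of $X$ forces one of $w^2 x$ or $w^2 y$ to lie in $\msF$, giving $\partial \msF/\partial x$ or $\partial \msF/\partial y$ nonzero at $P_w$, which together with $\partial \msG/\partial z = 1$ again yields rank~$2$ for the Jacobian of $(\msF, \msG)$.

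For $Z = H \cap Y$, the equation of $H$ has the form $\alpha x + \beta y = 0$ for general $\alpha, \beta \in \mbC$. The base locus of $|A|_Y$ is again contained in $\{P_t, P_w\}$, and Bertini handles everything outside these two points. At each of $P_t$ and $P_w$ I would compute the Jacobian of $(\msF, \msG, \msH)$: the $z$-column (from $\msG$) and the $x$-column (from $\msH$, via $\alpha \neq 0$) are automatically nonzero and independent, and a third independent column comes from the partial of $\msF$ supplied by the monomials $t^2 w$ (at $P_t$) and $w^2 x$ or $w^2 y$ (at $P_w$). This gives rank $3$, so $Z$ is quasismooth at both base points. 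The argument is routine and I do not anticipate any serious obstacle; the only point requiring care is the Bertini step in the weighted setting, which is standard once the base locus has been pinned down.
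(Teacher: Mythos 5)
Your proposal is correct and follows essentially the same route as the paper: identify $\Bs|3A|=(x=y=z=0)_X=\{P_t,P_w\}$ using $t^2w\in\msF$, invoke Bertini off the base locus, and check the Jacobian at $P_t$ (via $t^2w$) and $P_w$ (via $w^2x$ or $w^2y$, forced by quasismoothness of $X$), then repeat for $Z$. The only cosmetic difference is that the paper eliminates $z$ and treats $Z$ as a hyperplane section of $Y\subset\mbP(1,1,4,7)$, whereas you compute the rank of the Jacobian of the full complete intersection $(\msF,\msG,\msH)$ directly; both are fine.
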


\begin{proof}
    The base locus of $|3A|$ is the set $\{P_t,P_w\}$.
    We have $t^2 w \in \msF$ and either $w^2 x \in \msF$ or $w^2 y \in \msF$.
    The equation defining $Y \in |3A|$ can be written as $z + c (x,y) = 0$, where $c (x,y)$ is a general cubic form in $x$ and $y$.
    It follows that $Y$ is quasismooth at $P_t$ and $P_w$, and hence it is quasismooth.

    Eliminating the variable $z$, we can identify $Y$ with a quasismooth hypersurface of degree $15$ in $\mbP (1,1,4,7)$.
    Let $\msG = \msG (x,y,t,w) = 0$ be the equation which defines $Y$ in $\mbP (1,1,4,7)$.
    The base locus of $|A|_Y$ is $\{P_t, P_w\}$.
    By the quasismoothness of $Y$ at $P_t$, we have $t^2 w \in \msF$ and either $w^2 x \in \msF$ or $w^2 y \in \msF$.
    The equation defining $Z$ in $Y$ can be written as $\alpha x + \beta y = 0$, where $\alpha, \beta \in \mbC$ are general.
    It follows that $Z$ is quasismooth at $P_t$ and $P_w$, and $Z$ is quasismooth.
\end{proof}

\begin{Lem} \label{lem:N25sgqsm2}
    Suppose that $t^2w \notin \msF$.
    Let $Y, H \in |A|$ be general members.
    Then $Y$ is quasismooth and $H|_Y = \Gamma + \Delta$, where $\Gamma$ and $\Delta$ are irreducible and reduced curves with the following properties.
    \begin{itemize}
        \item $\Gamma$ is quasismooth.
        \item $\Gamma \cap \Delta = \{P_w\}$ and $P_t \in \Gamma \setminus \Delta$.
        \item $(\Gamma^2) = -11/28$, $(\Delta^2) = -2/7$ and $(\Gamma \cdot \Delta) = 3/7$.
    \end{itemize}
\end{Lem}

\begin{proof}
    Let $Y, H \in |A|$ be general members.
    By the quasismoothness of $X$, we have $z^5, t^3 z \in \msF$.
    We may assume that $\coeff_{\msF} (z^5) = \coeff_{\msF} (t^3 z) = 1$.
    Then we have
    \[
    \msF (0,0,z,t,w) = t^3 z + z^5 = z (t^3 + z^4).
    \]
    We define
    \[
    \begin{split}
        \Gamma &\coloneq (x = y = z = 0), \\
        \Delta &\coloneq (x =  y= t^3 + z^4 = 0).
    \end{split}
    \]
    The curve $\Gamma$ is quasismooth, $\Delta$ is quasismooth outside $P_w$ and $\Gamma \cap \Delta = \{P_w\}$.
    It follows that $Y \in |A|$ is quasismooth outside $P_w$.
    By the quasismoothness of $X$ at $P_w$, we have either $w^2 x \in \msF$ or $w^2 y \in \msF$.
    The equation defining $Y$ in $X$ is $\alpha x + \beta y = 0$, where $\alpha, \beta \in \mbC$ are general.
    It follows that $Y$ is quasismooth at $P_w$, and hence it is quasismooth.

    We have $H|_Y = \Gamma + \Delta$.
    The curve $\Gamma$ is a smooth rational curve, $K_Y \sim 0$ and $\Sing_{\Gamma} (Y) = \{\frac{1}{4} (1,3),\frac{1}{7} (3,4)\}$.
    It follows that
    \[
    (\Gamma^2) = -2 + \frac{3}{4} + \frac{6}{7} = - \frac{11}{28}.
    \]
    By taking intersection numbers of $H|_Y = \Gamma + \Delta$ and $\Gamma$, and then $\Delta$, we obtain
    \[
    (\Gamma \cdot \Delta) = \frac{3}{7}, \ 
    (\Delta^2) = - \frac{2}{7},
    \]
    and the proof is completed.
\end{proof}
\color{black}

%
%

\subsubsection{Case (iii): $P$ is the $\frac{1}{7} (1, 3, 4)$ point and $t^2 w \notin \msF$}
\label{Sec:N25sgiii}

Let $P = P_w$ be the $\frac{1}{7} (1,3,4)$ point.
Suppose that $P$ is exceptional, that is, $t^2 w \notin \msF$.
Let $Y, H \in |A|$ be general members and let $H|_Y = \Gamma + \Delta$ be as in Lemma \ref{lem:N25sgqsm2}.
Then, $P \in \Gamma \subset Y \subset X$ is a flag of type $\mathrm{IIa}$ with
\[
(l_Y,l_H,m,n,\lambda,\mu,\nu,r_P) = (1,1,1,1,-11/28,2/7,3/7,7).
\]

By the formulae given in Proposition \ref{prop:flagIIadelta}, we have
\[
\begin{split}
    S (V_{\bullet,\bullet}^Y;\Gamma) &= \frac{84}{5} \int_0^1 \left( \int_0^{\frac{1-u}{3}} \left( \frac{5}{28} (1-u)^2 - \frac{1}{14} (1-u)v - \frac{11}{28} v^2 \right) d v + \right. \\
    & \hspace{3cm} \left. + \int_{\frac{1-u}{3}}^{1-u} \frac{1}{4} (1-u-v)^2 d v \right) d u \\
    &= \frac{19}{60}, \\
    S (W_{\bullet,\bullet,\bullet}^{Y,\Gamma}; P) &= \frac{84}{5} \int_0^1 \left( \int_0^{\frac{1-u}{3}} \left( \frac{1-u+11v}{28} \right)^2 d v \right. \\
    & \hspace{2cm} \left. + \int_{\frac{1-u}{3}}^{1-u} \frac{1}{16} (1-u-v)^2 d v \right) d u + F_P (W_{\bullet,\bullet,\bullet}^{Y,\Gamma}) \\
    &= \frac{71}{1680} + \frac{1}{15} \\
    &= \frac{61}{560}
\end{split}
\]
where we have $F_P (W_{\bullet,\bullet,\bullet}^{Y,\Gamma}) = 1/15$ since $\ord_P (\Delta|_{\Gamma}) = 3/7$.
Thus, we have
\[
\delta_P (X) \ge \min \left\{ 4, \ \frac{60}{19}, \ \frac{80}{61} \right\} = \frac{80}{61}.
\]

Therefore Theorem \ref{thm:deltasingpt} is proved for family \textnumero 25.

\subsection{Family \textnumero 33: $X_{17} \subset \mbP (1, 2, 3, 5, 7)$}

Let $X$ be a member of family \textnumero 33.
Then $\Sing (X) = \{\frac{1}{2} (1,1,1), \frac{1}{3} (1,1,2), \frac{1}{5} (1,2,3)^{\QI}, \frac{1}{7} (1,2,5)^{\QI}\}$.
We have $\delta_P (X) > 1$ for any maximal center $P \in \Sing (X)$ by the following results.

\begin{itemize}
    \item[(i)] Let $P$ be the $\frac{1}{5} (1,2,3)$ point.
    Suppose that $P$ is nondegenerate.
    Then $\alpha_P (X) \ge 14/17$ by \cite[\S 5.6.d]{KOW}, and hence $\delta_P (X) \ge 56/51$.
    \item[(ii)] Let $P$ be the $\frac{1}{7} (1,2,5)$ point.
    Suppose that $y^5 w \in \msF$.
    Let $Y \in |5A|$ be a general member and set $H \coloneq H_x$.
    Then, by Lemma~\ref{Lem:N33sgqsm2}, $P \in Z \coloneq H \cap Y \subset Y \subset X$ is a flag of type $\mathrm{I}$ with $(l_Y, l_H, e, r_P) = (5,1,1,7)$ and we have $\delta_P (X) \ge 24/17$.
    \item[(iii)]
    Let $P$ be the $\frac{1}{7} (1, 2, 5)$ point.
    Suppose that $y^5 w \notin \msF$.
    It will be proved in \S \ref{Sec:N33sgvii} that $\delta_P (X) \ge 408/275$ by considering a flag of type $\mathrm{IIa}$ with
    \[
    (l_Y,l_H,m,n,\lambda,\mu,\nu,r_P) = (5,1,1,1,-13/14,2/3,1,7).
    \]
\end{itemize}

\begin{table}[h]
\caption{\textnumero \! 33: $X_{17} \subset \mbP (1, 2, 3, 5, 7)$}
\label{table:No33}
\centering
\begin{tabular}{cllll}
\toprule
Point & Case & $\delta_P$ & flag & Ref. \\
\midrule
$\frac{1}{5} (1,2,3)^{\QI}$ & (i) ndgn & $\delta_P \ge 56/51$ & & \cite[\S 5.6.d]{KOW} \\
\cmidrule{1-5}
$\frac{1}{7} (1, 2, 5)^{\QI}$ & (ii) $y^5 w \in \msF$ & $\delta_P \ge 24/17$ & $\mathrm{I}$ & \\
\cmidrule{2-5}
 & (iii) $y^5 w \notin \msF$ & $\delta_P \ge 408/275$ & $\mathrm{IIa}$ & \S \ref{Sec:N33sgvii} \\
\bottomrule
\end{tabular}
\end{table}

\subsubsection{Some results on quasismoothness}

\begin{Lem} \label{Lem:N33sgqsm2}
    Let $Y \in |5 A|$ be a general member and set $H \coloneq H_x \in |A|$.
    Then $Y$ is quasismooth and the following hold.
    \begin{enumerate}
        \item If $y^5 w \in \msF$, then $Z \coloneq H \cap Y$ is quasismooth.
        \item If $y^5 w \notin \msF$, then $H|_Y = \Gamma + \Delta$, where $\Gamma$ is a quasismooth curve and $\Delta$ is an irreducible and reduced curve such that $P_w \in \Gamma \setminus \Delta$ and
        \[
        (\Gamma^2) = - \frac{13}{14}, \ 
        (\Delta^2) = - \frac{2}{3}, \ 
        (\Gamma \cdot \Delta) = 1.
        \]
    \end{enumerate}
\end{Lem}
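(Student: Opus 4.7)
My plan is to first exploit the quasismoothness of $X$ at toric strata to write $\msF$ in a canonical form, then analyze the equations of $Y$ and of $Z = H \cap Y$ via elimination, and finally extract the factorization and intersection data in case (2). Quasismoothness of $X$ at $P_w$ forces $w^2 z \in \msF$, so after rescaling I may assume
\[
\msF = w^2 z + w\, G(y,z,t) + H(y,z,t) + x\, R(x,y,z,t,w)
\]
with $G, H$ of degrees $10, 17$. Since the degree-$5$ monomials on $\mbP(1,2,3,5,7)$ are $x^5, x^3 y, xy^2, x^2 z, yz, t$, the system $|5A|$ has base locus contained in $(x = t = 0)_X$, which is a finite set of points, and a general $Y \in |5A|$ is cut by $t + g(x,y,z) = 0$ with $g|_{x=0} = \alpha\, yz$ for a general nonzero $\alpha \in \mbC$. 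Outside $\Bs|5A|$ quasismoothness of $Y$ is automatic; at $P_w$ the partial $\partial/\partial z$ still contains $w^2$; at the other base points the relation $\partial(t+g)/\partial t = 1$ combined with the quasismoothness of $X$ suffices, giving the quasismoothness of $Y$.

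For case (1) I would substitute $x = 0$ and $t = -\alpha yz$ into $\msF(0,y,z,t,w)$ to identify $Z$ with the hypersurface of degree $17$ in $\mbP(2,3,7)_{y,z,w}$ defined by
\[
w^2 z + \mu\, y^2 z^2 w + \lambda_4\, y^5 w + \mu'\, y^4 z^3 + \nu'\, y z^5 + \xi'\, y^7 z,
\]
where $\lambda_4 = \coeff_\msF(y^5 w) \neq 0$ by hypothesis. A direct Jacobian check handles the three coordinate points: $w^2 z$ gives a nonzero $\partial/\partial z$ at $P_w$; the term $\lambda_4\, y^5 w$ gives a nonzero $\partial/\partial w$ at $P_y$; and quasismoothness of $X$ at $P_z$ forces one of $yz^5, z^4 t$ into $\msF$, producing a coefficient $\nu' = c_6 - \alpha c_3$ that is nonzero for generic $\alpha$, yielding a nonzero $\partial/\partial y$ at $P_z$. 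At every other point of $Z$ quasismoothness is inherited from that of $X$ away from $\Bs|5A|$.

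For case (2), the key structural observation is that when $\lambda_4 = 0$ every monomial obtained from the substitution $t = -\alpha yz$ in $\msF(0,y,z,t,w)$ is divisible by $z$, so the equation of $Z$ factors as
\[
z \cdot \bigl( w^2 + \mu\, y^2 z w + \mu'\, y^4 z^2 + \nu'\, y z^4 + \xi'\, y^7 \bigr).
\]
I set $\Gamma = (z = 0) \subset Z$ and let $\Delta$ be the residual component: $\Gamma \cong \mbP(2,7) \cong \mbP^1$ is quasismooth, and $P_w \in \Gamma \setminus \Delta$ because the coefficient of $w^2$ is $1$. The singular locus of $Y$ along $\Gamma$ is $\{P_y, P_w\}$ with $Y$-singularity types $\tfrac{1}{2}(1,1)$ and $\tfrac{1}{7}(1,2)$, hence indices $2$ and $7$; since $\Gamma \cong \mbP(2,7)$ gives $(A \cdot \Gamma)_X = 1/14$ and $K_Y \sim 4 A|_Y$, Lemma \ref{lem:diff}(2) yields
\[
(\Gamma^2) = -2 - \tfrac{2}{7} + \tfrac{1}{2} + \tfrac{6}{7} = -\tfrac{13}{14},
\]
and the identities $(H|_Y \cdot \Gamma) = (A \cdot \Gamma) = 1/14$ and $(H|_Y^2) = 5(A^3) = 17/42$ then force $(\Gamma \cdot \Delta) = 1$ and $(\Delta^2) = -2/3$.

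The main technical obstacle is the irreducibility and reducedness of $\Delta$ in case (2). Viewing its defining equation as a quadratic in $w$ over $\mbC[y,z]$, with $w$-discriminant $\mu^2 y^4 z^2 - 4(\mu' y^4 z^2 + \nu' y z^4 + \xi' y^7)$, I would argue that this discriminant is not a square in $\mbC[y,z]$ for generic $X$, so the quadratic does not split over $\mbP(2,3,7)$. Because $\mu, \mu', \nu', \xi'$ depend polynomially on the coefficients of $\msF$ and on the parameter $\alpha$, this reduces to exhibiting one concrete choice for which the discriminant is nonsquare, e.g.\ by examining its leading behavior in $\alpha$.
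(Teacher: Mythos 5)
Your overall route is the same as the paper's: normalize $\msF$ using quasismoothness at the coordinate points, cut $Y$ by $t+g=0$ and eliminate $t$, observe that when $y^5w\notin\msF$ the restriction of $\msF$ to $(x=0)$ becomes divisible by $z$, and then get $(\Gamma^2)=-13/14$ from the different and the remaining intersection numbers from $H|_Y=\Gamma+\Delta$. Those final computations match the paper exactly. However, there is a genuine gap at the start.

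You assert that $\Bs|5A|$ is a finite set of points, and your quasismoothness argument for $Y$ only inspects finitely many base points. This is false precisely in case (2): the base locus is $(x=t=yz=0)_X=(x=y=t=0)_X\cup(x=z=t=0)_X$, and $\msF(0,y,0,0,w)$ is a multiple of $y^5w$, so when $y^5w\notin\msF$ the entire quasi-line $(x=z=t=0)\cong\mbP(2,7)$ lies in $X$ and in $\Bs|5A|$ --- and this curve is exactly your $\Gamma$. Quasismoothness of a general $Y$ along this curve is not automatic and is where the paper does its real work: it isolates the monomials $xw^2y$, $xy^8$, $zw^2$, $zy^7$, $ty^6$ relevant along $\Gamma$ and runs a case analysis on their coefficients, computing the rank of the Jacobian at every point of $\Gamma$. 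Without this, both the assertion ``$Y$ is quasismooth'' and your claim $\Sing_\Gamma(Y)=\{\tfrac12(1,1),\tfrac17(1,2)\}$ (which feeds the $(\Gamma^2)$ computation via the different) are unjustified.

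Two smaller points. First, for the irreducibility of $\Delta$ you appeal to ``generic $X$'', but the lemma must hold for every quasismooth $X$ with $y^5w\notin\msF$; the only genericity available is in the parameter $\alpha$ of $Y$. The paper gets this by noting that quasismoothness of $X$ at $P_t$ forces $t^2w\in\msF$ or $t^3y\in\msF$, so after killing the $wz^2y^2$ term the coefficient of $z^3y^4$ is a nonconstant polynomial in $\alpha$, hence general, which makes $w^2+z^4y+\zeta z^2y^4+\eta y^7$ irreducible. Your discriminant plan can be repaired along the same lines, but the quantifier has to be on $\alpha$, not on $X$. Second, in case (1) checking quasismoothness of $Z$ only at the three coordinate points and invoking ``inheritance from $X$'' is weaker than needed, since $Z$ sits inside the fixed (possibly non-quasismooth) surface $H_x$; the paper instead verifies directly that the Jacobian of the explicit degree-$17$ hypersurface in $\mbP(2,3,7)$ has no nontrivial zero for general $\zeta$.
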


\begin{proof}
    We first prove the quasismoothness of $Y$.
    The base locus of $|5A|$ is the set $(x = yz = t = 0)_X = (x = y = t = 0)_X \cup (x = z = t = 0)_X$, where
    \[
    (x = y = t = 0)_X = \{P_z, P_w\}
    \]
    and
    \[
    (x = z = t = 0)_X = 
    \begin{cases}
        \{P_y, P_w\}, & \text{if $y^5 w \in \msF$}, \\
        (x = z = t = 0), & \text{if $y^5 w \notin \msF$}.
    \end{cases}
    \]
    Let 
    \[
    t - (\lambda z y + \mu y^2 x + x^2 h_3 (x, y, z)) = 0 
    \]
    be the equation of $Y$ in $X$, where $\lambda, \mu \in \mbC$ and $h_3 \in \mbC [x, y, z]$ are general.
    It is straightforward to see that $Y$ is quasismooth at $P_y, P_z$ and $P_w$.
    If $y^5 w \in \msF$, then $Y$ is quasismooth.

    We show that $Y$ is quasismooth assuming that $y^5 w \notin \msF$.
    By the quasismoothness of $X$ at $P_w$, we have $w^2 z \in \msF$ and we may assume that $\coeff_{\msF} (w^2 z) = 1$.
    We can write
    \[
    \msF = x (\alpha w^2 y + \beta y^8) + z (w^2 + \gamma y^7) + \delta t y^6 + \msF',
    \]
    where $\alpha, \beta, \gamma, \delta \in \mbC$ and $\msF' = \msF' (x, y, z, t, w)$ is a homogeneous polynomial of degree $17$ contained in the ideal $(x, z, t)^2$.
    If $\delta \ne 0$, then we may assume that $\delta = 1$, $\gamma = \beta = \alpha = 0$ by replacing $t$ and $z$ suitably.
    In this case, we have
    \[
    J_Y|_{(x = z = t = 0)} =
    \begin{pmatrix}
        0 & 0 & w^2 & y^6 & 0 \\
        - \mu y^2 & 0 & -\lambda y & 1 & 0
    \end{pmatrix}
    \]
    and it is easy to see that $\rank J_Y (Q) = 2$ for any point $Q \in (x = z = t = 0)$.
    If $\delta = 0$ and $\gamma \ne 0$, then we may assume that $\gamma = -1$ and $\beta = 0$ by replacing $z$.
    In this case, we have 
    \[
    J_Y|_{(x = z = t = 0)} =
    \begin{pmatrix}
        \alpha w^2 y & 0 & w^2 - y^7 & 0 & 0 \\
        - \mu y^2 & 0 & - \lambda y & 1 & 0
    \end{pmatrix}.
    \]
    We have $\alpha \ne 0$ because otherwise $X$ is not quasismooth at $(0\!:\!1\!:\!0\!:\!0\!:\!1)$.
    It is then easy to see that $\rank J_Y (Q) = 2$ for any point $Q \in (x = z = t = 0)$.
    Suppose that $\delta = \gamma = 0$.
    Then we have $\beta \ne 0$ by the quasismoothness of $X$ at $P_y$.
    We may assume that $\beta = 1$ and $\alpha = 0$ by replacing $z$.
    We have
    \[
    J_Z|_{(x = z = t = 0)} =
    \begin{pmatrix}
        y^8 & 0 & w^2 & 0 & 0 \\
        - \mu y^2 & 0 & - \lambda y & 1 & 0
    \end{pmatrix},
    \]
    and we see that $\rank J_Y (Q) = 2$ for any point $Q \in (x = z = t = 0)$.
    Thus $Y$ is quasismooth.

    We next consider the intersection $Z = H \cap Y$.
    By eliminating the variable $t$, we identify $Y$ with a quasismooth hypersurface of degree $17$ in $\mbP (1,2,3,7)$.
    Let $\msG = \msG (x,y,z,w)$ be the equation which defines $Y$ in $\mbP (1,2,3,7)$.
    Suppose that $y^5 w \in \msF$.
    We may assume that $\coeff_{\msF} (y^5 w) = 1$ by replacing $y$.
    By the quasismoothness of $Y$ at $P_z$ and $P_w$, we have $z^5 y, w^2 z \in \msF$.
    We may assume that $\coeff_{\msF} (z^5 y) = \coeff_{\msF} (w^2 z) = 1$.
    Then we can write
    \[
    \msH \coloneq \msG (0,y,z,w) = w^2 z + w (\theta z^2 y^2 + y^5) + z^5 y + \zeta z^3 y^4 + \eta z y^7, 
    \]
    where $\theta, \zeta, \eta \in \mbC$.
    The intersection $Z = H \cap Y$ is isomorphic to the hypersurface in $\mbP (2_y,3_z,7_w)$ defined by $\msH = 0$.
    Note that $\msG (0,y,z,w) = \msF (0,y,z,\lambda zy,w)$ for a general $\lambda \in \mbC$.
    It follows that at least one of $\theta \in \mbC$ and $\zeta \in \mbC$ is general since either $t^2 w \in \msF$ or $t^3 y \in \msF$. 
    Replacing $w \mapsto w - \theta z y^2/2$, we may assume that $\theta = 0$.
    Then $\zeta \in \mbC$ is general.
    It can be checked by explicit computations that $\prt \msH/\prt y = \prt \msH/\prt z = \prt \msH/\prt w = 0$ does not have a non-trivial solution for a general $\zeta \in \mbC$.
    This shows that $Z$ is quasismooth.

    Suppose that $y^5 w \notin \msF$.
    We may assume that $\coeff_{\msF} (w^2 z) = \coeff_{\msF} (z^5 y) = 1$.
    We can write
    \[
    \msH \coloneq \msG (0,y,z,w) = w^2 z + \theta w z^2 y^2 + z^5 y + \zeta z^3 y^4 + \eta z y^7,
    \]
    where $\theta, \zeta, \eta \in \mbC$.
    By replacing $w \mapsto w - \theta z y^2/2$, we may assume that $\theta = 0$.
    By the same argument as above, $\zeta \in \mbC$ is general.
    The polynomial $w^2 + z^4 y + \zeta z^2 y^4 + \eta y^7$ is irreducible since $\zeta$ is general.
    We set 
    \[
    \begin{split}
        \Gamma &\coloneq (x = z = t = 0), \\
        \Delta &\coloneq (x = t - \lambda z y = w^2 + z^4 y + \zeta z^2 y^4 + \eta y^7 = 0).
    \end{split}
    \]
    We see that $\Gamma$ is quasismooth, $\Delta$ is irreducible and reduced, and $P_w \in \Gamma \setminus \Delta$.
    The curve $\Gamma$ is a smooth rational curve, $K_Y \sim 4 A|_Y$ and $\Sing_{\Gamma} (Y) = \{\frac{1}{2} (1,1),\frac{1}{7} (1,2)\}$.
    It follows that
    \[
    (\Gamma^2) = -2 - \frac{2}{7} + \frac{1}{2} + \frac{6}{7} = - \frac{13}{14}. 
    \]
    By taking intersection numbers of $H|_Y = \Gamma + \Delta$ and $\Gamma$, and then $\Delta$, we obtain
    \[
    (\Gamma \cdot \Delta) = 1, \ (\Delta^2) = - \frac{2}{3}.
    \]
    This completes the proof.
\end{proof}

\subsubsection{Case (iii): $P$ is the point of type $\frac{1}{7} (1, 2, 5)$ and $y^5 w \notin \msF$} \label{Sec:N33sgvii}

Let $P = P_w$ be the $\frac{1}{7} (1,2,5)$ point.
Suppose that $y^5 w \notin \msF$.
Let $Y \in |5A|$ be a general member, $H \coloneq H_x \in |A|$and let $H|_Y = \Gamma + \Delta$ be as in (2) of Lemma~\ref{Lem:N33sgqsm2}.
Then, $P \in \Gamma \subset Y \subset X$ is a flag of type $\mathrm{IIa}$ with
\[
(l_Y,l_H,m,n,\lambda,\mu,\nu,r_P) = (5,1,1,1,-13/14,2/3,1,7).
\]

By the formulae given in Proposition \ref{prop:flagIIadelta}, we have
\[
\begin{split}
    S (V_{\bullet,\bullet}^Y;\Gamma) &= \frac{630}{17} \int_0^{\frac{1}{5}} \left( \int_0^{\frac{1-5u}{3}} \left( \frac{17}{42} (1-5u)^2 - \frac{1}{7} (1-5u)v - \frac{13}{14} v^2 \right) dv \right. \\
    & \hspace{3cm} \left. + \int_{\frac{1-5u}{3}}^{1-5u} \frac{4}{7} (1-5u-v)^2 d v \right) d u \\
    &= \frac{65}{204}, \\
    S (W_{\bullet,\bullet,\bullet}^{Y,\Gamma};P) &= \frac{630}{17} \int_0^{\frac{1}{5}} \left( \int_0^{\frac{1-5u}{3}} \left( \frac{1-5u+13v}{14} \right)^2 d v \right. \\
    & \hspace{3cm} \left. + \int_{\frac{1-5u}{3}}^{1-5u} \frac{16}{49} (1-5u - v)^2 d v \right) d u \\
    &= \frac{275}{2856},
\end{split}
\]
where we have $F_P (W_{\bullet,\bullet,\bullet}^{Y,\Gamma}) = 0$ since $\ord_P (\Delta|_{\Gamma}) = 0$.
Thus, we have
\[
\delta_P (X) \ge \min \left\{ 20, \ \frac{204}{65}, \ \frac{408}{275} \right\} = \frac{408}{275}.
\]

Therefore Theorem \ref{thm:deltasingpt} is proved for family \textnumero 33.

\subsection{Family \textnumero 38: $X_{18} \subset \mbP (1, 2, 3, 5, 8)$}

Let $X$ be a member of family \textnumero 38.
Then $\Sing (X) = \{2 \times \frac{1}{2} (1,1,1), \frac{1}{5} (1,2,3)^{\QI}, \frac{1}{8} (1,3,5)^{\QI}\}$.
We have $\delta_P (X) > 1$ for any maximal center $P \in \Sing (X)$ by the following results.

\begin{itemize}
     \item[(i)] Let $P$ be the $\frac{1}{5} (1,2,3)$ point.
    Suppose that $P$ is nondegenerate.
    Then $\alpha_P (X) \ge 8/9$ by \cite[\S 5.6.d]{KOW}, and hence $\delta_P (X) \ge 32/27$.
    \item[(ii)] Let $P$ be the $\frac{1}{8} (1,3,5)$ point.
    Let $Y \in |5A|$ be general members and set $H \coloneq H_x \in |A|$.
    Then, by Lemma~\ref{Lem:N38sgqsm1}, $P \in Z \coloneq H \cap Y \subset Y \subset X$ is a flag of type $\mathrm{I}$ with $(l_Y, l_H, e, r_P) = (5,1,1,8)$ and we have $\delta_P (X) \ge 4/3$ by Proposition~\ref{prop:flagIdelta}.
\end{itemize}

\begin{table}[h]
\caption{\textnumero \! 38: $X_{18} \subset \mbP (1, 2, 3, 5, 8)$}
\label{table:No38}
\centering
\begin{tabular}{cllll}
\toprule
Point & Case & $\delta_P$ & flag & Ref. \\
\midrule
$\frac{1}{5} (1,2,3)^{\QI}$ & (i) ndgn & $\delta_P \ge 32/27$ & & \cite[\S 5.6.d]{KOW} \\
\cmidrule{1-5}
$\frac{1}{8} (1, 3, 5)^{\QI}$ & (ii) & $\delta_P \ge 4/3$ & $\mathrm{I}$ & \\
\bottomrule
\end{tabular}
\end{table}

\subsubsection{Some results on quasismoothness}

\begin{Lem} \label{Lem:N38sgqsm1}
    Let $Y \in |5 A|$ be a general member and let $H \coloneq H_x \in |A|$.
    Then $Y$ and $Z \coloneq H \cap Y$ are both quasismooth.
\end{Lem}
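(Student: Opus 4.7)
The plan is to verify quasismoothness of both $Y$ and $Z$ by direct Jacobian computation at a few problematic points, with general-position arguments handling the remainder. First, the degree-$5$ monomials in $x, y, z, t$ (of weights $1, 2, 3, 5$) are exactly $x^5, x^3 y, x y^2, x^2 z, y z, t$, so a general $Y \in |5 A|$ has defining equation of the form $\msG \coloneq \alpha t + f_5 (x, y, z) = 0$ with $\alpha \in \mbC^{*}$ and $f_5$ general, and $\Bs |5 A| = (x = y = z = t = 0)_X = \{P_w\}$. The singular points of $X$ outside $\Bs |5 A|$ consist of $P_t$ and the two $\frac{1}{2} (1,1,1)$ points, all lying in the stratum $(x = z = t = 0)$.

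For $Y$, I first note that $P_t \notin Y$ since $\msG (P_t) = \alpha \ne 0$. At each $\frac{1}{2}$ point $P = (0{:}y_0{:}0{:}0{:}w_0)$, a parity argument shows that $\prt \msF/\prt v (P) = 0$ for $v \in \{x, z, t\}$: indeed a degree-$18$ monomial of the form $v \cdot y^a w^b$ would require $2 a + 8 b \in \{17, 15, 13\}$, which has no solution in $\mbZ_{\ge 0}$. Quasismoothness of $X$ at $P$ then forces one of $\prt \msF/\prt y (P), \prt \msF/\prt w (P)$ to be nonzero, while the gradient of $\msG$ at $P$ equals $(\prt f_5/\prt x (P), 0, \prt f_5/\prt z (P), \alpha, 0)$, with $\alpha$ in the $t$-slot and zeros in the $y, w$-slots; this gives rank $2$ for the Jacobian of $Y$ at $P$. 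At $P_w$, the quasismoothness of $X$ yields $w^2 y \in \msF$, whence $\prt \msF/\prt y (P_w) \ne 0$, while $\prt \msG/\prt t (P_w) = \alpha$, again giving rank $2$. Outside the base locus and these singular points, general choice on the quasismooth locus of $X$ ensures quasismoothness of $Y$.

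For $Z = H_x \cap Y$, eliminating $x$ via $x = 0$ and $t$ via $\alpha t + \zeta y z = 0$ (where $\zeta \coloneq \coeff_{f_5} (y z)$) identifies $Z$ with the hypersurface $(\msH = 0) \subset \mbP (2_y, 3_z, 8_w)$, where $\msH (y, z, w) \coloneq \msF (0, y, z, -(\zeta/\alpha) y z, w)$. At each $\frac{1}{2}$ point $P \in Z$ and at $P_w$, adjoining the extra row $\nabla x = (1, 0, 0, 0, 0)$ to the two rows already analyzed for $Y$ yields a Jacobian matrix of rank $3$, so $Z$ is quasismooth at these points. Of the remaining coordinate points of $\mbP (2, 3, 8)$, the point $P_y$, if it lies on $X$, is one of the $\frac{1}{2}$ points already treated, and $P_z \notin X$ because the quasismoothness of $X$ at $P_z$ (which is not a singular point of $X$) forces $z^6 \in \msF$. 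Outside these points, a general-position argument on the normal surface $H_x$ (normality provided by Lemma \ref{lem:qhypnormal}) applied to the linear family $\{Y \cap H_x\}_{Y \in |5 A|}$, whose base locus on $H_x$ is $\{P_w\}$, completes the proof. The main subtlety is the analysis at the $\frac{1}{2}$ points, where we do not know a priori which of $\prt \msF/\prt y (P)$ and $\prt \msF/\prt w (P)$ is nonzero, but the argument works uniformly in either case.
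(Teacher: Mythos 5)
Your argument reaches the right conclusion and the Jacobian computations at the individual points are correct, but you have misidentified the base locus, and this deserves flagging even though it happens not to sink the proof. The linear system $|5A|$ is spanned by $t, yz, xy^2, x^2z, x^3y, x^5$; the sections $y$ and $z$ are \emph{not} individually in the system (only their product $yz$ is), so
\[
\Bs |5A| = (x = t = yz = 0)_X = (x=y=t=0)_X \cup (x=z=t=0)_X,
\]
which is $\{P_w\}$ together with the two $\tfrac{1}{2}(1,1,1)$ points $Q_\pm$ lying on $(x=z=t=0)_X$ — not $(x=y=z=t=0)_X = \{P_w\}$ as you claim. The same error recurs for the restricted pencil on $H_x$, whose base locus is likewise $\{P_w, Q_+, Q_-\}$. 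You are saved only because you independently check the Jacobian at $Q_\pm$ (under the heading ``singular points of $X$''), so every point of the true base locus does get a direct rank computation, and Bertini applied with the \emph{correct} base locus covers the complement. Had the $\tfrac12$ points not also been singular points of $X$, your stated base locus would have left them unexamined and the proof would have a genuine hole; as written, the coverage is complete but the justification of the general-position step rests on a false premise. (Two smaller slips: $P_t$ does not lie in the stratum $(x=z=t=0)$, and your appeal to ``general position'' is really Bertini/generic smoothness for the pencil — for a pencil every point of $H_x$ lies on some member, so avoidance alone does not suffice away from the checked points.)

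Granting these corrections, your route for $Z$ is genuinely different from the paper's. The paper eliminates $x$ and $t$ to realize $Z$ as the hypersurface $\overline{\msF} = \msF(0,y,z,\lambda yz,w) = 0$ in $\mbP(2_y,3_z,8_w)$, completes the square in $w$, and shows quasismoothness of $Z$ \emph{everywhere at once} is equivalent to the nonvanishing of the discriminant of a cubic whose coefficients are polynomials in $\lambda$; the quasismoothness of $X$ at $P_t$ (forcing $t^2w \in \msF$ or $t^3z \in \msF$) guarantees this discriminant is a nonconstant polynomial in $\lambda$, hence nonzero for general $\lambda$. This is in effect a hands-on proof of Bertini for this particular pencil, and it simultaneously handles the base points and the moving part. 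Your approach — explicit rank-$3$ Jacobian checks at the finitely many base points, Bertini for the rest — is shorter and more conceptual, at the cost of invoking the general Bertini theorem on the normal surface $H_x$ (where one should also note that $\NQsm(H_x)$ is finite and a general member of the pencil avoids it outside the base locus). Both are valid; the paper's computation buys an explicit, self-contained criterion, while yours buys brevity.
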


\begin{proof}
    The base locus of $|5A|$ is $(x = yz = t = 0)_X = (x = y = t = 0)_X \cup (x = z = t = 0)_X$, which consists of $P_w$ and $2$ points of type $\frac{1}{2} (1, 1, 1)$.
    Let $t - \lambda yz - x h_4 = 0$ be the equation which defines $Y$ in $X$, where $\lambda \in \mbC$ and $h_4 \in \mbC [x, y, z]$ are general.
    It is easy to see that $Y$ is quasismooth at $P_w$ and at $2$ points of type $\frac{1}{2} (1,1,1)$.
    Hence $Y$ is quasismooth.
    
    By the quasismoothness of $X$, we have $w^2 y, z^6 \in \msF$ and we may assume that their coefficients in $\msF$ are $1$.
    Moreover, again by the quasismoothness of $X$, the equation $\msF (0, y, z, 0, 0) = z^6 + \alpha_4 z^4 y^3 + \alpha_2 z^2y^6 + \alpha_0 y^9 = 0$ has $3$ distinct solutions (in $\mbP (2, 3)$) and either $t^2 w \in \msF$ or $t^3 z \in \msF$.
    We write
    \[
    \begin{split}
        \overline{\msF} &\coloneq \msF (0,y,z,\lambda yz,w) \\
        &= w^2 y + w(\alpha (\lambda) z^2y^2 + \beta y^5)+ z^6 + \gamma (\lambda) z^4y^3 + \delta (\lambda) z^2y^6 + \varepsilon y^9,
    \end{split}
    \]
    where $\alpha (\lambda), \beta, \gamma (\lambda), \delta (\lambda), \varepsilon \in \mbC$.
    Then $\beta, \varepsilon \in \mbC$ does not depend on $\lambda$ while we can view $\alpha (\lambda), \gamma (\lambda), \delta (\lambda)$ as polynomials in variable $\lambda$.
    We have $\deg_{\lambda} \alpha(\lambda) \le 2$, $\deg_{\lambda} \gamma(\lambda) \le 3$ and $\deg_{\lambda} \delta(\lambda) \le 2$.
    If $t^2 w \in \msF$, then $\deg_{\lambda} a(\lambda)=2$.
    If $t^2 w \notin \msF$, then $t^3 z \in \msF$ and we have $\deg_{\lambda} \gamma (\lambda) = 3$.
    Replacing $w \mapsto w - (\alpha (\lambda) z^2 y + \beta y^4)/2$, we may assume that
    \[
    \overline{\msF} = w^2 y + z^6 + \gamma' (\lambda) z^4y^3 + \delta' (\lambda) z^2y^6 + \varepsilon' y^9,
    \]
    where $\gamma'(\lambda) = \gamma (\lambda) - \alpha (\lambda)^2/4$, $\delta' (\lambda) = \delta (\lambda) - \alpha (\lambda) \beta/2$ and $\varepsilon' = \varepsilon - \beta^2/4$.
    If $\varepsilon' = 0$, then $P_y$ is contained in $X$ and $X$ is not quasismooth at $P_y$ since $w y^5 \notin \msF$.
    Hence $\varepsilon' \neq 0$.
    Eliminating $x$ and $t = \lambda y z + x h_4$, we have an isomorphism
    \[
    Z = H \cap Y \cong (\overline{\msF} = 0) \subset \mbP (2_y, 3_z, 8_w),
    \]
    which is quasismooth if and only if 
    \[
    \Delta (\lambda) \coloneq -4 \delta'(\lambda)^3 -27 {\varepsilon'}^2 + \gamma'(\lambda)^2 \delta'(\lambda)^2+18\gamma'(\lambda) \delta' (\lambda) \varepsilon' - 4 \gamma'(\lambda)^3 \varepsilon' \ne 0.
    \]
    We see that $\Delta (\lambda)$ is a nonzero polynomial in $\lambda$ of positive degree since $\deg_{\lambda} \gamma'(\lambda) \ge 3$ and $\deg_{\lambda} \delta'(\lambda) \le 2$.
    Hence $\Delta (\lambda) \ne 0$ since $\lambda$ is general.
    Therefore $Z$ is quasismooth.
\end{proof}

Therefore Theorem \ref{thm:deltasingpt} is proved for family \textnumero 38.

\subsection{Family \textnumero 40: $X_{19} \subset \mbP (1, 3, 4, 5, 7)$}

Let $X$ be a member of family \textnumero 40.
Then $\Sing (X) = \{\frac{1}{3} (1, 1, 1), \frac{1}{4} (1, 1, 3), \frac{1}{5} (1, 2, 3)^{\EI}, \frac{1}{7} (1, 3, 4)^{\QI}\}$.
We have $\delta_P (X) > 1$ for any maximal center $P \in \Sing (X)$ by the following results.

\begin{itemize}
    \item[(i)] Let $P$ be the $\frac{1}{5} (1,2,3)$ point.
    Then $\alpha_P (X) \ge 1$ by \cite[Proposition 5.4]{KOW}, and hence $\delta_P (X) \ge 4/3$.
    \item[(ii)] Let $P$ be the $\frac{1}{7} (1,3,4)$ point.
    Then it will be proved in \S \ref{Sec:N43sgv} that $\alpha_P (X) \ge 15/19$, and hence $\delta_P (X) \ge 20/19$.
\end{itemize}

\begin{table}[h]
\caption{\textnumero \! 40: $X_{19} \subset \mbP (1, 3, 4, 5, 7)$}
\label{table:No40}
\centering
\begin{tabular}{cllll}
\toprule
Point & Case & $\delta_P$ & flag & Ref. \\
\midrule
$\frac{1}{5} (1,2,3)^{\EI}$ & (i) & $\delta_P \ge 4/3$ & & \cite[5.4]{KOW} \\
\cmidrule{1-5}
$\frac{1}{7} (1, 3, 4)^{\QI}$ & (ii) & $\delta_P \ge 20/19$ & & \S \ref{Sec:N43sgv} \\
\bottomrule
\end{tabular}
\end{table}

\subsubsection{Case (ii): The $\frac{1}{7} (1,3,4)$ point} \label{Sec:N43sgv}

Let $P = P_w$ be the $\frac{1}{7} (1,3,4)$ point.
By the quasismoothness of $X$ at $P$, we have $w^2 t \in \msF$.
It follows that the set $\mcC \coloneq \{x, y, z\}$ isolates $P$, that is, the point $P$ is an isolated component of the set $(x = y = z = 0)_X$.
By the same argument as in the proof of \cite[Proposition 5.4]{KOW} with the choice of the divisor $S = H_x$, we conclude that $\alpha_P (X) \ge \min \{1, c\}$, where 
\[
c \coloneq \frac{1}{rne_{\max} (A^3)}.
\]
In the above definition of $c$, $r$ is the index of the singularity $P \in X$, $n$ is the positive integer such that $S \sim n A$ and $e_{\max} \coloneq \max \Set{ \deg v | v \in \mcC}$.
In our case $r = 7$, $n = 1$ and $e_{\max} = 4$. 
Thus we have $c = 15/19$ and $\alpha_P (X) \ge 15/19$.

Therefore Theorem \ref{thm:deltasingpt} is proved for family \textnumero 40.

\subsection{Family \textnumero 58: $X_{24} \subset \mbP (1, 3, 4, 7, 10)$}

Let $X$ be a member of family \textnumero 58.
Then $\Sing (X) = \{2 \times \frac{1}{2} (1, 1, 1), \frac{1}{7} (1, 3, 4)^{\QI}, \frac{1}{10} (1, 3, 7)^{\QI}\}$.
We have $\delta_P (X) > 1$ by the following results.
\begin{itemize}
    \item[(i)] Let $P$ be the $\frac{1}{7} (1, 3, 4)$ point.
    Suppose that $t^2 w \in \msF$.
    Let $Y \in |3A|$ and $H \in |A|$ be general members.
    Then, by Lemma~\ref{Lem:N58sgqsm1}, $P \in Z \coloneq H \cap Y \subset Y \subset X$ is a flag of type $\mathrm{I}$ with $(l_Y, l_H, e, r_P) = (3, 1, 1, 7)$ and we have $\delta_P (X) \ge 4$ by Proposition~\ref{prop:flagIdelta}.
    \item[(ii)] Let $P$ be the $\frac{1}{10} (1, 3, 7)$ point.
    Let $Y \in |7A|$ and be a general member and set $H \coloneq H_x \in |A|$.
    Then, by Lemma~\ref{Lem:N58sgqsm2}, $P \in Z \coloneq Y \cap H \subset Y \subset X$ is a flag of type $\mathrm{I}$ with $(l_Y, l_H, e, r_P) = (7, 1, 1, 10)$ and we have $\delta_P (X) \ge 2$ by Proposition~\ref{prop:flagIdelta}.
\end{itemize}

\begin{table}[h]
\caption{\textnumero \! 58: $X_{24} \subset \mbP (1, 3, 4, 7, 10)$}
\label{table:No58}
\centering
\begin{tabular}{cllll}
\toprule
Point & Case & $\delta_P$ & flag & Ref. \\
\midrule
$\frac{1}{7} (1, 3, 4)^{\QI}$ & (i) $t^2 w \in \msF$ & $\delta_P \ge 4$ & $\mathrm{I}$ & \\
\cmidrule{1-5}
$\frac{1}{10} (1, 3, 7)^{\QI}$ & (ii) & $\delta_P \geq 2$ & $\mathrm{I}$ & \\
\bottomrule
\end{tabular}
\end{table}

\subsubsection{Some results on quasismoothness}

\begin{Lem} \label{Lem:N58sgqsm1}
    Suppose that $t^2 w \in \msF$.
    Let $Y \in |3 A|$ be a general member and let $H \coloneq H_x \in |A|$.
    Then, $Y$ and $Z \coloneq H \cap Y$ are both quasismooth.
\end{Lem}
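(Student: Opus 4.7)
The plan is to observe that $|3A|$ is a pencil whose base locus coincides set-theoretically with $Z$, thereby reducing both quasismoothness claims to a single Jacobian analysis of one curve in $\mbP(4, 7, 10)$.

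First I would note that the only degree-$3$ monomials in the variables $x, y, z, t, w$ (of weights $1, 3, 4, 7, 10$) are $x^3$ and $y$, so $|3A|$ is the pencil spanned by these sections, $\Bs |3A| = (x = y = 0)_X$, and a general member $Y$ is cut out in $X$ by $y - \lambda x^3 = 0$ for a general $\lambda \in \mbC$. In particular $Z = H \cap Y = (x = y = 0)_X$ coincides with this base locus. An enumeration of weighted-degree-$24$ monomials in $z, t, w$ alone yields
\[
\msF(0, 0, z, t, w) = \alpha z^6 + \beta t^2 w + \gamma w^2 z,
\]
where $\beta \ne 0$ by the hypothesis $t^2 w \in \msF$, $\gamma \ne 0$ by the quasismoothness of $X$ at $P_w$ (the only degree-$24$ monomial of the form $v \cdot w^n$ is $w^2 z$), and $\alpha \ne 0$ because otherwise $P_z \in X$ would contradict the quasismoothness of $X$ (no monomial of the form $v \cdot z^n$ of degree $24$ other than $z^6$ exists). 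Hence $Z$ is isomorphic to the hypersurface
\[
C \coloneq (\alpha z^6 + \beta t^2 w + \gamma w^2 z = 0) \subset \mbP(4, 7, 10).
\]

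Next I would verify the quasismoothness of $Z \cong C$ by imposing the vanishing of all three partial derivatives of the defining polynomial along $C$. The equation $\prt/\prt t = 2\beta t w = 0$ forces $t = 0$ or $w = 0$; combined with the defining equation, this reduces the candidate singular points to $P_z$, $P_t$, $P_w$ together with the family $(\alpha z^5 + \gamma w^2 = 0,\, t = 0,\, zw \ne 0)$. A direct check shows $P_z \notin C$ since $\alpha \ne 0$, while at $P_t$ and $P_w$ the partials $\prt/\prt w = \beta t^2$ and $\prt/\prt z = \gamma w^2$ respectively are nonzero; and on the final family one computes $\prt/\prt z = 6\alpha z^5 + \gamma w^2 = -5\gamma w^2 \ne 0$. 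Therefore $C$ is quasismooth.

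Finally, for the quasismoothness of $Y$: away from $\Bs |3A|$ it is inherited from $X$ for general $\lambda$ by the standard Bertini-type argument. Along the base curve, the Jacobian of the complete intersection $(\msF = y - \lambda x^3 = 0)$ has second row $(0, 1, 0, 0, 0)$ at every point with $x = 0$, so the rank-$2$ condition reduces to the non-vanishing of at least one of $\prt \msF/\prt z$, $\prt \msF/\prt t$, $\prt \msF/\prt w$ at each point of $C$, which is precisely what was established in the previous step. The only real obstacle is enumerating the monomials of $\msF(0, 0, z, t, w)$ correctly and handling the borderline locus $(\alpha z^5 + \gamma w^2 = t = 0)$; once that short list is in hand the argument parallels those of Lemmas \ref{lem:No13qsm2} and \ref{lem:N25sgqsm1} and presents no further difficulty.
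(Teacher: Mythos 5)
Your proof is correct and follows essentially the same route as the paper: both reduce everything to the quasismoothness of the base curve $(x = y = \msF(0,0,z,t,w) = 0)$ of the pencil $|3A|$, using that $z^6, w^2z \in \msF$ are forced by quasismoothness of $X$ and $t^2w \in \msF$ by hypothesis. The paper simply normalizes the three coefficients to $1$ and asserts the quasismoothness of that curve, whereas you carry out the Jacobian check explicitly; this is only a difference in the level of detail, not of method.
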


\begin{proof}
    We have $w^2 z, t^2 w, z^6 \in \msF$ by the quasismoothness of $X$ and by the assumption.
    We may assume that their coefficients in $\msF$ are $1$ by rescaling coordinates.
    We can write
    \[
    \msF (0, 0, z, t, w) = w^2 z + w t^2 + z^6.
    \]
    The base locus of $|3A|$ is $Z = H \cap Y$ set-theoretically, and hence $Y$ is quasismooth outside $Z$.
    Moreover, the scheme-theoretic intersection $Z \coloneq H \cap Y$ is the curve
    \[
    (x = y = w^2 z + w t^2 + z^6 = 0),
    \]
    which is quasismooth.
    Hence $Y$ and $Z$ are quasismooth.
\end{proof}

\begin{Lem} \label{Lem:N58sgqsm2}
    Let $Y \in |7A|$ be a general member and let $H \coloneq H_x \in |A|$.
    Then $Y$ and $Z \coloneq H \cap Y$ are both quasismooth.
\end{Lem}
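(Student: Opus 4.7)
The plan is to follow the same strategy as in Lemma~\ref{Lem:N38sgqsm1}: reduce $Z = H \cap Y$ to a weighted hypersurface in $\mbP(3,4,10)$ by eliminating variables, and verify quasismoothness via a discriminant that is a non-zero polynomial in the parameter defining $Y$.

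First I would write $Y$ explicitly. Since $t$ is the unique coordinate of weight $7$, a general $Y \in |7A|$ is cut out inside $X$ by an equation $t + g_7(x,y,z) = 0$ for a general polynomial $g_7$ of degree $7$ in $x, y, z$. The base locus of $|7A|$ on $X$ is contained in $(x = t = 0)_X \cap (yz = 0)_X$; using that $y^8, z^6, w^2z$ all appear in $\msF$ (forced by quasismoothness of $X$ at the vertices $P_y, P_z, P_w$), a short calculation shows that this base locus consists of $P_w$ together with finitely many smooth points of $X$, and that at each of them the Jacobian of the two defining equations of $Y$ has rank $2$ (at $P_w$ one uses $\partial \msF/\partial z \neq 0$ from $w^2 z \in \msF$). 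Bertini gives quasismoothness of $Y$ away from the base locus.

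Next I would reduce $Z$ to a weighted plane curve. Setting $x = 0$, the only degree-$7$ monomial in $y, z$ is $yz$, so the equation of $Y$ restricted to $H_x$ becomes $t + \alpha yz = 0$ for a generic $\alpha \in \mbC$ (depending on $g_7$). Eliminating $t = -\alpha yz$ identifies $Z$ with the hypersurface $(\overline{\msF} = 0) \subset \mbP(3, 4, 10)$, where $\overline{\msF}(y, z, w) \coloneqq \msF(0, y, z, -\alpha yz, w)$. Enumerating the ten degree-$24$ monomials of $\msF$ not involving $x$ yields
\[
\overline{\msF} = A y^8 + B(\alpha) y^4 z^3 + C z^6 + D(\alpha) w y^2 z^2 + E w^2 z,
\]
with $A = \coeff_{\msF}(y^8)$, $C = \coeff_{\msF}(z^6)$, $E = \coeff_{\msF}(w^2 z)$ non-zero (quasismoothness at $P_y, P_z, P_w$) and
\[
B(\alpha) = c_1 - c_6 \alpha + c_8 \alpha^2 - c_9 \alpha^3, \qquad D(\alpha) = c_3 - c_5 \alpha + c_7 \alpha^2,
\]
where $c_7 = \coeff_{\msF}(t^2 w)$, $c_9 = \coeff_{\msF}(t^3 y)$ (and analogously for the remaining $c_i$). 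The non-vanishing of $A, C, E$ immediately gives $P_y, P_z \notin Z$ and quasismoothness of $Z$ at $P_w$.

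Quasismoothness of $Z$ at the remaining points reduces, via a direct computation of $\nabla \overline{\msF}$ on the charts $y \ne 0$ and $z \ne 0$, to the non-vanishing of the discriminant
\[
\Delta(\alpha) \coloneqq (4 E B(\alpha) - D(\alpha)^2)^2 - 64 E^2 A C.
\]
The main obstacle is showing that $\Delta$ is a non-zero polynomial in $\alpha$. If $\Delta \equiv 0$, then $4EB(\alpha) - D(\alpha)^2$ must be constant in $\alpha$; inspecting the $\alpha^4$ and $\alpha^3$ coefficients of $4EB - D^2$ forces $c_7 = 0$ and then $c_9 = 0$. However, quasismoothness of $X$ at the $\frac{1}{7}(1,3,4)$ point $P_t$ requires at least one of $t^2 w$ or $t^3 y$ to appear in $\msF$, as these are the only degree-$24$ monomials of the form $t^a v$ with $v \in \{x, y, z, w\}$. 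This gives a contradiction, so $\Delta \not\equiv 0$, hence $\Delta(\alpha) \ne 0$ for general $\alpha$, and $Z$ is quasismooth.
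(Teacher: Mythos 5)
Your argument is correct and takes essentially the same route as the paper's: both eliminate $t$ via the restriction of $Y$ to $H_x$, identify $Z$ with a curve in $\mbP(3_y,4_z,10_w)$, and show the relevant discriminant is a nonconstant polynomial in the generic parameter because quasismoothness of $X$ at $P_t$ forces $t^2w$ or $t^3y$ to appear in $\msF$ (your condition $\Delta(\alpha)\ne 0$ is equivalent to the paper's $\beta'(\lambda)^2\ne 4$ after normalizing $A=C=E=1$ and completing the square in $w$). One small slip: the base points of $|7A|$ other than $P_w$ lie on the stratum $(x=y=t=0)\cong\mbP(4_z,10_w)$ and are $\frac{1}{2}(1,1,1)$ singular points of $X$ rather than smooth points, but this does not affect your verification, since you check quasismoothness of $Y$ via the rank of the Jacobian of its two defining equations, which is the correct criterion at any point.
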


\begin{proof}
    The linear system $|7A|$ is generated by $t, y z, x^7$ and other monomials of degree $7$ divisible by $x$.
    Hence $\Bs |7A| = (x = y = t = 0)_X \cup (x = z = t = 0)_X$.
    We have $w^2 z, z^6, y^8 \in \msF$ by the quasismoothness of $X$ and hence we may assume 
    \[
    \msF (0, 0, z, 0, w) = w^2 z - z^6, \quad 
    \msF (0, y, 0, 0, w) = y^8.
    \]
    It follows that $(x = y = t = 0)_X = \{P_w, Q_+, Q_-\}$, where $Q_{\pm} = (0\!:\!0\!:\!\pm 1\!:\!0\!:\!1) \in X$, and $(x = z = t = 0)_X = \{P_w\}$.
    Hence $\Bs |7A| = \{P_w, Q_+, Q_-\}$.

    Let $t - \lambda y z - x h_6 = 0$ be the equation which defines $Y$ in $X$, where $\lambda \in \mbC$ is general and $h_6 = h_6 (x, y, z)$ is a general homogeneous polynomial of degree $6$.
    It is then straightforward to check that $Y$ is quasismooth at $P_w, Q_+$ and $Q_-$, and hence it is quasismooth.
    We can write
    \[
    \overline{\msF} \coloneq \msF (0, y, z, \lambda yz, w) = w^2 z + \alpha (\lambda) w z^2 y^2 + z^6 + \beta (\lambda) z^3 y^4 + y^8,
    \]
    where $\alpha (\lambda), \beta (\lambda) \in \mbC$.
    We can view $\alpha (\lambda)$ and $\beta (\lambda)$ as polynomials in $\lambda$.
    We have $\deg_{\lambda} \alpha (\lambda) \le 2$ and $\deg_{\lambda} \beta (\lambda) \le 3$ and moreover either $\deg_{\lambda} \alpha (\lambda) = 2$ or $\deg_{\lambda} \beta (\lambda) = 3$ since either $t^2 w \in \msF$ or $t^3 y \in \msF$ by the quasismoothness of $X$.
    Replacing $w \mapsto w - \alpha (\lambda) z y^2/2$, we may assume that 
    \[
    \overline{\msF} = w^2 z + z^6 + \beta' (\lambda) z^3 y^4 + y^8,
    \]
    where $\beta' (\lambda) = \beta (\lambda) - \alpha (\lambda)^2/4$.
    Then, eliminating $x$ and $t$, we have an isomorphism 
    \[
    Z \coloneq H \cap Y \cong (\overline{\msF} = 0) \subset \mbP (3_y, 4_z, 10_w),
    \]
    which is quasismooth if and only if $\beta' (\lambda) \ne \pm 2$.
    We see that $\beta' (\lambda)$ is a nonzero polynomial of degree at least $3$.
    Hence $\beta' (\lambda) \ne \pm 2$ since $\lambda$ is general.
    Thus $Z$ is quasismooth.
\end{proof}

Therefore Theorem \ref{thm:deltasingpt} is proved for family \textnumero 58, and hence for all the families.

\bibliography{bibliography}
\bibliographystyle{alpha}

\end{document}